%% file: main.tex
\documentclass[a4paper, 10pt]{article}

\input{parts/header.tex}

\begin{document}

\input{parts/title_abstract_etc.tex}
\newpage
\input{parts/introduction.tex}

\input{parts/content.tex}

\newpage
\appendix
\input{parts/technical_proofs_introduction.tex}

\input{parts/technical_proofs_quasi_metric.tex}

\input{parts/technical_proofs_function_spaces.tex}

\input{parts/appendix.tex}

\printbibliography

\end{document}

%% file: parts/header.tex
\usepackage[utf8]{inputenc}
\usepackage[english]{babel}
\usepackage[T1]{fontenc}
\usepackage{lmodern}

\usepackage{amsmath}
\usepackage{amsthm}
\usepackage{xcolor}
\usepackage{enumitem}
\usepackage{multicol}
\usepackage[toc,page]{appendix}
\usepackage{aligned-overset} %
\usepackage{mathtools}
\usepackage{csquotes}

\usepackage{authblk}

\usepackage{zref-clever}
\zcsetup{cap}
\newcommand{\cref}[1]{\zcref{#1}}
\newcommand{\Cref}[1]{\zcref[S]{#1}}

\numberwithin{equation}{section}

\NewDocumentCommand{\newzctheorem}{mO{#1}m}{
  \newtheorem{#1}[sharedtheoremcounter]{#3}
    \AddToHook{env/#1/begin}{%
      \zcsetup{countertype={sharedtheoremcounter=#2}}}
}

\theoremstyle{plain}
\newzctheorem{theorem}{Theorem}
\newzctheorem{lemma}{Lemma}
\newzctheorem{corollary}{Corollary}
\newzctheorem{proposition}{Proposition}
\theoremstyle{definition}
\newzctheorem{definition}{Definition}
\newzctheorem{example}{Example}
\theoremstyle{remark}
\newzctheorem{remark}{Remark}
\newtheorem*{remark*}{Remark}

\MakeOuterQuote{"}

\usepackage[bookmarks=true]{hyperref}

\hypersetup{
	bookmarksnumbered,
	plainpages=false,
	colorlinks=true, %
	linktoc=all,     %
	linktocpage,
	linkcolor=red!70!black,  %
	citecolor=green!80!black,
	filecolor=magenta,
	hidelinks,
	urlcolor=magenta,
	breaklinks,
	unicode=true,
	hypertexnames=false,
}
\usepackage{mathrsfs}
\usepackage{bm}					   %
\usepackage{dsfont}

\usepackage{amssymb}

\usepackage{graphicx}
\usepackage{tikz}

\usepackage[style=alphabetic-verb, %
			sorting=nyt,
			url = false, 
			giveninits=true, %
			eprint = true, %
			isbn= false,
			backend = biber,
			maxbibnames=99,
			useprefix=true]{biblatex}%

\DeclareSourcemap{
  \maps[datatype=bibtex]{
    \map{
      \step[fieldset=language, null]
    }
  }
}

\numberwithin{equation}{section}
\usepackage[a4paper,
hmargin=3cm,bottom=4cm,top=3.5cm,footskip=3\baselineskip
]{geometry}
\allowdisplaybreaks		%

\renewcommand{\subset}{\subseteq}
\renewcommand{\supset}{\supseteq}

\newcommand{\supp}{\operatorname{supp}}
\newcommand{\eps}{\varepsilon}
\newcommand{\dif}{\mathrm{d}}
\newcommand{\one}{\mathds{1}}
\newcommand{\indicator}{\mathds{1}}

\DeclareMathOperator{\dist}{dist}
\DeclareMathOperator{\Lip}{Lip}

\newcommand{\abs}[1]{\left\lvert#1\right\rvert}
\newcommand{\norm}[1]{\left\lVert#1\right\rVert}

\newcommand{\Borel}{\mathcal{B}}

\newcommand{\fcc}{\eta} %
\newcommand{\ConditionZetaText}{Condition~\eqref{condition_zeta}}
\newcommand{\ConditionZetaTextAlphaC}[1]{Condition \allowbreak $\bigl(\text{\ref{condition_zeta}}#1\bigr)$}

\newcommand{\cn}{\mathcal{N}}   %
\newcommand{\extcn}{\mathcal{N}_{\mathrm{ext}}}   %
\newcommand{\en}{e}   %
\newcommand{\umd}{\overline{\operatorname{dim}}_{\mathrm{M},\exp}} %
\newcommand{\lmd}{\underline{\operatorname{dim}}_{\mathrm{M},\exp}} %
\newcommand{\md}{\operatorname{dim}_{\mathrm{M}, \exp}} %
\newcommand{\hdm}{\mathcal{H}_{\exp}} %
\newcommand{\hausd}{\operatorname{dim}_{H, \exp}} %
\newcommand{\metr}{\varrho}
\newcommand{\num}{\#}
\newcommand{\ball}{B}
\newcommand{\sigmacap}{\Cap}
\newcommand{\SC}{\mathscr{C}} %

\newcommand{\om}[1]{#1^{\ast}} %
\newcommand{\CR}{t} %
\newcommand{\AR}{t} %

\newcommand{\dis}{\mathcal{D}'} %
\newcommand{\td}{\mathscr{S}'} %
\newcommand{\isA}[1][]{\ifthenelse{\equal{#1}{}}{A_{p,q}^{s}}{(A#1)_{p#1,q#1}^{s#1}}} %
\newcommand{\embeds}{\hookrightarrow}

\newcommand{\scrit}{s_{\ast}}

\newcommand{\linspan}{\operatorname{span}}

\makeatletter
\newcommand*{\transpose}{{\mathpalette\@transpose{}}}
\newcommand*{\@transpose}[2]{\raisebox{\depth}{$\m@th#1\intercal$}}
\makeatother

\newcommand{\RR}{\mathbb{R}}
\newcommand{\R}{\RR}
\newcommand{\NN}{\mathbb{N}}
\newcommand{\N}{\NN}
\newcommand{\ZZ}{\mathbb{Z}}
\newcommand{\Z}{\ZZ}
\newcommand{\CC}{\mathbb{C}}

\newcommand{\PP}{\mathbb{P}}
\newcommand{\XX}{\mathbb{X}}
\newcommand{\X}{\XX}
\newcommand{\YY}{\mathbb{Y}}
\newcommand{\Y}{\YY}
\newcommand{\FF}{\mathbb{F}}

\newcommand{\CalB}{\mathcal{B}}
\newcommand{\CalG}{\mathcal{G}}
\newcommand{\CalO}{\mathcal{O}}
\newcommand{\CalA}{\mathcal{A}}
\newcommand{\CalH}{\mathcal{H}}
\newcommand{\CalR}{\mathcal{R}}
\newcommand{\CalN}{\mathcal{N}}
\newcommand{\CalW}{\mathcal{W}}
\newcommand{\CalP}{\mathcal{P}}
\newcommand{\CalZ}{\mathcal{Z}}
\newcommand{\CalNN}{\mathcal{NN}}
\newcommand{\diam}{\operatorname{diam}}

\renewcommand{\emptyset}{\varnothing}

%% file: parts/title_abstract_etc.tex
\renewcommand\Authfont{\sffamily}
\renewcommand\Affilfont{\sffamily\small}

\title{\vspace{-1cm}Maximally Spread Out Measures and Implications\\{} for Phase Transitions in Approximation Theory}

\author[$\dagger$]{Hannes Matt}
\author[$\ddagger$]{Erwin Riegler}
\author[$\dagger$]{Felix Voigtlaender\thanks{FV acknowledges support by the German Science Foundation (DFG)
in the context of the Emmy Noether junior research group VO 2594/1-1.
FV and HM acknowledge support by the Hightech Agenda Bavaria.}}

\affil[$\dagger$]{Mathematical Institute for Machine Learning and Data Science (MIDS),\authorcr
Catholic University of Eichstätt–Ingolstadt,
Germany}
\affil[$\,$]{\texttt{\{hannes.matt, felix.voigtlaender\}@ku.de}
\vspace{.5em}}

\affil[$\ddagger$]{ETH Z{\"u}rich, Z{\"u}rich, Switzerland}
\affil[$\,$]{\texttt{eriegler@mins.ee.ethz.ch}}

\date{\vspace*{-0.9cm}}

\maketitle

\begin{abstract}
We establish the existence of a "maximally spread out" Borel probability measure on a totally bounded subset $\SC$
of a (quasi)-Banach space $\XX$ under two mild conditions:
\emph{(i)} a growth condition on the covering numbers $\cn(\SC, \eps)$ of $\SC$,
and \emph{(ii)} a technical topological condition that is in particular satisfied
whenever $\SC \subset \XX$ is closed, bounded, and convex.

More formally, condition \emph{(i)} requires that the so-called
\emph{lower power-exponential Minkowski dimension} of $\SC$, i.e.,
\vspace*{-0.3cm}
\[
  s_\ast
  := \lmd(\SC)
  := \liminf_{\eps \downarrow 0} \frac{\log_2 \log_2 \cn(\SC, \eps)}{\log_2(1/\eps)}
  \tag{$\ast$}
\]
satisfies $s_\ast > 0$.
Under these conditions, we construct a Borel probability measure $\mu$ on $\XX$ that is \emph{critical} for $\SC$,
or maximally spread out, meaning that
the associated outer measure $\mu^\ast$ satisfies $\mu^\ast (\XX \setminus \SC)= 0$
and furthermore satisfies for every $0 < s < s_\ast$ the small-ball condition
\[
  \mu^\ast (\ball(x,r)) \leq \exp\bigl(- c(s) \cdot (1/r)^s\bigr)
  \quad \text{ for all } x \in \X \text{ and } 0 < r < r_0 (s)
  ,
  \tag{$\dagger$}
\]
where $\ball(x,r)$ is the ball around $x$ of radius $r$.

The existence of such a critical measure in particular implies
that the so-called \emph{power-exponential Hausdorff dimension}
$\hausd (\SC)$ of $\SC$ introduced in [J.~Topol.~Anal.~4(2):203--235, 2012]
satisfies $\hausd(\SC) = \lmd(\SC)$.

Such a critical measure gives rise to a phase transition
regarding lossy compression of elements of $\SC$,
under the further condition that the $\liminf$ in $(\ast)$ exists as an actual limit.
Indeed, in this case the dimension $s_\ast$ in $(\ast)$ describes the optimal compression rate for $\SC$,
meaning that for every $\tau < 1/s_\ast$, the elements of $\SC$ can be encoded using $n$ bits
and recovered up to error $\mathcal{O}(n^{-\tau})$,
whereas we show that for every fixed encoding/decoding scheme 
the set of elements $x \in \SC$ that can be encoded at rate $\mathcal{O}(n^{-\tau})$ for some $\tau> 1/s_\ast$ forms a $\mu$-null set.
Similar phase transitions occur when considering approximation-theoretic
properties of $\SC$, such as non-linear $n$-term approximation using a given dictionary,
or approximation using neural networks with complexity controlled by $n \in \NN$.

Previous work constructed critical measures for unit balls of certain Besov spaces,
considered as subsets of $L^2$, by utilizing the description of the Besov norm via wavelet bases.
In contrast, our construction is completely general.
In particular, our results are basis-independent and apply to balls of Besov- or Triebel-Lizorkin spaces,
considered as a subset of another such space, for the full range of parameters
and also for spaces of dominating mixed smoothness, as long as the embedding is compact.
Thus, we greatly generalize, unify, and simplify earlier results.

\end{abstract}

\noindent
\textbf{Keywords and phrases:}
critical measures,
metric entropy,
power-exponential scale,
generalized Hausdorff dimension,
phase transitions,
lossy compression,
approximation rates,
non-linear approximation,
neural network approximation,
Sobolev spaces,
Besov spaces,
Triebel-Lizorkin spaces,
spaces of dominating mixed smoothness, 
quasi-Banach spaces.

\vspace{0.2cm}

\noindent
\textbf{MSC (2020) classification:}
41A46, 41A25, 28C20, 28A78, 68P30, 94A34, 68T07.

%% file: parts/introduction.tex
\section{Introduction, main results, and applications}
\label{sec:intro}

For subsets $\SC \subset \R^d$, it is well-known that the
\emph{Hausdorff dimension} and the \emph{Minkowski dimension} of $\SC$
agree for sufficiently regular sets; see e.g.\ \cite[Theorem 5.7]{mattilaGeometrySetsMeasures1995},
but not for arbitrary sets; see e.g.\ \cite[p.77]{mattilaGeometrySetsMeasures1995} for a counter-example.
In this paper, we derive a version of this result adapted to the case
of subsets $\SC$ of infinite-dimensional spaces,
where instead of the "finite-dimensional polynomial scaling behavior"
$\cn(\SC, \eps) \approx (1/\eps)^{s_\ast}$, the covering numbers $\cn(\SC,\eps)$
more frequently scale ``power-exponentially'' like
$\cn(\SC,\eps) \approx \exp(C \cdot (1/\eps)^{s_\ast})$ for some $s_\ast = s_\ast (\SC) > 0$.

More precisely, let $\SC$ be a subset of a (quasi)-Banach space $\X$
and assume that the \emph{lower power-exponential Minkowski dimension} of $\SC$,
defined in terms of the covering numbers $\cn(\SC, \eps)$ of $\SC$ as
\begin{equation}
  \lmd (\SC)
  := \liminf_{\eps \downarrow 0}
       \frac{\log_2 \log_2 \cn(\SC, \eps)}{\log_2 (1/\eps)}
  \in [0,\infty],
  \label{eq:LowerMinkowskiDimension}
\end{equation}
satisfies $\lmd(\SC) > 0$.
Roughly speaking, this means that $s = \lmd(\SC)$ is the largest number $s > 0$ such that
for each $0\le \sigma <s$, we have $\cn(\SC, \eps) \geq \exp(C_\sigma \cdot (1/\eps)^{\sigma})$
as $\eps \to 0$, for some constant $C_\sigma > 0$.

Under certain regularity conditions on the set $\SC$ (see \cref{def:condition_zeta} below),
which, in particular, include the case when $\SC$ is closed, bounded, and convex,
we prove the existence of a "maximally spread out" probability measure $\mu$
on $\XX$ that is concentrated on $\SC$.
Here, the measure is "maximally spread out" in the sense that
the outer measure $\om{\mu}$ associated to
$\mu$ satisfies for every ${0 < \sigma < \lmd(\SC)}$
the \emph{small-ball condition}\footnote{
  Strictly speaking, we require \Cref{eq:SmallBallCondition} to hold with
  the outer measure $\mu^\ast (\ball(x,r))$
  instead of $\mu (\ball(x,r))$ on the left-hand side,
  to account for the fact that in a quasi-Banach space,
  balls are not necessarily Borel measurable.
  See \Cref{sub:QuasiNormedSpacesOuterMeasures} for more details on this.
}
\begin{equation}
    \om{\mu} \bigl(\ball(x,r)\bigr) \leq \exp\bigl( - c(\sigma) \cdot (1/r)^\sigma \bigr)
  \quad \text{for all } x \in \X \text{ and } 0 < r < r_0 (\sigma)
  \label{eq:SmallBallCondition}
\end{equation}
for certain $c(\sigma), r_0 (\sigma) > 0$.
The small-ball condition formalizes the notion of a ``spread-out'' measure,
since it guarantees that the measure cannot concentrate on small balls.
Regarding ``maximality'', we will see 
in \cref{cor:SmallBallCondition_Dominated_by_LowerMinkowskiDim}
that no measure
with $\mu^\ast (\SC) > 0$ can satisfy the small-ball condition with parameter $s > \lmd(\SC)$.
The measure $\mu$ can be seen as an analog of a "Frostman measure",
see \cite[Chapter 3]{bishopFractalsProbabilityAnalysis2017}, adapted to the modified
(i.e., power-exponential) scaling behavior of the covering numbers.

As we will see, the existence of such a measure has two important implications:
\begin{itemize}
  \item It implies that the so-called
        \emph{power-exponential Hausdorff dimension} $\hausd (\SC)$ of $\SC$
        --- a quantity introduced in \cite{kloecknerGeneralizationHausdorffDimension2012},
        which, in general, is relatively difficult to compute ---
        is simply given by
        \[
          \hausd (\SC) = \lmd(\SC)
          .
        \]
        This is in analogy to the finite-dimensional setting where the Hausdorff
        and Minkowski dimensions are equal in many situations (and in particular for convex sets).
        This fact and the definition of the power-exponential Hausdorff dimension
        are discussed in more detail in \cref{sub:PowerExponentialMinkowskiHausdorffDimension}
        and \cref{sub:MainResult} below.

  \item If, in addition, we assume that the power-exponential Minkowski dimension
        $\md (\SC)$ of $\SC$ exists
        (meaning that the limit inferior in \Cref{eq:LowerMinkowskiDimension} exists as a true limit;
        in which case $\md(\SC) := \lmd(\SC)$)
        and if $0 < \md (\SC) < \infty$, then the measure $\mu$ gives rise to a \emph{phase transition}
        regarding the achievable \emph{compression rates},
        as first described in \cite{grohsPhaseTransitionsRate2023}:
        \begin{itemize}
            \item There exists a coding scheme such that for every $\tau <1/\md(\SC)$, the elements of $\SC$ are described (encoded and decoded by this coding scheme) up to error $\CalO(n^{-\tau})$ using $n$ bits.

            \item For every coding scheme, the set of elements of $\SC$ that can be encoded up to error $\mathcal{O}(n^{-\tau})$ using $n$ bits (for every $n\in \NN$) for some $\tau>1/\md(\SC)$ forms a $\mu$-null set.

        \end{itemize}
        Our results thus greatly generalize, unify, and simplify the earlier results
        in \cite{grohsPhaseTransitionsRate2023}, which only showed the existence
        of such a maximally spread out measure $\mu$ for the special case where $\XX = L^2$
        and $\SC \subset L^2$ is a unit ball of certain Besov- or Sobolev spaces
        that compactly embed into $L^2$ (or $\SC$ is an image of one of these unit balls
        under a suitable sub-Lipschitz map).
\end{itemize}

Before we discuss the notions and results outlined above in more detail,
we mention that instead of requiring $\SC$ to be closed, bounded, and convex,
it suffices to impose the following weaker condition.

\begin{definition} \label{def:condition_zeta}
  Let $\XX$ be a quasi-normed space and let $\emptyset\ne\SC\subset \XX$.
  We then say that $\SC$ satisfies {\ConditionZetaText}
  if there exist constants $C, \alpha > 0$ satisfying
  \begin{equation}
    \sum_{n=1}^{\infty} \frac{x_n}{C \cdot n^\alpha}
    \in \SC
    \quad \text{whenever} \quad x_1,x_2,\dots \in \SC
    .
    \label{condition_zeta}
    \tag{$\zeta$}
  \end{equation}
  More precisely, we then say that $\SC$ satisfies \ConditionZetaTextAlphaC{(\alpha,C)}.
\end{definition}

{\ConditionZetaText} is in particular satisfied for every complete, bounded, and convex set $\SC$
(see \Cref{lem:ConvexSetsAreQuasiConvex}),
but applies more generally, in particular
whenever $\SC$ is the image of the unit ball under a continuous linear operator $T : \Y \to \X$
between quasi-Banach spaces $\X, \Y$;
see \Cref{quasi_Banach_unit_balls_are_quasi_convex,quasi-convexity_preserved}.
Conversely, {\ConditionZetaText} includes a certain notion of completeness and implies boundedness,
see \Cref{remark_condition_zeta_implies} for the latter.

\medskip{}

In the remainder of this introduction, we describe our main results in more detail.
We start with a primer on quasi-metric spaces (\Cref{sub:QuasiNormedSpacesOuterMeasures}),
focusing in particular on the issues regarding measurability that this entails.
The reader interested only in the case of genuine metric spaces can safely skip this section,
after noting that in this setting the outer measure $\om{\mu}(B)$,
which will appear throughout the technical parts of the paper, is always equal to $\mu(B)$,
where $B$ denotes a closed ball.
In \Cref{sub:PowerExponentialMinkowskiHausdorffDimension},
following \cite{kloecknerGeneralizationHausdorffDimension2012}, we discuss the concepts of the
\emph{power-exponential Minkowski dimension} and \emph{power-exponential Hausdorff dimension}
that were mentioned above.
These generalize the concepts of the classical Minkowski- and Hausdorff dimensions,
which are based on the \emph{polynomial scale} $r \mapsto r^s$, $s > 0$,
to the \emph{power-exponential scale} $r \mapsto \exp(- (1/r)^s)$, $s > 0$,
which is more useful for subsets of infinite-dimensional spaces.
The main results regarding the existence of maximally spread out measures and the consequences
for the power-exponential Hausdorff dimension are presented in \Cref{sub:MainResult},
while the implications regarding phase transitions for lossy compression and approximation theory
are discussed in \Cref{sub:PhaseTransitionConsequences}.
Finally, \Cref{sub:ImplicationsForClassicalSpaces} shows that our results are easily applicable
to a wide range of scenarios: in fact, they apply in a straightforward way
to (unit balls of) Besov- and Triebel-Lizorkin spaces,
both for the classical spaces and for spaces of dominating mixed smoothness.
The relations between our results and the existing literature are
discussed in \cref{sub:RelatedWork}.
\cref{sub:Notation} introduces additional notation and
in \cref{sub:PaperStructure}, we lay out the structure of the remaining part of the paper.

\subsection{Quasi-metric spaces and outer measures}%
\label{sub:QuasiNormedSpacesOuterMeasures}

This section gives a brief introduction to quasi-metric spaces and discusses potential issues
regarding measurability.
This section can be safely skipped by readers who are only interested in metric spaces,
in which case $\om{\mu}(\ball)= \mu(\ball)$ for all balls $\ball$.

In approximation theory and more generally in the theory of function spaces,
one frequently needs to consider \emph{quasi-normed spaces} instead of normed spaces
in order to cover all spaces of interest.
Natural examples for this are the $\ell^p$ spaces,
where a sequence ${x = (x_n)_{n \in \N} \in \ell^p}$ has better decay properties for smaller $p$,
so that one would also like to allow choosing ${p \in (0,1)}$.
In this case, $\ell^p$ is a quasi-Banach space, but no longer a Banach space.
Thus, the present subsection briefly discusses the most important properties of such
quasi-normed spaces (as well as quasi-metric spaces) that we will need,
including the resulting implications regarding the topology and measurability.

A \emph{quasi-metric} on a set $\XX$ is a map $d : \XX \times \XX \to [0,\infty)$
satisfying the usual axioms of a metric, except that the triangle inequality is replaced
by the \emph{quasi-triangle inequality}. 
The latter requires the existence of some constant $k \geq 1$, called the \emph{triangle constant}
or \emph{modulus of concavity}, satisfying
\[
  d(x,z) \leq k \cdot \bigl(d(x,y) + d(y,z)\bigr)
  \quad \text{for all } x,y,z \in \XX
  .
\]
Similarly, a \emph{quasi-norm} on a vector space $\XX$ over the field $\FF \in \{ \R, \CC \}$
is a map $\| \cdot \| : \XX \to [0,\infty)$ with the following properties:
\begin{enumerate}[label=(\roman*)]
  \item Definiteness: $\| x \| = 0$ if and only if $x = 0$;
  \item Homogeneity: $\| \alpha x \| = |\alpha| \cdot \| x \|$ for $\alpha \in \FF$ and $x \in \XX$;
  \item "quasi-triangle inequality": there exists a constant $k \geq 1$, called the
        \emph{modulus of concavity} or \emph{triangle constant} satisfying
        \[
          \| x + y \| \leq k \cdot (\| x \| + \| y \|)
          \quad \text{for all } \quad
          x,y \in \XX
          .
        \]
\end{enumerate}
In this case, a quasi-metric on $\XX$ is defined via
$d(x,y) := d_{\| \cdot \|} (x,y) := \| x - y \|$.
Unless stated otherwise, we will always interpret a quasi-normed space as a quasi-metric space
in this way.

The \emph{topology} on a quasi-metric space $(\XX, d)$ is defined
by declaring $U \subset \XX$ as open if
\[
  \forall \, x \in U \quad
  \exists \, r > 0 : \quad
  \{ y \in \XX \,\,:\,\,  d(x,y) < r\} \subset U
  .
\]
It is easy to see that this is equivalent to the condition
\[
  \forall \, x \in U \quad
  \exists \, r > 0 : \quad
  \ball(x,r) \subset U
  ,
\]
where
\[
  \ball(x, r)
  := \bigl\{ y \in \XX \,\,:\,\, d(x,y) \leq r \bigr\}
\]
is the "closed" ball of radius $r$ around $x$.
The \emph{Borel $\sigma$-algebra} on $\XX$ is then the $\sigma$-algebra generated by the
family of all open sets.
We will denote it by $\Borel(\XX)$ or by $\Borel(\XX,d)$.

It is crucial to note, however, that unlike in the setting of genuine metric spaces,
the "closed" ball $\ball(x,r)$ is \emph{not} generally closed
(in the sense of its complement being an open set).
Neither is the "open" ball $\{ y \in \XX \,\,:\,\, d(x,y) < r \}$ generally open.
This means that the quasi-metric $d$ of a quasi-metric space $\XX$
is in general \emph{not} continuous with respect to the topology generated by it.
Likewise, a quasi-norm is not necessarily continuous with respect to the topology
that it generates; in fact, it can happen that the open or closed balls
and the quasi-norm are not even Borel measurable.
For an example where this happens,
see \cite[Remark~2.1.9]{voigtlaenderEmbeddingTheoremsDecomposition2015}.
To deal with these issues, we will employ two main tools:
outer measures and the Aoki-Rolewicz theorem, which we now discuss.

In order to make sense of expressions such as $\mu(\ball(x,r))$ for a Borel measure $\mu$
on a quasi-metric space $\XX$, even in cases where the ``closed'' ball $\ball(x,r)$
might or might not be measurable, we will use the \emph{outer measure}
$\mu^\ast$ associated to $\mu$.
Generally, if $\mu : \CalA \to [0,\infty]$ is a measure defined on a $\sigma$-algebra $\CalA$
on a set $\XX$, \emph{the associated outer measure $\mu^\ast$} is defined as
\begin{equation}
  \mu^\ast (M)
  := \inf \bigl\{ \mu(A) \,\,:\,\, A \in \CalA \text{ with } A \supset M \bigr\}
  \in [0,\infty]
  \quad \text{for \emph{arbitrary} subsets} \quad
  M \subset \XX
  .
  \label{eq:OuterMeasure}
\end{equation}
It is easy to see that this outer measure has the following crucial properties:
\begin{enumerate}[label=(\roman*)]
  \item $\mu^\ast$ is monotone, meaning $\mu^\ast (M) \leq \mu^\ast \bigl(\widetilde{M}\bigr)$
        if $M \subset \widetilde{M}$.

  \item $\mu^\ast$ is $\sigma$-subadditive, meaning
        $\mu^\ast \bigl(\bigcup_{n \in \N} M_n\bigr) \leq \sum_{n=1}^{\infty} \mu^\ast (M_n)$
        for arbitrary sets $M_n \subset \XX$, $n \in \N$.

  \item $\mu^\ast$ is an extension of the measure $\mu$,
        meaning that for measurable sets $A \in \CalA$, we have $\mu^\ast (A) = \mu(A)$.
        In particular, $\mu^\ast (\emptyset) = 0$.
\end{enumerate}
In particular, if we provide bounds for $\mu^\ast (\ball(x,r))$,
then this is actually a bound for $\mu(\ball(x,r))$ in cases where the balls are measurable,
which in particular holds if $(\XX, \norm{\cdot})$ is a normed vector space
rather than a quasi-normed vector space.

The Aoki-Rolewicz theorem (see below) states that every (possibly pathological) quasi-norm
can, for many practical purposes, be replaced by an equivalent "nice" quasi-norm,
meaning a $p$-norm for some $p \in (0,1]$.
Here, two (quasi)-norms $\| \cdot \|$ and $\| \cdot \|^\ast$ on a vector space $\XX$
are called \emph{equivalent} if there exists $C \geq 1$ with
\[
  C^{-1} \cdot \| x \| \leq \| x \|^\ast \leq C \cdot \| x \|
  \qquad \text{for all } x \in \XX
  .
\]
It is easy to see that equivalent (quasi)-norms induce the same topology.

Finally, for given $p \in (0,1]$ we say that a (quasi)-norm $\| \cdot \|$ is a \emph{$p$-norm},
if it satisfies the \emph{$p$-triangle inequality}
\[
  \| x + y \|^p \leq \| x \|^p + \| y \|^p
  \qquad \text{for all } x,y \in \XX
  .
\]
In this case, one can define a metric via $d(x,y) := \| x - y \|^p$
and this metric induces the same topology as $\| \cdot \|$.
In particular, this implies that \emph{a $p$-norm is continuous with respect
to its induced topology}.

Given this terminology, we can now state the Aoki-Rolewicz theorem.
A proof can be found in \cite[Theorem~2.1.4]{voigtlaenderEmbeddingTheoremsDecomposition2015}
and \cite[Chapter~2, Theorem~1.1]{DeVoreConstructiveApproximation}.

\begin{theorem}[Aoki-Rolewicz]\label{thm:Aoki_Rolewicz}
    Let $(\XX,\norm{\cdot})$ be a quasi-normed space with modulus of concavity $k \geq 1$.
    Then there exists an equivalent quasi-norm $\norm{\cdot}_{\ast}$ on $\XX$
    such that $\norm{\cdot}_{\ast}$ is a $p$-norm with $p = \frac{1}{1 + \log_2(k)}$.
\end{theorem}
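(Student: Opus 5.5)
The idea is to build a $p$-norm directly from the quasi-norm by a chaining (infimum over finite decompositions) construction. Set $p := 1/(1+\log_2 k)$, so that $2^{1/p} = 2k$.

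\textbf{Step 1: the candidate quasi-norm.} Define
\[
  \norm{x}_\ast := \inf\Bigl\{ \Bigl(\sum_{i=1}^n \norm{x_i}^p\Bigr)^{1/p} \,:\, n \in \N,\ x_1,\dots,x_n \in \XX,\ x = \sum_{i=1}^n x_i \Bigr\}.
\]
Three of the required properties are immediate:
\begin{itemize}
  \item Homogeneity follows from the homogeneity of $\norm{\cdot}$.
  \item The $p$-triangle inequality follows by concatenating near-optimal decompositions of $x$ and of $y$.
  \item The trivial decomposition gives $\norm{x}_\ast \le \norm{x}$.
\end{itemize}
Everything therefore reduces to a reverse bound $\norm{x} \le C \norm{x}_\ast$. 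This bound also yields definiteness, and together with the previous inequality it gives equivalence.

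\textbf{Step 2: the key lemma.} The main claim is that for all $x_1,\dots,x_n$,
\[
  \Bigl\| \sum_{i=1}^n x_i \Bigr\| \le C \Bigl(\sum_{i=1}^n \norm{x_i}^p\Bigr)^{1/p},
\]
with, say, $C = 4^{1/p}$ or a similar constant depending only on $k$.

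I would first show a "dyadic" version. Iterating the quasi-triangle inequality along a balanced binary tree gives
\[
  \norm{x_1 + \dots + x_{2^m}} \le \max\bigl(\dots\bigr)\cdot(2k)^m = 2^{m/p}\max_i \norm{x_i}.
\]
The point is that at each level of the tree, two terms each bounded by $M$ combine to a term bounded by $2kM$. Equivalently, for a sum of $N$ terms, each of norm at most $M$, we get $\norm{\sum x_i} \le 2k\,N^{1/p} M$ after padding $N$ up to a power of $2$.

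\textbf{Step 3: the general case (main obstacle).} The hard part is handling terms of unequal size. I would normalize so that $\sum_i \norm{x_i}^p = 1$ and group the $x_i$ into dyadic classes
\[
  I_j := \bigl\{ i \,:\, 2^{-j-1} < \norm{x_i}^p \le 2^{-j} \bigr\}.
\]
Since the $p$-th powers sum to $1$, we have $\# I_j \le 2^{j+1}$. Step 2 then bounds each class sum by
\[
  \norm{S_j} \lesssim (2^{j+1})^{1/p}\, 2^{-j/p} = O(1).
\]
This is not yet enough: summing $O(1)$ over $j$ diverges. So one has to combine the classes more cleverly. I would use the quasi-triangle inequality in the nested form
\[
  \norm{S_0 + S_1 + \dots} \le k\norm{S_0} + k^2\norm{S_1} + \dots,
\]
and to make this converge I need the class sums to decay rather than merely stay bounded.

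The standard fix is a more efficient grouping. I would prove the lemma by induction on $n$ with a stronger hypothesis, namely the classical Aoki–Rolewicz argument:
\[
  \Bigl\|\sum_i x_i\Bigr\|^p \le 4 \sum_i \norm{x_i}^p .
\]
For the induction step, split the index set at the point where the partial sums of $\norm{x_i}^p$ cross half of the total. Each half, with the middle term removed, carries at most half of the total mass, so the induction hypothesis applies to it. The three pieces (left half, middle term, right half) are then combined with the quasi-triangle inequality twice. The exponent choice $2^{1/p} = 2k$ is exactly what makes the constants close up in this combination.

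Once this estimate holds, $\norm{x} \le 4^{1/p}\norm{x}_\ast$, and the theorem follows.
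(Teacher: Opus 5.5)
Your Steps 1 and 2 are fine. Reducing everything to an estimate $\|\sum_i x_i\|^p\le C\sum_i\|x_i\|^p$ is the standard route; the paper itself does not prove this theorem, it only cites it. The gap is the inductive step in Step 3. For $k>1$ it does not close, neither with the constant $4$ nor with any other constant $C$.

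Write $T=\sum_i\|x_i\|^p$, and let $L$, $M=x_m$, $R$ be the left sum, the middle term and the right sum, with masses $T_L\le T/2$, $T_M$, $T_R<T/2$. The induction hypothesis gives $\|L\|,\|R\|\le (CT/2)^{1/p}$. Since $2k=2^{1/p}$,
\[
  k\bigl(\|L\|+\|R\|\bigr)\le 2k\,(C/2)^{1/p}\,T^{1/p}=(CT)^{1/p},
\]
so combining the two halves alone already uses up the entire budget. The middle term then costs one more application of the quasi-triangle inequality. This extra factor cannot be absorbed even when $\|M\|$ is tiny, because a quasi-norm is not continuous.

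Concretely, take the regime $T_M\to0$ and $T_L,T_R\to T/2$, which occurs for instance with many small terms. There:
\begin{itemize}
  \item the grouping $(L+R)+M$ gives $k\,(CT)^{1/p}$;
  \item the grouping $(L+M)+R$ gives $(k^2+k)(CT/2)^{1/p}=\frac{k+1}{2}(CT)^{1/p}$;
  \item applying the hypothesis directly to $L+M$ requires $k\bigl(a^{1/p}+(1-a)^{1/p}\bigr)\le1$ for $a=(T_L+T_M)/T>1/2$, which fails by strict convexity, since equality holds only at $a=1/2$.
\end{itemize}
So the choice $2^{1/p}=2k$ does not ``make the constants close up''. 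It makes the two-group estimate exactly tight. The induction therefore closes only if the terms split into two groups of mass at most $T/2$ each with nothing left over, and for arbitrary real masses that is impossible.

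The missing idea is to round the masses to dyadic values, for which exact balanced splits exist. First prove the following claim: if $\ell_1,\dots,\ell_n\in\NN_0$ satisfy $\sum_i2^{-\ell_i}\le1$, then $\|\sum_ix_i\|\le\max_i2^{\ell_i/p}\|x_i\|$.
\begin{itemize}
  \item Induct on $n$. For $n\ge2$ all $\ell_i\ge1$.
  \item Order $\ell_1\le\dots\le\ell_n$. Then each partial sum $\sum_{i\le j}2^{-\ell_i}$, and also $1/2$, is an integer multiple of $2^{-\ell_j}$, so the partial sums cannot jump over $1/2$.
  \item Hence the indices split into two nonempty groups of weight at most $1/2$ each. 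If the total weight is at most $1/2$, simply split off the last index.
  \item Apply the induction hypothesis to each group with exponents $\ell_i-1$, and combine using $\|a+b\|\le2k\max\{\|a\|,\|b\|\}=2^{1/p}\max\{\|a\|,\|b\|\}$. This closes the induction exactly.
\end{itemize}
Then return to general terms. Discard zero terms and choose $\ell_i\in\NN_0$ with $2^{-\ell_i}\le\|x_i\|^p/T<2^{1-\ell_i}$. This gives $\sum_i2^{-\ell_i}\le1$ and $2^{\ell_i/p}\|x_i\|<(2T)^{1/p}$. Hence $\|\sum_ix_i\|^p\le2\sum_i\|x_i\|^p$, that is, $\|x\|\le2^{1/p}\|x\|_\ast$, which also yields definiteness of $\|\cdot\|_\ast$.
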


\subsection{The power-exponential Minkowski- and Hausdorff dimensions,
and the small-ball condition for measures}%
\label{sub:PowerExponentialMinkowskiHausdorffDimension}

The classical notions of Minkowski- and Hausdorff dimensions are based on
the \emph{polynomial scale} consisting of the functions $r \mapsto r^s$, $s > 0$.
For instance, the Minkowski dimension of a set $\SC$ is --- roughly speaking --- the exponent
$s$ for which the covering numbers $\cn(\SC, \eps)$ satisfy $\cn(\SC, \eps) \approx (1/\eps)^s$.
Likewise, the definition of the $s$-dimensional Hausdorff measure,
\[
  \CalH^s (M)
  = \lim_{\delta \downarrow 0} \,\,
    \inf
    \left\{ 
      \sum_{i=1}^{\infty} (\diam (M_i))^s
      \,\,:\,\,
      M_i \subset \XX \text{ for $i \in \N$, with }
      \bigcup_{i=1}^\infty M_i \!\supset\! M,
      \,\, \diam (M_i) \leq \delta
    \right\}
\]
for subsets $M\subseteq \XX$ of a metric space $\XX$,
is crucially based on using the $s$-th power of the diameter $\diam (M_i)$.

Instead of the polynomial scaling $\cn(\SC, \eps) \approx (1/\eps)^s$ that is characteristic
of finite-di\-men\-sion\-al sets, many interesting subsets of \emph{infinite}-dimensional spaces
obey the \emph{power-exponential scaling}
\[
  \cn(\SC, \eps) \approx \exp\bigl(C \cdot (1/\eps)^s\bigr)
\]
for some $s > 0$.
For our purposes, it will thus be appropriate to consider variants of the Minkowski-
and Hausdorff dimensions adapted to this scaling.
Our description of the resulting objects will mainly follow
\cite{kloecknerGeneralizationHausdorffDimension2012}.
However, since \cite{kloecknerGeneralizationHausdorffDimension2012} only considers the case of
metric spaces, whereas we work in the setting of \emph{quasi}-normed spaces,
we provide proofs for several of the required results from
\cite{kloecknerGeneralizationHausdorffDimension2012}
in the quasi-metric setting; see \Cref{sec:QuasiMetricTechnicalProofs}.

\subsubsection{The power-exponential Minkowski dimension}
\label{sub:power-exp_Minkowski_dimension}

The \emph{lower power-exponential Minkowski dimension} $\lmd$
was already introduced in \Cref{eq:LowerMinkowskiDimension}%
\footnote{
  Strictly speaking, \Cref{eq:LowerMinkowskiDimension} only applies for the case where
  $\SC$ is totally bounded and $\num \SC \geq 2$.
  The former ensures that $\cn(\SC,\eps)\in \NN$
  and the latter ensures that $\cn(\SC, \eps) \geq 2$ for $\eps$ small enough.
  In that case $\log_2(\cn(\SC,\eps))\ge1$ exists for $\eps$ small enough,
  which then ensures that $\log_2 (\log_2 (\cn(\SC, \eps))) \in [0,\infty)$ is well-defined.
  It follows that $\lmd(\SC)\ge 0$.
  In case of $\# \SC < 2$, we simply define $\lmd(\SC) := \umd(\SC) := \md(\SC) := 0$.
  In the case when $\SC$ is not totally bounded,
  we define $\lmd(\SC) := \umd(\SC) := \md(\SC) := \infty$.
}.
We will also have occasion to consider the \emph{upper power-exponential Minkowski dimension}
\[
  \umd (\SC)
  := \limsup_{\eps \downarrow 0}
       \frac{\log_2 \log_2 \cn(\SC, \eps)}{\log_2 (1/\eps)}
  \in [0,\infty]
  .
\]
We say that the \emph{power-exponential Minkowski dimension}
$\md (\SC)$ of $\SC$ exists if the lower and upper power-exponential
Minkowski dimensions agree, i.e., $\lmd (\SC) = \umd(\SC)$;
in this case, we set $\md(\SC) := \lmd(\SC)$.
We note that our terminology slightly deviates from that in
\cite[Definition~3.1]{kloecknerGeneralizationHausdorffDimension2012},
where the (lower/upper) power-ex\-po\-nen\-tial Minkowski dimensions are called
the \emph{(lower/upper) Minkowski critical parameters}.

\subsubsection{The power-exponential Hausdorff dimension}
\label{sub:power-exp_Hausdorff_dimension}

To introduce the \emph{power-exponential Hausdorff dimension}, we first define the
$f$-Hausdorff measure associated to a non-decreasing continuous function $f : [0,\infty) \to [0,\infty)$
with $f(0) = 0$.
Given a (quasi)-metric space $(\XX, d)$, this outer measure is defined as
\begin{equation*}
    \CalH^f(M) 
    := \sup_{\delta>0} \CalH^{f,\delta}(M),
\end{equation*}
for arbitrary $M\subset \XX$,
where
\begin{equation*}
    \CalH^{f,\delta}(M):=
     \inf
     \left\{ 
       \sum_{i=1}^{\infty} f(\diam(M_i))
       \,:\,
       M_i \subset \XX \text{ for $i \!\in\! \N$, with }
       \bigcup_{i=1}^\infty M_i \supset M,
       \,\, \diam (M_i) \leq \delta
     \right\}
\end{equation*}
for $\delta>0$.
Here, the diameter of a subset $M_i \subset \XX$ is defined as
\[
  \diam (M_i) := \sup_{x,y \in M_i} d(x,y)
  \quad \text{for } \emptyset \neq M_i \subset \XX
  ,
\]
and $\diam(\emptyset) := 0$.
Clearly, we have $\CalH^{f}(M) = \lim_{\delta\downarrow 0} \CalH^{f,\delta}(M)$.

Now, for $s \in (0,\infty)$, following
\cite[Section~2.2]{kloecknerGeneralizationHausdorffDimension2012}, we define
\begin{equation}
  f_s : \quad
  [0,\infty) \to [0,\infty), \quad
  f_s (r) = \begin{cases}
              \exp(- (1/r)^s), & \text{if } r > 0, \\
              0,               & \text{if } r = 0.
            \end{cases}
  \label{eq:PEHausdorffHelperFunction}
\end{equation}
Based on this, the \emph{power-exponential Hausdorff measure with exponent $s$}
is defined as
\begin{equation}
  \CalH_{\exp}^s := \CalH^{f_s}
  .
  \label{eq:PEHausdorffMeasureDefinition}
\end{equation}
Finally, we define the \emph{power-exponential Hausdorff dimension} of $M\subset \XX$ as
\begin{equation}
  \begin{aligned}
    \hausd (M)
    & := \sup \bigl\{ s \in (0,\infty) \,\,:\,\, \CalH_{\exp}^s (M) = \infty \bigr\} \\
    &\phantom{:}= \sup \bigl\{ s \in (0,\infty) \,\,:\,\, \CalH_{\exp}^s (M) > 0 \bigr\} \\
    &\phantom{:}= \inf \bigl\{ s \in (0,\infty) \,\,:\,\, \CalH_{\exp}^s (M) = 0 \bigr\} \\
    &\phantom{:}= \inf \bigl\{ s \in (0,\infty) \,\,:\,\, \CalH_{\exp}^s (M) < \infty \bigr\}
     \in [0,\infty]
     .
  \end{aligned}
  \label{eq:PEHausdorffDimensionDefinition}
\end{equation}
Here, we use the convention that $\sup \emptyset = 0$ and $\inf \emptyset = \infty$.
The fact that these four expressions all coincide
is established in \Cref{sec:proof_of_eq:PEHausdorffDimensionDefinition};
see also \cite[Lemma~2.2 and Def.~2.3]{kloecknerGeneralizationHausdorffDimension2012}.
Finally, we remark that in \cite{kloecknerGeneralizationHausdorffDimension2012}
the term \emph{critical parameter of $M$ with respect to the power-exponential scale}
is used instead of the term "power-exponential Hausdorff dimension" that we use.

\subsubsection{Relations between power-exponential Hausdorff and Minkowski dimensions}

In the case of the classical Hausdorff and Minkowski dimensions, it is well-known
that the Hausdorff dimension is always less than or equal to the lower Minkowski dimension;
see for instance \cite[Page~77]{mattilaGeometrySetsMeasures1995}.
As we will show in \Cref{sec:proof_of_eq:PEHausdorffDominatedByPELowerMinkowski}
(see \cref{lem:PEHausdorffDominatedByPEMinkowski}), the same also holds for the
power-exponential versions of these quantities, meaning that
\begin{equation}
  \hausd (\SC) \leq \lmd (\SC)
  ;
  \label{eq:PEHausdorffDominatedByPELowerMinkowski}
\end{equation}
see also \cite[Proposition~3.2]{kloecknerGeneralizationHausdorffDimension2012}.
For general sets, a converse inequality cannot be expected;
indeed, if $\SC$ is a countable set, then always $\hausd (\SC) = 0$,
but $\lmd(\SC)$ can have any value in $[0,\infty]$, as can be seen by taking $\SC$
to be a countable dense subset of a known set with the specified lower power-exponential
Minkowski dimension.
However, one of the main results of the present paper
(see \cref{thm:MainResultCriticalMeasureExistence}) will show that
\[
    \text{if}
    \quad
    \begin{array}{c}
        \XX \text{ is a quasi-Banach space and}\\
        \SC \subset \XX \text{ satisfies \ConditionZetaText}
    \end{array}
    \qquad
    \text{then}
    \quad
  \hausd (\SC) = \lmd(\SC),
\]
and this will be done by constructing a suitable measure on $\SC$.

\subsubsection{Measures satisfying a small-ball condition}
\label{sub:measures_sbc}

A useful tool for deriving lower bounds on the classical Hausdorff dimension
is (the easy direction of) the \emph{Frostman lemma}
(see \cite[Theorem 8.8]{mattilaGeometrySetsMeasures1995}), which involves measures satisfying
the decay condition $\mu(\ball(x, r)) \leq r^s$ for some $s \geq 0$.
In the present setting, the relevant classes of measures are those that satisfy
the small-ball condition, as made precise in the following \cref{def:small_ball}.
\begin{definition}
    \label{def:small_ball}
    Let $(\XX,d)$ be a quasi-metric space and let $\mu$ be a measure on some $\sigma$-algebra on $\XX$.
    Let $s\ge 0$.
    We say that $\mu$ satisfies the \emph{small-ball condition with parameter $s$}, if there exist constants $c,r_0>0$ (depending on $s$) such that
    \begin{equation}
      \mu^\ast \bigl(\ball(x,r)\bigr)
      \leq \exp\bigl( -c \cdot (1/r)^s \bigr)
      \qquad \forall \, x\in \XX \text{ and } 0 < r < r_0
      .
      \label{eq:SmallBallConditionPrecise}
    \end{equation}
\end{definition}
Using this terminology, we get the following important tool for lower-bounding the
power-exponential Hausdorff dimension of a set:

\begin{lemma}[Version of the easy direction of the Frostman lemma]
    \label{lem:FrostmanLemmaEasy}
    Let $(\XX,d)$ be a quasi-metric space and let $\SC \subset \XX$.
    Let $s \in [0,\infty)$ and suppose that $\mu$ is a measure on some $\sigma$-algebra on $\XX$ which satisfies
    the small-ball condition with parameter $s$ and also satisfies $\mu^{\ast}(\SC) > 0$.
    Then $\hausd(\SC) \geq s$.
\end{lemma}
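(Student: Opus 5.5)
The plan is to show that $\CalH_{\exp}^{\sigma}(\SC) > 0$ for every $0 < \sigma < s$ (when $s > 0$; the case $s = 0$ is trivial since $\hausd(\SC) \geq 0$ by convention). By the second expression in \eqref{eq:PEHausdorffDimensionDefinition}, this gives $\hausd(\SC) \geq \sigma$, and letting $\sigma \uparrow s$ yields the claim. Working with $\sigma < s$ instead of $s$ itself gives a margin: the constant $c$ in the small-ball condition, and the loss from passing from diameters to ball radii, can both be absorbed into the faster decay.

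Fix $\sigma < s$, and let $c, r_0$ be the constants from \eqref{eq:SmallBallConditionPrecise}. Take $\delta > 0$ small, to be specified, and an arbitrary cover $\SC \subset \bigcup_i M_i$ with $\diam(M_i) \leq \delta$. We may discard empty $M_i$. For each nonempty $M_i$, pick $x_i \in M_i$ and set $r_i := \diam(M_i)$. Since $d(x_i, y) \leq \diam(M_i)$ for $y \in M_i$, we get $M_i \subset \ball(x_i, r_i)$ directly; unlike the usual $2\,\diam$ bound, this needs no triangle inequality, which is convenient in the quasi-metric setting. Two cases must be handled separately.

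\emph{Case $r_i = 0$.} Then $M_i = \{x_i\} \subset \ball(x_i, r)$ for every $r > 0$, so $\mu^\ast(M_i) = 0$.

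\emph{Case $r_i > 0$.} If $r_i < r_0$, then $\mu^\ast(M_i) \leq \exp(-c\, r_i^{-s})$. Choose $\delta < r_0$ so small that $c\, r^{-s} \geq r^{-\sigma}$ for all $0 < r \leq \delta$; this is possible since $r^{\sigma - s} \to \infty$ as $r \to 0$. Then $\mu^\ast(M_i) \leq \exp(-r_i^{-\sigma}) = f_\sigma(\diam M_i)$. The case $r_i = 0$ also satisfies this bound, since $f_\sigma(0) = 0$. To avoid the boundary issue $r_i = r_0$, simply require $\delta < r_0$.

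By $\sigma$-subadditivity and monotonicity of $\mu^\ast$,
\[
  0 < \mu^\ast(\SC) \leq \sum_i \mu^\ast(M_i) \leq \sum_i f_\sigma(\diam M_i).
\]
Taking the infimum over all covers gives $\CalH^{f_\sigma,\delta}(\SC) \geq \mu^\ast(\SC) > 0$, hence $\CalH_{\exp}^\sigma(\SC) > 0$.

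There is no serious obstacle. The only points needing care are these: use the outer measure throughout, since balls may be non-measurable; treat zero-diameter sets separately; and use the slack $\sigma < s$ to absorb the constant $c$. Actually the slack could even be avoided by noting that $\exp(-c\, r^{-s}) = f_s(c^{-1/s} r)$ and rescaling, but the $\sigma < s$ route is cleanest given the equivalent characterizations in \eqref{eq:PEHausdorffDimensionDefinition}.
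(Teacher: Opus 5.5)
Your proof is correct and is essentially the paper's argument: cover by sets $M_i$ of small diameter, use $M_i \subset \ball(x_i, \diam M_i)$, apply the small-ball bound, and absorb $c$ via the slack $\sigma < s$; the paper only differs in phrasing it as a contradiction with $\CalH_{\exp}^{\sigma}(\SC) = 0$ and in handling zero-diameter sets by extending the small-ball bound to $r = 0$. Your closing aside that rescaling avoids the slack is not accurate as stated, because rescaling $d$ by $c^{-1/s}$ only gives $\CalH_{\exp}^{s}(\SC) > 0$ for the rescaled metric and passing back to $d$ needs the same $\sigma < s$ comparison; the main proof does not rely on it, so this does not affect correctness.
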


\begin{proof}
  See \cref{sec:proof_of_lem:FrostmanLemmaEasy}.
\end{proof}

By the previous lemma and \cref{eq:PEHausdorffDominatedByPELowerMinkowski},
we see that no measure $\mu$ on $\XX$ with $\mu^\ast (\SC) > 0$
can satisfy the small-ball condition with a parameter $s > \lmd(\SC)$.
\begin{corollary}
    \label{cor:SmallBallCondition_Dominated_by_LowerMinkowskiDim}
    Let $\XX,\SC,\mu,s$ be as in \cref{lem:FrostmanLemmaEasy}.
    Then $s \le \lmd(\SC)$.
\end{corollary}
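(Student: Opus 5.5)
The corollary follows by chaining two results already available. Assume $\XX,\SC,\mu,s$ are as in \cref{lem:FrostmanLemmaEasy}: $\mu$ satisfies the small-ball condition with parameter $s$ and $\mu^\ast(\SC) > 0$.

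First, \cref{lem:FrostmanLemmaEasy} gives $\hausd(\SC) \geq s$. Second, \cref{eq:PEHausdorffDominatedByPELowerMinkowski} (established as \cref{lem:PEHausdorffDominatedByPEMinkowski}) gives $\hausd(\SC) \leq \lmd(\SC)$. Combining the two yields
\[
  s \leq \hausd(\SC) \leq \lmd(\SC),
\]
which is the claim.

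The only point needing care is the range of validity of \cref{eq:PEHausdorffDominatedByPELowerMinkowski} in the degenerate cases covered by the footnote conventions.

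\begin{itemize}
  \item If $\SC$ is not totally bounded, then $\lmd(\SC) = \infty$ and the inequality $s \le \lmd(\SC)$ is trivial.
  \item If $\num \SC < 2$, then $\lmd(\SC) = 0$, so we must check that $s = 0$ is forced. Suppose $s > 0$. Then $\SC$ is empty or a singleton $\{x\}$, and the small-ball condition gives $\mu^\ast(\SC) \le \mu^\ast(\ball(x,r)) \le \exp(-c\,(1/r)^s) \to 0$ as $r \downarrow 0$. This contradicts $\mu^\ast(\SC) > 0$, so $s = 0$.
\end{itemize}

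Apart from these edge cases, which \cref{lem:PEHausdorffDominatedByPEMinkowski} presumably already covers, there is no real obstacle; the substance lies in the two lemmas being invoked.
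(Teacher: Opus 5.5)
Your proposal is correct and matches the paper's argument exactly: the paper also obtains the corollary by combining the easy direction of the Frostman lemma, which gives $s \le \hausd(\SC)$, with the inequality $\hausd(\SC) \le \lmd(\SC)$. Your separate treatment of the degenerate cases is harmless but redundant, since the proof of that inequality already handles the cases $\lmd(\SC) = \infty$ and $\num \SC < 2$ explicitly.
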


Motivated by this, we introduce a terminology for measures that satisfy
the small-ball condition for the "largest possible" range of parameters $s$
(ignoring the case $s = \lmd(\SC)$).

\begin{definition}\label{def:CriticalMeasure}
    Let $\XX$ be a quasi-metric space and let $\SC \subset \XX$.
    We say that a measure $\mu$ on $\XX$ is \emph{critical for $\SC$},
    if it satisfies $\om{\mu}(\SC)>0$ and the small-ball condition with parameter $s$
    for every $0 < s < \lmd(\SC)$.
\end{definition}

The notion of a critical measure was first introduced in
\cite[Definition~3]{grohsPhaseTransitionsRate2023},
where these were also called ``measures of critical growth''.
Our notion of a critical measure is a generalization thereof in the following sense:
Firstly, we consider subsets of quasi-metric spaces instead of Banach spaces.
Secondly, in our present article, the small-ball condition is required to hold
for all ${0 < s < \lmd(\SC) \vphantom{\sum_j^i}}$, while in \cite{grohsPhaseTransitionsRate2023},
the small-ball condition is required to hold for all $0 <s < \umd(\SC) \vphantom{\sum_j^i}$.
For all the specific sets $\SC$ considered in \cite{grohsPhaseTransitionsRate2023},
it holds that $\lmd(\SC) = \umd(\SC) \vphantom{\sum_j^i}$, and so both ranges
for the parameter $s$ are equal.
In view of 
\cref{cor:SmallBallCondition_Dominated_by_LowerMinkowskiDim},
our definition is the natural generalization to cases where $\lmd(\SC)< \umd(\SC)$ might hold.
For an example of a convex compact subset where the upper and lower dimensions differ,
see \cref{sec:example_lmd<umd}.

\begin{remark}\label{rem:CriticalMeasuresImpliesHausdorffEqualMinkowski}
  If $\lmd(\SC) > 0$ and if there exists a measure on $\XX$ that is critical for $\SC$, then
  \cref{def:CriticalMeasure} and
  \cref{lem:FrostmanLemmaEasy} show that for all $s \in (0,\lmd(\SC))$
  we have $s\le \hausd(\SC)$.
  Hence $\lmd(\SC)\le \hausd(\SC)$.
  In combination with \cref{eq:PEHausdorffDominatedByPELowerMinkowski},
  this then implies
  \[
    \hausd(\SC) = \lmd(\SC)
    .
  \]
  If $\lmd(\SC)=0$, then $0\le \hausd(\SC) \le  \lmd(\SC)=0$ by
  \cref{eq:PEHausdorffDominatedByPELowerMinkowski}, and so $\hausd(\SC)=\lmd(\SC)$.
\end{remark}

\subsection{Main result}%
\label{sub:MainResult}

The main result of the paper
shows the existence of a critical probability measure for any subset $\SC$
of a quasi-Banach space that satisfies some mild regularity condition
(namely, {\ConditionZetaText}).
Precisely, we show the following:

\begin{theorem}\label{thm:MainResultCriticalMeasureExistence}
  Let $\XX$ be a quasi-Banach space.
  Let $\emptyset\ne \SC \subset \XX$ satisfy {\ConditionZetaText}.
  Then there exists a Borel probability measure $\mu$ on $\XX$ that is critical
  for $\SC$ with $\om{\mu}(\XX\setminus\SC)=0$ and $\om{\mu}(\SC)=1$.
  In particular, the power-exponential Hausdorff dimension of $\SC$
  coincides with the power-exponential lower Minkowski dimension,
  \[
    \hausd(\SC) = \lmd(\SC)
    .
  \]
\end{theorem}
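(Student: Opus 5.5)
We construct $\mu$ as the law of a random series $\sum_{n} Y_n/(C n^\alpha)$, where each $Y_n$ is a uniformly distributed point of a finite, well-separated subset of $\SC$. Condition~\eqref{condition_zeta} then places the random variable in $\SC$, and the separation yields the small-ball bound.

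\emph{Preliminary reductions.} By \Cref{thm:Aoki_Rolewicz} we may pass to an equivalent $p$-norm. Equivalent quasi-norms change covering numbers only by rescaling $\eps$, so $\lmd$ is unchanged; the small-ball condition survives up to constants; and Condition~\eqref{condition_zeta} is a purely algebraic statement about $\SC$, so it is unaffected. Hence we may assume $d(x,y)=\|x-y\|^p$ is a genuine metric. By \Cref{remark_condition_zeta_implies}, $\SC$ is bounded. If $\lmd(\SC)=0$, any Dirac measure $\delta_{x_0}$ with $x_0\in\SC$ is critical, since the small-ball requirement is vacuous. If $\lmd(\SC)=\infty$, we still need measures for every $s$, and we handle this with the same construction using a sequence $s_n\uparrow\infty$. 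So assume $0<\lmd(\SC)$.

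\emph{Construction.} Choose scales $\eps_n\downarrow0$ decreasing geometrically, say $\eps_n=2^{-n}$, or sparser if needed. For each $n$, choose a maximal $\eps_n$-separated set $P_n\subset\SC$. Its cardinality is at least $\cn(\SC,\eps_n)\ge \exp(c\,\eps_n^{-\sigma})$ for any $\sigma<\lmd(\SC)$ and all large $n$. Since we need separation relative to the weight $1/(Cn^\alpha)$, we instead pick $P_n$ as a maximal $\delta_n$-separated subset with $\delta_n := C n^\alpha \eps_n$ times a constant. Because $n^\alpha$ is only polynomial in $n\sim\log(1/\eps_n)$, this loses nothing in the exponent $\sigma$.

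Let $Y_n$ be independent and uniform on $P_n$. Set $X=\sum_n Y_n/(Cn^\alpha)$, which converges in the quasi-Banach space because $\SC$ is bounded and $\sum n^{-\alpha p}$ converges. Pass to a subsequence of indices if needed, or raise $\alpha$ by using Condition~\eqref{condition_zeta} with $x_n$ repeated as $0$-like elements. Note that $0\in\SC$ need not hold, so this must be done by grouping terms; we must check that Condition~\eqref{condition_zeta} still permits it. Define $\mu$ as the law of $X$. Then $X\in\SC$ surely, so $\mu^\ast(\XX\setminus\SC)=0$ provided $\mu$ is Borel, which holds since $X$ is a limit of measurable random variables. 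It follows that $\mu^\ast(\SC)=1$.

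\emph{Small-ball estimate (the main obstacle).} Fix $x$ and $r$, and choose $N$ with $\eps_N\approx r$ times a constant. Condition on $Y_m$ for all $m\ne N$. Then $X = Z + Y_N/(CN^\alpha)$ with $Z$ independent of $Y_N$. Two distinct values $y\ne y'$ of $Y_N$ give points at distance $\ge \delta_N/(CN^\alpha)\gtrsim \eps_N$, so by the $p$-triangle inequality at most one value of $Y_N$ can put $X$ into $\ball(x,r)$. Hence $\mu^\ast(\ball(x,r))\le 1/\#P_N\le\exp(-c\,\eps_N^{-\sigma}N^{-\alpha\sigma})$. Absorbing $N^{\alpha\sigma}$ into a slightly smaller exponent gives the bound $\exp(-c'(1/r)^{s})$ for every $s<\sigma$. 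The delicate points are these: controlling $\#P_N$ from below using only the $\liminf$, which is handled by taking $N$ large; measurability of the balls, for which we bound by the outer measure after conditioning via Fubini on a measurable superset; and the interplay of the constants $C,\alpha$ with the separation scale.

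Finally, the identity $\hausd(\SC)=\lmd(\SC)$ follows from \cref{rem:CriticalMeasuresImpliesHausdorffEqualMinkowski}.
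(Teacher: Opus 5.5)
Your small-ball estimate follows the paper's argument (Theorem~\ref{measure_SC_has_growth_order}, applied as in Theorem~\ref{equivalence_lower_minkowski_existence_of_measure}). Building the factor $n^\alpha$ into the separation scale $\delta_n$, instead of choosing $n(r)\approx\log_2 u-\alpha\log_2\log_2 u$, is only a reparametrization.

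\textbf{The gap.} It lies in the claim $\mu^\ast(\XX\setminus\SC)=0$, which is part of the statement. You infer it from two facts: $X(\omega)\in\SC$ for every $\omega$, and $\mu$ is Borel. But $\SC$ is not assumed to be Borel, and this implication is false in general. Knowing that $X\in\SC$ surely only shows that every Borel \emph{superset} of $\SC$ has full measure, i.e.\ $\mu^\ast(\SC)=1$. The claim $\mu^\ast(\XX\setminus\SC)=0$ instead requires a Borel \emph{subset} $A\subset\SC$ with $\mu(A)=1$. For a counterexample, take a Bernstein set $V\subset[0,1]$, which has full Lebesgue outer measure and inner measure zero, and equip it with the trace $\sigma$-algebra and the trace of Lebesgue measure. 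The inclusion $V\hookrightarrow[0,1]$ is measurable and takes values in $V$, yet its law is Lebesgue measure, for which $[0,1]\setminus V$ has outer measure one.

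\textbf{How to close it.} The paper does this in Step~2 of the proof of Lemma~\ref{measure_SC_n_is_well_def}. It writes $X=S((Y_n)_n)$ with $S$ a Borel map from the Polish space $\prod_n\SC_n$ into a Polish subspace of the completion of $\XX$. Then $S(\prod_n\SC_n)\subset\SC$ is analytic, hence universally measurable, which gives a Borel set $\widehat A\subset\SC$ with $\mu(\widehat A)=1$.

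In your setup a more elementary repair also works. \ConditionZetaText{} forces the partial sums $S_m(z)=\sum_{n\le m}z_n/(Cn^\alpha)$ to be uniformly Cauchy on the compact space $\prod_n P_n$. If they were not, a gliding-hump argument would splice disjoint bad blocks $z_{m_k+1},\dots,z_{M_k}$ into a single sequence $z$ with $z_n\in P_n\subset\SC$ whose series is not Cauchy, contradicting \ConditionZetaText{}. Hence the limit map $S$ is continuous, and $S(\prod_n P_n)$ is a compact, hence Borel, subset of $\SC$ of $\mu$-measure one.

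\textbf{Two smaller points.}
\begin{itemize}
\item Convergence of $\sum_n Y_n/(Cn^\alpha)$ comes directly from \ConditionZetaText{} with the given $(\alpha,C)$. That $\alpha$ need not satisfy $\alpha p>1$, so your appeal to $\sum n^{-\alpha p}$ is unjustified, and raising $\alpha$ or grouping terms is unnecessary.
\item If $\SC$ is not totally bounded (e.g.\ the unit ball of an infinite-dimensional space), maximal separated sets are infinite and carry no uniform distribution. You must instead take finite separated subsets of prescribed, rapidly growing cardinality, as the paper does with $\#\SC_n\ge\exp(\ln 2\cdot 2^{n^2})$. Your remark about $s_n\uparrow\infty$ only gestures at this.
\end{itemize}
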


\begin{proof}
  See \cref{equivalence_lower_minkowski_existence_of_measure} and its proof.
  The "in particular" part follows from \cref{rem:CriticalMeasuresImpliesHausdorffEqualMinkowski}.
\end{proof}

The theorem thus implies that for any set satisfying some mild regularity conditions,
the power-exponential Hausdorff dimension --- which is often difficult to determine ---
coincides with the much more tractable power-exponential lower Minkowski dimension.
The latter is based on the asymptotic growth of the covering numbers,
which is well understood for many infinite-dimensional sets of interest;
see for instance the examples in \cref{sub:ImplicationsForClassicalSpaces}.

We expect the construction of the measure, which is described in detail in
\cref{sec:MeasureConstruction}, to be of independent interest.

\subsection{Consequences regarding phase transitions for lossy compression
and (non-linear) approximation}%
\label{sub:PhaseTransitionConsequences}

In addition to simplifying the computation of the power-exponential Hausdorff dimension,
critical measures are of interest for lossy compression and in approximation theory,
where they allow to quantify the size of subsets of "well approximable" elements.
As we will explain in this section, critical measures give rise to a \emph{phase transition}:
Roughly speaking, every compression/approximation rate below a critical threshold is achievable,
whereas the elements that can be encoded/approximated at a rate
above the threshold form a null set with respect to the critical measure.
We remark that this general idea already appeared in \cite{grohsPhaseTransitionsRate2023}.
Our main contribution here is to systematically determine general conditions
under which results as in \cite{grohsPhaseTransitionsRate2023} hold,
whereas \cite{grohsPhaseTransitionsRate2023} focused on the specific setting
of Besov- or Sobolev spaces as subsets of $L^2(\Omega)$.

Our discussion is structured as follows:
In \cref{sub:IntroRateDistortionTheory}, we recall the findings
from \cite{grohsPhaseTransitionsRate2023} in our notation and our level of generality,
and analyze how the quantities studied there
relate to the quantities considered in the present paper,
such as the power-exponential Minkowski dimension.
In \cref{sub:IntroNonlinearApproximation}, we introduce
a general criterion pertaining to methods of non-linear approximation.
We then show that whenever this criterion is satisfied,
a phase transition occurs for the considered approximation method.
This criterion in particular applies to the setting of approximation
by neural networks which was studied in \cite[Theorem~3]{grohsPhaseTransitionsRate2023},
but also to the setting of non-linear $n$-term approximation using a dictionary.

\subsubsection{Critical measures and lossy compression}
\label{sub:IntroRateDistortionTheory}

The goal in lossy compression is to encode the elements $x$ of a "signal class" $\SC$ using $n$ bits,
in such a way that the error ("distortion") incurred by first encoding and then decoding
a signal is as small as possible.
The "size" of the signal class is then reflected in the decay of this distortion,
as the number $n$ of bits tends to $\infty$.

Formally, let $(\XX, d)$ be a quasi-metric space, and let $\SC \subset \XX$.
An \emph{$n$-bit codec} (or an \emph{$n$-bit encoder-decoder pair}) for $\SC$ is a pair
$(E_n, D_n)$ consisting of the encoder $E_n : \SC \to \{ 0,1 \}^n$ and the decoder
$D_n : \{ 0,1 \}^n \to \XX$.
The \emph{(worst-case) distortion} of $(E_n, D_n)$ is defined as
\[
  \delta (E_n, D_n)
  := \sup_{x \in \SC} d\bigl(x, D_n(E_n(x))\bigr)
  \in [0,\infty]
  .
\]
We note that $\delta(E_n,D_n) < \infty$ if and only if $\SC$ is bounded.

A \emph{codec sequence} for $\SC$ is a sequence $( (E_n,D_n) )_{n \in \N}$,
where $(E_n, D_n)$ is an $n$-bit codec for $\SC$.
We say that such a codec sequence \emph{achieves compression rate $\CR \in [0,\infty)$},
if there exists $C > 0$ such that
\[
  \delta (E_n, D_n)
  \leq C \cdot n^{-\CR}
  \qquad \forall \, n \in \N
  .
\]
Finally, we define the \emph{optimal compression rate} of $\SC \subset \XX$ as
\begin{equation}
  \CR^\ast (\SC, \XX)
  := \sup
     \bigl\{
       \CR \in [0,\infty)
       \,\,:\,\,
       \text{$\exists$ codec sequence for $\SC$ achieving compression rate $\CR$}
     \bigr\}
  .
  \label{eq:OptimalCompressionRate}
\end{equation}

As we will now show in \cref{lem:CompressionRateVSMinkowskiDimension},
the optimal compression rate of $\SC$ is closely related to the
power-exponential Minkowski dimension of $\SC$.
This result is probably folklore, but since we could not locate a reference, we provide the proof. 

\begin{lemma}\label{lem:CompressionRateVSMinkowskiDimension}
  Let $(\XX, d)$ be a quasi-metric space and let $\emptyset \neq \SC \subset \XX$.
  Then
  \[
    \CR^\ast (\SC, \XX)
    = \bigl(\umd(\SC)\bigr)^{-1}
    .
  \]
  Here, $\SC$ is equipped with the quasi-metric $d$,
  and we use the interpretations $\infty^{-1} = 0$ and $0^{-1} = \infty$.
\end{lemma}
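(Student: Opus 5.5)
We prove the two inequalities $\CR^\ast(\SC,\XX) \geq \umd(\SC)^{-1}$ and $\CR^\ast(\SC,\XX) \leq \umd(\SC)^{-1}$ separately. Both rest on the dictionary between $n$-bit codecs and coverings: an $n$-bit codec with distortion $\delta$ produces a covering of $\SC$ by the at most $2^n$ balls $\ball(D_n(c),\delta)$, $c\in\{0,1\}^n$. Conversely, a covering of $\SC$ by $N \leq 2^n$ balls of radius $\eps$ gives an $n$-bit codec with distortion at most $\eps$: encode $x$ by the index of a ball containing it, and decode to that ball's center. (If $\cn$ is defined with centers in $\SC$ or in $\XX$, this changes radii only by the factor $2k$, which is harmless for exponents.) We first dispose of the degenerate cases. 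If $\SC$ is not totally bounded, then $\umd=\infty$, and no codec sequence can have $\delta(E_n,D_n)\to 0$, so only rate $\CR = 0$ can be achieved (and rate $0$ only if $\SC$ is bounded); with $\sup$ of the rate set taken as $0$, this gives $\CR^\ast = 0$. If $\#\SC=1$, every codec has distortion $0$, so $\CR^\ast=\infty=0^{-1}$. For finite $\SC$ with $\#\SC\ge2$, the convention in the footnote gives $\umd=0$, and exact encoding using $\lceil\log_2\#\SC\rceil$ bits yields distortion $0$ for large $n$, so $\CR^\ast=\infty$. From now on, $\SC$ is infinite and totally bounded, so $\cn(\SC,\eps)\to\infty$.

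\textbf{Upper bound.} Suppose a codec sequence achieves rate $\CR>0$, so $\delta(E_n,D_n)\le Cn^{-\CR}$. Given a small $\eps$, choose $n=\lceil (C/\eps)^{1/\CR}\rceil$. The covering dictionary then gives $\cn(\SC,\eps)\le 2^n$, hence $\log_2\log_2\cn(\SC,\eps)\le \log_2 n \le \tfrac1\CR\log_2(1/\eps)+O(1)$. Dividing by $\log_2(1/\eps)$ and taking the $\limsup$ yields $\umd(\SC)\le 1/\CR$. Taking the supremum over achievable $\CR$ gives $\CR^\ast\le \umd^{-1}$; in particular, if $\umd=\infty$ then only $\CR=0$ can be achieved.

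\textbf{Lower bound.} Assume $\umd(\SC)<\infty$ and fix $\CR<\umd^{-1}$. Pick $s$ with $\umd<s<1/\CR$; if $\umd=0$, any $s>0$ with $s<1/\CR$ works. By the definition of $\limsup$, there is $\eps_0$ with $\log_2\cn(\SC,\eps)\le \eps^{-s}$ for all $\eps<\eps_0$. For each $n$, set $\eps_n:=n^{-1/s}$. Once $\eps_n<\eps_0$, we get $\cn(\SC,\eps_n)\le 2^{n}$, which gives an $n$-bit codec with distortion at most $\eps_n=n^{-1/s}\le n^{-\CR}$. For the finitely many small $n$, use any codec: the distortion is finite because $\SC$ is bounded, so the constant $C$ absorbs these cases. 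Hence rate $\CR$ is achievable, and $\CR^\ast\ge\umd^{-1}$.

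The main obstacle is only bookkeeping. One must handle the conventions for $\cn$ and the quasi-metric constant $k$, namely whether ball centers must lie in $\SC$; this costs the factor $2k$ in the radius, which does not affect exponents. One must also keep the degenerate cases consistent with the stated conventions $\infty^{-1}=0$ and $0^{-1}=\infty$.
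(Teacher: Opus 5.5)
Your proof is correct and follows essentially the same route as the paper. A codec sequence with $\delta(E_n,D_n)\le C n^{-\CR}$ yields coverings of $\SC$ by $2^n$ balls, losing the factor $2k$ when moving the centers into $\SC$, which gives $\umd(\SC)\le 1/\CR$. Conversely, nets of size at most $2^n$ at scale $n^{-1/s}$ give codecs achieving every rate below $1/\umd(\SC)$, with the finitely many small $n$ absorbed into the constant; the paper uses scale $n^{-\CR}$ directly, without your intermediate $s$.

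One small slip: when $\#\SC=1$, it is not true that every codec has distortion $0$. Only the codec whose decoder outputs the single point of $\SC$ does, but that is all you need.
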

\begin{proof}
 See \cref{sec:proof_of_lem:CompressionRateVSMinkowskiDimension}.
\end{proof}

By definition, no codec sequence can achieve a distortion of $\CalO(n^{-\CR^\ast (\SC, \XX)-\eps})$ for some $\eps>0$, uniformly over all elements of $\SC$.
However, for a given codec sequence $( (E_n, D_n) )_{n \in \N}$,
there might be elements $x \in \SC$ that can be encoded and recovered at a strictly better rate.
For instance, the unit ball of $W^{k,\infty} (\Omega)$ also contains (a multiple of)
the unit ball of $W^{k+1, \infty} (\Omega)$, and these more regular functions can be encoded
and recovered at a faster rate, given a suitable codec sequence.
It is thus of interest to quantify the "size" of the set of elements that can be
encoded and recovered at a "better than optimal" rate.
As we will see, if there exists a critical measure $\mu$ on $\XX$
that is critical for $\SC$ and if $\SC$ is sufficiently
``nice'' (meaning that $\md(\SC)$ exists),
then this set of elements will be a $\mu$-null set.

To make this precise, given a fixed codec sequence $\mathscr{S} = ( (E_n, D_n) )_{n \in \N}$,
and any $\CR \in [0,\infty)$, we define the set of elements that can be encoded and recovered
at rate $\CalO(n^{-\CR})$ as
\begin{equation}
  \mathcal{R}^\CR (\mathscr{S}, \SC, \XX)
  := \Bigl\{
       x \in \SC
       \,\,:\,\,
       \sup_{n \in \N}
       \Bigl(
         n^\CR \cdot d\bigl(x, D_n(E_n(x))\bigr)
       \Bigr)
       < \infty
     \Bigr\}
  .
  \label{eq:ElementsWithGivenCompressionRate}
\end{equation}

We then have the following result, which is a (minor) generalization of
\cite[Theorem~4]{grohsPhaseTransitionsRate2023} to the present setting
of subsets of quasi-metric spaces, instead of the subsets of Banach spaces
considered in \cite{grohsPhaseTransitionsRate2023}.

\begin{theorem}\label{thm:RateDistortionTheoryPhaseTransition}
  Let $(\XX, d)$ be a quasi-metric space.
  Let $\emptyset \ne \SC \subset \XX$ be bounded.
  Then, writing $\CR^\ast := \bigl(\umd(\SC)\bigr)^{-1}$, the following holds:
  \begin{enumerate}[label=(\roman*)]
    \item \emph{(A suitable codec sequence achieves every rate below $\CR^\ast$)}
          There exists a codec sequence ${\mathscr{S}^\ast = ( (E_n^\ast, D_n^\ast) )_{n \in \N}}$
          for $\SC$ such that for every $0 \leq \CR < \CR^\ast$,
          there exists $C(\CR) > 0$ satisfying
          \[
            \delta (E_n^\ast, D_n^\ast) \leq C(\CR) \cdot n^{-\CR}
            \qquad \forall \, n \in \N
            .
          \]
          In particular, $\mathcal{R}^\CR (\mathscr{S}^\ast, \SC, \XX) = \SC$.
  \end{enumerate}
  Now, assume that $\mu$ is a probability measure on $\XX$ that is critical for $\SC$,
  and set
  \[
    \CR^\sharp
    := \bigl(\lmd(\SC)\bigr)^{-1}
    .
  \]
  Then the following hold:
  \begin{enumerate}[resume,label=(\roman*)]
    \item \emph{(quantitative bound relating approximation error and number of bits)}
          Let $\CR \in (\CR^\sharp, \infty)$.
          Then there exist $c = c(\CR) > 0$ and $\eps_0 = \eps_0 (\CR) > 0$
          such that for every $n \in \N$ and every $n$-bit codec $(E_n, D_n)$ for $\SC$,
          we have
          \[
            \mu^\ast
            \Bigl(
              \bigl\{
                x \in \SC
                \,\,:\,\,
                d\bigl(x, D_n(E_n(x))\bigr) \leq \eps
              \bigr\}
            \Bigr)
            \leq 2^{n - c \cdot \eps^{-1/\CR}}
            \qquad \forall \, \eps \in (0, \eps_0)
            .
          \]

    \item \emph{(any rate above $\CR^\sharp$ is only achievable on a $\mu$-null set)}
          For every $t \in (\CR^\sharp, \infty)$ and every codec sequence
          $\mathscr{S} = \bigl( (E_n, D_n) \bigr)_{n \in \N}$,
          the set $\mathcal{R}^\CR (\mathscr{S}, \SC, \XX)$ is a $\mu$-null set:
          \[
            \mu^\ast \Bigl(\mathcal{R}^\CR (\mathscr{S}, \SC, \XX)\Bigr)
            = 0
            \quad \text{ for every $\CR\in (\CR^{\sharp},\infty)$.}
          \]
          In fact, even
          \[
            \mu^\ast
            \biggl(\,\,
              \bigcup_{\CR \in (\CR^\sharp, \infty)}
                \mathcal{R}^\CR (\mathscr{S}, \SC, \XX)
            \biggr)
            = 0
            .
          \]
  \end{enumerate}
\end{theorem}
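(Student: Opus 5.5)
For part (i), I will reuse the construction behind \cref{lem:CompressionRateVSMinkowskiDimension}. Assume first $\umd(\SC)<\infty$, since otherwise $\CR^\ast=0$ and boundedness of $\SC$ suffices. For each $n$, let $\eps_n := \inf\{\eps>0 : \cn(\SC,\eps)\le 2^n\}$. Pick an $\eps$-net with at most $2^n$ points for some $\eps\le 2\eps_n$, or simply $\eps$ slightly above $\eps_n$. Enumerate the net by $n$-bit strings, let $E_n$ send $x$ to the index of a net point within distance $\eps$, and let $D_n$ return that net point. Then $\delta(E_n,D_n)\le 2\eps_n$. Now fix $0\le \CR<\CR^\ast$ and choose $\sigma$ with $\umd(\SC)<\sigma<1/\CR$. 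For small $\eps$ we have $\log_2\cn(\SC,\eps)\le \eps^{-\sigma}$. Taking $\eps=n^{-1/\sigma}$ gives $\cn(\SC,\eps)\le 2^n$, hence $\eps_n\le n^{-1/\sigma}\le n^{-\CR}$ for all large $n$. The finitely many small $n$ are absorbed into $C(\CR)$ using boundedness. The codec sequence does not depend on $\CR$, and $\mathcal{R}^\CR(\mathscr{S}^\ast,\SC,\XX)=\SC$ follows at once.

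For part (ii), fix $\CR>\CR^\sharp$, which means $1/\CR<\lmd(\SC)$, so $s:=1/\CR$ lies in $(0,\lmd(\SC))$. Criticality gives the small-ball condition $\mu^\ast(\ball(y,r))\le \exp(-c_0 r^{-s})$ for $r<r_0$. The set $\{x\in\SC: d(x,D_n(E_n(x)))\le\eps\}$ is contained in $\bigcup_{w\in\{0,1\}^n}\ball(D_n(w),\eps)$; quasi-metric symmetry suffices here. By subadditivity of $\mu^\ast$, its outer measure is at most $2^n\exp(-c_0\eps^{-s}) = 2^{n-c\eps^{-1/\CR}}$ with $c=c_0/\ln 2$, valid for $\eps<\eps_0:=r_0$.

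For part (iii), fix $\CR>\CR^\sharp$ and a codec sequence. Write $\mathcal{R}^\CR=\bigcup_{K\in\N}R_K$, where $R_K=\{x: d(x,D_n(E_n(x)))\le K n^{-\CR}\ \forall n\}$. Choose $\CR'\in(\CR^\sharp,\CR)$ and apply (ii) with $\CR'$ and $\eps=Kn^{-\CR}$, which is less than $\eps_0$ for large $n$. This gives $\mu^\ast(R_K)\le 2^{n-cK^{-1/\CR'}n^{\CR/\CR'}}$ for every large $n$. Since $\CR/\CR'>1$, the exponent tends to $-\infty$, so $\mu^\ast(R_K)=0$, and countable subadditivity yields $\mu^\ast(\mathcal{R}^\CR)=0$. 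The key step is using a strictly smaller exponent $\CR'$ so that the superlinear term beats $n$; this is needed because (ii) only yields a bound of order $2^{n-cn}$ if one uses $\CR$ itself. For the union over all $\CR$, note that $\mathcal{R}^\CR$ decreases as $\CR$ increases. Hence the union equals $\bigcup_{m}\mathcal{R}^{\CR^\sharp+1/m}$ (with $\CR^\sharp=0$ allowed if $\lmd=\infty$), which is a countable union of null sets. If $\lmd(\SC)=0$, then $\CR^\sharp=\infty$ and (ii)–(iii) are vacuous.
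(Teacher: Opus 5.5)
Your proposal is correct. Parts (ii) and (iii) follow the paper's proof essentially verbatim. In (ii) you cover the good set by the $2^n$ balls $\ball(D_n(w),\eps)$ and use subadditivity of $\mu^\ast$. In (iii) you decompose into the sets where $d(x,D_n(E_n(x)))\le K n^{-\CR}$, apply (ii) with an intermediate exponent in $(\CR^\sharp,\CR)$ so that $n^{\CR/\CR'}$ beats $n$, and use $\mathcal{R}^{\CR}\subset\mathcal{R}^{s}$ for $\CR>s$ to reduce to a countable union.

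For part (i) your route differs somewhat from the paper's.

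\emph{The paper's route.} For each $n$ it takes a codec whose distortion is within $2^{-n}$ of the optimal $n$-bit distortion $\delta_n := \inf_{(E_n,D_n)} \delta(E_n,D_n)$. It then invokes \cref{lem:CompressionRateVSMinkowskiDimension}, i.e.\ $\CR^\ast(\SC,\XX) = (\umd(\SC))^{-1}$, to get for each $\CR<\CR^\ast$ some codec sequence of rate $\tau\in(\CR,\CR^\ast]$, and compares against it.

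\emph{Your route.} You build the codecs directly from near-minimal internal covering radii $\eps_n$ and read off the rate from the definition of $\umd$. This avoids the lemma entirely; in effect you inline the ``$\geq$'' half of its proof, but with a single $\CR$-independent construction.

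\emph{Trade-off.} The paper's argument is shorter given the lemma and never has to look at how good codecs are built. Yours is self-contained and produces an explicit ultimate codec.

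\emph{Edge case.} If $\eps_n = 0$, which happens exactly when $\# \SC \le 2^n$, there is no admissible $\eps\in(0,2\eps_n]$ to pick. In that case either take the net to be $\SC$ itself (distortion $0$), or bound the distortion by $\eps_n + 2^{-n}$ instead of $2\eps_n$. The rest of your argument is unaffected.
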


\begin{remark}\label{rem:SharpPhaseTransition}
  The most interesting case of the above theorem occurs if the power-exponential Minkowski dimension
  of $\SC$ exists, meaning that $\CR^\ast = \CR^{\sharp}$.
  In this case, one gets a \emph{sharp phase transition}, meaning that every rate
  \emph{below} $\CR^\ast$ is achievable on all of $\SC$, but every rate above $\CR^\ast$
  can only be achieved on a $\mu$-null set.
  
  It should be noted that the theorem makes no claim for the case $\CR = \CR^\ast$.
\end{remark}

\begin{proof}
  See \cref{sec:proof_of_thm:RateDistortionTheoryPhaseTransition}.
  We remark that the proof is essentially the same as in \cite{grohsPhaseTransitionsRate2023},
  so no real originality is claimed.
  The (relatively short) proof is provided to make the paper self-contained.
  The differences of our result compared to the one in \cite{grohsPhaseTransitionsRate2023}
  are the following:
  \begin{itemize}
      \item We handle the setting of \emph{quasi}-metric spaces instead of Banach spaces;

      \item We clearly distinguish between the upper and the lower Minkowski dimension.
            In \cite{grohsPhaseTransitionsRate2023}, a critical measure is required
            to ``match'' the rate $(\CR^{\ast})^{-1}= \umd(\SC)$. 
            In view of 
            \cref{cor:SmallBallCondition_Dominated_by_LowerMinkowskiDim},
            this implies that
            $\umd(\SC)\le \lmd(\SC)$, so that $\md(\SC)$ exists.
            For an example of a compact convex set for which the upper and lower
            generalized Minkowski dimension differ, see \cref{sec:example_lmd<umd}.

      \item We show that one ``ultimate'' codec sequence $\mathscr{S}^{\ast}$ in (i)
            can be chosen that attains any rate $\CR\in [0,\CR^{\ast})$.
            In contrast, in \cite{grohsPhaseTransitionsRate2023},
            a potentially different codec sequence is used for every $\CR\in [0,\CR^{\ast})$.
            \qedhere
  \end{itemize}
\end{proof}

\subsubsection{Critical measures and (non-linear) approximation}
\label{sub:IntroNonlinearApproximation}

Given a subset $\SC$ of a (quasi)-Banach space $\XX$, approximation theory is concerned
with approximating the elements of $\SC$ via "elementary objects" $g \in \CalA_n$
of a given complexity $n \in \N$,
and with determining the decay of the approximation error as $n \to \infty$.
Examples include:
\begin{enumerate}[label=(\roman*)]
  \item Approximating by polynomials (or trigonometric polynomials) of degree at most $n \in \N$.
        This is a case of \emph{linear} approximation, in the sense that
        the sets $\CalA_n$, $n \in \N$ in this case form a nested family of subspaces.
        A fundamental result related to this form of approximation is \emph{Jackson's inequality}, which
        states that $2\pi \Z^d$-periodic functions $f \in C^r_{2 \pi} (\R^d)$
        can be approximated by trigonometric polynomials
        up to error $\CalO(n^{-r})$ in $L^\infty$; see for instance
        \cite[Chapter~6, Section~3, Theorem~6]{LorentzApproximationOfFunctions}.
        Here, it should be noted that the space of trigonometric polynomials of degree at most $n$
        in $d$ variables has dimension $\CalO(n^d)$,
        meaning that $\CalO(n^{-r}) = \CalO\bigl( [\dim(\CalA_n)]^{-r/d}]\bigr)$.

  \item Approximating by neural networks of "complexity" at most $n \in \N$.
        Here, the "complexity" could refer to several quantities, such as
        the width and/or the depth of the networks,
        the number of non-zero network weights,
        but also to the magnitude of the network coefficients.

        This is a case of \emph{non-linear approximation}, since the "neural network sets"
        $\CalA_n = \mathcal{NN}_n$ are not subspaces;
        see \cite{PetersenEtAlTopologicalPropertiesOfNNSets}.
        For quantitative approximation results using neural networks, we refer to
        \cite{MhaskarNNsForOptimalApproximation,YarotskyErrorBounds,YarotskyPhaseDiagram,
        PetersenVoigtlaenderOptimalApproximationUsingNNs,
        ShenYangZhangOptimalApproximationRateWidthDepth,schneiderNonlocalTechniquesAnalysis2026,
        NNApproximationSobolev}
        and the references therein.

  \item Approximating by linear combinations of at most $n$ elements
        of a given system ${\CalG \!=\! (g_i)_{i \in I}}$.
        This is also called \emph{non-linear $n$-term approximation} in the literature;
        see, e.g., \cite{DahmenCompressedSensing}.
        Hence, in this case we would essentially have
        \begin{equation}
          \CalA_n
          = \Sigma_n (\CalG)
          := \bigcup_{I_0 \subset I, \, \# I_0 \leq n}
               \mathrm{span} \bigl\{ g_i \,\,:\,\, i \in I_0 \bigr\}
          .
          \label{eq:NonlinearNTermApproximation}
        \end{equation}
        This is again a case of non-linear approximation (i.e., the sets $\Sigma_n (\CalG)$
        do not form subspaces), which is particularly well studied
        due to its relation with compressive sensing
        and the design of "optimally adapted dictionaries"; see \cite{DahmenCompressedSensing}.
\end{enumerate}

In many such cases, one chooses the sets $\CalA_n$ such that each element
$g \in \CalA_n$ can be described by $\CalO(n)$ real parameters
(plus potentially $\CalO(n)$ discrete parameters).
This is for instance the case for $\CalA_n = \Sigma_n (\CalG)$
and for $\CalA_n = \mathcal{NN}_n$, if one requires that networks in $\mathcal{NN}_n$
have at most $n$ non-zero weights.
For $\CalA_n = \Sigma_n (\CalG)$, the $n$ discrete parameters specify the set $I_0:= \{i_1,\dots,i_n\}$
in \Cref{eq:NonlinearNTermApproximation}, and the $n$ real parameters specify the coefficients
$c_i$ of the linear combination $\sum_{i\in I_0} c_i \, g_i$.
For $\CalA_n := \mathcal{NN}_n$, the $3n$ discrete parameters describe the ``positions''
of the non-zero weights (which two neurons in which layer are connected),
and the $n$ real parameters encode the values of the $n$ non-zero network weights.

In such a case, one could expect for ``nice'' sets $\SC$ that the (optimal) approximation rate
scales like
\begin{equation}
  \sup_{x \in \SC} \, \dist(x, \CalA_n)
  \asymp n^{-\AR^\ast}
  ,
  \label{eq:ApproximationWithFamiliesTargetStatement}
\end{equation}
where $\AR^\ast = \AR^\ast (\SC, \XX)$ is as in \Cref{eq:OptimalCompressionRate}.
In the remainder of this section, we will discuss a wide class of "well-behaved"
families of sets $\CalA_n$ where this is indeed true (up to fine-print).
Crucially, however, such a statement cannot be expected to hold for \emph{completely general}
families of sets $\CalA_n$ in which $\CalA_n$ can be described by $n$ real parameters,
because of phenomena such as space-filling curves that allow to encode $n$ real parameters
using just one real parameter.
We briefly discuss several examples that illustrate what can go wrong,
even in seemingly natural cases.
\begin{itemize}
  \item For non-linear $n$-term approximation, if one chooses $\CalG = (g_\ell)_{\ell \in \N}$
        to be a countable \emph{dense} subset of the "ambient space" $\XX$,
        it is easy to see that $\dist(x, \CalA_n) \leq \dist(x, \CalA_1) = 0$ for all $x \in \XX$,
        meaning that a statement as in \Cref{eq:ApproximationWithFamiliesTargetStatement}
        does not hold.

        The pathology here is that one needs to use elements "far out" in the dictionary
        to approximate well.
        We will rule out such a behavior by imposing a condition of "polynomial search depth";
        see \Cref{prop:PolynomiallyBoundedNTermApproximationControlledComplexity} below.

    \item For approximation by neural networks,
          \cite[Theorem 4]{MaiorovPinkusLowerBoundsForMLPApproximation}
          shows, based on the Kol\-mo\-go\-rov--Arnold representation theorem,
          that there exists a (pathological but) smooth, sigmoidal activation function
          such that \emph{every} continuous function on $[0,1]^d$
          can be approximated up to \emph{arbitrarily small error} using neural networks
          \emph{of a fixed size}.
          In this case, the issue is that the weights and biases used for the approximation
          grow in an uncontrolled fashion.

  \item Even for the widely used ReLU activation function,
        a statement as in \Cref{eq:ApproximationWithFamiliesTargetStatement}
        fails, if one does not impose further assumptions.
        Indeed, the phenomenon of ``superconvergence'' for ReLU networks, discovered in \cite{pmlr-v75-yarotsky18a}
        and further refined in
        \cite{YarotskyPhaseDiagram,ShenYangZhangOptimalApproximationRateWidthDepth},
        states, roughly speaking, 
        that one can approximate functions $f \in C^r ([0,1]^d)$ up to error $\CalO(n^{-2r/d})$
        (instead of the expected $\CalO(n^{-r/d})$) using ReLU networks with $n$ non-zero weights,
        if one neither restricts the depth nor the weight magnitude of the approximating networks.
        It was recently shown in \cite[Lemma~3.4]{OuBoelcskeiCoveringNumbersForDeepReLUNNs}
        for the case of Lipschitz functions in dimension one
        that this even holds for networks with all weights in $[-1,1]$,
        meaning that the phenomenon is driven by the network depth,
        rather than the magnitude of the network weights.

        In this case, the issue is that the $n$ real parameters in $[-1,1]$
        defining the network have to be discretized "extremely finely" in order
        for the resulting discretized network to be close to the original one.
\end{itemize}

Being properly warned of what can go wrong, we now introduce a large class of
approximating families that are "well-behaved".

\begin{definition}\label{def:ControlledComplexity}
  Let $(\XX, d)$ be a quasi-metric space, and for each $n \in \N$
  let $\emptyset \neq \CalA_n \subset \XX$.
  We say that the family $(\CalA_n)_{n \in \N}$ is \emph{of controlled complexity in $\XX$}, if
  for every $\alpha > 0$, every $x \in \X$ and every $R > 0$,
  there exist $n_0 \in \N$ and a function $\fcc : \N \to [0,\infty)$ such that
  \begin{equation}
    \forall \, \delta > 0 : \qquad \sup_{n \in \N} \, \frac{\fcc(n)}{n^{1 + \delta}} < \infty
    \label{eq:ZetaGrowthCondition}
  \end{equation}
  and
  \begin{equation}
    \cn\bigl(\ball(x,R) \cap \CalA_n, \,\, n^{-\alpha}\bigr)
    \leq \exp(\fcc(n))
    \qquad \forall \, n \in \N_{\geq n_0}
    .
    \label{eq:ControlledComplexity}
  \end{equation}
\end{definition}

\begin{remark*}
  For showing that a given family is of controlled complexity, one usually chooses
  \[
    \fcc (n) = C \cdot n \cdot (\ln(e n))^\mu
  \]
  for some $\mu \ge 0$.
  It is straightforward to see that this satisfies Condition \eqref{eq:ZetaGrowthCondition}.
\end{remark*}

We now show that for families of controlled complexity,
our target statement \eqref{eq:ApproximationWithFamiliesTargetStatement} essentially holds,
at least for the case where the power-exponential Minkowski dimension of $\SC$
exists and satisfies $\md(\SC) \in (0,\infty)$.

\begin{theorem}\label{thm:PhaseTransitionForFamiliesWithControlledComplexity}
  Let $(\XX, d)$ be a quasi-metric space and let $\emptyset \neq \SC \subset \XX$ be bounded.
  Then, writing $\AR^\ast := (\umd(\SC))^{-1}$, the following holds:
  \begin{enumerate}[label=(\roman*)]
    \item (A suitable family of controlled complexity achieves every rate below $\AR^\ast$)
          There exists a family $(\CalA_n^\ast)_{n \in \N}$ of subsets
          $\emptyset \neq \CalA_n^\ast \subset \XX$
          of controlled complexity in $\XX$ such that for every $0 \leq \AR < \AR^\ast$,
          there exists $C(\AR) > 0$ satisfying
          \[
            \sup_{x \in \SC} \,\, \dist(x, \CalA_n^\ast) \leq C(\AR) \cdot n^{-\AR}
            \qquad \forall \, n \in \N.
          \]

    \item (No family of controlled complexity can achieve a rate above $\AR^\ast$
          uniformly over $\SC$)
          Let $(\CalA_n)_{n \in \N}$ be a family of controlled complexity in $\XX$.
          If
          \begin{equation}
            \sup_{x \in \SC} \,\, \dist(x, \CalA_n) \leq C(\AR) \cdot n^{-\AR}
            \qquad \forall \, n \in \N
            \label{eq:FamilyApproximationRate}
          \end{equation}
          holds for some $\AR \geq 0$ and some constant $C(\AR) > 0$, then $\AR \leq \AR^\ast$.
  \end{enumerate}
  Now, set $\AR^\sharp := \bigl( \lmd (\SC) \bigr)^{-1}$ and
  assume that $\mu$ is a probability measure on $\XX$ that is critical for $\SC$.
  Then the following holds:
  \begin{enumerate}[label=(\roman*),resume]
    \item (Any rate above $\AR^\sharp$ is only achievable on a $\mu$-null set)
          For every $\AR > \AR^\sharp$ and every family $(\CalA_n)_{n \in \N}$
          of subsets $\emptyset \neq \CalA_n \subset \XX$ of controlled complexity in $\XX$,
          the set
          \[
            \CalR^\AR \bigl( (\CalA_n)_{n \in \N}, \SC, \XX\bigr)
            := \Bigl\{
                 x \in \SC
                 \,\,:\,\,
                 \sup_{n \in \N}
                 \bigl(
                   n^\AR \cdot \dist (x, \CalA_n)
                 \bigr)
                 < \infty
               \Bigr\}
          \]
          is a $\mu$-null set, i.e.,
          $\mu^\ast \bigl( \CalR^\AR \bigl( (\CalA_n)_{n \in \N}, \SC, \XX\bigr) \bigr) = 0$.
          In fact, we even have
          \[
            \mu^\ast
            \biggl(\,\,
              \bigcup_{\AR \in (\AR^\sharp, \infty)}
                \CalR^\AR \bigl( (\CalA_n)_{n \in \N}, \SC, \XX\bigr)
            \biggr)
            = 0
            .
          \]
  \end{enumerate}
\end{theorem}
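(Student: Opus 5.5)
The plan is to reduce all three parts to the compression results, \Cref{lem:CompressionRateVSMinkowskiDimension} and \Cref{thm:RateDistortionTheoryPhaseTransition}. The bridge is simple: if $(\CalA_n)$ has controlled complexity, an approximant in $\CalA_n$ can be replaced by a nearby element of a finite net of $\ball(x_0,R)\cap\CalA_n$, and then encoded by its index using about $\fcc(n)/\ln 2$ bits. Throughout, fix $x_0\in\SC$ and $R$ large, e.g.\ $R := k(\diam\SC + 1)$, so that every near-best approximant of any $x\in\SC$ lies in $\ball(x_0,R)$. This uses boundedness of $\SC$ and the quasi-triangle inequality.

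For (i), take the ultimate codec $\mathscr S^\ast$ from \Cref{thm:RateDistortionTheoryPhaseTransition}(i) and set $\CalA_n^\ast := D_n^\ast(\{0,1\}^n)$. Then $\#\CalA_n^\ast\le 2^n$, so \eqref{eq:ControlledComplexity} holds with $\fcc(n)=n\ln 2$ for every $\alpha,x,R$. Moreover $\dist(x,\CalA_n^\ast)\le d(x,D_n^\ast(E_n^\ast(x)))\le C(\AR)n^{-\AR}$ for every $\AR<\AR^\ast$.

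For (ii) and (iii), fix $\AR>0$ and $\alpha>\AR$. Controlled complexity gives a net $N_n\subset\XX$ of $\ball(x_0,R)\cap\CalA_n$ at scale $n^{-\alpha}$ with $\#N_n\le e^{\fcc(n)}$. If $x\in\SC$ satisfies $\dist(x,\CalA_n)\le Kn^{-\AR}$, pick $g\in\CalA_n$ with $d(x,g)\le 2Kn^{-\AR}$; this $g$ lies in $\ball(x_0,R)$ for $n$ large. Then pick $y\in N_n$ with $d(g,y)\le n^{-\alpha}$, so that $d(x,y)\le k(2K+1)n^{-\AR}$. Let $m(n):=\lceil \fcc(n)/\ln 2\rceil+1$. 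We obtain an $m(n)$-bit codec for $\SC$ in which $E$ picks such a $y$ (or an arbitrary index if none exists) and $D$ returns the listed net point. In (ii) this gives distortion at most $C' n^{-\AR}$. By \eqref{eq:ZetaGrowthCondition}, $m(n)\le C_\delta n^{1+\delta}$, so the distortion is $\lesssim m^{-\AR/(1+\delta)}$ along the subsequence $m=m(n)$.

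The main obstacle is passing from this subsequence to all $m\in\N$, which requires filling the gaps. I would fill them by monotone padding: for general $m$, use the codec for the largest $n$ with $m(n)\le m$, padding with zeros. Making this work needs $m(n)$ to be chosen non-decreasing, replacing $\fcc$ by $\max_{j\le n}\fcc(j)$, which still satisfies \eqref{eq:ZetaGrowthCondition}. It also needs consecutive values to be comparable, which holds since $m(n+1)\le C_\delta (n+1)^{1+\delta}$ and $n\ge c\,m^{1/(1+\delta)}$. With this, a codec sequence achieving rate $\AR/(1+\delta)$ exists. Hence $\AR/(1+\delta)\le\AR^\ast$ by \Cref{lem:CompressionRateVSMinkowskiDimension}, and letting $\delta\to0$ gives (ii).

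For (iii), a cleaner route avoids building codec sequences and instead uses \Cref{thm:RateDistortionTheoryPhaseTransition}(ii) directly. Given $\AR>\AR^\sharp$, choose $\AR'$ with $\AR^\sharp<\AR'<\AR$ and $\delta>0$ so small that $\AR/(1+\delta)>\AR'$. Write $\CalR^\AR=\bigcup_{K\in\N}\CalR^\AR_K$, where $\CalR^\AR_K$ is the set of $x$ with $\sup_n n^\AR\dist(x,\CalA_n)\le K$. For fixed $K$ and large $n$, the $m(n)$-bit codec above achieves error $\eps_n:=k(2K+1)n^{-\AR}$ on all of $\CalR^\AR_K$. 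So (ii) of \Cref{thm:RateDistortionTheoryPhaseTransition}, applied with rate $\AR'$, gives
\[
  \mu^\ast(\CalR^\AR_K)\le 2^{m(n)-c\,\eps_n^{-1/\AR'}} .
\]
Here $m(n)\lesssim n^{1+\delta}$ while $\eps_n^{-1/\AR'}\gtrsim n^{\AR/\AR'}$ with $\AR/\AR'>1+\delta$, so the bound tends to $0$ as $n\to\infty$. Countable subadditivity of $\mu^\ast$ over $K$ then finishes the proof. Since $\CalR^\AR$ is decreasing in $\AR$, the union over $\AR>\AR^\sharp$ equals the countable union over $\AR=\AR^\sharp+1/j$, which yields the final claim. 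The only care needed is the measurability-free use of $\mu^\ast$ and taking $R$ large, as fixed at the start.
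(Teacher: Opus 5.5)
Your proposal is correct. Part (i) is exactly the paper's argument, but (ii) and (iii) are organized differently; in effect, you and the paper put the gap-filling step in opposite places.

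\textbf{Part (ii).} The paper never builds codecs here. It takes $\alpha=\AR$ in the controlled-complexity condition and, for small $\eps$, picks $n$ with $k(1+2C(\AR))n^{-\AR}\le \eps/(2k)\le k(1+2C(\AR))(n-1)^{-\AR}$. It then observes that the $n^{-\AR}$-net of $\ball(x_0,R)\cap\CalA_n$ is an external $\eps/(2k)$-net of $\SC$. Hence $\cn(\SC,\eps)\le\exp(C_\delta n^{1+\delta})\le\exp(C'\eps^{-(1+\delta)/\AR})$, and so $\umd(\SC)\le(1+\delta)/\AR$ directly. Because $\eps$ is a continuous parameter matched to $n$, no gap-filling is needed.

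Your detour through codecs and \Cref{lem:CompressionRateVSMinkowskiDimension} is valid, but it forces the padding step. As literally stated, that step has a small hole: ``the largest $n$ with $m(n)\le m$'' need not exist when $\fcc$ is bounded. For example, if every $\CalA_n$ is a singleton, then $\fcc\equiv 0$ is admissible. It is simpler to take $n(m)$ maximal with $C''n^{1+\delta}\le m$. This always exists and makes your monotonization of $\fcc$ unnecessary.

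\textbf{Part (iii).} Here the paper does exactly this gap-filling. It builds one codec sequence $\mathscr{S}$ with $n(m)$ maximal subject to $n^{1+\delta}\le\frac{\ln 2}{C_\delta}m$. It then proves $\CalR^\AR((\CalA_n)_{n},\SC,\XX)\subset\CalR^\sigma(\mathscr{S},\SC,\XX)$ for $\sigma=\AR/(1+\delta)$ and quotes part (iii) of \Cref{thm:RateDistortionTheoryPhaseTransition}.

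You instead split $\CalR^\AR$ by the constant $K$ and apply the quantitative bound of part (ii) of that theorem along the sparse bit budgets $m(n)$. This effectively inlines the proof of part (iii) there and avoids defining codecs for every $m$.

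\textbf{What each route buys.} The paper's version gives a slightly more structural statement: a single codec sequence attains rate $\sigma$ on all of $\CalR^\AR$, so the approximation phase transition is literally a corollary of the coding one. Yours is shorter and needs only a subsequence of bit budgets.
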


\begin{remark*}
  The most interesting case of the theorem occurs if $\md(\SC) \in (0,\infty)$ exists.
  In that case, we get a true phase transition, since then $\AR^\ast = \AR^{\sharp}$.
\end{remark*}

\begin{proof}
  See \Cref{sec:PhaseTransitionForControlledComplexityFamiliesProof}.
\end{proof}

In view of the above result, the following definition makes sense.

\begin{definition}\label{def:OptimalFamilies}
  Let $(\XX, d)$ be a quasi-metric space, let $\emptyset \neq \SC \subset \XX$ be bounded,
  and set $\AR^\ast := (\umd(\SC))^{-1}$.
  A family $(\CalA_n)_{n \in \N}$ of subsets $\emptyset \neq \CalA_n \subset \XX$ of controlled complexity
  is called \emph{optimal for $\SC$ in the class of families of controlled complexity in $\XX$},
  if \eqref{eq:FamilyApproximationRate} holds for all $0 \leq \AR < \AR^\ast$.
\end{definition}

To indicate the generality of the notion of a family of controlled complexity
(see \Cref{def:ControlledComplexity}) and thus the generality and utility of
\Cref{thm:PhaseTransitionForFamiliesWithControlledComplexity}, in the following we show
that the family $\CalA_n = \CalNN_n (g, \R^d)$ corresponding to neural network approximation
(with suitable restrictions on the number of weights, the magnitude of the individual weights,
and the network depth)
and the family $\CalA_n = \Sigma_n (\CalG, P)$ corresponding to non-linear $n$-term
approximation using the dictionary $\CalG$ (with ``polynomially bounded search depth'')
are both of controlled complexity.
Furthermore, for each case, we identify natural sets $\SC$ for which the given family
is optimal in the sense of \cref{def:OptimalFamilies}.

We start with the case of neural networks.

\begin{definition}\label{def:NeuralNetworkSet}
  The weights of a neural network are a tuple of the form
  \[
    \CalW = \bigl( (A_1, b_1), \dots, (A_L, b_L)\bigr)
  \]
  of matrices $A_\ell \in \R^{N_\ell \times N_{\ell-1}}$ and bias vectors $b_\ell \in \R^{N_\ell}$,
  where $N_\ell \in \N$ specifies the width of the $\ell$-th layer of the network.
  We define the \emph{input- and output dimensions} of $\CalW$ as
  $d_{\mathrm{in}}(\CalW) := N_0$ and $d_{\mathrm{out}}(\CalW) := N_L$.

  The associated \emph{neural network function} using the ReLU activation function
  $\varrho : \R \to \R$, $\varrho(x) = \max \{ 0, x \}$ is given by
  \[
    R_\varrho (\CalW) : \quad \R^{d_{\mathrm{in}}(\CalW)} \to \R^{d_{\mathrm{out}} (\CalW)},
    \quad
    R_\varrho (\CalW)
    := T_L \circ (\varrho \circ T_{L-1}) \circ \cdots \circ (\varrho \circ T_1)
    ,
  \]
  where $T_\ell \, x = A_\ell \, x + b_\ell$, and where $\varrho$ acts componentwise on vectors,
  i.e.,
  \[
    \varrho \bigl( (x_1,\dots,x_m)\bigr)
    = \bigl(\varrho(x_1), \dots, \varrho(x_m)\bigr)
    .
  \]
  In this paper, we measure the ``complexity'' of
  $\CalW$ in terms of the \emph{number of non-zero weights}
  \[
      \norm{\CalW}_{\ell^0}
    := \sum_{j=1}^{L} \bigl(\| A_j \|_{\ell^0} + \| b_j \|_{\ell^0}\bigr)
    ,
  \]
  in terms of the \emph{network depth} $L(\CalW) := L$,
  and in terms of the \emph{weight magnitude}
  \[
      \| \CalW \|_{\ell^{\infty}}
    := \max_{1 \leq j \leq L} \bigl(\| A_j \|_{\ell^\infty} + \| b_j \|_{\ell^\infty}\bigr)
    .
  \]
  Here, for a matrix (or vector) $A$, the quantity $\| A \|_{\ell^0}$
  counts the total number of non-zero entries of $A$,
  and $\| A \|_{\ell^\infty} := \max_{i,j} |A_{i,j}|$.

  Now, given $d,g \in \N$, we define for $n \in \N$ the set of ReLU networks $\R^d \to \R$
  with at most $n$ non-zero weights, weight magnitude "of moderate growth",
  and number of layers growing at most poly-logarithmically with $n$ as
  \[
    \CalNN_n (g, \R^d)
    := \Bigl\{
         R_\varrho (\CalW)
         \,\,:\,\,
         \begin{array}{l}
           \CalW \text{ neural network weights with }
           d_{\mathrm{in}}(\CalW) = d,
           d_{\mathrm{out}}(\CalW) = 1, \\
           \norm{\CalW}_{\ell^0} \leq n, \,\,
           \norm{\CalW}_{\ell^{\infty}} \leq g \cdot n^{g},
           \text{ and }
           L(\CalW) \leq (\ln(e n))^g
         \end{array}
       \Bigr\}
    .
  \]
\end{definition}

\begin{proposition}\label{prop:NNApproximationOfControlledComplexity}
  Let $d,g \in \N$, let $\Omega \subset \R^d$ be measurable and bounded, and let $p \in (0,\infty]$.
  Then the sequence $(\CalA_n)_{n \in \N}$ of the sets $\CalA_n := \CalNN_n (g, \R^d)$
  introduced in \Cref{def:NeuralNetworkSet} and considered as subsets of $\XX:=L^p(\Omega)$,
  is of controlled complexity in $L^p(\Omega)$.
\end{proposition}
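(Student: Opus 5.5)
The plan is to bound the covering numbers of the whole class $\CalNN_n(g,\R^d)$ in $L^p(\Omega)$ directly, with no use of the ball $\ball(x,R)$. A subset of a set of small covering number has a comparably small covering number (in a quasi-metric space, at the cost of replacing $\eps$ by $\eps/k$, or by covering with balls centered in $\XX$ and then adjusting), so this suffices. We fix $\alpha>0$ and aim for
\[
  \cn\bigl(\CalNN_n(g,\R^d),\, n^{-\alpha}\bigr) \le \exp\bigl(C\, n\, (\ln(en))^{\mu}\bigr)
\]
for some constants $C,\mu$ depending on $d,g,\alpha,p,\Omega$. This $\fcc$ satisfies \eqref{eq:ZetaGrowthCondition}. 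We also fix $M>0$ with $\Omega\subset[-M,M]^d$.

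\textbf{Step 1: Lipschitz dependence on the weights.} Consider two weight tuples $\CalW,\CalW'$ with the same architecture (depth $L$, widths $N_\ell$), both with entries bounded by $B:=g n^g$ and differing by at most $\eta$ in every entry. On $[-M,M]^d$ we will show
\[
  \sup_{x}\abs{R_\varrho(\CalW)(x)-R_\varrho(\CalW')(x)} \le \eta\,\bigl(W\max\{B,1\}\bigr)^{L}\,(M+1)\,L ,
\]
where $W:=\max_\ell N_\ell$. The proof is a standard induction over the layers. One bounds the hidden activations by $(W B)^\ell(M+1)$ and uses that $\varrho$ is $1$-Lipschitz.

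Two reductions keep these quantities under control. First, we may delete neurons that have no nonzero incoming or outgoing weight; this does not change the realization, except that biases feed through constantly, which is handled by keeping a neuron iff some weight touching it is nonzero. After this, $W\le 2n+d$ and the number of neurons is $\CalO(n+dL)$. Second, we may assume $L\le(\ln(en))^g$. Together these give $(WB)^L\le \exp\bigl(C(\ln(en))^{g+1}\bigr)$. This is the main obstacle: the depth $L$ enters exponentially, and it is exactly the polylog bound on $L$ that makes the resulting factor only quasi-polynomial in $n$. Choosing
\[
  \eta:= n^{-\alpha}\exp\bigl(-C'(\ln(en))^{g+1}\bigr)
\]
then makes the sup-norm error at most $n^{-\alpha}$ up to a constant. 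Since $\Omega$ is bounded, this gives an $L^p$-error of at most $\abs{\Omega}^{1/p}$ times that, which we absorb into constants.

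\textbf{Step 2: counting.} We enumerate the following data.
\begin{enumerate}[label=(\alph*)]
  \item The depth $L\le(\ln(en))^g$.
  \item The architecture, i.e.\ the widths: at most $(3n+d)^{L}$ choices.
  \item The positions of the at most $n$ nonzero entries among the $\CalO\bigl((n+d)^2L\bigr)$ slots: at most $\bigl(C(n+d)^2L\bigr)^{n}$ choices.
  \item The values, quantized to the grid $\eta\Z\cap[-B,B]$: at most $(2B/\eta+1)^n$ choices.
\end{enumerate}
The total number is at most $\exp\bigl(C n(\ln(en))^{g+1}\bigr)$, since $\ln(B/\eta)=\CalO\bigl((\ln(en))^{g+1}\bigr)$. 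Rounding each weight of an arbitrary network to the grid keeps it of the same form, with magnitude at most $B+\eta$, so Step 1 applies. The resulting finite set is therefore an $n^{-\alpha}$-net, up to a constant factor in the radius, which we remove by adjusting $\eta$.

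Finally, for $n<n_0$ nothing needs to be checked. Taking $\fcc(n):=Cn(\ln(en))^{g+1}$ completes the proof of \Cref{prop:NNApproximationOfControlledComplexity}.
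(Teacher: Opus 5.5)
Your proposal is correct and arrives at the same $\fcc(n)=C\,n\,(\ln(en))^{g+1}$ as the paper, but by a self-contained route rather than through a citation.

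\textbf{The paper's route.} It never proves a perturbation estimate. In \Cref{lem:NNCoveringNumberBounds} it does the following:
\begin{enumerate}[label=(\roman*)]
  \item It absorbs the affine map $T_Q$, sending $[0,1]^d$ onto a cube $Q\supset\Omega$, into the first layer. This doubles the number of nonzero weights and multiplies the weight bound by $4dR$.
  \item It passes from $L^p(\Omega)$ to $L^\infty(Q)$ via $\|f\|_{L^p(\Omega)}\le(2R)^{d/p}\|f\|_{L^\infty(Q)}$.
  \item It cites the covering bound in $C([0,1]^d)$ from \cite[Lemma~6.1]{GrohsVoigtlaenderTheoryToPracticeGapDeepLearning}.
\end{enumerate}
The proposition then follows by inserting $C_0=g\,n^g$ and $L_0=\lfloor(\ln(en))^g\rfloor$.

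\textbf{Your route.} You effectively re-derive the cited bound directly on $[-M,M]^d$. You use three ingredients: Lipschitz dependence of the realization on the weights; pruning of inactive neurons to get width $\CalO(n+d)$, which is needed because $\CalNN_n(g,\R^d)$ places no bound on the widths; and counting of depths, architectures, sparsity patterns and quantized values.

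\textbf{What each buys.} The paper's version is shorter and produces a reusable explicit bound for the general classes $\CalNN_n(C_0,L_0;\R^d)$. Yours avoids the external lemma and the rescaling trick. It also makes visible exactly where the polylogarithmic depth and polynomial weight bounds enter, namely through $(WB)^L\le\exp\bigl(C(\ln(en))^{g+1}\bigr)$.

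\textbf{Two harmless slips.} Both are absorbed by the free constant $C'$ in your choice of $\eta$.
\begin{enumerate}[label=(\roman*)]
  \item Because of the biases, the activation bound should read something like $(2WB)^\ell(M+1)$ rather than $(WB)^\ell(M+1)$.
  \item Your quantized networks need not lie in $\CalNN_n(g,\R^d)$, since $\|A_j\|_{\ell^\infty}+\|b_j\|_{\ell^\infty}$ can slightly exceed $g\,n^g$ after rounding. So they form an external net, and passing to an internal net costs a factor $2k$ in the radius by \cref{lem:InternalVSExternalCoveringNumbers}, not $k$.
\end{enumerate}
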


\begin{proof}
  See \Cref{sec:NNControlledComplexityProofs}.
\end{proof}

The neural network sets defined above are optimal for the approximation of functions in
$C^r ([0,1]^d)$, as shown in the following result.
We remark that very similar observations are already contained in
\cite{ElbraechterDNNApproximationTheory} and \cite{grohsPhaseTransitionsRate2023}.
We state the result here mainly to show that it can be easily treated by our methods,
and since we do get a phase transition in $L^p$, which is not part of the results shown in
\cite{ElbraechterDNNApproximationTheory}.
A similar phase transition is contained in \cite{grohsPhaseTransitionsRate2023}
for the case when the approximation error is measured in $L^2$,
whereas we can cover the case of general $p \in [1,\infty]$.

\begin{proposition}\label{prop:NNsOptimalForCr}
  Let $d \in \N$, $p \in [1,\infty]$, $m \in \N_0$ and $\alpha \in (0,1]$ and set $r := m + \alpha$.
  We say that $f : [0,1]^d \to \R$ belongs to $C^{m,\alpha}([0,1]^d)$
  if $f$ is $m$ times continuously differentiable with
  \[
    \| f \|_{C^{m,\alpha}}
    := \max
       \Bigl\{ 
         \| f \|_{L^\infty},
         \quad
         \max_{\beta \in \N_0^d, |\beta| = m} \,\,
           \sup_{x,y \in [0,1]^d, x \neq y} \,\,
             \frac{|\partial^\beta f (x) - \partial^\beta f(y)|}{\norm{x-y}^\alpha}
       \Bigr\}
    < \infty
    .
  \]
  Then, as a subset of $\XX = L^p ([0,1]^d)$, the set
  \[
    \SC
    := \bigl\{ f \in C^{m,\alpha}([0,1]^d) \,\,:\,\, \| f \|_{C^{m,\alpha}} \leq 1 \bigr\}
  \]
  satisfies $\md(\SC) = \frac{d}{r}$,
  there exists a Borel probability measure on $\XX = L^p ([0,1]^d)$
  that is critical for $\SC$,
  and the family of neural networks
  \(
    (\CalA_n)_{n \in \N} = \bigl(\CalNN_n (3,\R^d)\bigr)_{n \in \N}
  \)
  defined in \Cref{prop:NNApproximationOfControlledComplexity} (with $g = 3$)
  is optimal for $\SC$ in the class of families of controlled complexity in $L^p([0,1]^d)$.
  In particular, a phase transition as in
  \Cref{thm:PhaseTransitionForFamiliesWithControlledComplexity} occurs.
\end{proposition}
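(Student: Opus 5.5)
Three claims need proof: $\md(\SC)=d/r$, the existence of a critical measure, and the optimality of $(\CalNN_n(3,\R^d))_{n\in\N}$. The first two reduce to known entropy estimates combined with \cref{thm:MainResultCriticalMeasureExistence}. The third requires an explicit ReLU approximation result with controlled weights and depth.

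\emph{Step 1 (Minkowski dimension).} I will use the classical Kolmogorov--Tikhomirov entropy bounds for Hölder balls. In $L^\infty([0,1]^d)$ one has $\log \cn(\SC,\eps) \lesssim \eps^{-d/r}$. For the lower bound, I will use the standard bump construction. Place disjoint bumps $\eps^{1/r}\cdot$ (rescaled $C^\infty$ bump) on a grid of $\asymp \eps^{-d/r}$ cubes of side $\eps^{1/r}$, with signs $\pm$. Each signed sum has $C^{m,\alpha}$-norm at most a constant, so after scaling it lies in $\SC$. Two sign patterns differing in a fixed fraction of cubes are $\gtrsim \eps$ apart in $L^1([0,1]^d)$. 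A Varshamov--Gilbert subset of sign patterns then gives $\log \cn(\SC,c\eps)\gtrsim \eps^{-d/r}$ in $L^1$. Since $\|\cdot\|_{L^1}\le\|\cdot\|_{L^p}\le\|\cdot\|_{L^\infty}$ on $[0,1]^d$, both bounds transfer to every $p\in[1,\infty]$. Hence $\log_2\log_2\cn(\SC,\eps)/\log_2(1/\eps)\to d/r$, and $\md(\SC)=d/r$ exists.

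\emph{Step 2 (critical measure).} $\SC$ is convex and bounded in the Banach space $L^p$. It is also closed in $L^p$: an $L^p$-limit of functions in $\SC$ has a subsequence converging uniformly by Arzelà--Ascoli, since the derivatives up to order $m$ are uniformly bounded and equicontinuous. Both the sup bound and the Hölder bound on the $m$-th derivatives pass to the limit, so the limit lies in $\SC$. Thus $\SC$ is complete, and \Cref{lem:ConvexSetsAreQuasiConvex} gives {\ConditionZetaText}. \Cref{thm:MainResultCriticalMeasureExistence} then yields a critical Borel probability measure on $L^p$.

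\emph{Step 3 (optimality of networks).} \Cref{prop:NNApproximationOfControlledComplexity} shows that the family has controlled complexity. By \Cref{def:OptimalFamilies} it remains to show that for every $\AR<r/d$,
\[
  \sup_{f\in\SC}\dist_{L^\infty}\bigl(f,\CalNN_n(3,\R^d)\bigr)\le C\,n^{-\AR}.
\]
The $L^\infty$ bound suffices because $\|\cdot\|_{L^p}\le\|\cdot\|_{L^\infty}$ on $[0,1]^d$. I will invoke the known ReLU approximation theorem for Hölder functions (Yarotsky; Petersen--Voigtlaender; see also \cite{ElbraechterDNNApproximationTheory}). Its construction is a local Taylor polynomial on a grid of mesh $N^{-1}$, glued with a ReLU partition of unity, with products and monomials realized by Yarotsky's squaring networks to accuracy $\eps$. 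This gives error $\lesssim N^{-r}+\eps$ using $\lesssim N^d\log(1/\eps)$ weights and depth $\lesssim\log(1/\eps)$. The weights are bounded by a polynomial in $N$ and $1/\eps$. Choosing $\eps=N^{-r}$ and $N\asymp (n/\log n)^{1/d}$ gives error $\lesssim (n/\log n)^{-r/d}\le C_\AR n^{-\AR}$. The resulting networks have depth $\lesssim\ln n\le(\ln(en))^3$ and magnitudes $\lesssim n^{c}$, which fit within the bound $3n^3$ for large $n$. Small $n$ are absorbed into the constant because $\SC$ is bounded.

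The main obstacle is Step 3, specifically the bookkeeping showing that the weight magnitude stays below $3n^3$. The partition-of-unity functions use slopes $\sim N\le n$. The squaring networks have weights $O(1)$. The final affine combination uses Taylor coefficients, which are bounded by $1$, times constants. So the magnitudes should be $O(n)$. I would first try to verify this directly for the cited construction. Failing that, I would rescale the weights across layers, using the positive homogeneity of ReLU to distribute large factors across depth.

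Once Step 3 is established, \Cref{thm:PhaseTransitionForFamiliesWithControlledComplexity}, together with Steps 1--2 and $\md(\SC)=d/r$, gives the phase transition.
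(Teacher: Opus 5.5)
Your proposal is correct. It differs from the paper's proof mainly in where the upper entropy bound and the network approximation come from.

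\textbf{Minkowski dimension.} The paper never uses Kolmogorov--Tikhomirov. It gets $\lmd(\SC)\ge d/r$ from Clements' $L^1$ entropy bound \cite[Theorem~3]{ClementsEntropiesOfSeveralSetsOfFunctions}; your bump/Varshamov--Gilbert argument is the standard way to prove such a bound. The matching upper bound $\umd(\SC)\le d/r$ is obtained \emph{from the approximation result itself}, via Part~(ii) of \Cref{thm:PhaseTransitionForFamiliesWithControlledComplexity}: a family of controlled complexity achieving every rate $\AR<r/d$ forces $(\umd(\SC))^{-1}\ge r/d$. So in the paper a single approximation theorem gives both the entropy upper bound and optimality. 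Your route imports a separate entropy estimate but makes Step~1 independent of neural networks.

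\textbf{Approximation step.} The paper uses \cite[Theorem~5]{SchimdtHieberNonparametricRegressionUsingReLUNNs} together with \cite[Lemma~F.1]{PetersenVoigtlaenderOptimalApproximationUsingNNs}. That theorem gives networks with \emph{all weights in $[-1,1]$}, depth $\lesssim\tau$, $\lesssim\tau N$ nonzero weights, and error $\lesssim 2^{-\tau}N+N^{-r/d}$. Taking $\tau\approx(\log_2 n)^2$ and $N\approx n/(\log_2 n)^2$ removes your ``main obstacle'' (the weight bound $3n^3$) entirely. The price is depth $(\log_2 n)^2$, which is exactly why $g=3$ is chosen. Your Yarotsky-type route (depth $\lesssim\log n$, weights $O(N)$ from the partition of unity) should also work, but as written it is not tied to a precise citable statement. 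Make sure the construction you cite is formulated for the strictly layered networks of \Cref{def:NeuralNetworkSet}, or account for the conversion; that only costs polylogarithmic factors, which are harmless for rates $\AR<r/d$. The rescaling fallback is not needed.

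\textbf{\ConditionZetaText.} You argue via convexity, closedness in $L^p$ (Arzel\`a--Ascoli) and \Cref{lem:ConvexSetsAreQuasiConvex}. The paper instead views $\SC$ as the unit ball of the Banach space $C^{m,\alpha}$ and transports it into $L^p$. Both are fine.

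\textbf{One implicit step to state.} In several places you use that the given norm bounds all intermediate derivatives: in Arzel\`a--Ascoli, in applying Kolmogorov--Tikhomirov, and for the Taylor coefficients, which are bounded by a constant, not by $1$. The norm only controls $\|f\|_{L^\infty}$ and the top-order H\"older seminorm, so this needs the interpolation inequality from \cite[Lemma~F.1]{PetersenVoigtlaenderOptimalApproximationUsingNNs}, and you should state it.
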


\begin{proof}
  See \Cref{sec:NNsOptimalForCrProof}.
\end{proof}

Next, we consider the setting of non-linear $n$-term approximation with
polynomially bounded search depth.
The precise definition and result are as follows.

\begin{proposition}\label{prop:PolynomiallyBoundedNTermApproximationControlledComplexity}
  Let $\XX$ be a quasi-normed space and $\CalG = \{g_n\}_{n \in \N} \subseteq \XX$ arbitrary.
  Moreover, let $P \in \R[X]$ be a polynomial.
  For $n \in \N$, define
  \[
    \CalA_n
    := \Sigma_n(\CalG, P)
    := \bigcup_{\substack{I_0 \subseteq \{k \in \N: k \leq P(n)\}, \\ \# I_0 \leq n}}
         \linspan \bigl\{ g_k \,\,:\,\, k \in I_0 \bigr\}
  \]
  to be the set of $n$-term linear combinations of $\CalG$ with search depth bounded by $P$.
  Here, we use the convention $\linspan \emptyset = \{0\}$.
  Then the family $(\CalA_n)_{n \in \N}$ is of controlled complexity in $\XX$.
\end{proposition}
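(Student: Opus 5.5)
The plan is to fix $\alpha>0$, $x\in\XX$, $R>0$ and bound the covering number of $\ball(x,R)\cap\CalA_n$ at scale $n^{-\alpha}$ by $\exp(C n\ln(en))$ for $n\ge n_0$. Then $\fcc(n):=Cn\ln(en)$ satisfies \eqref{eq:ZetaGrowthCondition}, since $\ln(en)=\mathcal{O}(n^\delta)$ for every $\delta>0$. By the Aoki--Rolewicz theorem (\Cref{thm:Aoki_Rolewicz}) we may use an equivalent $p$-norm $\norm{\cdot}_\ast$. Equivalence of quasi-norms only changes the radii by constant factors, and a factor in the covering radius is absorbed by slightly increasing $\alpha$. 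So it suffices to cover $\ball_\ast(x,R')\cap\CalA_n$ at scale $c\,n^{-\alpha}$ with respect to $\norm{\cdot}_\ast$.

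First, $\CalA_n$ is a union of at most $\binom{\lfloor P(n)\rfloor}{\le n}\le (P(n)+1)^{n}\le \exp(C' n\ln(en))$ subspaces $V_{I_0}=\linspan\{g_k:k\in I_0\}$, each of dimension at most $n$. By subadditivity of covering numbers under finite unions, it is enough to bound $\cn(\ball(x,R)\cap V_{I_0},n^{-\alpha})$ uniformly over $I_0$ by $\exp(C''n\ln(en))$. For each $I_0$, the set $\ball(x,R)\cap V_{I_0}$ is either empty or contained in a ball of radius $\lesssim R+\norm{x}$ around $0$ intersected with $V_{I_0}$. If $y_0\in \ball(x,R)\cap V_{I_0}$, we re-center at $y_0$, and its points lie in $\ball_\ast(y_0,R'')\cap V_{I_0}$ with $R''$ depending only on $R$ and the constants.

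Second, we use a volumetric estimate in a finite-dimensional quasi-normed space. If $V$ has real dimension $m\le 2n$ (complex scalars double the dimension), then $\cn(\ball_\ast(y_0,\rho)\cap V,\eps)\le (1+2\rho/\eps)^{m/p}$ up to constants. The proof goes through a maximal $\eps$-separated set and Lebesgue measure on $V\cong\R^m$: the $\norm{\cdot}_\ast$-balls are compact, and Lebesgue measure is homogeneous of degree $m$. The $p$-triangle inequality guarantees that balls of radius $2^{-1/p}\eps$ around separated points are disjoint and contained in the ball of radius $(\rho^p+\eps^p)^{1/p}$. This gives the bound $\bigl((\rho^p+\eps^p)^{1/p}2^{1/p}/\eps\bigr)^m$. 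Crucially, it is independent of the particular subspace and of $\CalG$. Note that maximal-separated covering inside $\ball\cap V$ provides centers in $V$, and this is fine for our purposes.

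With $\eps=n^{-\alpha}$ and $\rho=R''$, this yields $\exp\bigl(2n\cdot C(\alpha,p,R)\ln(en)\bigr)$. Combined with the count of subspaces, we get $\eta(n)=Cn\ln(en)$ for $n\ge n_0$, where $n_0$ is chosen so that $P(n)\ge1$ and similar trivialities hold; if $P(n)<1$, then $\CalA_n=\{0\}$ anyway.

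The main obstacle is the volumetric argument in a quasi-normed (possibly non-continuous) setting. We handle it by passing to the $p$-norm, which is a genuine continuous quasi-norm on the finite-dimensional $V$ and hence equivalent to the Euclidean norm there, so that its balls are compact and Borel with positive finite volume. A secondary point is that covering numbers are taken with centers in $\XX$ relative to the original quasi-metric; converting between $\norm{\cdot}$ and $\norm{\cdot}_\ast$ costs only the equivalence constant, which is absorbed as described above.
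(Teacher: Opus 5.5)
Your proposal is correct and has the same structure as the paper's proof. You take a union over at most $(C n^m)^n$ subspaces of real dimension at most $2n$, bound the covering number of each intersection $\ball(x,R)\cap V_{I_0}$ by $(CR/\eps)^{2n}$ uniformly in the subspace, and then use $n^{Cn}\le \exp(Cn\ln(en))$.

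The one difference is how the finite-dimensional covering bound is obtained. You prove it yourself by a volume comparison in $V\cong\R^m$ using the $p$-triangle inequality. The paper instead derives it from Vyb\'iral's entropy estimate (\cref{lem:VybiralEntropyNumberEstimate}) and then converts external to internal covering numbers via \cref{lem:InternalVSExternalCoveringNumbers}.

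On that last point, your remark that covering numbers are taken "with centers in $\XX$" is not right for the paper's $\cn$, which requires centers in the covered set. You need either the same conversion or to take your maximal separated set directly inside $\ball(x,R)\cap V_{I_0}$; your volume argument handles the latter equally well.
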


\begin{proof}
  See \Cref{sec:NonlinearNTermControlledComplexityProofs}.
\end{proof}

As our final result in this section, we show that our results can --- in some cases --- also
be applied to obtain critical measures for sets that do \emph{not} satisfy \ConditionZetaText.
This can be done by taking a critical measure
for a set satisfying {\ConditionZetaText} and ``pushing it forward'' via a ``sub-H{\"o}lder'' map,
see \cref{prop:transfer_principle}.
Using this technique, we will construct a critical measure for the class of certain
$C^2$-regular subsets of $\R^2$.
These sets have been intensely studied in harmonic analysis in the context of the
optimal approximation and encoding of so-called \emph{cartoon-like functions}
\cite{CandesCurvelets,VoigtlaenderPeinAnalysisVSSynthesisSparsityAlphaShearlets,
GrohsCartoonApproximation,KutyniokCompactlySupportedShearlets}.
The precise class of sets that we will consider is defined as follows:

\begin{definition}[{cf.\ \cite[Section~2.2]{GrohsCartoonApproximation}
  and \cite[Definition~6.1]{VoigtlaenderPeinAnalysisVSSynthesisSparsityAlphaShearlets}}]
  \label{def:StarShapedSets}
  Let $\nu \geq 1$ and fix ${0 < \varrho_0 \leq \frac{1}{4} < \frac{1}{2} \leq \varrho_1 < 1}$.
  Then the family of \emph{star-shaped subsets of $\R^2$ with $C^{2}$ smooth boundary}
  is defined as
  \[
    \mathrm{STAR}^{2} (\nu; \varrho_0, \varrho_1)
    := \biggl\{
         x_0 + S_{\varrho}
         \,\,:\,\,
         \begin{array}{l}
           x_0 \in \R^2 \text{ and } \varrho \in C^{2}_{2 \pi}(\R)
           \text{ with } \| \varrho'' \|_{L^\infty} \leq \nu , \\
           \varrho_0 \leq \varrho \leq \varrho_1 ,
           \text{ and } x_0 + S_{\varrho} \subset [0,1]^2
         \end{array}
       \biggr\}
    ,
  \]
  where $C_{2\pi}^{2}(\R)$ denotes the set of all $2 \pi$-periodic $C^2$ functions
  $\varrho : \R \to \R$, and where
  \begin{equation}
    S_{\varrho}
    := \biggl\{
         r \cdot \begin{pmatrix} \cos(\phi) \\ \sin(\phi) \end{pmatrix}
         \,\,:\,\,
         \phi \in [0,2\pi] \text{ and } 0 \leq r \leq \varrho(\phi)
       \biggr\}
    \subset \R^2
    .
    \label{eq:StarShapedSet}
  \end{equation}
\end{definition}

By combining our results with existing results on non-linear $n$-term approximation using shearlets,
we prove the following result.

\begin{proposition}\label{prop:ShearletsOptimalForC2Boundary}
  Let $\nu \geq 1$ and fix $0 < \varrho_0 \leq \frac{1}{4} < \frac{1}{2} \leq \varrho_1 < 1$.
  Then, the set
  \[
    \SC
    := \bigl\{ \indicator_{S} \,\,:\,\, S \in \mathrm{STAR}^2 (\nu; \varrho_0, \varrho_1) \bigr\}
    ,
  \]
  considered as a subset of $\XX = L^2 (\R^2)$, satisfies $\md(\SC) = 1$ and
  there exists a Borel probability measure on $\XX$ that is critical for $\SC$.

  Moreover, there exist a suitable \emph{cone-adapted shearlet system}
  (see \cite[Definition~5.6]{VoigtlaenderPeinAnalysisVSSynthesisSparsityAlphaShearlets}),
  a suitable enumeration $\CalG = (g_n)_{n \in \N}$ of this system,
  and a polynomial $P$ such that the family
  $(\CalA_n)_{n \in \N} = \bigl(\Sigma_n (\CalG, P)\bigr)_{n \in \N}$
  defined in \Cref{prop:PolynomiallyBoundedNTermApproximationControlledComplexity}
  is optimal for $\SC$ in the class of families of controlled complexity in $L^2(\RR^2)$.
  In particular, a phase transition as in
  \Cref{thm:PhaseTransitionForFamiliesWithControlledComplexity} occurs.
\end{proposition}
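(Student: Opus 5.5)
We split the proof into three parts: computing $\md(\SC)=1$, constructing a critical measure via a push-forward, and establishing optimality of shearlet $n$-term approximation.

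\textbf{Upper bound on the dimension.} We use the known optimal sparse approximation of cartoon-like functions by cone-adapted shearlets (cf.\ \cite{KutyniokCompactlySupportedShearlets,VoigtlaenderPeinAnalysisVSSynthesisSparsityAlphaShearlets}). For $f=\indicator_S\in\SC$, the best $n$-term approximation error in $L^2$ is $\CalO(n^{-1}(\log n)^{3/2})$. Since the sets $S$ lie in $[0,1]^2$, only shearlets at scale $j\lesssim \log n$ with translations in a bounded region are needed. After a suitable enumeration of the system, the relevant indices therefore lie in $\{k\le P(n)\}$ for a polynomial $P$. Quantizing the coefficients, which are bounded, to precision $n^{-2}$ gives a codec with $\CalO(n\log n)$ bits and error $\CalO(n^{-1}\log^{3/2} n)$. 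By \Cref{lem:CompressionRateVSMinkowskiDimension}, $\umd(\SC)\le 1$. The same estimate shows that $\Sigma_n(\CalG,P)$ achieves every rate $\AR<1$. This family has controlled complexity by \Cref{prop:PolynomiallyBoundedNTermApproximationControlledComplexity}, so it is optimal once $\md(\SC)=1$ is known.

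\textbf{Lower bound and critical measure via transfer.} For the lower bound we construct a critical measure and then invoke \cref{cor:SmallBallCondition_Dominated_by_LowerMinkowskiDim}, which gives $\lmd(\SC)\ge 1$ once a measure satisfying the small-ball condition for every $s<1$ exists. Consider the set $K$ of radius functions $\varrho=\tfrac38+h$, where $h$ ranges over a small multiple of the unit ball of $C^{2}_{2\pi}$ (say $\|h\|_{C^2}\le\eta$). Fix $x_0=(1/2,1/2)$. Then every $x_0+S_{\varrho}$ lies in $\mathrm{STAR}^2$.

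As an ambient space for $K$ we take $L^1(0,2\pi)$. The set $K-\tfrac38$ is the image of a unit ball under the bounded inclusion $C^2_{2\pi}\embeds L^1$, hence satisfies \ConditionZetaText\ by \Cref{quasi_Banach_unit_balls_are_quasi_convex,quasi-convexity_preserved}. Its covering numbers in $L^1$ are known to be $\exp(\Theta(\eps^{-1/2}))$, so $\lmd=1/2$. \Cref{thm:MainResultCriticalMeasureExistence} then yields a critical measure $\nu$ on $K$.

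Next we push $\nu$ forward under $\Phi(\varrho)=\indicator_{x_0+S_\varrho}$. In polar coordinates,
\[
\|\Phi(\varrho_1)-\Phi(\varrho_2)\|_{L^2}^2=\int_0^{2\pi}\Bigl|\int_{\varrho_2(\phi)}^{\varrho_1(\phi)} r\,dr\Bigr|\,d\phi\asymp\|\varrho_1-\varrho_2\|_{L^1},
\]
where the two-sided comparison uses $\varrho\ge\varrho_0$. Thus $\Phi$ is a bi-Hölder embedding with exponent $1/2$ from $(K,L^1)$ into $L^2$. A ball of radius $r$ in $L^2$ pulls back into an $L^1$-ball of radius $\asymp r^2$, so
\[
\Phi_*\nu\bigl(\ball(f,r)\bigr)\le\exp\bigl(-c\,r^{-2s}\bigr)\quad\text{for } s<\tfrac12 .
\]
This is the small-ball condition with every parameter $<1$. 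We apply \cref{prop:transfer_principle} for this step; measurability of $\Phi$ follows from its continuity.

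\textbf{Main obstacles.} Two points need care. First, we must confirm that the scale $\eps^{-1/2}$ of the $L^1$ entropy of $C^2$-balls fits the Hölder exponent exactly, i.e.\ $(1/2)\cdot 2=1$. Second, we must verify the polynomial search-depth bound in the shearlet approximation theorem. This requires checking that the cited $n$-term results only use coefficients at scales $\lesssim\log n$ and translations $\lesssim 2^{j}$, which keeps the index count polynomial in $n$.
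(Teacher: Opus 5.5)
Your proposal is correct and follows the paper's proof. The paper also uses sets $x^{(0)}+S_{f+3/8}$ with $x^{(0)}=(\frac12,\frac12)$ and shows $\|\Phi(f)-\Phi(g)\|_{L^2}^2\asymp\|f-g\|_{L^1}$ in polar coordinates. It then pushes forward a critical measure for a $C^2$-ball in $L^1$ (with $\lmd\geq\frac12$) via \Cref{prop:transfer_principle} with $\beta=\frac12$, and obtains $\umd(\SC)\le 1$ from shearlet $n$-term approximation with polynomial search depth, citing \cite[Theorem~E.10]{VoigtlaenderPeinAnalysisVSSynthesisSparsityAlphaShearlets}.

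The only differences are in detail. First, the paper builds the measure on $C^2([\pi-1,\pi+1])$ and periodizes with an extension operator. It gets the entropy lower bound from \cite[Theorem~3]{ClementsEntropiesOfSeveralSetsOfFunctions} plus a mollification argument, whereas you use the periodic $C^2$-ball directly; a bump construction suffices for that. Second, your quantized codec is unnecessary: Part~(ii) of \Cref{thm:PhaseTransitionForFamiliesWithControlledComplexity}, combined with \Cref{prop:PolynomiallyBoundedNTermApproximationControlledComplexity}, gives $\umd(\SC)\le 1$ directly from the approximation rate.
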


\begin{remark*}
    It is well-known that non-linear $n$-term approximation via shearlets
    achieves a rate of $\CalO(n^{-1})$ up to log factors for $C^2$ cartoon-like functions,
    and that this is the optimal rate that can be achieved by non-linear $n$-term approximation
    using \emph{any} dictionary, if one imposes a polynomial search depth restriction
    \cite{Grohs2015,GrohsCartoonApproximation,KutyniokCompactlySupportedShearlets,
    VoigtlaenderPeinAnalysisVSSynthesisSparsityAlphaShearlets}.
    The only genuinely new result that we prove is the phase transition. 
    Moreover, we show optimality in the class of all approximation schemes of controlled complexity.

    The considered function class $\SC$ is a subset of the set of \emph{$C^2$ cartoon-like functions}
    that is typically considered in the literature
    \cite{CandesCurvelets,VoigtlaenderPeinAnalysisVSSynthesisSparsityAlphaShearlets,
    GrohsCartoonApproximation,KutyniokCompactlySupportedShearlets}.
    Since shearlets achieve the same order of approximation $\CalO(n^{-1})$ up to log factors
    for the full set, and since every Borel probability measure $\mu$ on $L^2(\R^2)$
    that satisfies the small-ball condition with parameter $\sigma$ and
    $\mu^\ast(\SC) > 0$ also satisfies
    \[
      \mu^\ast \Bigl( \{ \text{all $C^2$ cartoon-like functions} \} \Bigr)
      > 0
      ,
    \]
    the result as stated for $\SC$ is in fact \emph{stronger}
    than it would be if it was stated for the class of $C^2$ cartoon-like functions.
\end{remark*}

\begin{proof}[Proof of \cref{prop:ShearletsOptimalForC2Boundary}]
  See \Cref{sec:ShearletProof}.
\end{proof}

\subsection{Implications for Besov- and Triebel-Lizorkin spaces}
\label{sub:ImplicationsForClassicalSpaces}

In this section, we provide a large class of sets for which critical measures do exist:
unit balls in Besov and Triebel-Lizorkin spaces that are compactly embedded
into other Besov or Triebel-Lizorkin spaces.
We consider the full range of parameters (except for $p=\infty$ in the $F$-case)
and give results in the isotropic and in the dominating mixed smoothness case.
In particular, these unit balls serve as valid signal/target classes
to which the phase transition results stated in \cref{thm:RateDistortionTheoryPhaseTransition}
and \cref{thm:PhaseTransitionForFamiliesWithControlledComplexity} apply.

Let $\Omega \subset \RR^d$ be open and bounded.
For $s\in \RR$, $0<p,q\le \infty$ denote by $B^{s}_{p,q}(\Omega)$
the Besov space on $\Omega$ and by $F^{s}_{p,q}(\Omega)$ (with $p<\infty$)
the Triebel-Lizorkin space on $\Omega$. 
For the precise definition of these spaces,
we refer to \cite[Section 1.11]{triebelTheoryFunctionSpaces2006}.

Our main result on function spaces of isotropic smoothness is the following
\cref{existence_crit_measure_isotropic}.
\cite{grohsPhaseTransitionsRate2023} already contains the case where $\Omega$
is a bounded Lipschitz domain, and the parameters are given by $s_2=0$, $p_2=q_2=2$
paired either with $A_1=B$ or with $(A_1)^{s_1}_{p_1,q_1} = W^{k,p}$.
Our result generalizes theirs to arbitrary bounded open subsets and the full range of parameters.
Its proof is deferred to \cref{sec_proof_of_existence_crit_measure_isotropic}.

\begin{theorem} \label{existence_crit_measure_isotropic}
  Let $\emptyset\ne \Omega \subset \RR^d$ be an open and bounded subset.
  Let $A_1,A_2\in \{B,F\}$.
  Let $s_1,s_2\in \RR$ and let $0<p_1,q_1,p_2,q_2 \le \infty$ (with $p_i<\infty$ if $A_i=F$).
  Assume that 
  \begin{equation*}
     s_1 > s_2 
     \quad \text{and} \quad
      s_1 - \frac{d}{p_1} > s_2 -\frac{d}{p_2}.
  \end{equation*}
  Let $\SC$ denote the unit ball of $(A_1)^{s_1}_{p_1,q_1}(\Omega)$
  and let $\XX = (A_2)^{s_2}_{p_2,q_2}(\Omega)$,
  endowed with the \mbox{(quasi-)}\allowbreak metric induced by
  $\norm{\cdot\mid (A_2)^{s_2}_{p_2,q_2}(\Omega)}$.
  Then $\md(\SC) = \frac{d}{s_1-s_2}$ and there exists a Borel probability measure on
  $\XX$ that is critical for $\SC$.
\end{theorem}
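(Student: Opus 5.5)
The proof has two parts: existence of the critical measure, which I will get from \cref{thm:MainResultCriticalMeasureExistence}, and the value $\md(\SC)=\frac{d}{s_1-s_2}$, which I will get from known entropy-number estimates for compact embeddings of function spaces on bounded domains.

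\textbf{Existence of the measure.} Set $\XX = (A_2)^{s_2}_{p_2,q_2}(\Omega)$. This is a quasi-Banach space, since the restriction spaces of \cite[Section 1.11]{triebelTheoryFunctionSpaces2006} are quotients of quasi-Banach spaces on $\R^d$. Under the stated conditions on $s_i,p_i$, the embedding
\[
  \mathrm{id}: (A_1)^{s_1}_{p_1,q_1}(\Omega) \to (A_2)^{s_2}_{p_2,q_2}(\Omega)
\]
is continuous; in fact it is compact for bounded $\Omega$, and for non-smooth $\Omega$ I would check this via the restriction/extension-free definition. Then $\SC$ is the image of the unit ball of a quasi-Banach space under a continuous linear operator. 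By \Cref{quasi_Banach_unit_balls_are_quasi_convex,quasi-convexity_preserved}, $\SC$ satisfies \ConditionZetaText. Now \cref{thm:MainResultCriticalMeasureExistence} gives a Borel probability measure on $\XX$ that is critical for $\SC$.

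\textbf{Computing the dimension.} I would translate covering numbers into entropy numbers. If $e_k(\mathrm{id})\asymp k^{-(s_1-s_2)/d}$, then, since $\cn(\SC, e_k)\le 2^{k-1}$ while $\cn(\SC,\eps)>2^{k-1}$ whenever $\eps<e_k$, we get $\log_2\cn(\SC,\eps)\asymp \eps^{-d/(s_1-s_2)}$. Taking $\log_2\log_2$ and dividing by $\log_2(1/\eps)$ then yields $\lmd(\SC)=\umd(\SC)=\frac{d}{s_1-s_2}$. One point needs care: covering numbers in the quasi-norm and in any equivalent quasi-norm differ only by a constant rescaling of $\eps$, which does not change the dimension.

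The entropy asymptotics are classical on bounded Lipschitz (or smooth) domains, by Edmunds–Triebel and \cite{triebelTheoryFunctionSpaces2006}. For an arbitrary bounded open $\Omega$ I would sandwich between a ball and a cube. For the upper bound, choose a cube $Q\supset\Omega$ and restrict, noting that restriction $A(Q)\to A(\Omega)$ is norm-nonincreasing under the infimum definition. Then the unit ball of $A_1(\Omega)$ is covered via extensions to $\R^d$; I would multiply by a cutoff supported near $Q$ to land in $A_1$ of a larger smooth domain $Q'$ with bounded norm, and restrict the cover. For the lower bound, choose a small ball $B\subset\Omega$. Functions in $A_1$ with support in $B$ (built e.g.\ from wavelets or atoms supported in $B$) extend by zero to $\Omega$ with comparable norms in both spaces, and the known lower entropy bounds already come from such localized atomic constructions.

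\textbf{Main obstacle.} I expect the hardest step to be exactly this domain-independence of the entropy asymptotics for arbitrary bounded open $\Omega$, across the full parameter range including $q=\infty$, $p<1$, and the $B$/$F$ mixing. To handle it, I would first use elementary embeddings to reduce to Besov spaces: $B^{s}_{p,\min(p,q)}\hookrightarrow F^s_{p,q}\hookrightarrow B^s_{p,\max(p,q)}$. Entropy asymptotics do not depend on the fine index $q$, so the upper and lower bounds for the $B$-case transfer to every combination.
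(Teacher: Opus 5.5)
Your proposal is correct and follows essentially the paper's route. You get \ConditionZetaText{} from \cref{quasi_Banach_unit_balls_are_quasi_convex,quasi-convexity_preserved} (the paper packages these as \cref{continuously_embedded_balls_are_zeta}) and then invoke the existence theorem \cref{equivalence_lower_minkowski_existence_of_measure}; you obtain the dimension from two-sided entropy-number bounds, reducing the $B$/$F$ mixing to the $B$--$B$ case through $B^s_{p,\min\{p,q\}}\embeds F^s_{p,q}\embeds B^s_{p,\max\{p,q\}}$ and multiplicativity of entropy numbers. The only difference is that your ``main obstacle'' does not arise, because \cite[Theorem~1.97]{triebelTheoryFunctionSpaces2006} already gives $\en_k(\iota)\asymp k^{-(s_1-s_2)/d}$ for the $B$--$B$ embedding on an arbitrary bounded open $\Omega$; this makes your (sound) cube/ball sandwich unnecessary, and the constant-factor slack in your step $\cn(\SC,\en_k)\le 2^{k-1}$ (internal versus external centres) is absorbed by \cref{entropy_numbers_to_minkowski_dim_upper,entropy_numbers_to_minkowski_dim_lower}.
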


For an arbitrary bounded domain%
\footnote{In the sense of \cite[Section 3.1]{vybiralFunctionSpacesDominating2006}.}
$\Omega \subset \RR^d$, let $S^{s}_{p,q}B(\Omega)$ and $S^{s}_{p,q}F(\Omega)$ (with $p<\infty$)
denote the Besov and Triebel-Lizorkin spaces of dominating mixed smoothness.
For the precise definition of these spaces,
we refer to \cite[Section 3.1]{vybiralFunctionSpacesDominating2006}.

Our main result on function spaces of dominating mixed smoothness is the following
\cref{existence_crit_measure_dms}.
Its proof is deferred to \cref{sec_proof_of_existence_crit_measure_dms}.

\begin{theorem}\label{existence_crit_measure_dms}
Let $\emptyset \ne \Omega \subset \RR^d$ be an arbitrary bounded domain.
Let $A_1,A_2\in \{B,F\}$.
Let $s_1,s_2\in \RR$ and let $0<p_1,q_1,p_2,q_2 \le \infty$ (with $p_i<\infty$ if $A_i=F$).
Assume that 
\begin{equation*}
   s_1 > s_2 
   \quad \text{and} \quad
    s_1 - \frac{1}{p_1} > s_2 -\frac{1}{p_2}.
\end{equation*}
Let $\SC$ denote the unit ball of $S^{s_1}_{p_1,q_1}A_1(\Omega)$
and let $\XX= S^{s_2}_{p_2,q_2}A_2(\Omega)$, endowed with the \mbox{(quasi-)}\allowbreak metric
induced by $\norm{\cdot\mid S^{s_2}_{p_2,q_2}A_2(\Omega)}$.
Then $\md(\SC)=\frac{1}{s_1-s_2}$ and there exists a Borel probability measure
on $\XX$ that is critical for $\SC$.
\end{theorem}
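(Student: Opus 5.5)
The plan is to reduce the statement to \cref{thm:MainResultCriticalMeasureExistence}. That requires three ingredients: $\XX = S^{s_2}_{p_2,q_2}A_2(\Omega)$ is a quasi-Banach space; $\SC$ satisfies \ConditionZetaText{}; and the entropy numbers of the embedding behave like $k^{-(s_1-s_2)}$, possibly up to logarithmic factors.

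\emph{Step 1: \ConditionZetaText{} and compactness.} $\SC$ is the unit ball of the quasi-Banach space $\YY := S^{s_1}_{p_1,q_1}A_1(\Omega)$. Under the stated conditions $s_1 > s_2$ and $s_1 - 1/p_1 > s_2 - 1/p_2$, the identity $\mathrm{id}: \YY \to \XX$ is continuous and even compact. This follows from the embedding results for dominating mixed smoothness spaces on bounded domains in \cite{vybiralFunctionSpacesDominating2006}. So $\SC$ is the image of the unit ball of $\YY$ under a continuous linear operator between quasi-Banach spaces. By \Cref{quasi_Banach_unit_balls_are_quasi_convex,quasi-convexity_preserved}, $\SC$ satisfies \ConditionZetaText{}. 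It is totally bounded in $\XX$ by compactness, and it is nonempty with more than one point. \cref{thm:MainResultCriticalMeasureExistence} then yields a Borel probability measure $\mu$ on $\XX$ that is critical for $\SC$.

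\emph{Step 2: computing $\md(\SC)$.} I would use the dyadic entropy numbers $e_k := e_k(\mathrm{id}: \YY \to \XX)$ in the dominating mixed setting. Vyb\'{\i}ral proved two-sided estimates of the form
\[
  e_k \asymp k^{-(s_1-s_2)} (\log k)^{\beta}
\]
with some $\beta \in \R$ depending on the parameters and on $d$. These are stated on $\R^d$ or the cube and carry over to bounded domains through restriction and extension arguments. Translated to covering numbers, $e_k \le \eps$ gives $\log_2 \cn(\SC, c\,\eps) \le k$, up to the quasi-triangle constant; conversely, $\log_2 \cn(\SC,\eps) \ge k$ whenever $e_k > C\eps$. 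Inverting $\eps \approx k^{-(s_1-s_2)} (\log k)^\beta$ gives $\log_2 \cn(\SC,\eps) \asymp \eps^{-1/(s_1-s_2)}$ up to polylogarithmic factors in $1/\eps$. Such factors disappear in $\log_2 \log_2 \cn / \log_2(1/\eps)$, so the limit exists and equals $1/(s_1-s_2)$. This step has to be done with the quasi-metric and the constants handled carefully, and it should be isolated as a small lemma relating entropy numbers to $\md$.

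\emph{Main obstacle.} The obstacle is making sure the entropy asymptotics are available for the full parameter range, including $p_i, q_i < 1$, $p = \infty$ in the $B$-case, and mixed $B$/$F$ pairs, and on an arbitrary bounded domain rather than a cube. To handle it, I would sandwich $\Omega$ between a small cube $Q_1 \subset \Omega$ and a large cube $Q_2 \supset \Omega$. For the upper bound, I would extend from $\Omega$ to $\R^d$ and multiply by a cutoff supported in $Q_2$. For the lower bound, I would restrict to $Q_1$ using functions supported in $Q_1$. For a lower bound in the $F$-case, I would pass to $B$-spaces via the elementary embeddings $B^{s}_{p,\min(p,q)} \embeds F^{s}_{p,q} \embeds B^{s}_{p,\max(p,q)}$ (valid in the dominating mixed setting as well). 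Changing the fine index $q$ only changes log-powers, so these embeddings suffice, since log factors are irrelevant for $\md$.
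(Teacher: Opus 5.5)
Your proposal follows the paper's proof. It establishes \ConditionZetaText{} via \Cref{quasi_Banach_unit_balls_are_quasi_convex,quasi-convexity_preserved} (packaged in the paper as \Cref{continuously_embedded_balls_are_zeta}), gets the critical measure from \Cref{equivalence_lower_minkowski_existence_of_measure}, and obtains $\md(\SC)=\frac{1}{s_1-s_2}$ from Vyb\'{\i}ral's entropy bounds; these are transferred from the $B$- to the $F$-case by the elementary embeddings and multiplicativity of entropy numbers, then converted via \Cref{entropy_numbers_to_minkowski_dim_upper,entropy_numbers_to_minkowski_dim_lower}. Two small remarks: the available bounds need not have matching log powers (the paper only uses $C_1 k^{-(s_1-s_2)}(\ln(ek))^{u_1} \le \en_k(\iota) \le C_2 k^{-(s_1-s_2)}(\ln(ek))^{u_2}$, which suffices, as you note), and your cube-sandwiching is unnecessary because \cite[Theorem~4.11]{vybiralFunctionSpacesDominating2006} is already formulated for arbitrary bounded domains, with the case $d=1$ reduced to the isotropic setting.
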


\begin{remark}
Of course, in both \cref{existence_crit_measure_isotropic} and \cref{existence_crit_measure_dms},
the generalized Minkowski dimension of $\SC$ was already known.
Our contribution is the existence of the critical probability measure for $\SC$.
\end{remark}

\subsection{Related work}%
\label{sub:RelatedWork}

This section discusses the relation between the present paper
and earlier works.
The discussion is split into different subsections by topic.

\subsubsection{Previous results on critical measures}

The present paper was heavily inspired by \cite{grohsPhaseTransitionsRate2023},
which introduced the notion of a critical measure and proved the existence
of such measures for unit balls of Sobolev- and Besov spaces,
considered as subsets of $L^2 (\Omega)$, for the whole range of parameters
for which these smoothness spaces embed compactly into $L^2$.

The construction in \cite{grohsPhaseTransitionsRate2023} is meticulously tailored
to the specific setting of Sobolev- and Besov spaces as subsets of $L^2$,
which allows to construct a critical measure in the setting of certain sequence spaces,
and then transfer this measure to the actual function spaces using wavelets.
In contrast, the results in the present paper establish the existence of critical measures
in much larger generality, assuming only a mild topological condition (see \cref{def:condition_zeta})
and a quite general condition on the covering numbers of the ``signal set'' $\SC$,
namely\footnote{
The condition $\lmd(\SC) = \umd(\SC)$ is not strictly speaking necessary for our results to hold,
but it ensures that the notion of a critical measure from \Cref{def:CriticalMeasure}
agrees with the one in \cite{grohsPhaseTransitionsRate2023} and that one gets a single
sharp threshold; see also the discussion following \Cref{def:CriticalMeasure}.
}
that $\lmd(\SC) = \umd(\SC)$.
In addition to this much greater generality on the ``allowed signal classes'' $\SC$,
we emphasize that our approach allows for a general quasi-Banach space $\XX$
as the ``ambient space'', whereas \cite{grohsPhaseTransitionsRate2023} only
constructs critical measures for the case where the ambient space is $L^2(\Omega)$.

\subsubsection{Results regarding the ``size'' of the set of difficult-to-approximate functions}

One focus in classical approximation theory is to analyze the asymptotic decay
of the \emph{worst-case} (or \emph{minimax}) approximation error, given by
\[
  \delta_n (\SC, \CalA)_{\XX}
  := \sup_{f \in \SC} \,\,
       \inf_{g \in \CalA_n} \,\,
         \| f - g \|_{\XX}
  ,
\]
where $\SC$ is the function class of interest, $\CalA = (\CalA_n)_{n \in \N}$
is a family of approximating sets of increasing ``complexity'',
and the space $\XX$ is used to measure the error.
In many cases, one obtains a decay behavior of $\delta_n (\SC, \CalA)_{\XX} \asymp n^{-t^\ast}$
for a certain $t^\ast = t^\ast (\SC; \XX)$; see for instance
\cite{LorentzApproximationOfFunctions,DahmenCompressedSensing,
CandesCurvelets,GrohsCartoonApproximation,VoigtlaenderPeinAnalysisVSSynthesisSparsityAlphaShearlets,
KutyniokCompactlySupportedShearlets,MhaskarNNsForOptimalApproximation,YarotskyErrorBounds,
ElbraechterDNNApproximationTheory, YarotskyPhaseDiagram,PetersenVoigtlaenderOptimalApproximationUsingNNs,
ShenYangZhangOptimalApproximationRateWidthDepth,schneiderNonlocalTechniquesAnalysis2026,NNApproximationSobolev}.
For a more detailed discussion of most of these results,
see \Cref{sub:IntroNonlinearApproximation}.

However, knowing that $\delta_n (\SC, \CalA)_{\XX} \asymp n^{-t^\ast}$ only implies
that there is \emph{at least} one function that is ``difficult to approximate'',
which still leaves open the possibility that ``most'' functions $f \in \SC$
can be approximated at a rate $n^{-t}$ with $t > t^\ast$.
In order to obtain a more precise understanding of the difficulty of approximating
functions $f \in \SC$, it is thus of interest to quantify the ``size''
of the set of functions in $\SC$ that are ``difficult to approximate''.

This was in fact the main motivation for the paper \cite{grohsPhaseTransitionsRate2023},
which showed that if
\begin{itemize}
  \item $\SC \subset L^2 (\Omega)$ is the unit ball of a Sobolev-
        or Besov space that is compactly embedded in $L^2$,
  \item $\mu$ is a critical measure for $\SC$,
  \item $t^\ast = 1 / \md(\SC)$, and
  \item $\CalA_n$ is the set of neural networks with at most $n$
        non-zero weights, and \emph{all weights suitably quantized},
\end{itemize}
then the following phase transition occurs:
For each $0 < t < t^\ast$, one has $\delta_n (\SC, \CalA)_{L^2} \lesssim n^{-t}$,
but for each $t > t^\ast$, the set of functions $f \in \SC$ for which
$\dist(f, \CalA_n) \lesssim n^{-t}$ forms a $\mu$-null set;
see \cite[Theorem~3 and Remark~3]{grohsPhaseTransitionsRate2023}.

The proof in \cite{grohsPhaseTransitionsRate2023} proceeds by first establishing
a similar phase transition for the setting of lossy compression
of the elements $x \in \SC$ using $n$ bits
(see \Cref{sub:IntroRateDistortionTheory} for more details),
which is then used to derive the result for approximation by quantized neural networks.
The present paper generalizes the results in \cite{grohsPhaseTransitionsRate2023}
regarding lossy compression (see \Cref{sub:IntroRateDistortionTheory})
and moreover introduces the concept of an
\emph{approximating family $\CalA = (\CalA_n)_{n \in \N}$ of controlled complexity},
showing for each such family that a phase transition similar to the one
in \cite{grohsPhaseTransitionsRate2023} occurs; see \Cref{sub:IntroNonlinearApproximation}.
This new notion not only covers the setting of quantized neural networks
considered in \cite{grohsPhaseTransitionsRate2023}, but also
non-quantized neural networks with bounds on the network depth and the magnitude of the weights,
as well as non-linear approximation (with polynomially bounded search depth)
using arbitrary dictionaries.

We mention that the earlier works \cite{zbMATH01333083,zbMATH07161892}
also analyzed the ``size'' of difficult to approximate functions 
(in less generality).
These papers provided important inspiration for \cite{grohsPhaseTransitionsRate2023}
and the present work.

The idea of probabilistic lower bounds for approximation ---
in the sense that one equips the set of ``target functions'' with a probability measure
and studies the measure of the set of functions that are approximated well ---
has also been employed in \cite{KURKOVA201734,KURKOVA2023654}
to derive lower bounds for approximation by neural networks,
albeit in the setting where the input set $\Omega \subset \R^d$ on which the error
is measured is a finite set, so that also the ``target class'' of functions to
be approximated is finite.

\subsubsection{Approximation rates and power-exponential Minkowski dimension}

Our results in \Cref{sub:IntroNonlinearApproximation}
regarding approximating families of controlled complexity
in particular imply that one cannot achieve approximation error
$\CalO(n^{-t})$ for $t > 1 / \lmd(\SC)$ for the cases of
\emph{(i)} non-linear approximation (with polynomial search depth) using arbitrary dictionaries, and
\emph{(ii)} approximation by neural networks, under certain assumptions on the
network depth and the magnitude of the network weights.

While the notion of families of controlled complexity is new,
the specific results \emph{(i)} and \emph{(ii)} are well-known;
they already appear in \cite{Grohs2015} and \cite{bolcskeiOptimalApproximationSparsely2019},
respectively.
See also \cite{ElbraechterDNNApproximationTheory} and the references therein.

The fact that suitable sets of neural networks form families of controlled complexity
relies on known bounds for the entropy numbers of such neural networks,
which we cite from \cite{GrohsVoigtlaenderTheoryToPracticeGapDeepLearning}.
Related bounds can be found in \cite{OuBoelcskeiCoveringNumbersForDeepReLUNNs,SiegelXuEntropyNumbers}.

\subsubsection{Generalized Hausdorff and Minkowski dimensions and the Frostman lemma}

The covering numbers of finite-dimensional sets $A \subset \R^d$ typically satisfy
the polynomial scaling behavior $\cn (A, \eps) \asymp \eps^{-s}$.
Moreover, the polynomial scale $r \mapsto r^s$ is foundational for the construction
of the Hausdorff measure $\CalH^s$ on $\R^d$.
For subsets of infinite-dimensional spaces, however, the ``power-exponential''
scaling behavior $\cn(A, \eps) \asymp \exp(C \cdot \eps^{-s})$ is more common,
which is why we focus especially on this type of scaling through the power-exponential
Minkowski dimension and the power-exponential Hausdorff measure and Hausdorff dimension.

These are special cases of the general notion of \emph{gauge functions} considered
in the field of geometric measure theory, where each such gauge function (or \emph{scale})
defines a corresponding notion of Minkowski dimension,
Hausdorff measure, and Hausdorff dimension; see
\cite{mattilaGeometrySetsMeasures1995,kloecknerGeneralizationHausdorffDimension2012}
and the references therein.

Key results known to hold in the setting of general gauge functions are
that the Minkowski dimension for a given scale is an upper bound for the Hausdorff dimension
of the same scale (see \cite[Proposition~3.2]{kloecknerGeneralizationHausdorffDimension2012}),
and the general \emph{Frostman lemma},
see \cite[Proposition~3.1 and Theorem~F]{fan2026infinitedimensionalmultifractalanalysis}.
The latter in particular implies that for a compact metric space $\SC$ and a gauge function $f$,
the positivity of the Hausdorff measure $\CalH^{f}(\SC)>0$ implies that there exists
a Borel probability measure $\mu$ on $\SC$ and some constant $C>0$
such that $\mu$ satisfies the ``small-ball condition''
\[
  \mu(\ball(x,r)) \leq C \cdot f(6r)
  \qquad \forall \, x \in \SC \text{ and } r > 0
  .
\]
While the Frostman lemma implies --- under the above assumptions ---
the existence of a measure satisfying a small-ball condition,
the emphasis of our results is different:
The Frostman lemma imposes the measure theoretic condition $\CalH^{f}(\SC) > 0$,
whereas the goal of the present paper is to identify topological conditions on the
set $\SC$ (namely, \ConditionZetaText) under which one can show the existence
of a measure that satisfies a small-ball condition matching the
growth behavior of the covering numbers of $\SC$.
This then implies a posteriori (see \Cref{thm:MainResultCriticalMeasureExistence})
that the lower power-exponential Minkowski dimension agrees
with the power-exponential Hausdorff dimension of the set,
but makes no claim about whether the corresponding Hausdorff measure
of the set is positive at the critical exponent.

We remark that \cite{kloecknerGeneralizationHausdorffDimension2012}
deduces bounds for the Hausdorff dimension of certain specific sets
(so-called ``Hilbert cubes'') by explicitly constructing
a ``Frostman measure'', i.e., a measure satisfying a suitable type of
small-ball condition; see
\cite[proofs of Propositions 4.2 and 4.3]{kloecknerGeneralizationHausdorffDimension2012}.

Another paper concerned with different notions of dimension
on infinite-dimensional sets, with respect to general scalings
(i.e., families of gauge functions) is \cite{HelfterScales}.
The considered dimensions include a version of the Minkowski dimension
(see \cite[Definition~1.5]{HelfterScales}),
the Hausdorff dimension (see \cite[Definition~2.13]{HelfterScales}),
and certain ``local scales'' and ``quantization scales''
(see \cite[Definitions 1.7 and 1.8]{HelfterScales}) associated to a given Borel measure
on the considered metric space.
The main results of \cite{HelfterScales} (Theorems~A and C) state several inequalities
between these different notions of dimension.
We remark that the power-exponential scale that we focus on
is studied closely in \cite{HelfterScales}; see \cite[Example~1.1]{HelfterScales}.

In particular, \cite[Theorem~E]{HelfterScales} shows for the unit ball $\SC^{k,\alpha}_d$
of the space $C^{k,\alpha}([0,1]^d)$, considered as a subset of $C([0,1]^d)$ that the
power-exponential Minkowski dimension and Hausdorff dimension of $\SC^{k,\alpha}_d$
are both given by $\frac{d}{k + \alpha}$, which is part of our \Cref{prop:NNsOptimalForCr}
(for the special case $p = \infty$).
What is not explicitly stated in \cite{HelfterScales} is the existence of a critical measure
that is shown in \Cref{prop:NNsOptimalForCr}.
However, an inspection of the proof in \cite{HelfterScales} shows that
it proceeds by \emph{(i)} considering a ``Cantor-type set'' $Z = \prod_{n=1}^\infty Z_n$
which is a product of finite, discrete sets $Z_n$, equipped with a suitable metric,
for which it is then shown (in our terminology) that the product measure of the uniform measures
is critical (see \cite[Proposition~4.1 and Equation~(4.3)]{HelfterScales}),
and \emph{(ii)} showing that the space $Z$ can be embedded via an expanding map into $\SC^{k,\alpha}_d$;
see \cite[Lemma~4.7]{HelfterScales}.
The construction of this expanding map, however, is tailored to the special set
$\SC^{k,\alpha}_d$; in fact, it is based on dilating and shifting a fixed function,
which is similar in spirit to the wavelet-based construction in \cite{grohsPhaseTransitionsRate2023}.
In particular, the results in \cite{HelfterScales} do not imply the general
existence result for critical measures that we prove.

Finally, one might wonder whether one could simply take a (general) Hausdorff measure
itself as a measure satisfying the ``small-ball condition''.
In general, this is not possible, as for instance implied by the results
in \cite{boardmanHausdorffMeasureProperties1973}.
That paper shows for a certain compact space $\SC$ that
${\CalH^f (\SC) \in \{ 0, \infty \}}$ for every gauge function $f$,
whereas we are interested in \emph{finite, positive} measures satisfying a small-ball condition.
We note that the pathological behavior of the set $\SC$ mentioned above cannot be explained
by it being ``too large''; in fact, 
\cite[Theorem 1.8]{kloecknerGeneralizationHausdorffDimension2012}
shows that the set considered in \cite{boardmanHausdorffMeasureProperties1973}
has power-exponential Hausdorff dimension equal to $1$.
Overall, this shows that Hausdorff measures may not always play the role of
a maximally spread out measure, even for sets with finite
power-exponential Hausdorff dimension.

\subsubsection{Existence of critical measures for the polynomial scale}

In the finite-dimensional setting and for the polynomial scale $r \mapsto r^s$, $s > 0$,
critical measures are known to exist for a wide range of ``regular'' sets.
In fact, two-sided versions of \cref{eq:SmallBallConditionPrecise},
with a polynomial instead of a power-exponential scaling,
hold for Hausdorff measures restricted to compact and convex sets
\cite[Example 12.7]{grafFoundationsQuantizationProbability2000}, to surfaces
of compact and convex sets \cite[Example 12.8]{grafFoundationsQuantizationProbability2000},
to compact differentiable manifolds \cite[Example 12.9]{grafFoundationsQuantizationProbability2000},
and to self-similar sets satisfying the open set condition
\cite[Example 12.10]{grafFoundationsQuantizationProbability2000}, and, more generally,
self-similar sets satisfying the weak separation property in their affine hull
\cite[Theorem~2.1]{fraserAssouadDimensionSelfsimilar2015}.
These sets are then also called \emph{Ahlfors regular}.
Here, we note that if $\mu(\SC) = 1$ and
\[
  \mu(\ball(x,r)) \asymp r^s
  ,
\]
then this implies that the Minkowski dimension of $\SC$ (with respect to the polynomial scale)
is $s$, so that $\mu$ is a critical measure (with respect to the polynomial scale).

\subsubsection{Small-ball probabilities for Gaussian measures and metric entropy}

Measures satisfying a small-ball condition are also studied in probability theory,
where particularly the small-ball behavior of Gaussian measures has received attention
\cite{KL93,LL99,LS01}.
To describe a major result in this direction, let $\nu$ be a centered Gaussian measure
on a separable Banach space $\XX$,
and let $B_\nu$ denote the unit ball of its reproducing-kernel Hilbert space $H_{\nu}$,
viewed as a subset of $\XX$
(see e.g.\ \cite[Lemma~2.1]{KuelbsRKHSForGaussianMeasure} for the definition).
The results of \cite{KL93,LL99} relate the asymptotic behavior of
$-\log \nu\bigl(\ball(0,r)\bigr)$ as $r\downarrow 0$ to the metric entropy
of $B_\nu$; see also the survey \cite{LS01}.

Despite the similarity in the studied notions (small-ball conditions and metric entropy),
this setting considerably differs from ours:
For the Gaussian measures, the measure $\nu$ is prescribed and its small-ball behavior
is inferred from the metric entropy properties of the associated unit ball $B_\nu$.
In contrast, in our results the set $\SC \subset \XX$ is prescribed, and we construct a Borel measure
$\mu$ on $\X$ that is concentrated on $\SC$ and satisfies a suitable small-ball property.
We also note that our sets $\SC$ are always bounded, whereas the support of a non-trivial
Gaussian measure is always unbounded;
moreover, the reproducing-kernel Hilbert space $H_\nu$ always satisfies
$\nu(H_\nu) = 0$ except if $H_\nu$ is finite-dimensional;
see \cite[Theorem~2.4.7]{BogachevGaussianMeasures}
or \cite[Proposition~4.45]{hairer2023introductionstochasticpdes}.
Finally, \cite[Lemma~3.3]{Goodman1988} shows that the covering numbers of the ball $B_\nu$ always
satisfy $\log_2 (\cn(B_\nu, \eps)) = o(\eps^{-2})$, which is not always satisfied
for the sets $\SC$ that we consider.

\subsection{Notation}%
\label{sub:Notation}

We introduce some additional notation to complement that already introduced in this introduction.
Let $\N = \{ 1,2,\dots \}$ denote the set of natural numbers and let $\N_0 := \N \cup\{0\}$.
We also use the notation $\N_{\geq k} := \{ n \in \N \,\,:\,\, n \geq k \}$.
The cardinality of a set $A$ will be denoted by $\num{A} \in \N_0 \cup \{ \infty \}$.
For a function $f : \XX \to \YY$ and a subset $A \subset \XX$, we denote by $f|_{A} : A \to \YY$
the restriction of $f$ to $A$.
For a set $A \subset \XX$, we sometimes consider the indicator function associated to $A$,
given by
\[
  \indicator_A (x)
  := \begin{cases}
       1, & \text{if } x \in A, \\
       0, & \text{otherwise}.
     \end{cases}
\]
Given a measure space $(\XX, \CalA, \mu)$, a measurable space $(\YY, \CalB)$,
and a measurable map $\Phi : \XX \to \YY$, we define the push-forward of $\mu$ by $\Phi$ as
\[
  \Phi_{\#} \mu : \quad
  \CalB \to [0,\infty], \quad
  (\Phi_{\#} \mu) (B) := \mu(\Phi^{-1}(B))
  .
\]

For a multi-index $\beta = (\beta_1,\dots, \beta_d) \in \NN_0^d$,
let $\abs{\beta} = \sum_{i=1}^d \beta_i$, and let
$\partial^{\beta} := \partial_{x_1}^{\beta_1} \dots \partial_{x_d}^{\beta_d}$.
We will denote by $\log_2$ the logarithm to the base $2$ and by $\ln$ the natural logarithm.

For non-negative quantities $a,b$ that depend on several other objects,
we write $a \lesssim b$ to mean that $a \leq C \cdot b$,
where the implied constant $C > 0$ only depends on objects that are considered fixed
and is independent of objects that are considered variable;
the context will make clear (or explicitly state) the details.
Similarly, $a \gtrsim b$ means that $b \lesssim a$, and $a \asymp b$ means that $a \lesssim b$
and $b \lesssim a$.

Let $(\XX,\varrho)$ be a quasi-metric space. 
We denote by $\ball\bigl(x,r \mid (\XX,\varrho)\bigr) := \{ y \in \XX : \varrho(y,x)\le r\}$
the ``closed ball'' centered at $x\in \XX$ with radius $r>0$.
Depending on context, we will sometimes simply write $\ball(x,r \mid \XX)$ or $\ball(x,r)$.
If $(\XX,\norm{\cdot})$ is a quasi-normed space, we will also write $\ball(x,r \mid \norm{\cdot})$.
For a non-empty subset $\CalA \subset \XX$ of a quasi-metric space $(\XX, \varrho)$,
we write
\[
  \dist (x, \CalA)
  := \inf_{a \in \CalA} \varrho(x,a)
  , \qquad \text{for } x \in \XX
  .
\]

Given two quasi-normed spaces $(\XX, \| \cdot \|_{\XX})$ and $(\YY, \| \cdot \|_{\YY})$,
we write $\XX \hookrightarrow \YY$ to mean that $\XX$ continuously embeds into $\YY$;
this means that $\XX \subset \YY$ and that there exists a constant $C > 0$ with
$\| x \|_{\YY} \leq C \cdot \| x \|_{\XX}$ for all $x \in \XX$.
We then also write $\iota : \XX \hookrightarrow \YY$ for the embedding map $\iota (x) = x$.
If the embedding is compact, this will be explicitly stated/assumed.
Finally, for a subset $\SC \subset \XX$ (not necessarily a subspace),
we sometimes write $\SC \hookrightarrow \XX$ for the inclusion map.

Let $\emptyset \ne \XX$ be a set and let $d_1,d_2$ be two quasi-metrics on $\XX$. 
We say that $d_1$ and $d_2$ are \emph{bi-Lipschitz equivalent},
if there exists a constant $C\ge 1$, called a \emph{bi-Lipschitz constant}, such that
\begin{equation*}
    \frac{1}{C} \cdot d_1(x,y) \le d_2(x,y) \le C \cdot d_1(x,y) 
    \quad \text{ for all $x,y\in \XX$.}
\end{equation*}

Let $(\XX,\varrho)$ be a quasi-metric space, let $\emptyset \ne \SC\subset \XX$, and let $\eps > 0$.
We say that a subset $P\subset \SC$ is $\eps$-\emph{separated in $\SC$},
or an $\eps$-\emph{packing in $\SC$}, if
\begin{equation*}
    \metr(x,x') >\eps
    \qquad 
    \text{for all $x,x' \in P$ with $x\ne x'$.}
\end{equation*}
We call a subset $N\subset \SC$ an $\eps$-\emph{net for $\SC$},
if for every $x\in \SC$ there exists $y \in N$ with $\varrho(x,y)\le \eps$.
We call an $\eps$-separated subset $P\subset \SC$ maximal,
if for every $x\in \SC\setminus P$, the set $P\cup \{x\}$ is not $\eps$-separated.
Maximal $\eps$-separated subsets always exist, see \cref{maximal_separated_subsets_exist},
and are $\eps$-nets, see \cref{maximal_separated_subsets_are_nets}.

We define the $\eps$-\emph{covering number} of a non-empty set $\SC \neq \emptyset$ as
\begin{equation*}
  \cn(\SC,\eps)
  := \inf \bigg\{
            n \in \NN
            \,\,:\,\,
            \begin{array}{c}
              \text{there exists an $\eps$-net $\SC_{\eps}\subset \SC$}\\
              \text{with $\num{\SC_{\eps}}=n$}
            \end{array}
          \bigg\}
  ,
\end{equation*}
where, by convention, we let $\cn(\SC,\eps):=\infty$, if there is no finite $\eps$-net for $\SC$.
Moreover, we set $\cn(\emptyset, \eps) := 0$ for every $\eps > 0$.

In words, $\cn(\SC,\eps)$ is the smallest natural number $n\in \NN$
such that $\SC \neq \emptyset$ can be covered by $n$ closed balls of radius $\eps$
\emph{with centers in $\SC$}.
For $\emptyset \neq \SC \subset \XX$, we also define the
\emph{external $\eps$-covering number of $\SC$ in $\XX$},
denoted by $\extcn(\SC,\XX,\eps)$ to be the smallest number $n\in \NN$
such that $\SC$ can be covered with $n$ balls of radius $\eps$ \emph{with centers in $\XX$};
if no such $n \in \N$ exists, define $\extcn(\SC, \XX, \eps) := \infty$.
Moreover, set $\extcn(\emptyset, \XX, \eps) := 0$.
It is clear that $\extcn(\SC,\XX,\eps) \le \cn(\SC,\eps)$.
For a bound in the other direction, see \cref{lem:InternalVSExternalCoveringNumbers}.

\subsection{Structure of the remainder of the paper}%
\label{sub:PaperStructure}

\cref{sec:ExistenceOfCriticalMeasures} is dedicated to proving the existence
of critical measures for sets satisfying {\ConditionZetaText}.
We begin with \cref{sec:MeasureConstruction}, which details the construction of a spread-out measure
using packing sets and establishes our main technical tool, \cref{measure_SC_has_growth_order}.
Building on this, \cref{sub:CriticalMeasureExistence} proves our main existence result,
\cref{equivalence_lower_minkowski_existence_of_measure},
which immediately yields \cref{thm:MainResultCriticalMeasureExistence}.
\cref{sub:TransferringCriticalMeasures} concludes the section by establishing
two additional technical tools:
A transfer result for transferring measures satisfying a small-ball condition via ``sub-H{\"o}lder'' maps,
and a clarification of the role of the ``ambient space'' $\XX$
in our definition (\Cref{def:CriticalMeasure}) of a critical measure.

\cref{sec:sets_satisfying_cond_zeta} investigates the generality of {\ConditionZetaText}.
\cref{sec:convex_bounded_and_complete_are_zeta} shows that every convex, bounded, and complete subset
of a quasi-normed space satisfies {\ConditionZetaText},
while \cref{sec:unit_balls_are_zeta} extends this to (continuously embedded) unit balls
of quasi-Banach spaces.
\cref{sec_unit_Balls_B_F_are_zeta} applies these observations to Besov- and Triebel-Lizorkin spaces,
providing proofs of \cref{existence_crit_measure_isotropic,existence_crit_measure_dms}.

The appendix sections contain postponed technical proofs and auxiliary constructions.
The proofs for the lossy compression- and non-linear approximation results in \cref{sub:PhaseTransitionConsequences}
are provided in \cref{sec:TechnicalProofsIntroduction}.
\cref{sec:QuasiMetricTechnicalProofs}
contains proofs for several technical results regarding quasi-metric spaces
that are used in \Cref{sec:intro,sec:ExistenceOfCriticalMeasures}.
In \cref{sec:proofs_of_technical_results_on_function_spaces}, we collect
and prove several facts regarding the relation between
entropy numbers of linear maps and the power-exponential Minkowski dimension,
which are then used to prove technical results regarding
function spaces that are crucial for \Cref{sec_unit_Balls_B_F_are_zeta}.
Finally, \cref{sec:example_lmd<umd} presents a construction of a compact convex set
whose upper and lower power-exponential Minkowski dimensions strictly differ,
and \cref{sec:standard_technical_lemmas} collects standard technical lemmas.

\subsection{Usage of ``AI'' tools}

The core results in this paper were fully obtained before April 2026,
i.e., before ``AI'' tools based on large language models (LLMs)
showed the ability to prove substantial, genuinely novel results.

Consequently, the use of these tools was restricted to the following:
\begin{itemize}
  \item The construction of the compact convex set with non-existing
        power-exponential Minkowski dimension in \Cref{sec:example_lmd<umd}
        was suggested by Google Gemini and heavily simplified by us.

        Gemini did not provide any source for this construction,
        but when we later asked ChatGPT to suggest additional related work
        that we might have missed, it pointed us to the paper \cite{HelfterScales},
        whose Example~4.4 is roughly similar in spirit,
        with one decisive difference being that the set constructed in \cite{HelfterScales} is
        a Cartesian product of discrete sets, whereas our set is convex.

  \item Google Gemini was used as an OCR tool, i.e., for transcribing
        handwritten scanned proofs to LaTeX.

  \item LLM-based tools were used for language improvements.

  \item ChatGPT 5.6 Sol was used for a final proofreading of the paper.
        This included suggesting additional references
        (including \cite{HelfterScales,fan2026infinitedimensionalmultifractalanalysis}),
        all of which we carefully checked.
\end{itemize}

The authors have independently verified, validated, and rewritten all parts of the paper
influenced by LLM-generated material and take full responsibility for the mathematical content
of the paper.

%% file: parts/content.tex
\section{Existence of critical measures}
\label{sec:ExistenceOfCriticalMeasures}

This section proves the existence of critical measures for sets satisfying
\ConditionZetaText, see \Cref{equivalence_lower_minkowski_existence_of_measure}.
The main tools for the proof are \cref{measure_SC_n_is_well_def,measure_SC_has_growth_order}
which construct a Borel probability measure on $\XX$ that is concentrated on $\SC$
(in the sense that $\mu^\ast (\XX \setminus \SC) = 0$ and hence $\mu^\ast(\SC) = 1$)
and satisfies a certain small-ball condition.

The idea for constructing $\mu$ is as follows:
We start with a sequence of finite sets $\emptyset \neq \SC_n \subset \SC$,
where in most cases we will take $\SC_n$ to be a maximal $2^{-n}$-separated subset of $\SC$.
The naive idea for creating a ``spread-out measure'' would be to take
a random vector $Y_n$ to be uniformly chosen from $\SC_n$
(with the $Y_n$, $n \in \N$, being independent),
and then let $\mu$ be the law (i.e., the distribution) of
the ``random walk'' $X := \sum_{n=1}^{\infty} Y_n$.
The main issue with this approach is that the sum defining $X$ does not necessarily converge.
To circumvent this issue, we require that $\SC$ satisfies \ConditionZetaTextAlphaC{(\alpha,C_0)}
and then consider the ``damped'' random walk
\[
  X := \sum_{n=1}^{\infty} \frac{Y_n}{C_0 \cdot n^\alpha}
  ,
\]
which is then guaranteed to converge to an element of $\SC$.

\Cref{sec:MeasureConstruction} below shows that this construction indeed yields
a well-defined Borel measure on $\XX$ that satisfies
$\mu^\ast (\XX \setminus \SC) = 0$ and $\mu^\ast (\SC) = 1$.
Since $\SC$ is not assumed to be Borel measurable, these properties are non-trivial.
Moreover, \Cref{sec:MeasureConstruction} proves \cref{measure_SC_has_growth_order}, which
states that if the sets $\SC_n$ are $2^{-n}$-separated with a certain growth
of the cardinalities $\# \SC_n$, then the measure $\mu$ satisfies a certain
small-ball condition.
Roughly speaking, this is shown by choosing $n = n(r)$ suitably and then
bounding $\PP (X \in \ball(x,r))$ by conditioning on all $Y_m$ with $m \neq n$
and using that $Y_n$ is suitably spread out.
We also prove a kind of converse of this statement (which is probably folklore),
namely that if a probability measure $\mu$ on $\SC$ satisfies a certain small-ball condition,
then this implies a lower bound on the cardinality of any $\eps$-net for $\SC$.

In \Cref{sub:CriticalMeasureExistence}, we use \Cref{measure_SC_has_growth_order}
to prove the existence of a critical measure for any set $\SC \subset \XX$
satisfying \ConditionZetaText; see \Cref{equivalence_lower_minkowski_existence_of_measure}.

Finally, in \Cref{sub:TransferringCriticalMeasures},
we establish two technical properties of critical measures
that are frequently useful:
The first of these results is a ``transfer result'' which shows that if $\Phi : \SC \to \SC '$
satisfies the ``lower Hölder condition'' $\varrho' (\Phi(x), \Phi(x')) \gtrsim (\varrho(x, x'))^\beta$,
and if $\mu$ is a probability measure on $\SC$ satisfying a small-ball condition with parameter $s$,
then the push-forward $\Phi_{\#} \mu$ satisfies a small-ball condition with parameter $s / \beta$.
The second result clarifies the exact definition of a critical measure:
Our definition of a critical measure (\Cref{def:CriticalMeasure})
is formulated for subsets $\SC$ of an "ambient" (quasi)-metric space $\XX$.
\Cref{restriction_inherits_criticality} in \Cref{sub:TransferringCriticalMeasures} shows that if
one ``forgets'' about the ambient space $\XX$, the resulting ``intrinsic definition''
is equivalent to the original ``extrinsic notion'' from \Cref{def:CriticalMeasure}.

\subsection{Constructing a spread-out measure using packing sets}
\label{sec:MeasureConstruction}

The following \cref{definition_measure_SC_n} and \cref{measure_SC_n_is_well_def}
show that on sets satisfying {\ConditionZetaText},
the construction using the ``damped'' random walk outlined at the start of this section
indeed yields a well-defined probability measure $\mu$.

\begin{definition}
    \label{definition_measure_SC_n}
    Let $(\XX,\norm{\cdot})$ be a quasi-normed space.
    Let $\SC\subset \XX$ satisfy \ConditionZetaTextAlphaC{(\alpha,C_0)}.
    For every $n\in \NN$ let $\SC_n\subset \SC$ be a non-empty, finite subset.
    Then we define the measure
    \begin{equation*}
        \mu(A)
        := \mu\bigl[ (\SC_n)_{n\in \NN},\alpha,C_0\bigr](A)
        := \PP
           \biggl(\,\,
             \sum_{n=1}^{\infty}
               \frac{Y_n}{C_0 \cdot n^{\alpha}} \in A
           \biggr), 
        \quad A \in \Borel(\XX),
    \end{equation*}
    where the random vectors $(Y_n)_{n\in \NN}$ are independent
    and for every $n$ the random vector $Y_n$ is uniformly distributed on $\SC_n$.
\end{definition}

The following \cref{measure_SC_n_is_well_def} shows that the measure $\mu$
in \cref{definition_measure_SC_n} is well-defined.
Its proof is rather technical and is deferred to \cref{sec:proof_of_measure_SC_n_is_well_def}.

\begin{lemma}\label{measure_SC_n_is_well_def}
  Let $(\XX, \| \cdot \|)$ be a quasi-normed space,
  let $\emptyset \neq \SC \subset \XX$ satisfy {\ConditionZetaTextAlphaC{(\alpha,C_0)}} for certain $\alpha, C_0 > 0$.
  Finally, for each $n \in \N$, let $\emptyset \neq \SC_n \subset \SC$ be finite.\\
  Then the measure $\mu = \mu[(\SC_n)_{n \in \N}, \alpha, C_0]$
  introduced in \Cref{definition_measure_SC_n} is a well-defined Borel probability measure on $\XX$.
  Furthermore, we have $\mu^\ast (\XX \setminus \SC) = 0$ and $\om{\mu}(\SC)=1$.
\end{lemma}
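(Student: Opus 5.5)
We realize everything on a concrete product probability space and push forward by a measurable map. Let $\Omega := \prod_{n\in\N} \SC_n$ with the product of the discrete $\sigma$-algebras and the product $\PP$ of the uniform distributions, and let $Y_n$ be the coordinate projections; these are independent and uniform on $\SC_n$. The natural candidate is $\Phi:\Omega\to\XX$, $\Phi(y) := \sum_{n} y_n/(C_0 n^\alpha)$, and $\mu := \Phi_\#\PP$. Before this makes sense we must check two things: that the series converges in $\XX$ for every $y\in\Omega$ (even though $\XX$ need not be complete), and that $\Phi$ is Borel measurable.

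\textbf{Convergence and the value of the limit.} By \ConditionZetaTextAlphaC{(\alpha,C_0)}, the statement ``$\sum_n x_n/(C_0n^\alpha)\in\SC$'' presupposes that the series converges in $\XX$ whenever $x_n\in\SC$. So for each fixed $y\in\Omega$ the partial sums $S_N(y):=\sum_{n\le N} y_n/(C_0 n^\alpha)$ converge to a point $\Phi(y)\in\SC$. Hence $\Phi$ is well defined and $\Phi(\Omega)\subset\SC$.

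\textbf{Measurability.} Each $S_N$ depends only on finitely many coordinates, so it takes finitely many values on cylinder sets and is therefore measurable. Since the topology need not be metrizable by the quasi-metric itself, we first apply \Cref{thm:Aoki_Rolewicz}, which gives an equivalent $p$-norm $\|\cdot\|_\ast$. Then $d_\ast(x,y):=\|x-y\|_\ast^p$ is a genuine metric inducing the same topology, so $\Borel(\XX)$ is the Borel $\sigma$-algebra of a metric space. A pointwise limit of measurable maps into a metric space is measurable: for closed $F$,
\[
\Phi^{-1}(F)=\bigcap_k\bigcup_M\bigcap_{N\ge M} S_N^{-1}(F_{1/k}),
\]
where $F_{1/k}$ is the open $1/k$-neighbourhood of $F$ in $d_\ast$. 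The technical point here is that the open sets are exactly the $d_\ast$-open sets, which follows from the equivalence of the quasi-norms. Thus $\Phi$ is Borel measurable, and $\mu=\Phi_\#\PP$ is a Borel probability measure. It coincides with the law in \Cref{definition_measure_SC_n}. Since any independent family with the same marginals yields the same joint law on $\Omega$, the definition does not depend on the particular underlying probability space.

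\textbf{Concentration on $\SC$.} This is the main obstacle, because $\SC$ need not be Borel, so we cannot simply write $\mu(\SC)=1$. We handle it through the outer measure. If $A\in\Borel(\XX)$ satisfies $A\supset\XX\setminus\SC$, then $\Phi^{-1}(A)\subset\Phi^{-1}(\XX\setminus\SC)=\emptyset$, because $\Phi(\Omega)\subset\SC$. This suggests $\mu(A)=0$, but that conclusion does not follow, since a Borel $A$ containing $\XX\setminus\SC$ may still meet $\SC$. Instead we bound $\om\mu(\XX\setminus\SC)$ by choosing a good Borel superset. Since $\Phi(\Omega)\subset\SC$, it suffices to find a Borel set $B$ with $\Phi(\Omega)\subset B\subset\SC$. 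Then $\XX\setminus B$ is a Borel superset of $\XX\setminus\SC$ with $\mu(\XX\setminus B)=\PP(\Phi\notin B)=0$.

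The obvious candidate is $B=\Phi(\Omega)$ itself. Here $\Omega$ is compact metrizable, being a product of finite sets, and $\Phi$ is continuous on it. Indeed $\|\cdot\|_\ast$ makes the tails small uniformly in $y$: by the $p$-triangle inequality, together with boundedness of $\SC$ (\Cref{remark_condition_zeta_implies}), one might hope for a bound like $\sum_{n>N}\|y_n\|_\ast^p/(C_0^pn^{\alpha p})$, but this sum need not converge when $\alpha p\le1$. So instead we use condition $(\zeta)$ applied to the tail sequences to get uniform convergence. If this does not work directly, we would first try a Baire/Cauchy-type argument as in the proof of \Cref{remark_condition_zeta_implies}. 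Granting continuity, $\Phi(\Omega)$ is compact, hence closed in the Hausdorff metric topology and in particular Borel. This gives $\om\mu(\XX\setminus\SC)=0$ and consequently $\om\mu(\SC)\ge\mu(\Phi(\Omega))=1$.
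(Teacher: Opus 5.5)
Your measurability step is fine and is essentially the paper's Step~1. The concentration step, however, has a genuine gap at exactly the point that carries it. You need $\Phi$ to be continuous on the compact space $\Omega=\prod_n\SC_n$, and you rightly note that the $p$-triangle inequality gives no uniform tail bound when $\alpha p\le 1$. You then write ``granting continuity''. Your proposed remedy, applying \ConditionZetaText{} to tails of a given $y$, only yields convergence of $S_N(y)$ for each fixed $y$, i.e.\ pointwise convergence, which you already have. Continuity of the limit needs convergence that is uniform in $y\in\Omega$. Without it you know neither that $\Phi(\Omega)$ is compact nor even that it is Borel, so $\om{\mu}(\XX\setminus\SC)=0$ is not established.

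The missing idea is a gluing argument across \emph{different} sequences. Let $\norm{\cdot}_\ast$ be the $p$-norm from \Cref{thm:Aoki_Rolewicz}. The claim is that for every $\eta>0$ there is $N_0$ such that $\norm{\sum_{n=N+1}^{M}x_n/(C_0 n^\alpha)}_\ast\le\eta$ for all $M>N\ge N_0$ and all $(x_n)_n\subset\SC$. If not, there are blocks $N_1<M_1\le N_2<M_2\le\cdots$ and sequences $x^{(k)}\subset\SC$ whose $k$-th block sums have $\norm{\cdot}_\ast$-size $>\eta$. Set $x_n:=x^{(k)}_n$ for $N_k<n\le M_k$ and $x_n:=x_0\in\SC$ otherwise. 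By \ConditionZetaText{} the series of this single sequence converges, so $\norm{S_{M_k}-S_{N_k}}_\ast\to0$, a contradiction. Letting $M\to\infty$ then gives $\sup_{y\in\Omega}\norm{\Phi(y)-S_N(y)}_\ast\le\eta$ for $N\ge N_0$. Each $S_N$ factors through the finite discrete set $\SC_1\times\dots\times\SC_N$, so $\Phi$ is a uniform limit of continuous maps, hence continuous, and the rest of your argument goes through.

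Completed this way, your route genuinely differs from the paper's. The paper never uses continuity, only Borel measurability of $S:\CalZ\to\overline{\CalP}$, where $\overline{\CalP}$ is the closure of the countable set of partial sums inside the completion $\widehat{\XX}$. It then uses that the analytic set $S(\CalZ)$ is universally measurable to find a Borel set $\widehat A\subset S(\CalZ)\subset\SC$ of full measure. Your version avoids completions and analytic sets. It also gives the stronger conclusion that $\mu$ is carried by the compact set $\Phi(\Omega)\subset\SC$.
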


The following \cref{measure_SC_has_growth_order} is our main technical result.
It shows that if the sets $\SC_n$ in \Cref{definition_measure_SC_n}
are suitably separated subsets of $\SC$, then the resulting distribution $\mu$
of the damped random walk will have its mass spread accordingly.

\begin{theorem}\label{measure_SC_has_growth_order}
    Let $(\XX,\norm{\cdot})$ be a quasi-normed space with modulus of concavity $C_2 \ge 1$.
    Assume that $\SC$ satisfies {\ConditionZetaTextAlphaC{(\alpha,C_0)}} for some constants $\alpha, C_0>0$.
    Let $s\ge0$, $t\in \RR$ (with $t>0$ if $s=0$), and let $c_1>0$. 
    Let $(\SC_n)_{n\in\NN}$ be a sequence of finite $2^{-n}$-separated subsets $\SC_n \subset \SC$,
    and let $n_0\in \NN$ such that
    \begin{equation}
        \num\SC_n \ge \exp\bigl( c_1 \cdot 2^{ns}n^{t}\bigr)
        \quad \text{for all $n\ge n_0$.}
        \label{eqn_assumption_lower_bound_packing_num}
    \end{equation}
    Then the measure $\mu$ given in \cref{definition_measure_SC_n} satisfies the following:
    There exist $r_0>0$ and $c>0$ such that
    \begin{equation}
        \om{\mu}\bigl(\ball(x,r)\bigr) 
        \le \exp\biggl( - c  
        \cdot \Bigl(\frac{1}{r}\Bigr)^{s} \cdot \Bigl( \log_2 \Bigl(\frac{1}{r}\Bigr)\Bigr)^{t-\alpha s}\biggr)
        \label{eqn_growth_of_mu}
    \end{equation}
    for all $r\in (0,r_0)$ and all $x\in \XX$.
\end{theorem}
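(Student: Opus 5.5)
The plan is to bound the probability that the damped random walk lands in a small ball by freezing all steps except one well-chosen step $Y_n$, and then to use that a small enough ball contains at most one point of the $2^{-n}$-separated set $\SC_n$. Write $X = \sum_{m} \frac{Y_m}{C_0 m^\alpha}$, so that $\mu$ is the law of $X$ by \Cref{definition_measure_SC_n} and \Cref{measure_SC_n_is_well_def}.

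\emph{Step 1: removing measurability issues.} By \Cref{thm:Aoki_Rolewicz} there is an equivalent $p$-norm $\norm{\cdot}_\ast$ with $K^{-1}\norm{\cdot} \le \norm{\cdot}_\ast \le K \norm{\cdot}$. Then $\ball(x,r\mid\norm{\cdot}) \subset \ball(x,Kr\mid\norm{\cdot}_\ast) =: \widetilde B$. The set $\widetilde B$ is Borel, because $\norm{\cdot}_\ast$ is continuous for the topology, and this topology is the same for both quasi-norms. It therefore suffices to bound $\mu(\widetilde B) = \PP(X\in\widetilde B)$ and then absorb $K$ into the constants $c$ and $r_0$. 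I will check that $X$ is a measurable map into $(\XX,\Borel(\XX))$; this is part of the content of \Cref{measure_SC_n_is_well_def}.

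\emph{Step 2: conditioning on all but one step.} Fix $n$ and split $X = Z + \frac{Y_n}{C_0 n^\alpha}$, where $Z := \sum_{m\ne n} \frac{Y_m}{C_0 m^\alpha}$ is independent of $Y_n$. If both $y$ and $y'$ in $\SC_n$ satisfy $Z + \frac{y}{C_0 n^\alpha},\, Z+\frac{y'}{C_0n^\alpha}\in\widetilde B$, then the quasi-triangle inequality for $\norm{\cdot}$ (used twice, through $\norm{\cdot}_\ast$) gives $\norm{y-y'} \le C_3\, C_0\, n^\alpha r$, where $C_3$ depends only on $K$ and $C_2$. So if
\[
  C_3\, C_0\, n^\alpha\, r \le 2^{-n},
\]
then separation forces $y=y'$. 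Hence for each fixed value $z$ of $Z$ at most one $y\in\SC_n$ is admissible. By Fubini on the product of the law of $Z$ and the uniform law of $Y_n$ (the relevant set is Borel by Step 1), this gives
\[
  \mu(\widetilde B) \le \frac{1}{\num \SC_n} \le \exp\bigl(-c_1 2^{ns} n^t\bigr)
  \qquad \text{whenever } n \ge n_0 .
\]

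\emph{Step 3: choosing $n=n(r)$.} Take $n$ minimal with $C_3 C_0 n^\alpha 2^n \le 1/r$. For $r$ small this gives $n\ge n_0$, $n \asymp \log_2(1/r)$, and, using minimality, $2^{n} \gtrsim (1/r)\,(\log_2(1/r))^{-\alpha}$. Substituting,
\[
  2^{ns} n^t \gtrsim r^{-s}\bigl(\log_2(1/r)\bigr)^{t-\alpha s}.
\]
Here $n^t \asymp (\log_2(1/r))^t$ holds for either sign of $t$, since $n\asymp\log_2(1/r)$. The case $s=0$, where $t>0$, is included. This yields \eqref{eqn_growth_of_mu}.

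I expect Step 1 to be the main obstacle: making the conditioning rigorous when balls need not be Borel, and checking measurability of $Z$ and of the set used in Fubini. The counting in Step 2 and the asymptotics in Step 3 are routine.
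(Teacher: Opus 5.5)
Your proposal is correct and follows the paper's proof essentially step for step: Aoki--Rolewicz to dominate $\ball(x,r)$ by a Borel $\norm{\cdot}_\ast$-ball, splitting off $Y_n$ via independence and Fubini, using $2^{-n}$-separation to allow at most one point of $\SC_n$ per small ball, and taking $n(r)\approx\log_2(1/r)-\alpha\log_2\log_2(1/r)$; the only cosmetic difference is that you define $n(r)$ implicitly rather than by the paper's explicit floor formula. One slip needs fixing: since $\nu\mapsto\nu^{\alpha}2^{\nu}$ is increasing, the \emph{minimal} $n$ with $C_3C_0n^{\alpha}2^{n}\le 1/r$ is simply $n=1$, so you must take the \emph{maximal} such $n$; this is in fact what your estimate $2^{n}\gtrsim (1/r)(\log_2(1/r))^{-\alpha}$ uses, via the failure of the constraint at $n+1$.
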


\begin{proof}
    \emph{Step 1.}
    We first derive some preliminary results.
    \cref{definition_measure_SC_n} is applicable and \cref{measure_SC_n_is_well_def}
    implies that $\mu$ is well-defined.
    By \cref{thm:Aoki_Rolewicz}, there exist $p \in (0,1]$, a $p$-norm $\norm{\cdot}_{\ast}$
    and a constant $C_3\ge 1$ such that
    \begin{equation*}
        \ball\Bigl( x, \frac{r}{C_3} \mid \norm{\cdot}_{\ast}\Bigr)
        \subset \ball(x,r) \subset \ball\bigl(x,C_3 r \mid \norm{\cdot}_{\ast}\bigr) =: \ball^{\ast}(x,C_3r)
        \quad \text{for all $x\in \XX$, $r>0$.}
    \end{equation*}
    Since $d(x,y):= \norm{x-y}^p_{\ast}$ is a metric,
    $\ball^{\ast}(x,C_3 r)$ is closed in the topology induced by $\norm{\cdot}_{\ast}$.
    Since $\norm{\cdot}$ and $\norm{\cdot}_{\ast}$ are bi-Lipschitz equivalent,
    they induce the same topology and thus the same Borel sigma-algebras.
    It follows that $\ball^{\ast}(x,C_3 r) \in \Borel(\XX)$.

    \medskip{}
    \emph{Step 2:}
    We now investigate the growth behavior of $\mu$.
    Let $x\in \XX$ and let $r>0$ be arbitrary. 
    Furthermore, let $n\in \NN$ be arbitrary.
    We will make some suitable choice for $n$ and restrictions on the maximal admissible size of $r$ further below.
    Using that $\om{\mu}$ is monotone at (a) and that $\om{\mu}=\mu$ on Borel-measurable subsets at (b), we obtain
    \begin{equation}
        \label{eqn_1_11}
        \om{\mu}(\ball(x,r)) 
        \overset{(a)}{\le} \om{\mu}\bigl( \ball^{\ast}(x,C_3 r)\bigr)
        \overset{(b)}{=} \mu\bigl( \ball^{\ast}(x,C_3 r)\bigr).
    \end{equation}
    We compute
    \begin{align}
        \mu\bigl(\ball^{\ast}(x,C_3 r)\bigr)
       &= \PP\bigl( Y \in \ball^{\ast}(x,C_3 r)\bigr)\nonumber\\
       &= \PP
          \Biggl(
            \frac{Y_n}{C_0\cdot n^{\alpha}}
            + \sum_{\substack{m=1\\m\ne n}}^{\infty} \frac{Y_m}{C_0\cdot m^{\alpha}}
            \in \ball^{\ast}(x,C_3 r)
          \Biggr)\nonumber\\
       &= \int \one_{\ball^{\ast}(x,C_3 r)}\Bigl(\frac{Y_n}{C_0\cdot n^{\alpha}} + Z_n\Bigr) \dif \PP,
       \label{eqn_1_1}
    \end{align}
    where we define $Z_n:= \sum_{\substack{m=1\\m\ne n}}^{\infty} \frac{Y_m}{C_0\cdot m^{\alpha}}$.
    Notice that by an argument as in the proof of \cref{measure_SC_n_is_well_def},
    $Z_n$ is well-defined and Borel measurable.
    More precisely, $Z_n$ is measurable with respect to the Borel $\sigma$-algebra
    on the co-domain and the sigma-algebra generated by $(Y_m)_{m\in \NN\setminus\{n\}}$
    on the domain.
    Since the family $(Y_m)_{m\in\NN}$ is independent,
    we have that $Y_n$ is independent of $(Y_m)_{m\in \NN\setminus\{n\}}$,
    see \cite[Corollary 4.7]{kallenbergFoundationsModernProbability2021}.
    It follows that $Y_n$ is independent of $Z_n$.
    Hence, by \cite[Lemma 4.10]{kallenbergFoundationsModernProbability2021},
    we have $\PP^{Y_n,Z_n} = \PP^{Y_n} \otimes \PP^{Z_n}$.
    By Tonelli's theorem, it follows that
    \begin{align}
        \int \one_{\ball^{\ast}(x,C_3 r)}\Bigl(\frac{Y_n}{C_0n^{\alpha}} + Z_n\Bigr) \dif \PP
        &= \int \one_{\ball^{\ast}(x,C_3 r)}\Bigl(\frac{y_n}{C_0n^{\alpha}} + z_n\Bigr) \, \dif\PP^{(Y_n,Z_n)}(y_n,z_n)\nonumber\\
        &= \iint \one_{\ball^{\ast}(x,C_3 r)}\Bigl(\frac{y_n}{C_0n^{\alpha}} + z_n\Bigr) \, \dif\PP^{Y_n}(y_n)\, \dif\PP^{Z_n}(z_n).
        \label{eqn_1_2}
    \end{align}
    Now fix $z_n \in \XX$ and consider the inner integral.
    We have
    \begin{align}
       \int\one_{\ball^{\ast}(x,C_3 r)}\Bigl(\frac{y_n}{C_0n^{\alpha}} + z_n\Bigr) \, \dif\PP^{Y_n}(y_n)
       &= \PP\left( \frac{Y_n}{C_0n^{\alpha}} + z_n \in \ball^{\ast}(x,C_3 r)\right)\nonumber\\
       &= \PP\Bigl( Y_n \in \ball^{\ast}\bigl(C_0n^{\alpha}(x-z_n),C_3C_0n^{\alpha}r\bigr)\Bigr)\nonumber\\
       &\le \sup_{z\in \XX} \PP\Bigl( Y_n \in \ball^{\ast}\bigl(z,C_3C_0n^{\alpha}r\bigr)\Bigr).
        \label{eqn_1_3}
    \end{align}
    Notice that this upper bound does not depend on $z_n$.
    Combining \eqref{eqn_1_1}, \eqref{eqn_1_2}, and \eqref{eqn_1_3}, we deduce that
    \begin{equation}
        \mu\bigl(\ball^{\ast}(x,C_3r)\bigr) 
        \le\sup_{z\in \XX} \PP\Bigl( Y_n \in \ball^{\ast}\bigl(z,C_3C_0n^{\alpha}r\bigr)\Bigr).
      \label{eqn_1_8}
  \end{equation}

  Now let $z \in \XX$ be fixed and let $n\ge n_0$.
  By the bi-Lipschitz equivalence of $\norm{\cdot}$ and $\norm{\cdot}_{\ast}$, 
  we have
  \begin{equation}
      \label{eqn_1_10}
      \ball^{\ast}\bigl(z,C_3C_0n^{\alpha}r\bigr)   
      \subset \ball\bigl(z,C_3^2C_0n^{\alpha}r\bigr).
  \end{equation}
  Using that $Y_n$ is uniformly distributed on $\SC_n$ at (a) and \eqref{eqn_1_10} at (b), we infer that
  \begin{align}
      \PP\Bigl( Y_n \in \ball^{\ast}\bigl(z,C_3C_0n^{\alpha}r\bigr)\Bigr)
     &\overset{(a)}{=} \frac{ \num\bigl(\ball^{\ast}(z,C_3C_0n^{\alpha}r) \cap \SC_n\bigr)}{\num\SC_n}\nonumber\\
     &\overset{(b)}{\le} \frac{ \num\bigl(\ball(z,C_3^2C_0n^{\alpha}r) \cap \SC_n\bigr)}{\num\SC_n}
     \label{eqn_1_5}
  \end{align}
  Combining \eqref{eqn_1_11}, \eqref{eqn_1_8}, and \eqref{eqn_1_5}, we deduce that
  \begin{equation}
        \om{\mu}(\ball(x,r)) 
        \le \frac{ \num\bigl(\ball(z,C_3^2C_0n^{\alpha}r) \cap \SC_n\bigr)}{\num\SC_n}
        \quad \text{for all $n\in \NN$, $x\in \XX$, $r>0$.}
        \label{eqn_1_20}
  \end{equation}

  \medskip{}
  \emph{Step 3:}
  We now choose a suitable value for $n\in \NN$ in \eqref{eqn_1_20}.
  Assume for the moment that
  \begin{equation}
      2C_3^2C_2 C_0 n^{\alpha} r \le 2^{-n}.
      \label{eqn_1_4}
  \end{equation}
  Then, for $z',z'' \in \ball(z,C_3^2C_0n^{\alpha}r) \cap \SC_n$ we would have 
  \begin{equation*}
      \norm{z'-z''} 
      \le C_2 \cdot \bigl( \norm{z'-z} +\norm{z-z''}\bigr)
      \le C_2 \cdot 2C_3^2C_0 n^{\alpha}r
      \le 2^{-n}.
  \end{equation*}
  Since $\SC_n$ is $2^{-n}$-separated, we would infer that $z'=z''$.
  Therefore, we would obtain that $\num\bigl(\ball(z,C_3^2C_0n^{\alpha}r) \cap \SC_n\bigr)\le 1$.

  With this consideration in mind, we define 
  \begin{equation}
      n:= n(r):= \Bigl\lfloor \log_2(u) + \log_2\bigl(\log_2(u)^{-\alpha}\bigr) \Bigr\rfloor,
      \qquad
      \text{where } u:=u(r):=\frac{1}{2C_3^2C_2C_0r}.
      \label{eqn_1_6}
  \end{equation}
  Notice that $\lim_{r\to 0}u(r)=\infty$.
  Hence, for all sufficiently small $r > 0$ (depending on $C_0,C_2,C_3$)
  we have $\log_2(u)>0$ and so $n(r)$ is well-defined in $\ZZ$.
  Furthermore, we have
  \begin{equation*}
      n(r)
      \ge \log_2(u) + \log_2\bigl(\log_2(u)^{-\alpha}\bigr) -1
      = \log_2(u)\Bigl(1 -\alpha \frac{\log_2\log_2(u) }{\log_2(u)} - \frac{1}{\log_2(u)}\Bigr)
      .
  \end{equation*}
  Therefore, for all sufficiently small $r > 0$ (depending on $C_0,C_2,C_3,\alpha,n_0$)
  we have $n(r) \in \NN$ and $n(r) \ge n_0$.
  Regarding \eqref{eqn_1_4}, we notice that the function $\nu \mapsto 2^{\nu} {\nu}^{\alpha}$ is increasing on $(0,\infty)$.
  Hence, for all sufficiently small $r > 0$ (depending on $C_0,C_2,C_3,\alpha,n_0$) we have
  \begin{align*}
      2^n n^{\alpha} 
      &\le 2^{\log_2(u) + \log_2\bigl(\log_2(u)^{-\alpha}\bigr)} \Bigl(\log_2(u) + \log_2\bigl(\log_2(u)^{-\alpha}\bigr)\Bigr)^{\alpha}\\
      &= u \cdot \log_2(u)^{-\alpha} \cdot \Bigl(\log_2(u) + \log_2\bigl(\log_2(u)^{-\alpha}\bigr)\Bigr)^{\alpha}\\
      &= u \cdot \Bigl( 1 -\alpha \frac{\log_2\log_2(u)}{\log_2(u)}\Bigr)^{\alpha}.
  \end{align*}
  Thus, for all sufficiently small $r > 0$ (depending on $C_0,C_2,C_3,\alpha,n_0$),
  we have $2^nn^{\alpha}\le u(r)$, which is precisely \eqref{eqn_1_4}.
  Therefore, \eqref{eqn_1_20} implies that for all sufficiently small $r > 0$
  (depending on $C_0,C_2,C_3,\alpha,n_0$) and $n(r)$ as in \eqref{eqn_1_6}, we have
  \begin{equation}
      \om{\mu}(\ball(x,r)) \le \frac{1}{\num \SC_n}
      \quad \text{for all $x\in \XX$}.
      \label{eqn_1_21}
  \end{equation}

  \medskip{}
  \emph{Step 4a.}
  We consider the case $s>0$ and $t\in \RR$.
  Looking back at \eqref{eqn_1_21} and recalling \eqref{eqn_assumption_lower_bound_packing_num},
  it remains to make the dependency of $2^{ns}n^t$ on $r$ explicit with $n=n(r)$ is as in \eqref{eqn_1_6}.
  Since the map $\nu \mapsto 2^{\nu s }\nu^t$ is increasing on $[\nu_0, \infty)$
  for sufficiently large $\nu_0 \geq 1$ (as can easily be seen by taking the derivative),
  we have for all sufficiently small $r > 0$ (depending on $C_0,C_2,C_3,\alpha,n_0,s,t$) that
    \begin{align*}
        2^{ns}n^t 
        &\ge 2^{s\bigl(\log_2(u) + \log_2\bigl(\log_2(u)^{-\alpha}\bigr)-1\bigr)} \cdot \Bigl( \log_2(u) + \log_2\bigl(\log_2(u)^{-\alpha}\bigr)-1\Bigr)^{t}\\
        &= u^s \cdot \log_2(u)^{-s\alpha} \cdot 2^{-s} \cdot \Bigl( \log_2(u) + \log_2\bigl(\log_2(u)^{-\alpha}\bigr)-1\Bigr)^{t}\\
        &= u^s \cdot \log_2(u)^{t-s\alpha} \cdot 2^{-s} \cdot \Bigl( 1 -\alpha \frac{\log_2\log_2(u)}{\log_2(u)} -\frac{1}{\log_2(u)}\Bigr)^{t}.
    \end{align*}
    Hence, for all sufficiently small $r > 0$ (depending on $C_0,C_2,C_3,\alpha,n_0,s,t$), we have
    \begin{align}
        2^{ns}n^t 
        &\ge u^s \log_2(u)^{t-s\alpha} \cdot 2^{-s-1}\nonumber\\
        &= \Bigl(\frac{1}{2 C_0C_2C_3^2}\Bigr)^{s} \cdot \Bigl(\frac{1}{r}\Bigr)^{s} \cdot \Bigl(\log_2\Bigl( \frac{1}{r}\Bigr) -\log_2(2C_0C_2C_3^2)\Bigr)^{t-s\alpha}\cdot 2^{-s-1}\nonumber\\
        &=\frac{1}{2}\Bigl(\frac{1}{4 C_0C_2C_3^2}\Bigr)^{s} \cdot \Bigl(\frac{1}{r}\Bigr)^{s} \cdot \log_2\Bigl(\frac{1}{r}\Bigr)^{t-s\alpha}
        \cdot \Bigl( 1 - \frac{\log_2(2C_0C_2C_3^2)}{\log_2(1/r)}\Bigr)^{t-s\alpha}\nonumber\\
        &\ge \frac{1}{4}\Bigl(\frac{1}{4 C_0C_2C_3^2}\Bigr)^{s} \cdot \Bigl(\frac{1}{r}\Bigr)^{s} \cdot \log_2\Bigl(\frac{1}{r}\Bigr)^{t-s\alpha}.
        \label{eqn_1_7}
    \end{align}
    By \cref{eqn_assumption_lower_bound_packing_num}, it follows that
    \begin{equation*}
        \frac{1}{\num \SC_n }
       \overset{\eqref{eqn_assumption_lower_bound_packing_num}}{\le} \exp\bigl( -c_1 \cdot 2^{ns}n^{t}\bigr)
       \overset{\eqref{eqn_1_7}}{\le}\exp \Bigl( -\frac{c_1}{4}\Bigl(\frac{1}{4 C_0C_2C_3^2}\Bigr)^{s} \cdot \Bigl(\frac{1}{r}\Bigr)^{s} \cdot \log_2\Bigl(\frac{1}{r}\Bigr)^{t-s\alpha} \Bigr).
    \end{equation*}
    In combination with \eqref{eqn_1_21}, this proves \eqref{eqn_growth_of_mu} in the case $s>0$ and $t\in \RR$.

    \medskip{}
    \emph{Step 4b.} We consider the case $s=0$ and $t>0$.
    In that case, we compute
    \begin{equation*}
       n^t 
       \ge \bigl(\log_2(u) + \log_2\bigl(\log_2(u)^{-\alpha}\bigr)-1\bigr)^t
       = \log_2(u)^t \cdot \Bigl( 1 -\alpha \frac{\log_2\log_2(u)}{\log_2(u)} - \frac{1}{\log_2(u)}\Bigr)^t.
    \end{equation*}
    Hence, for all sufficiently small $r > 0$, depending on $C_0,C_2,C_3,n_0,t$, we have
    \begin{align}
        n^t 
        &\ge \frac{1}{2} \log_2(u)^t
        = \frac{1}{2}\Bigl(\log_2\Bigl( \frac{1}{r}\Bigr) -\log_2(2C_3^2C_2C_0)\Bigr)^t\nonumber\\
        &= \frac{1}{2}\Bigl(\log_2\Bigl(\frac{1}{r}\Bigr)\Bigr)^t \cdot \Bigl( 1 - \frac{\log_2(2C_3^2C_2C_0)}{\log_2(1/r)}\Bigr)^t
        \ge \frac{1}{4}\Bigl(\log_2\Bigl(\frac{1}{r}\Bigr)\Bigr)^t.
        \label{eq:1_8}
    \end{align}
    By \cref{eqn_assumption_lower_bound_packing_num}, it follows that
    \begin{equation*}
        \frac{1}{\num \SC_n }
       \overset{\eqref{eqn_assumption_lower_bound_packing_num}}{\le} \exp\bigl( -c_1 \cdot n^{t}\bigr)
       \overset{\eqref{eq:1_8}}{\le}\exp \Bigl( -\frac{c_1}{4} \bigl(\log_2(1/r)\bigr)^t \Bigr).
    \end{equation*}
    In combination with \eqref{eqn_1_21}, this proves \eqref{eqn_growth_of_mu} in the case $s=0$ and $t> 0$.
\end{proof}

\begin{remark}
 The case $s=0$ and $t=1$ corresponds to the polynomial scale.
 In that case, however, the result is not optimal.
 For example, if the packing numbers scale like $(1/\eps)^d$, or more precisely,
 if $\num\SC_n \ge 2^{nd}$ for $\eps=2^{-n}$, we get $\num\SC_n \ge \exp(\ln(2) nd)$.
 \cref{measure_SC_has_growth_order} then only provides a measure $\mu$ with
 $\mu(\ball_r) \le \exp(-c \log_2(1/r)) = r^{\tilde{c}}$,
 where $\tilde{c} > 0$ might be much smaller than $d$.
\end{remark}

In the following, we show that a kind of converse of \cref{measure_SC_has_growth_order} also holds;
see \cref{measure_gives_packings} below.
The proof relies on the following lemma.

\begin{lemma}\label{growth_order_implies_lower_bnd_covering_num}
 Let $(\SC, \varrho)$ be a non-empty quasi-metric space.
 Let $\mu$ be a Borel probability measure on $\SC$, and let $s\in (0,\infty)$ and $t\in \RR$.
 Assume that there exist $r_0>0$ and $c>0$ such that
 \begin{equation*}
     \om{\mu} \bigl( \ball(x,r)\bigr) 
    \leq \exp
         \biggl(
           - c
             \cdot \Bigl( \frac{1}{r}\Bigr)^{s}
             \cdot \Bigl( \log_2 \Bigl(\frac{1}{r}\Bigr)\Bigr)^{t}
         \biggr)
    \quad \text{for all $x\in \SC$ and all $r\in (0,r_0)$.}
 \end{equation*}
 Then for every $\eps\in (0,r_0)$ and every $\eps$-net $\SC_{\eps}\subset \SC$ we have
 \begin{equation*}
     \num\SC_{\eps}
     \geq \exp
          \biggl(
            c \cdot \left(\frac{1}{\eps}\right)^s
              \cdot \left(\log_2 \left(\frac{1}{\eps}\right)\right)^{t}
          \biggr).
 \end{equation*}
\end{lemma}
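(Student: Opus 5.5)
The plan is a direct covering argument using $\sigma$-subadditivity of the outer measure $\om{\mu}$. Fix $\eps\in(0,r_0)$ and an $\eps$-net $\SC_\eps\subset\SC$. If $\SC_\eps$ is infinite, there is nothing to prove, since the right-hand side is a finite real number. So assume $\SC_\eps$ is finite (or, more generally, countable).

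By the definition of an $\eps$-net, every $x\in\SC$ lies in $\ball(y,\eps)$ for some $y\in\SC_\eps$, so
\[
  \SC = \bigcup_{y\in\SC_\eps} \ball(y,\eps).
\]
Here the balls are taken in the quasi-metric space $(\SC,\varrho)$ itself. Since $\mu$ is a Borel probability measure on $\SC$, we have $\om{\mu}(\SC)=\mu(\SC)=1$. The balls need not be Borel measurable, but monotonicity and $\sigma$-subadditivity of $\om{\mu}$ (properties (i)--(iii) listed in \cref{sub:QuasiNormedSpacesOuterMeasures}) still apply. Together with the assumed small-ball bound at radius $r=\eps<r_0$, applied at each centre $y\in\SC_\eps\subset\SC$, this gives
\[
  1 = \om{\mu}(\SC)
  \le \sum_{y\in\SC_\eps} \om{\mu}\bigl(\ball(y,\eps)\bigr)
  \le \num\SC_\eps \cdot \exp\Bigl(-c\cdot\Bigl(\frac1\eps\Bigr)^s\cdot\Bigl(\log_2\Bigl(\frac1\eps\Bigr)\Bigr)^t\Bigr).
\]
Rearranging yields exactly the claimed lower bound on $\num\SC_\eps$.

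There is no serious obstacle; the only points needing care are the following.
\begin{enumerate}[label=(\roman*)]
  \item Use the outer measure $\om{\mu}$ throughout, since balls in a quasi-metric space may fail to be Borel.
  \item Note that the centres of the net lie in $\SC$, so that the hypothesis (stated only for $x\in\SC$) applies to them.
  \item Handle separately the trivial case of an infinite net.
\end{enumerate}
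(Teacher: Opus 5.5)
Your proposal is correct and is essentially the paper's proof: you set aside the infinite-net case, cover $\SC$ by the balls $\ball(y,\eps)$ centred at the net points, and apply subadditivity of $\om{\mu}$ together with the small-ball bound at $r=\eps$. Rearranging $1\le \num\SC_\eps\cdot\exp(\cdots)$ then gives the claimed lower bound, exactly as in the paper.
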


\begin{proof}
  Let $\eps\in (0,r_0)$ and let $\SC_{\eps} \subset \SC$ be an $\eps$-net for $\SC$.
  If $\num\SC_{\eps} = \infty$, we are done.
  Assume in the following that $\num\SC_{\eps}<\infty$.
  It follows from the definition of an $\eps$-net that $\SC \subset \bigcup_{x \in \SC_{\eps}} \ball(x,\eps)$.
  Therefore, we have
  \begin{equation*}
    1
    =\mu(\SC) 
    =\om{\mu}(\SC)
    \leq \sum_{x\in \SC_{\eps}} \om{\mu}\bigl( \ball(x,\eps)\bigr)
    \leq \num(\SC_{\eps})
         \cdot \exp
               \biggl(
                 - c
                   \cdot \Bigl( \frac{1}{\eps}\Bigr)^{s}
                   \cdot \Bigl( \log_2 \Bigl(\frac{1}{\eps}\Bigr)\Bigr)^{t}
             \biggr).
  \end{equation*}
  From this, we deduce the claim.
\end{proof}

\begin{proposition}\label{measure_gives_packings}
 Let $(\SC, \varrho)$ be a non-empty quasi-metric space.
 Let $\mu$ be a Borel probability measure on $\SC$, and let $s\in (0,\infty)$ and $t\in \RR$.
 Assume that there exist $r_0>0$ and $c_0>0$ such that
 \begin{equation*}
     \om{\mu} \bigl( \ball(x,r)\bigr) 
    \le \exp\biggl( - c_0 \cdot \Bigl( \frac{1}{r}\Bigr)^{s} \cdot \Bigl( \log_2 \Bigl(\frac{1}{r}\Bigr)\Bigr)^{t}\biggr)
    \quad \text{for all $x\in \SC$ and all $r\in (0,r_0)$.}
 \end{equation*}
 Then for every $\eps\in (0,r_0)$ there exists an $\eps$-separated subset $P_{\eps}$ with
 \begin{equation*}
   \num P_{\eps}
   \geq \exp
        \biggl(
          c_0
          \cdot \left(\frac{1}{\eps}\right)^s
          \cdot \left(\log_2 \left(\frac{1}{\eps}\right)\right)^{t}
        \biggr).
 \end{equation*}
\end{proposition}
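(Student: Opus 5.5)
The plan is to combine \cref{growth_order_implies_lower_bnd_covering_num} with the standard fact, recalled in \cref{sub:Notation}, that maximal $\eps$-separated subsets exist and are $\eps$-nets. Fix $\eps\in(0,r_0)$. By \cref{maximal_separated_subsets_exist}, there is a maximal $\eps$-separated subset $P_\eps\subset\SC$. By \cref{maximal_separated_subsets_are_nets}, $P_\eps$ is an $\eps$-net for $\SC$.

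To verify the net property directly in the quasi-metric setting: if some $x\in\SC\setminus P_\eps$ had $\varrho(x,y)>\eps$ for all $y\in P_\eps$, then $P_\eps\cup\{x\}$ would still be $\eps$-separated. This uses the symmetry of $\varrho$, and it contradicts maximality. Points $x\in P_\eps$ are trivially covered by themselves.

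Since $P_\eps$ is an $\eps$-net and the hypothesis of \cref{growth_order_implies_lower_bnd_covering_num} is exactly our assumption (with $c=c_0$), that lemma yields
\[
  \num P_\eps \ge \exp\Bigl(c_0\cdot(1/\eps)^s\cdot\bigl(\log_2(1/\eps)\bigr)^t\Bigr).
\]
This also covers the case $\num P_\eps=\infty$, in which the bound is trivial. So the same set $P_\eps$ is both $\eps$-separated and large, as required.

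The only point needing care is the existence of a maximal $\eps$-separated set when $\SC$ might be infinite or not totally bounded. This follows from Zorn's lemma, since the union of a chain of $\eps$-separated sets is $\eps$-separated, and the paper already supplies it as \cref{maximal_separated_subsets_exist}. I therefore expect no genuine obstacle; the proof is essentially a two-line combination of these results.
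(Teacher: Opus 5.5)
Your proof is correct and is the same argument the paper uses. You take a maximal $\eps$-separated subset via \cref{maximal_separated_subsets_exist}, note that it is an $\eps$-net by \cref{maximal_separated_subsets_are_nets}, and then apply \cref{growth_order_implies_lower_bnd_covering_num} with $c=c_0$.
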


\begin{proof}
 Let $\eps\in (0,r_0)$ and let $P_{\eps}\subset \SC$ be a maximal $\eps$-separated subset.
 Such a subset exists by \cref{maximal_separated_subsets_exist}.
 Then, by \cref{maximal_separated_subsets_are_nets}, $P_{\eps}$ is an $\eps$-net.
 The claim then follows from \cref{growth_order_implies_lower_bnd_covering_num}.
\end{proof}

\subsection{Critical measures}
\label{sub:CriticalMeasureExistence}

In \cref{measure_SC_has_growth_order}, we have shown that given a suitable sequence
of packings in $\SC$, we can construct a measure on $\SC$ that spreads its mass
according to the size of the packings;
see \eqref{eqn_assumption_lower_bound_packing_num} and \eqref{eqn_growth_of_mu}.
In this section, we show that if the packings used in \cref{measure_SC_has_growth_order} are maximal,
then the resulting measure is ``maximally spread out'',
meaning that it is critical in the sense of \cref{def:CriticalMeasure}.

\begin{theorem}\label{equivalence_lower_minkowski_existence_of_measure}
    Let $(\XX,\norm{\cdot})$ be a quasi-normed space.
    Let $\SC\subset \XX$ be a non-empty subset satisfying \ConditionZetaText.
    Then there exists a Borel probability measure on $\XX$ that is critical for $\SC$
    and that satisfies $\om{\mu}(\XX\setminus\SC) = 0$ and $\om{\mu}(\SC) = 1$.
\end{theorem}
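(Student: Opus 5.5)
Take $\SC_n$ to be a maximal $2^{-n}$-separated subset of $\SC$ for every $n\in\NN$, and set $\mu := \mu[(\SC_n)_{n\in\NN},\alpha,C_0]$ as in \cref{definition_measure_SC_n}, where $\SC$ satisfies \ConditionZetaTextAlphaC{(\alpha,C_0)}.

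\emph{Step 1 (well-definedness and support).} First we check that $\SC$ is totally bounded. By \cref{remark_condition_zeta_implies}, $\SC$ is bounded. If $\SC$ were not totally bounded, then $\lmd(\SC)=\infty$, and the maximal separated sets could be infinite. In that case one would replace $\SC_n$ by finite $2^{-n}$-separated subsets of arbitrarily large prescribed size (e.g. $\num \SC_n \ge \exp(2^{n^2})$), and the argument below goes through verbatim. In the totally bounded case, maximal separated sets are finite $2^{-n}$-nets, using \cref{maximal_separated_subsets_exist,maximal_separated_subsets_are_nets}. Then \cref{measure_SC_n_is_well_def} gives a Borel probability measure with $\om{\mu}(\XX\setminus\SC)=0$ and $\om{\mu}(\SC)=1$. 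It remains to verify the small-ball condition for each $0<s<\lmd(\SC)$; if $\lmd(\SC)=0$ there is nothing to check.

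\emph{Step 2 (size of the packings).} Fix $0<s<\lmd(\SC)$ and choose $s<s'<\lmd(\SC)$. Since $\SC_n$ is a $2^{-n}$-net, $\num\SC_n\ge\cn(\SC,2^{-n})$. By the definition of $\lmd$ as a $\liminf$, there is $\eps_0>0$ with $\log_2\log_2\cn(\SC,\eps)\ge s'\log_2(1/\eps)$ for $\eps<\eps_0$. This gives $\num\SC_n\ge 2^{2^{ns'}}\ge\exp(\ln 2\cdot 2^{ns'})$ for $n\ge n_0$. Hence \eqref{eqn_assumption_lower_bound_packing_num} holds with exponent $s'$, $t=0$, and $c_1=\ln 2$.

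\emph{Step 3 (small-ball bound).} \Cref{measure_SC_has_growth_order} now yields
\[
\om{\mu}(\ball(x,r))\le\exp\bigl(-c\,(1/r)^{s'}(\log_2(1/r))^{-\alpha s'}\bigr)
\]
for $r<r_0$. Since $s'>s$, we have $(1/r)^{s'-s}(\log_2(1/r))^{-\alpha s'}\ge1$ for $r$ small. After shrinking $r_0$, this gives $\om{\mu}(\ball(x,r))\le\exp(-c(1/r)^s)$, which is the small-ball condition with parameter $s$.

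Because the measure does not depend on $s$, it is critical for $\SC$. The only delicate point is the logarithmic loss $(\log)^{-\alpha s'}$ coming from the damping. It is absorbed by the strict gap $s<s'$, which is exactly why criticality only asks for $s<\lmd(\SC)$.
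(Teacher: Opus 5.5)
Your proposal is correct and follows essentially the same route as the paper: maximal $2^{-n}$-separated sets (or, if $\SC$ is not totally bounded, large finite separated sets for $n$ beyond the scale where total boundedness fails, with singletons below it), the damped random-walk measure, \cref{measure_SC_has_growth_order} with $t=0$, and absorbing the logarithmic loss through the gap $s<s'$. The differences are cosmetic: the paper uses a Dirac measure when $\lmd(\SC)=0$ instead of your random-walk measure, and your opening phrase ``we check that $\SC$ is totally bounded'' should not be read as a claim, since \ConditionZetaText{} does not imply total boundedness (you correctly treat that case separately).
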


\begin{proof}
    If $\lmd(\SC) = 0$, then we can choose $\delta_{x_0}$ for some $x_0 \in \SC$,
    since $\delta_{x_0}$ vacuously satisfies the condition in \cref{def:CriticalMeasure}.

    Assume in the following that $\lmd(\SC) > 0$.
    We will deduce the existence of $\mu$ with the desired properties
    from \Cref{measure_SC_has_growth_order}.

    \medskip{}
    \emph{Step 1:}
    We claim that there exists a sequence $(\SC_n)_{n \in \NN}$ of finite subsets
    $\SC_n \subset \SC$ such that for every $\tilde{s} \in (0,\lmd(\SC))$
    there exists $n_0 = n_0(\tilde{s})\in \NN$ such that
    \begin{equation}
        \label{eqn_3_5}
        \num \SC_n
        \geq \exp
             \bigl(
               \ln(2) \cdot 2^{n\tilde{s}}
             \,\bigr)
      \quad \text{for every $n\ge n_0$}.
    \end{equation}

    We first consider the case when $\SC$ is totally bounded.
    Let
    \begin{equation}\label{eqn_3_4}
      \scrit
      := \lmd(\SC)
       = \liminf_{\eps \to 0}
         \frac{ \log_2 \Bigl(\log_2 \bigl(\cn(\SC,\eps)\bigr)\Bigr)}
              {\log_2 \bigl(\frac{1}{\eps}\bigr)}
      \in (0,\infty].
    \end{equation}
    For $n\in \NN$ let $\SC_n \subset \SC$ be a maximal $2^{-n}$-separated subset.
    By \cref{maximal_separated_subsets_exist}, such subsets exist.
    Since $\SC$ is totally bounded, it follows that $\num\SC_n<\infty$ for all $n\in \NN$.

    Let $\tilde{s} <\scrit$.
    Then our assumption \eqref{eqn_3_4} together with \cref{lemma_technical_log_log_asymp}
    imply that there exists $\eps_0 > 0$ such that
    \begin{equation}
        \label{eqn_3_1}
        \cn(\SC,\eps) \ge 2^{\eps^{-\tilde{s}}}
        \quad
        \text{for all $\eps \in (0,\eps_0)$.}
    \end{equation}
    Since $\SC_n \subset \SC$ is a maximal $2^{-n}$-separated subset,
    it is a $2^{-n}$-net for $\SC$ by \cref{maximal_separated_subsets_are_nets}.
    Hence $\num(\SC_n)\ge \cn(\SC,2^{-n})$ for all $n\in \NN$.
    Using this at (a) and \eqref{eqn_3_1} at (b), we deduce that
    \begin{equation*}
       \num(\SC_n)
       \overset{(a)}{\ge} \cn(\SC,2^{-n}) 
       \overset{(b)}{\ge} \exp\bigl(\ln(2) \cdot 2^{n\tilde{s}}\bigr)
    \end{equation*}
    for all $n\ge n_0$, where $n_0\in \NN$ is so large that $2^{-n_0}< \eps_0$.
    This shows \eqref{eqn_3_5} in the case when $\SC$ is totally bounded.
    
    Now we consider the case when $\SC$ is not totally bounded.
    Then, there exists $n_1\in \NN$ such that for every $n \geq n_1$
    there exists an infinite $2^{-n}$-separated subset of $\SC$.
    In particular, for every $n \geq n_1$ there exists a finite $2^{-n}$-separated subset
    $\SC_n\subset \SC$ such that 
    \begin{equation*}
        \num\SC_n 
        \ge \exp\Bigl( \ln(2) \cdot 2^{n^2}\Bigr) \quad \text{for every $n\ge n_1$}.
    \end{equation*}
    For $n < n_1$, define $\SC_n := \{ x_0 \}$ for a fixed $x_0 \in \SC \neq \emptyset$.

    Let $\tilde{s} \in (0,\lmd(\SC))$ and let
    $n_0 := \max\bigl\{ \lceil \,\tilde{s}\, \rceil, n_1 \bigr\}$.
    Then for every $n \geq n_0$, we have
    \begin{equation*}
        \num\SC_n 
        \ge \exp\Bigl( \ln(2) \cdot 2^{n^2}\Bigr)
        \ge \exp\Bigl( \ln(2) \cdot 2^{n\tilde{s}}\Bigr).
    \end{equation*}
    This shows \eqref{eqn_3_5}.

    \medskip{}
    \emph{Step 2:}
    With the subsets $(\SC_n)_{n\in \NN}$ in place,
    we can now define our measure and deduce some of the claimed properties.
    Recall that $\SC$ satisfies \ConditionZetaText.
    Hence there exist $\alpha>0$ and $C_0>0$ such that $\SC$ satisfies \ConditionZetaTextAlphaC{(\alpha,C_0)}.
    Let $\mu:= \mu\bigl[ (\SC)_{n\in \NN},\alpha,C_0\bigr]$ be the measure from \cref{definition_measure_SC_n}.
    Then \cref{measure_SC_n_is_well_def} implies that $\mu$ is a Borel probability measure on $\XX$
    and that $\om{\mu}(\XX\setminus\SC)=0$ and $\om{\mu}(\SC) = 1$.

    \medskip{}
    \emph{Step 3:} We will now investigate the growth behavior of $\mu$.
    Let $s \in [0,\lmd(\SC))$ and let $\tilde{s} \in (s,\lmd(\SC))$.
    It follows from \eqref{eqn_3_5} and \cref{measure_SC_has_growth_order} with $t=0$
    that there exist $c_1= c_1(\tilde{s}\,) > 0$ and $r_1=r_1(\tilde{s}\,) > 0$ such that
    \begin{equation}
        \label{eqn_3_2}
        \om{\mu}\bigl(\ball(x,r)\bigr)
       \leq \exp
            \biggl(
              -c_1
              \cdot \Bigl(\frac{1}{r}\Bigr)^{\tilde{s}}
              \cdot \Bigl( \log_2 \Bigl(\frac{1}{r}\Bigr)\Bigr)^{-\alpha\tilde{s}}
            \biggr)
    \end{equation}
    for all $x\in \XX$ and all $r\in (0,r_1)$.
    Since $s<\tilde{s}$, we have
    \begin{equation*}
        \lim_{r\to0}\left( \Bigl(\frac{1}{r}\Bigr)^{\tilde{s}-s} \log_2 \Bigl(\frac{1}{r}\Bigr)^{-\alpha \tilde{s}} \right)
        = \infty.
    \end{equation*}
    Therefore, there exists $r_2 \in (0,r_1)$ such that
    \begin{equation}
        \label{eqn_3_3}
       \Bigl(\frac{1}{r}\Bigr)^{\tilde{s}-s} \log_2 \Bigl(\frac{1}{r}\Bigr)^{-\alpha \tilde{s}}
       \ge 1 
       \quad \text{for all $r\in (0,r_2)$.}
    \end{equation}
    Combining \eqref{eqn_3_2} with \eqref{eqn_3_3}, we infer that
    \begin{equation}
        \om{\mu}\bigl(\ball(x,r)\bigr)
       \le \exp\biggl( -c_1 \cdot \Bigl(\frac{1}{r}\Bigr)^{s} \biggr)
    \end{equation}
    for all $x\in \XX$ and all $r\in(0,r_2)$.
    This shows that $\mu$ satisfies the small-ball condition of order $s$. 
    Since $s \in [0,\lmd(\SC))$ was arbitrary, we conclude that
    the measure $\mu$ is critical for $\SC$.
\end{proof}

\subsection{Transferring critical measures and intrinsic characterization}
\label{sub:TransferringCriticalMeasures}

In applications, e.g.~in the proof of \cref{prop:ShearletsOptimalForC2Boundary},
it is useful \emph{(i)} to be able to transfer a given measure from one quasi-metric space
to another in such a way that the small-ball condition is affected in a predictable way,
and \emph{(ii)} to have an ``intrinsic'' perspective, where the ambient space $\XX$ is forgotten
and only the quasi-metric space $\SC$ (with the trace quasi-metric) is considered.
This section provides several technical results that address these points.

The following \cref{prop:transfer_principle} shows that the small-ball condition is preserved,
potentially with a different order, if one pushes a measure forward using an ``expansive'' map.
This is a generalization of \cite[Lemma~7]{grohsPhaseTransitionsRate2023},
where subsets of Banach spaces in the case $\beta=1$ were considered.

\begin{proposition}\label{prop:transfer_principle}
 Let $(\SC,\metr)$ and $(\SC',\metr')$ be two non-empty quasi-metric spaces. 
 Let $\mu$ be a Borel probability measure on $\SC$ that satisfies the small-ball condition
 with parameter $s>0$ on $\SC$.
 Let $\Phi \colon \SC \to \SC'$ be a Borel-measurable map that satisfies
 the following sub-H{\"o}lder condition:
 there exist $c>0$ and $\beta>0$ such that
 \begin{equation}
     \label{eq:sub_Hölder_small_condition}
     \metr'\bigl( \Phi(x_1), \Phi(x_2) \bigr) \ge c \cdot \metr( x_1,x_2)^{\beta} \quad \text{for all $x_1,x_2\in \SC$.}
 \end{equation}
 Then the measure $\Phi_{\#}\mu$ is a Borel probability measure on $\SC'$
 that satisfies the small-ball condition with parameter $\frac{s}{\beta}$ on $\SC'$.
\end{proposition}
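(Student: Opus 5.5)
The plan is to bound the outer measure of a ball in $\SC'$ by the outer measure of a ball in $\SC$, using the sub-H\"older condition \eqref{eq:sub_Hölder_small_condition}. First, $\Phi_{\#}\mu$ is clearly a Borel probability measure on $\SC'$, since $\Phi$ is Borel measurable and $(\Phi_{\#}\mu)(\SC') = \mu(\Phi^{-1}(\SC')) = \mu(\SC) = 1$. Let $c_0, r_0>0$ be the constants from the small-ball condition for $\mu$ with parameter $s$. Fix $y \in \SC'$ and $r>0$. The central set inclusion is as follows. If $x_1,x_2 \in \Phi^{-1}(\ball(y,r))$, then by the quasi-triangle inequality in $\SC'$ (with constant $k'$) we have $\metr'(\Phi(x_1),\Phi(x_2)) \le 2k' r$. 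Combined with \eqref{eq:sub_Hölder_small_condition}, this gives $\metr(x_1,x_2) \le (2k'r/c)^{1/\beta} =: \rho(r)$. Hence, if $\Phi^{-1}(\ball(y,r)) \neq \emptyset$, fixing any $x_1$ in it yields $\Phi^{-1}(\ball(y,r)) \subset \ball(x_1,\rho(r))$. If the preimage is empty, there is nothing to prove.

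The step that needs care is the outer measure, since $\ball(y,r)$ need not be Borel. I will show that $(\Phi_{\#}\mu)^\ast(M) \le \mu^\ast(\Phi^{-1}(M))$ cannot be obtained naively. Instead, I will use the definition of $\mu^\ast(\ball(x_1,\rho))$: choose a Borel set $A \supset \ball(x_1,\rho)$ in $\SC$ with $\mu(A) \le \mu^\ast(\ball(x_1,\rho)) + \eta$. The difficulty is that $\Phi(A)$ need not be Borel, so I cannot directly produce a Borel superset of $\ball(y,r)$ in $\SC'$. To get around this, I would use the Aoki--Rolewicz idea as in Step 1 of the proof of \cref{measure_SC_has_growth_order}. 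By passing to an equivalent metric power of the quasi-metric, one obtains a closed, hence Borel, set $\ball^\ast(y,Cr) \supset \ball(y,r)$; for general quasi-metric spaces this step invokes the metrization result for quasi-metrics (Macías--Segovia / Paluszyński--Stempak), which supplies a metric $d'$ and $p$ with $d' \asymp (\metr')^p$. Then
\[
(\Phi_{\#}\mu)^\ast(\ball(y,r)) \le (\Phi_{\#}\mu)(\ball^\ast(y,Cr)) = \mu\bigl(\Phi^{-1}(\ball^\ast(y,Cr))\bigr) \le \mu^\ast\bigl(\ball(x_1,\rho(C' r))\bigr),
\]
where the inclusion argument from above is applied to the ball $\ball^\ast(y,Cr) \subset \ball(y,C'r)$ for a suitable constant $C'$. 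I expect this measurability step to be the main obstacle: one must make sure the Borel enlargement of the ball is available in a general quasi-metric space and is compatible with the preimage inclusion.

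Finally, for $r$ small enough that $\rho(C'r) < r_0$, the small-ball condition for $\mu$ gives
\[
(\Phi_{\#}\mu)^\ast(\ball(y,r)) \le \exp\bigl(-c_0 (c/(2k'C'r))^{s/\beta}\bigr) = \exp\bigl(-\tilde c\,(1/r)^{s/\beta}\bigr),
\]
with $\tilde c := c_0 (c/(2k'C'))^{s/\beta} > 0$. This is exactly the small-ball condition with parameter $s/\beta$ on $\SC'$, with the new radius bound $\tilde r_0$ determined by $\rho(C'\tilde r_0) = r_0$.
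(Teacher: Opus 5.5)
Your proposal is correct and follows essentially the paper's proof. In both, $\ball(y,r)$ is enlarged to a Borel (closed) ball via a metric power equivalent to $\varrho'$, and the sub-H\"older bound then places the preimage of that enlarged ball inside a single $\varrho$-ball of radius $\asymp r^{1/\beta}$ around one preimage point. The only thing to tidy is the case distinction. The emptiness check and the choice of $x_1$ must be made for the preimage of the enlarged ball ($\ball^\ast(y,Cr)$ or $\ball(y,C'r)$), exactly as in the paper's two cases, rather than for $\Phi^{-1}(\ball(y,r))$. When balls are not Borel, emptiness of $\Phi^{-1}(\ball(y,r))$ does not by itself force the outer measure to vanish.
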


\begin{proof}
    Since $\Phi$ is Borel-measurable, we immediately get that $\Phi_{\#} \mu$ is a
    well-defined probability measure on $\SC '$.
    We thus only have to verify the small-ball condition.
    
    For this, we start with some preparations:
    Let $c_1>0$ and $r_1>0$ denote the constants from the small-ball condition for $\mu$,
    see \eqref{eq:SmallBallConditionPrecise}.
    Next, note as a consequence of \cref{quasi_metric_Heinonen_result}
    that there exist quasi-metrics $\varrho_{\ast}$ and $\varrho'_{\ast}$,
    and constants $C_2, C_3 \geq 1$ such that
    \begin{equation*}
        \frac{1}{C_2} \varrho \leq \varrho_{\ast} \le C_2 \varrho 
        \quad \text{and} \quad
        \frac{1}{C_3} \varrho' \leq \varrho'_{\ast} \le C_3 \varrho'
        ,
    \end{equation*}
    and such that in addition, the quasi-metrics $\varrho_{\ast}$ and $\varrho'_{\ast}$,
    when raised to some suitable power, are metrics. 
    In particular, the balls with respect to $\varrho_{\ast}$ and $\varrho'_{\ast}$
    are closed sets and thus measurable.
    Furthermore, let $C_4 \geq 1$ denote a triangle constant for $\varrho'_{\ast}$.

    We now verify the definition of the small-ball condition for $\Phi_{\#}\mu$.
    Let $y \in \SC'$ and let $r > 0$.
    Then we have $\ball(y,r \mid \varrho') \subset \ball(y, C_3r \mid \varrho'_{\ast})$.
    By the monotonicity of outer measures, and since the latter ball is
    Borel measurable (in fact, closed), we get
    \begin{equation}
        \label{eqn_10_2}
        (\Phi_{\#}\mu)^{\ast} \bigl( \ball(y,r\mid \varrho')\bigr)
        \le (\Phi_{\#}\mu)^{\ast}\bigl( \ball(y,C_3 r\mid \varrho_{\ast}')\bigr)
        = \Phi_{\#}\mu \bigl( \ball(y,C_3 r\mid \varrho_{\ast}')\bigr).
    \end{equation}

    We now distinguish two cases.
    In the first case, assume that $\Phi(\SC) \cap \ball(y,C_3 r\mid \varrho_{\ast}') = \emptyset$.
    Then $\Phi_{\#}\mu \bigl( \ball(y,C_3 r\mid \varrho_{\ast}')\bigr)=0$,
    which implies the small-ball condition for the ball $\ball(y,r\mid \varrho')$. 
    In the second case, assume that $\Phi(\SC) \cap \ball(y,C_3 r\mid \varrho_{\ast}') \neq \emptyset$.
    Then there exists $z \in \Phi(\SC) \cap \ball(y,C_3 r\mid \varrho_{\ast}')$.
    It follows from \cref{lemma_ball_inclusions} that 
    \begin{equation}
        \ball(y, C_3 r \mid \varrho_{\ast}')
        \subset \ball(z, 2C_3C_4 r \mid \varrho_{\ast}')
        .
        \label{eq:SmallBallTransferTechnicalInclusion}
    \end{equation}
    Let $x\in \SC$ such that $z = \Phi(x)$. 
    Then for every $v \in \Phi^{-1}\bigl( \ball(z,2 C_3 C_4 r \mid \varrho_{\ast}')\bigr)$,
    our assumption \eqref{eq:sub_Hölder_small_condition} implies that
    \begin{equation*}
        c \cdot \varrho(x,v)^{\beta}
        \le \varrho'(z, \Phi(v))
        \le C_3 \cdot \varrho'_{\ast}(z,\Phi(v))
        \le 2 C_3^2 C_4 r.
    \end{equation*}
    Hence,
    \begin{equation}
        \label{eqn_10_1}
       \Phi^{-1}\bigl( \ball(z,2 C_3 C_4 r \mid \varrho_{\ast}')\bigr)
       \subset \ball\Bigl(x, \Bigl(\frac{2 C_3^2 C_4 r}{c}\Bigr)^{\frac{1}{\beta}} \mid \varrho\Bigr).
    \end{equation}

    Inserting \eqref{eqn_10_1} into \eqref{eqn_10_2}, and using the small-ball condition for $\mu$, we obtain
    \begin{align*}
        (\Phi_{\#}\mu)^{\ast} \bigl( \ball(y,r\mid \varrho')\bigr)
        \hspace*{0.3cm}
        &\hspace*{-0.25cm}\overset{\eqref{eqn_10_2}}{\leq} \Phi_{\#}\mu \bigl( \ball(y,C_3 r\mid \varrho_{\ast}')\bigr)
        = \mu \Bigl( \Phi^{-1}\bigl(\ball(y,C_3 r\mid \varrho_{\ast}')\bigr)\Bigr)\\
        &\hspace*{-0.6cm}\overset{\eqref{eq:SmallBallTransferTechnicalInclusion},\eqref{eqn_10_1}}{\leq}
        \mu^{\ast} \biggl( \ball\Bigl(x, \Bigl(\frac{2 C_3^2 C_4 r}{c}\Bigr)^{\frac{1}{\beta}} \,\bigg|\, \varrho\Bigr) \biggr)\\
        &\leq \exp\biggl( -c_1 \cdot \Bigl(\frac{c}{2 C_3^2 C_4}\Bigr)^{\frac{s}{\beta}} r^{-\frac{s}{\beta}}\biggr)
        ,
    \end{align*}
    provided that $(\frac{2 C_3^2 C_4 r}{c})^{1/\beta} < r_1$, which is satisfied for all
    sufficiently small $r > 0$.
    This shows the small-ball condition for $\Phi_{\#}\mu$.
\end{proof}

As pointed out at the start of \Cref{sec:ExistenceOfCriticalMeasures},
\Cref{def:CriticalMeasure} is ``extrinsic'' in the sense that the
set $\SC$ is considered as a subset of a larger ambient space $\XX$.
In contrast, the ``intrinsic'' point of view would consider the space $\SC$
equipped with the trace quasi-metric and the resulting Borel $\sigma$-algebra $\Borel(\SC)$
as the primary object.

It is well-known that this $\sigma$-algebra, i.e.~$\Borel(\SC)$,
agrees with the trace of $\Borel(\XX)$ on $\SC$.
As we show below, this allows us to pass from the extrinsic point of view to the intrinsic one
by simply restricting the quasi-metric, the Borel $\sigma$-algebra, and the measure.
For this, we will need the following notation:
For a family $\mathcal{F}$ of subsets of $\XX$ and a fixed subset $\SC \subset \XX$,
we let
\[
  \mathcal{F} \sigmacap \SC
  := \SC \sigmacap \mathcal{F} := \{ F \cap \SC : F \in \mathcal{F}\}
  .
\]
In particular, if $\mathcal{F}$ is a $\sigma$-algebra on $\XX$,
$\mathcal{F} \sigmacap \SC$ is the \emph{trace of $\mathcal{F}$ on $\SC$}.

The following \cref{restriction_inherits_growht_order} implies that restricting a measure to a
subset of full outer measure preserves the growth order.
The rather technical proof is deferred to \cref{sec_proof_of_restriction_inherits_growht_order}.

\begin{lemma}\label{restriction_inherits_growht_order}
    Let $(\XX,\metr)$ be a non-empty quasi-metric space,
    let $\mathcal{F}$ be a sigma-algebra on $\XX$, let $\SC \subset \XX$ be a non-empty subset,
    and let $\mu \colon \mathcal{F}\to [0,1]$ be a probability measure on $\XX$ with $\om{\mu}(\SC)=1$.
    Define the measure $\nu$ on $\mathcal{F}\sigmacap \SC$ via $\nu(A\cap \SC) := \mu(A)$ for $A\in \mathcal{F}$.
    Then $\nu$ is a well-defined probability measure on $\SC$.
    Moreover, the following hold:
    \begin{enumerate}[label=(\roman*)]
      \item If $\mu$ is a Borel measure on $\Borel(\XX)$, then $\nu$ is a Borel measure on $\Borel(\SC)$.

      \item If
            \begin{equation}
                \label{eqn_restrictions_growth_condition_mu}
                \om{\mu}\bigl( \ball\bigl(x,r \mid (\XX,\metr)\bigr)\bigr) \le \exp\bigl(-c_0 \cdot r^{-s}\bigr)
            \end{equation}
            for all $r\in (0,r_0)$ and all $x\in \SC$, then
            \begin{equation}
                \label{eqn_restrictions_growth_condition_nu}
                \om{\nu}\bigl( \ball\bigl(x,r \mid (\SC,\metr\vert_{\SC\times \SC})\bigr)\bigr) 
                \le \exp\bigl( -c_0 \cdot r^{-s}\bigr)
            \end{equation}
            for all $r\in (0,r_0)$ and all $x\in \SC$.
\end{enumerate}
\end{lemma}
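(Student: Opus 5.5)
The plan is to first check that $\nu$ is well defined, then verify countable additivity, and finally handle the two additional claims.

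\emph{Well-definedness.} If $A\cap\SC = A'\cap\SC$ for $A,A'\in\mathcal{F}$, then the symmetric difference $A\triangle A'$ is a measurable set contained in $\XX\setminus\SC$. Hence $\SC\subset \XX\setminus(A\triangle A')$. Since $\om{\mu}(\SC)=1$, the definition of the outer measure gives $\mu(\XX\setminus(A\triangle A'))\ge 1$, so $\mu(A\triangle A')=0$ and therefore $\mu(A)=\mu(A')$.

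\emph{$\sigma$-additivity.} The same argument shows that if $(A_k\cap\SC)_k$ are pairwise disjoint, then each intersection $A_k\cap A_j$ (for $k\ne j$) is a measurable set disjoint from $\SC$, and so it is $\mu$-null. Define the disjoint measurable sets
\[
  A_k' := A_k\setminus\bigcup_{j<k}A_j .
\]
These satisfy $A_k'\cap\SC=A_k\cap\SC$ and $\mu(A_k')=\mu(A_k)$, so
\[
  \nu\Bigl(\bigcup_k A_k\cap\SC\Bigr)=\mu\Bigl(\bigcup_k A_k'\Bigr)=\sum_k\mu(A_k).
\]
Finally, $\nu(\SC)=\mu(\XX)=1$.

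\emph{Part (i).} This reduces to the identity $\Borel(\SC)=\Borel(\XX)\sigmacap\SC$ for the trace quasi-metric.
\begin{itemize}
  \item The inclusion $\supset$ holds because $\{B\subset\XX: B\cap\SC\in\Borel(\SC)\}$ is a $\sigma$-algebra containing the open sets: if $U$ is open in $\XX$, then $U\cap\SC$ is open in $\SC$, since $\ball(x,r\mid\SC)=\ball(x,r)\cap\SC$.
  \item For $\subset$, the trace of $\Borel(\XX)$ on $\SC$ is a $\sigma$-algebra on $\SC$, so it suffices to show that every open $V\subset\SC$ has the form $U\cap\SC$ with $U$ open in $\XX$. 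Here the lack of continuity of quasi-metrics is the main obstacle. I would pass, via \cref{quasi_metric_Heinonen_result} (equivalently Aoki--Rolewicz-type metrization), to an equivalent quasi-metric $\metr_\ast$ some power of which is a metric; this does not change the topologies on $\XX$ or on $\SC$.
  \item For each $x\in V$ choose $r_x>0$ with $\{y\in\SC:\metr_\ast(x,y)<r_x\}\subset V$, and put
  \[
    U := \bigcup_{x\in V}\{y\in\XX:\metr_\ast(x,y)<r_x\}.
  \]
  This is open in $\XX$, because open $\metr_\ast$-balls are open (they are open balls of a genuine metric). By construction $U\cap\SC=V$.
\end{itemize}

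\emph{Part (ii).} Fix $x\in\SC$ and $r\in(0,r_0)$, and write $\ball_\SC:=\ball(x,r\mid\SC)=\ball(x,r\mid\XX)\cap\SC$. Take any $A\in\mathcal{F}$ with $A\supset\ball(x,r\mid\XX)$. Then $A\cap\SC\supset\ball_\SC$, so
\[
  \om{\nu}(\ball_\SC)\le\nu(A\cap\SC)=\mu(A).
\]
Taking the infimum over all such $A$ yields $\om{\nu}(\ball_\SC)\le\om{\mu}(\ball(x,r\mid\XX))\le\exp(-c_0r^{-s})$, which is the claimed bound.
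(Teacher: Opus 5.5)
Your proposal is correct and follows essentially the same route as the paper. Well-definedness comes from a $\mu$-null measurable set disjoint from $\SC$. Part (i) rests on the identity $\Borel(\SC)=\Borel(\XX)\sigmacap\SC$, obtained via the metrization result \cref{quasi_metric_Heinonen_result}. Part (ii) compares outer measures: the paper proves the equality $\om{\nu}(B)=\om{\mu}(B)$ for all $B\subset\SC$, while you prove only the inequality you need.

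Two of your steps go beyond the paper's text. You check $\sigma$-additivity explicitly, which the paper omits, since it only verifies that $\nu$ is well defined. You also prove the $\sigma$-algebra identity inline, where the paper cites the trace results for metric topologies and Borel $\sigma$-algebras.
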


The previous \cref{restriction_inherits_growht_order} implies that it is always possible
to move from an ``extrinsic'' point of view to an ``intrinsic'' one.
The following corollary explicitly records this fact.

\begin{corollary}\label{restriction_inherits_criticality}
    Let $(\XX,\metr)$ be a non-empty quasi-metric space,
    let $\SC \subset \XX$ be a non-empty subset, and let $\mu$ be a probability measure on $\XX$
    with $\om{\mu}(\SC)=1$.
    If $\mu$ is critical for $\SC$,
    then the measure $\nu$ as defined in \cref{restriction_inherits_growht_order}
    is critical for $\SC$.
\end{corollary}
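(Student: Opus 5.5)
The plan is to derive the corollary directly from \cref{restriction_inherits_growht_order}, once we observe that the lower power-exponential Minkowski dimension of $\SC$ is the same from both points of view. The intrinsic view equips $\SC$ with the trace quasi-metric $\metr|_{\SC\times\SC}$. The extrinsic view regards $\SC$ as a subset of $(\XX,\metr)$.

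\emph{First step: $\lmd(\SC)$ is intrinsic.} The covering numbers $\cn(\SC,\eps)$ are defined through $\eps$-nets $\SC_\eps \subset \SC$ with centers in $\SC$. They only involve distances between points of $\SC$, so they agree for $(\SC,\metr|_{\SC\times\SC})$ and for $\SC \subset \XX$. Total boundedness and the convention for $\num\SC<2$ likewise depend only on the trace quasi-metric. Hence $\lmd(\SC)$ is the same number in both settings, and the range of parameters $0<s<\lmd(\SC)$ in \cref{def:CriticalMeasure} does not change.

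\emph{Second step: positivity.} By \cref{restriction_inherits_growht_order}, $\nu$ is a well-defined probability measure on $\mathcal{F}\sigmacap\SC$ (a Borel measure if $\mu$ is Borel). Taking $A=\XX$ gives $\nu(\SC)=\mu(\XX)=1$, so $\om{\nu}(\SC)=1>0$.

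\emph{Third step: small-ball condition.} Fix $0<s<\lmd(\SC)$. Criticality of $\mu$ gives $c_0,r_0>0$ with $\om{\mu}(\ball(x,r\mid(\XX,\metr)))\le \exp(-c_0 r^{-s})$ for all $x\in\XX$ and $r\in(0,r_0)$, in particular for all $x\in\SC$. Part (ii) of \cref{restriction_inherits_growht_order} then yields the same bound for $\om{\nu}$ on intrinsic balls centered at points of $\SC$. In the intrinsic space $\SC$, every center $x$ lies in $\SC$, so this is exactly the small-ball condition with parameter $s$ for $\nu$. Since $s$ was arbitrary, $\nu$ is critical for $\SC$.

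The only point requiring care is the first step, namely checking that the paper's definition of covering numbers uses centers inside $\SC$. This is what makes $\lmd$ intrinsic; with external covering numbers one would instead need \cref{lem:InternalVSExternalCoveringNumbers}. Everything else is a direct application of the lemma.
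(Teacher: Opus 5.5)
Your proposal is correct and follows the paper's own argument. The paper also observes that $\lmd(\SC)$ depends only on the trace quasi-metric, and then uses \cref{restriction_inherits_growht_order} for well-definedness and for transferring the small-ball bounds to $\nu$. The only difference is that you spell out the positivity $\om{\nu}(\SC)=\nu(\SC)=\mu(\XX)=1$ and the internal-centers remark about $\cn(\SC,\eps)$ more explicitly than the paper does.
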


\begin{proof}
  By \cref{restriction_inherits_growht_order},
  the measure $\nu$ is a well-defined probability measure on $\SC$. 
  Furthermore, the definition of $\lmd(\SC)$ implies that the value
  of $\lmd(\SC)$ only depends on the restriction of the metric on $\XX$ to $\SC$.
  In combination with \cref{restriction_inherits_growht_order},
  this implies that $\nu$ is critical for $\SC$.
\end{proof}

\cref{prop:transfer_principle} applies in particular to the case
when $\SC$ is a subset of $\XX=:\SC'$ equipped with the trace metric
and the trace Borel sigma-algebra.
It follows that in the case of Borel measures, it is also possible to switch
from the ``intrinsic'' point of view to an extrinsic one by embedding the space $\SC$
into the larger space $\XX$,
as recorded in the following corollary.

\begin{corollary}\label{cor:IntrinsicToExtrinsic}
  Let $(\XX, \varrho)$ be a quasi-metric space, and let $\emptyset\ne \SC \subset \XX$.
  Let $\nu$ be a Borel probability measure on $\SC$ that is critical for $\SC$.
  Let $\mu$ denote the push-forward of $\nu$ under the inclusion map $\SC \embeds \XX$.
  Then $\mu$ is a Borel probability measure on $\XX$ that is critical for $\SC$.
\end{corollary}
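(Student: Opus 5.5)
The plan is to obtain $\mu$ as the push-forward $\iota_{\#}\nu$ of $\nu$ under the inclusion $\iota : \SC \embeds \XX$, and to derive the small-ball condition from \cref{prop:transfer_principle} applied with $\SC' := \XX$, $\metr' := \metr$, $\Phi := \iota$ and $\beta = 1$.

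First I check measurability. Since $\Borel(\SC)$ is the trace $\Borel(\XX) \sigmacap \SC$, for every $B \in \Borel(\XX)$ we have $\iota^{-1}(B) = B \cap \SC \in \Borel(\SC)$. Thus $\iota$ is Borel measurable and $\mu = \iota_{\#}\nu$ is a Borel probability measure on $\XX$.

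Next, the sub-Hölder condition \eqref{eq:sub_Hölder_small_condition} holds trivially with $c = 1$ and $\beta = 1$, because the trace quasi-metric satisfies $\metr(\iota(x_1),\iota(x_2)) = \metr(x_1,x_2)$. For each $0 < s < \lmd(\SC)$, criticality of $\nu$ gives the small-ball condition for $\nu$ on $\SC$ with parameter $s$. \Cref{prop:transfer_principle} then yields the small-ball condition for $\mu$ on $\XX$ with parameter $s$, so the bound holds for balls centered at arbitrary $x \in \XX$ and not only at points of $\SC$. The only other input is that $\lmd(\SC)$ is the same whether $\SC$ is viewed intrinsically or as a subset of $\XX$, since covering numbers use centers in $\SC$ and hence depend only on the trace quasi-metric.

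It remains to show $\om{\mu}(\SC) > 0$, and I expect this to be the only mildly delicate step, because $\SC$ need not be Borel in $\XX$. I would argue that $\om{\mu}(\SC) = 1$. Take any $A \in \Borel(\XX)$ with $A \supset \SC$. Then $\mu(A) = \nu(\iota^{-1}(A)) = \nu(A \cap \SC) = \nu(\SC) = 1$. Taking the infimum over all such $A$ in \eqref{eq:OuterMeasure} gives $\om{\mu}(\SC) = 1 > 0$.

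Combined with the previous step, $\mu$ satisfies both requirements of \cref{def:CriticalMeasure}, so $\mu$ is critical for $\SC$.
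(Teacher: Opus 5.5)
Your proposal is correct and follows the paper's own argument: apply \cref{prop:transfer_principle} with $\beta = 1$ to the inclusion map, then invoke the definition of a critical measure. You additionally spell out three points the paper leaves implicit: the Borel measurability of the inclusion via the trace $\sigma$-algebra, the fact that $\lmd(\SC)$ depends only on the trace quasi-metric, and the verification that $\om{\mu}(\SC) = 1$ even though $\SC$ need not be Borel in $\XX$.
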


\begin{proof}
  This follows from \Cref{prop:transfer_principle} (with $\beta = 1$),
  applied to the inclusion map, and the definition of a critical measure.
\end{proof}

Finally, we note that in order to check whether a measure has a given growth order,
it suffices to check balls with centers in some subset of full outer measure,
as stated in the following result.

\begin{corollary}
  Let $(\XX,\varrho)$ be a quasi-metric space, and let $\emptyset \ne \SC \subset \XX$.
  Let $\mu$ be a Borel probability measure on $\XX$ that satisfies $\om{\mu}(\SC)=1$
  and the small-ball condition with parameter $s>0$ on $\SC$, i.e.,
  there exist $r_0, c > 0$ such that
  \[
    \mu^\ast (\ball (x,r)) \leq \exp(- c \cdot (1/r)^s)
    \qquad \forall \, x \in \SC \text{ and } 0 < r < r_0
    ,
  \]
  where $\ball(x,r) = \{ y \in \XX \,\,:\,\, \varrho(y,x) \leq r \}$.

  Then $\mu$ satisfies the small-ball condition with parameter $s$ on $\XX$,
  i.e., there exist constants $r_1, c^\ast > 0$ such that
  \[
    \mu^\ast (\ball (x,r)) \leq \exp(- c^\ast \cdot (1/r)^s)
    \qquad \forall \, x \in \XX \text{ and } 0 < r < r_1
    .
  \]
\end{corollary}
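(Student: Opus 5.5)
The plan is to reduce balls centered at arbitrary points of $\XX$ to balls centered in $\SC$, using the quasi-triangle inequality and the fact that $\SC$ has full outer measure. As in the proof of \cref{prop:transfer_principle}, I would first invoke \cref{quasi_metric_Heinonen_result} to get a quasi-metric $\varrho_\ast$ with $C_1^{-1}\varrho \le \varrho_\ast \le C_1 \varrho$, some power of which is a metric. Its balls $\ball(x,r\mid\varrho_\ast)$ are then closed, hence Borel. Let $k \ge 1$ be a triangle constant for $\varrho$.

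Fix $x \in \XX$ and $r>0$. I would split into two cases according to whether the measurable ball $B' := \ball(x, C_1^2 r \mid \varrho)$ meets $\SC$. Rather than working with $B'$ directly, I would use the enlargement $\ball(x, r\mid\varrho) \subset \ball(x, C_1 r\mid \varrho_\ast) =: B^\ast$, which is Borel.

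\emph{Case 1: $B^\ast \cap \SC = \emptyset$.} Then $\SC \subset \XX \setminus B^\ast$, and this complement is Borel. Since $\om{\mu}(\SC) = 1$, we get $\mu(\XX\setminus B^\ast) = 1$ and hence $\mu(B^\ast) = 0$. Monotonicity then gives $\om{\mu}(\ball(x,r)) = 0$, so the bound holds trivially. This is exactly where measurability of $B^\ast$ is essential, and it is the only delicate point of the argument.

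\emph{Case 2: some $z \in B^\ast \cap \SC$ exists.} Then $\varrho(x,z) \le C_1^2 r$. For every $y \in \ball(x,r)$, the quasi-triangle inequality gives
\[
  \varrho(z,y) \le k\bigl(\varrho(z,x) + \varrho(x,y)\bigr) \le k(C_1^2+1) r =: K r .
\]
Hence $\ball(x,r) \subset \ball(z, Kr)$ with $z \in \SC$. Applying the hypothesis at the center $z$ yields $\om{\mu}(\ball(x,r)) \le \exp(-c K^{-s} (1/r)^s)$ whenever $Kr < r_0$.

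Setting $r_1 := r_0/K$ and $c^\ast := c K^{-s}$ covers both cases, which proves the claim.
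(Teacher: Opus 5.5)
Your proof is correct, but it is organized differently from the paper's. The paper's proof goes through two lemmas:
\begin{itemize}
  \item it first restricts $\mu$ to a Borel probability measure $\nu$ on $\SC$ via \cref{restriction_inherits_growht_order}; this is where $\om{\mu}(\SC)=1$ enters, namely to make $\nu(A\cap\SC):=\mu(A)$ well defined, and the same lemma transfers the small-ball bound to $\nu$ intrinsically;
  \item it then notes $\mu=\iota_{\#}\nu$ for the inclusion $\iota\colon\SC\to\XX$ and applies \cref{prop:transfer_principle} with $\beta=1$.
\end{itemize}
The core mechanism is the same as yours, since the proof of \cref{prop:transfer_principle} is exactly your case split. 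A ball for an auxiliary quasi-metric, some power of which is a metric, either misses $\iota(\SC)$, in which case its preimage is empty and its measure is $0$. Or it contains some $z\in\SC$, around which a larger ball is taken. The substantive difference is that in your Case~1 you use $\om{\mu}(\SC)=1$ directly to conclude $\mu(B^\ast)=0$. This makes your argument more elementary and self-contained. You need neither the construction of $\nu$, nor the identification $\Borel(\SC)=\Borel(\XX)\sigmacap\SC$ from \cref{Borel_intrinsic_is_Borel_trace} (which is what makes $\iota$ measurable), nor the intrinsic small-ball bound for $\nu$. You only need that $\varrho_\ast$-balls are Borel in $\XX$. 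The paper's route instead gets a two-line proof by reusing tools it needs elsewhere anyway. One wording fix: drop the phrase calling $\ball(x,C_1^2 r\mid\varrho)$ ``measurable''. Balls for $\varrho$ need not be Borel, which is precisely why your switch to $B^\ast$ is necessary.
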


\begin{proof}
  Let $\nu$ denote the restriction of $\mu$ to $\SC$
  (defined as in \Cref{restriction_inherits_growht_order}, meaning $\nu(A \cap \SC) := \mu(A)$
  for $A \in \Borel(\XX)$; this is well-defined by \Cref{restriction_inherits_growht_order})
  and let $\iota\colon \SC\to \XX$ denote the embedding.
  Then $\mu =\iota_{\#}\nu$ is the push-forward of $\nu$ by $\iota$.
  Indeed, for every $A \in \Borel(\XX)$, we have by the definitions of the push-forward measure,
  of the map $\iota$, and of the measure $\nu$, that
  \[
    \iota_{\#}\nu(A)
    = \nu( \iota^{-1}(A))
    = \nu( A \cap \SC)
    = \mu(A)
    .
  \]
  By assumption and \cref{restriction_inherits_growht_order},
  the measure $\nu$ is a well-defined Borel probability measure on $\SC$
  that satisfies the small-ball condition of order $s$ on $\SC$.
  By \cref{prop:transfer_principle}, the measure $\mu =\iota_{\#}\nu$
  then satisfies the small-ball condition with parameter $s$ on $\XX$.
\end{proof}

\section{Sets that satisfy \texorpdfstring{{\ConditionZetaText}}{Condition (Zeta)} and application to unit balls of quasi-Banach spaces}
\label{sec:sets_satisfying_cond_zeta}

In this section, we indicate the generality of \ConditionZetaText.
Firstly, in \cref{sec:convex_bounded_and_complete_are_zeta}, we show that convex,
complete, and bounded subsets of quasi-normed spaces satisfy \ConditionZetaText.
Secondly, in \cref{sec:unit_balls_are_zeta}, we show that whenever
$(\YY, \norm{\cdot}_{\YY}) \embeds (\XX, \norm{\cdot}_{\XX})$ is a continuous embedding
of quasi-Banach spaces, then the unit ball $\SC:= \ball\bigl(0,1\mid \norm{\cdot}_{\YY}\bigr)$
satisfies {\ConditionZetaText} as a subset of $\XX$.
Then, in \cref{sec_unit_Balls_B_F_are_zeta}, we apply these results to compactly embedded unit balls
in Besov- and Triebel-Lizorkin spaces and prove \cref{existence_crit_measure_isotropic}
and \cref{existence_crit_measure_dms}.

\subsection{Convex, bounded, and complete sets}
\label{sec:convex_bounded_and_complete_are_zeta}

\begin{remark}\label{remark_condition_zeta_implies}
    In \cref{condition_zeta}, the series is assumed to converge to an element in $\SC$.
    The {\ConditionZetaText} therefore includes a certain closedness or completeness condition.
    Furthermore, it implies that $\SC$ is bounded, as the following argument shows.
    For unbounded $\SC$, one could choose a sequence $(x_n)_{n\in \NN} \subset \SC$
    with $\norm{x_n}\ge C \, n^{\alpha}$.
    Then
    \[
      \norm{ \sum_{i=1}^n \frac{x_i}{C \, i^{\alpha}} - \sum_{i=1}^{n-1} \frac{x_i}{C \, i^{\alpha}} }
      = \norm{x_n} C^{-1} n^{-\alpha}
      \geq 1
      ,
    \]
    and so the partial sums do not converge.
\end{remark}

The following result shows that {\ConditionZetaText} generalizes the conditions of being a complete, bounded, and convex set.

\begin{lemma}\label{lem:ConvexSetsAreQuasiConvex}
  Let $(\XX, \norm{\cdot})$ be a quasi-normed space
  and let $\SC \subset \XX$ be complete, bounded, and convex.
  Then $\SC \subset \XX$ satisfies \ConditionZetaText.
\end{lemma}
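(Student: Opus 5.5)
Given $x_1,x_2,\dots\in\SC$, we choose $\alpha>1$ (say $\alpha=2$) together with a constant $C$. The aim is to show that the weighted partial sums converge to an element of $\SC$.

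The key normalization is $C:=\zeta(\alpha)=\sum_{n\ge1}n^{-\alpha}$ when $\SC$ contains $0$. In general, though, convexity only gives that convex combinations stay in $\SC$, and the weights $\frac{1}{Cn^\alpha}$ sum to $1$ exactly, not in a way that keeps partial sums inside $\SC$. We therefore use partial convex combinations. Set $w_n:=\frac{1}{C n^\alpha}$, so $\sum_n w_n=1$, and $W_N:=\sum_{n\le N}w_n$. Define
\[
  S_N:=\sum_{n=1}^N w_n x_n
  \quad\text{and}\quad
  T_N:=S_N+(1-W_N)\,x_1 .
\]
Each $T_N$ is a convex combination of $x_1,\dots,x_N$. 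By an induction on $N$, using the two-point definition of convexity, $T_N\in\SC$.

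Next we show that $(T_N)$ is Cauchy. This step is the main obstacle, because in a quasi-normed space one cannot simply sum norms with the triangle inequality. We pass via \cref{thm:Aoki_Rolewicz} to an equivalent $p$-norm $\norm{\cdot}_\ast$ with $p\in(0,1]$. Let $R:=\sup_{y\in\SC}\norm{y}_\ast<\infty$, which is finite by boundedness and equivalence of the quasi-norms. For $M>N$ we have
\[
  T_M-T_N=\sum_{n=N+1}^M w_n\,(x_n-x_1) ,
\]
so the $p$-triangle inequality yields
\[
  \norm{T_M-T_N}_\ast^p \le \sum_{n>N} w_n^p\,(2^{1/p}R)^p \lesssim \sum_{n>N} n^{-\alpha p}.
\]
The right-hand side is the tail of a convergent series provided $\alpha p>1$. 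This is exactly why $\alpha$ must be chosen depending on the modulus of concavity: we take $\alpha:=2/p$ and $C:=\zeta(\alpha)$. Hence $(T_N)$ is Cauchy in $\norm{\cdot}_\ast$, and therefore also in $\norm{\cdot}$, since the two quasi-norms are equivalent.

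Since $T_N\in\SC$ and $\SC$ is complete, $T_N\to T\in\SC$. Moreover, $\norm{(1-W_N)x_1}\le(1-W_N)\norm{x_1}\to0$, so $S_N$ converges to the same limit $T$. Thus $\sum_n \frac{x_n}{Cn^\alpha}=T\in\SC$, which is \ConditionZetaTextAlphaC{(\alpha,C)} with $\alpha=2/p$ and $C=\zeta(2/p)$. If $\SC=\emptyset$ were allowed, the condition would hold vacuously, but \cref{def:condition_zeta} assumes $\SC\ne\emptyset$ anyway.
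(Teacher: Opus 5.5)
Your proposal is correct and follows essentially the same route as the paper. You pad the partial sums with the leftover weight on a fixed point of $\SC$ (you use $x_1$, the paper uses an arbitrary fixed $x_0\in\SC$), so that they lie in $\SC$ by convexity. You then show they are Cauchy via an Aoki--Rolewicz $p$-norm with $\alpha p>1$, and conclude by completeness of $\SC$ together with the vanishing of the padding term.
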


\begin{proof}
    We prove the claim by checking the definition of \ConditionZetaText.
    By \zcref{thm:Aoki_Rolewicz} there exists $p\in (0,1]$ and a $p$-norm $\norm{\cdot}_{\ast}$ that is equivalent to $\norm{\cdot}$.
    Let $C\ge1$ denote the corresponding bi-Lipschitz constant.
    Let $\alpha>\frac{1}{p}$ and let $C_0:= \sum_{k=1}^{\infty} k^{-\alpha}$.
    Since $p\le 1$, we have $\alpha>1$ and so $C_0<\infty$.
    We will show that $\SC$ satisfies Condition~$\bigl(\text{\ref{condition_zeta}}(\alpha,C_0)\bigr)$.

    Let $(x_k)_{k\in \NN} \subset \SC$.
    For $n\in \NN$ let 
    \begin{equation*}
      s_n
      := \sum_{k=1}^n
           \frac{x_k}{C_0 \, k^{\alpha}}
      \in \XX
    \end{equation*}
    denote the corresponding partial sums.
    Let $x_0 \in \SC$ be some fixed point.
    For $n\in \NN$ let ${w_n:= 1- \frac{1}{C_0}\sum_{k=1}^{n} k^{-\alpha}}$.
    Then $w_n \in (0,1)$ and $w_n + \sum_{k=1}^{n} \frac{1}{C_0k^{\alpha}} =1$.
    Since $\SC$ is convex, it follows that
    \begin{equation*}
        w_n x_0 + s_n = w_n x_0 + \sum_{k=1}^n \frac{x_k}{C_0 k^{\alpha}} \in \SC 
        \quad \text{for all $n\in \NN$}.
    \end{equation*}
    This shows that the partial sums are,
    up to a shift by the null sequence $(w_n x_0)_{n\in \NN}$, in $\SC$.

    We now show that the limit is in $\SC$.
    Let $m,n\in \NN$ with $n>m$.
    Using the equivalence of the quasi-norms $\norm{\cdot}$ and $\norm{\cdot}_{\ast}$
    at (a) and at (c), and that $\norm{\cdot}_{\ast}$ is a $p$-norm at (b), we infer that
    \begin{align*}
      \norm{w_n x_0 + sn - (w_mx_0+s_m)}^p 
      &= \norm{
           w_n x_0
           + \sum_{k=1}^n \frac{x_k}{C_0 k^{\alpha}}
           - \biggl(
               w_m x_0 + \sum_{k=1}^{m} \frac{x_k}{C_0 k^{\alpha}}
             \biggr)
         }^p\\
      &\overset{(a)}{\leq} C^p \cdot
                           \norm{
                             w_n x_0
                             + \sum_{k=1}^n \frac{x_k}{C_0 k^{\alpha}}
                             - w_m x_0
                             - \sum_{k=1}^{m} \frac{x_k}{C_0 k^{\alpha}}
                           }_{\ast}^p\\
      &\overset{(b)}{\leq} C^p
                           \norm{x_0}_{\ast}^p \abs{w_n-w_m}^p 
                           + C^p \sum_{k=m+1}^n \frac{\norm{x_k}_{\ast}^p}{C_0^pk^{\alpha p}} \\
      &\overset{(c)}{\leq} C^{2p} \sup_{x\in \SC} \norm{x}^p \cdot
                           \biggl(
                             \abs{w_n-w_m}^p
                             + \frac{1}{C_0^p}\sum_{k=m+1}^n \frac{1}{k^{\alpha p}}
                           \biggr).
    \end{align*}
    Since $\SC$ is bounded, since $\lim_{n\to \infty} w_n =0$,
    and since $\sum_{k=1}^{\infty} k^{-\alpha p}<\infty$,
    we thus see that the sequence \allowbreak\mbox{$(w_n x_0 + s_n)_{n\in \NN}$}
    is a Cauchy sequence in $\SC$. 
    Since $\SC$ is complete, there exists a limit $x \in \SC$
    such that $\lim_{n\to \infty} w_n x_0+s_n = x$.
    Since $\lim_{n\to \infty} (w_n x_0) =0$, it follows that $\lim_{n\to\infty} s_n = x$.
\end{proof}

\subsection{Unit balls satisfy \texorpdfstring{{\ConditionZetaText}}{Condition (Zeta)}}
\label{sec:unit_balls_are_zeta}

In this section, we verify {\ConditionZetaText} for unit balls
in quasi-Banach spaces $(\YY,\norm{\cdot}_{\YY})$.
Later on, these unit balls will then play the role of the signal class $\SC$
and they will be considered as compactly embedded into some other quasi-Banach space
$(\XX,\norm{\cdot}_{\XX})$.
Notice that now two quasi-norms are involved.
A first quasi-norm $\norm{\cdot}_{\YY}$ that is used to define the unit ball,
and a second quasi-norm $\norm{\cdot}_{\XX}$ with respect to which the various notions
of a size of $\SC$ are measured.
However, once {\ConditionZetaText} is verified for $\SC$ with respect to $\norm{\cdot}_{\YY}$,
the continuity of the embedding of $\SC$ into $\XX$ will imply that $\SC$ also satisfies
{\ConditionZetaText} with respect to the quasi-norm $\norm{\cdot}_{\XX}$.
From then on, the quasi-norm $\norm{\cdot}_{\YY}$ can be forgotten.

In \cref{quasi_Banach_unit_balls_are_quasi_convex} we show that unit balls
in quasi-Banach spaces satisfy \ConditionZetaText.
In order to prove it, we will first show the claim in the setting of $p$-normed spaces,
see \cref{p_norm_unit_balls_are_quasi_convex}, and then lift the result
to general quasi-Banach spaces using the Aoki-Rolewicz theorem.
In \cref{quasi-convexity_preserved}, we show that {\ConditionZetaText} is preserved
under bounded linear maps.

\begin{lemma}\label{p_norm_unit_balls_are_quasi_convex}
    Let $(\YY,\norm{\cdot})$ be a complete $p$-normed space.
    Then, for every $\alpha > \frac{1}{p}$, the unit ball $\ball\bigl(0,1 \mid \norm{\cdot}\bigr)$
    satisfies \ConditionZetaTextAlphaC{(\alpha,C)} with
    $C:= \bigl( \sum_{n=1}^{\infty} n^{-\alpha p}\bigr)^{\frac{1}{p}} \in (0,\infty)$.
\end{lemma}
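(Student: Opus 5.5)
The plan is to show directly that the partial sums of $\sum_n x_n/(C n^\alpha)$ form a Cauchy sequence in $\YY$ and that their limit has $p$-norm at most $1$. Fix $\alpha > 1/p$. Then $\alpha p > 1$, so $\sum_{n} n^{-\alpha p}$ converges, and $C := \bigl(\sum_{n=1}^\infty n^{-\alpha p}\bigr)^{1/p} \in (0,\infty)$ is well-defined. Let $x_1, x_2, \dots \in \ball(0,1\mid\norm{\cdot})$ and write $s_N := \sum_{n=1}^N \frac{x_n}{C n^\alpha}$.

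First, for $N > M$, the $p$-triangle inequality (iterated) and homogeneity give
\[
  \norm{s_N - s_M}^p
  \le \sum_{n=M+1}^{N} \frac{\norm{x_n}^p}{C^p n^{\alpha p}}
  \le \frac{1}{C^p} \sum_{n=M+1}^{N} n^{-\alpha p},
\]
which tends to $0$ as $M \to \infty$ because the series converges. Since $d(x,y) = \norm{x-y}^p$ is a metric inducing the topology of $\YY$, $(s_N)_N$ is Cauchy for this metric. Completeness of $\YY$ then yields a limit $s \in \YY$ with $\norm{s - s_N} \to 0$.

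Second, the same estimate with $M = 0$ gives
\[
  \norm{s_N}^p \le \frac{1}{C^p}\sum_{n=1}^N n^{-\alpha p} \le 1 \quad \text{for all } N.
\]
A $p$-norm is continuous with respect to its own topology, since $\bigl|\norm{a}^p - \norm{b}^p\bigr| \le \norm{a-b}^p$. Hence $\norm{s}^p = \lim_N \norm{s_N}^p \le 1$, so $s \in \ball(0,1\mid\norm{\cdot})$. This is exactly Condition~$\bigl(\text{\ref{condition_zeta}}(\alpha,C)\bigr)$.

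There is no real obstacle here. The one point needing care is the passage to the limit in the norm bound. This is legitimate precisely because we are in the $p$-normed setting, where the quasi-norm is continuous; for a general quasi-norm the step could fail. That is why the paper treats this case first and only afterwards transfers the result to general quasi-Banach spaces via \cref{thm:Aoki_Rolewicz}.
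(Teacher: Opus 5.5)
Your proposal is correct and follows essentially the same route as the paper: the $p$-triangle inequality yields the Cauchy estimate and the uniform bound $\norm{s_N}^p \le 1$, completeness gives the limit, and continuity of the $p$-norm passes the bound to the limit. Your explicit justification of continuity via $\bigl|\norm{a}^p - \norm{b}^p\bigr| \le \norm{a-b}^p$ is a nice touch that the paper only asserts.
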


\begin{proof}
    Let $\SC:= \ball\bigl(0,1\mid \norm{\cdot}\bigr)$.
    Let $\alpha>\frac{1}{p}$ be arbitrary and let $C:= \bigl( \sum_{n=1}^{\infty} n^{-\alpha p}\bigr)^{\frac{1}{p}}$.
        Notice that $C\in (0,\infty)$ since $\alpha p >1$.
    Let $(y_n)_{n\in \NN} \subset \SC$.
    Then for all $M,N\in \NN$ with $M\le N$, we have
    \begin{align}
        \norm{\sum_{n=M}^N \frac{y_n}{C n^{\alpha}}}^p
        \le\sum_{n=M}^N \frac{\norm{y_n}^p}{C^pn^{\alpha p}}
        \le \frac{1}{C^p} \sum_{n=M}^{N} \frac{1}{n^{\alpha p}}.
        \label{eqn_2_1}
    \end{align}
    Since $\alpha>\frac{1}{p}$, this implies that
    $\bigl(\sum_{n=1}^N \frac{y_n}{C n^{\alpha}}\bigr)_{N\in \NN}$ is a Cauchy sequence
    with respect to $\norm{\cdot}$.
    Since $(\YY,\norm{\cdot})$ is complete,
    the limit $y:=\sum_{n=1}^{\infty} \frac{y_n}{C n^{\alpha}} \in \YY$ exists.

    Let us check whether $y \in \SC$.
    Recall that $\norm{\cdot}^p$ is a metric.
    Hence, $\norm{\cdot}$ is continuous with respect to its induced topology.
    Using \eqref{eqn_2_1} with $M=1$, we thus obtain
    \begin{equation*}
        \norm{y} 
        = \lim_{N\to \infty} \norm{ \sum_{n=1}^{N} \frac{y_n}{C n^{\alpha}} }
        \le \liminf_{N\to \infty} \frac{1}{C} \cdot\biggl(\sum_{n=1}^{N} \frac{1}{n^{\alpha p}}\biggr)^{\frac{1}{p}}
        = \frac{1}{C} \cdot\biggl(\sum_{n=1}^{\infty} \frac{1}{n^{\alpha p}}\biggr)^{\frac{1}{p}}=1.
    \end{equation*}
    Hence, $y\in \SC$.
    This shows that $\SC$ satisfies \ConditionZetaTextAlphaC{(\alpha,C)}.
\end{proof}

\begin{lemma}\label{quasi_Banach_unit_balls_are_quasi_convex}
    Let $(\YY,\norm{\cdot})$ be a quasi-Banach space with modulus of concavity $K$.
    Let \linebreak\mbox{$\alpha> 1 + \log_2(K)$}. 
    Then there exists $C>0$ such that the closed unit ball $\ball\bigl(0,1\mid \norm{\cdot}\bigr)$
    satisfies \ConditionZetaTextAlphaC{(\alpha,C)}.
\end{lemma}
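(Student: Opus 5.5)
The plan is to reduce to \cref{p_norm_unit_balls_are_quasi_convex} via the Aoki-Rolewicz theorem (\cref{thm:Aoki_Rolewicz}). Set $p := \frac{1}{1+\log_2(K)}$, so that the assumption $\alpha > 1 + \log_2(K)$ is exactly $\alpha > \frac1p$. By \cref{thm:Aoki_Rolewicz} there is a $p$-norm $\norm{\cdot}_\ast$ on $\YY$ and a constant $C_1 \geq 1$ with $C_1^{-1}\norm{y} \le \norm{y}_\ast \le C_1 \norm{y}$ for all $y \in \YY$. Since equivalent quasi-norms induce the same Cauchy sequences and the same convergent sequences, $(\YY,\norm{\cdot}_\ast)$ is again complete, hence a complete $p$-normed space.

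Next I would compare the two unit balls. Write $\SC := \ball(0,1\mid\norm{\cdot})$ and $\SC_\ast := \ball(0,1\mid\norm{\cdot}_\ast)$. Then $\SC \subset \ball(0,C_1\mid\norm{\cdot}_\ast) = C_1 \SC_\ast$ and $C_1^{-1}\SC_\ast \subset \SC$. Given $(y_n)_{n\in\NN}\subset\SC$, the points $y_n/C_1$ lie in $\SC_\ast$. Applying \cref{p_norm_unit_balls_are_quasi_convex} with $C' := (\sum_n n^{-\alpha p})^{1/p}$ gives that $\sum_n \frac{y_n}{C_1 C' n^\alpha}$ converges in $\norm{\cdot}_\ast$ to an element of $\SC_\ast$. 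Dividing once more by $C_1$ (continuity of scalar multiplication), $\sum_n \frac{y_n}{C_1^2 C' n^{\alpha}}$ converges to an element of $C_1^{-1}\SC_\ast \subset \SC$.

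Convergence with respect to $\norm{\cdot}_\ast$ is the same as convergence with respect to $\norm{\cdot}$, because the quasi-norms are bi-Lipschitz equivalent. Hence $C := C_1^2 C'$ works, and $\SC$ satisfies \ConditionZetaTextAlphaC{(\alpha,C)}.

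The only step needing care is the bookkeeping between the two balls: the scaling by $C_1$ has to be done twice, once to move into $\SC_\ast$ and once to return to $\SC$. The rest is routine, so I expect no real obstacle.
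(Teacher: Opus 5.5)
Your proof is correct and follows essentially the same route as the paper. Both pass to an equivalent $p$-norm via \cref{thm:Aoki_Rolewicz}, rescale the $y_n$ into the $\norm{\cdot}_\ast$-unit ball, apply \cref{p_norm_unit_balls_are_quasi_convex}, and rescale back using the equivalence constants; you additionally verify the completeness of $(\YY,\norm{\cdot}_\ast)$, which the paper uses only implicitly.
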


\begin{proof}
    By \cref{thm:Aoki_Rolewicz}, there exists a $p$-norm $\norm{\cdot}_{\ast}$
    with $\frac{1}{p} = 1 + \log_2(K)$ and constants $c_0,C_0>0$ such that
    \begin{equation*}
        c_0\cdot\norm{y}_{\ast} 
        \le \norm{y} 
        \le C_0\cdot \norm{y}_{\ast}
    \qquad
    \text{for all } y\in \YY.
    \end{equation*}
    Let $\SC := \ball\bigl(0,1 \mid \norm{\cdot}\bigr)$
    and $\SC_{\ast} := \ball\bigl(0,1\mid \norm{\cdot}_{\ast}\bigr)$.
    Let $\alpha > 1 + \log_2(K) = \frac{1}{p}$.
    By \cref{p_norm_unit_balls_are_quasi_convex}, there exists $C_1>0$ such that
    $\ball\bigl(0,1 \mid \norm{\cdot}_{\ast}\bigr)$ satisfies \ConditionZetaTextAlphaC{(\alpha,C_1)}.

    Let $(y_n)_{n\in \NN} \subset \SC$.
    Then 
    \begin{equation*}
        \norm{c_0 y_n}_{\ast} \le \norm{y_n} \le 1
        \quad \text{for all $n\in \NN$}.
    \end{equation*}
    Hence $\bigl( c_0 y_n\bigr)_{n\in \NN}\subset \SC_{\ast}$.
    Since $\SC_{\ast}$ satisfies \ConditionZetaTextAlphaC{(\alpha, C_1)}, it follows that
    $\sum_{n=1}^{\infty} \frac{c_0 y_n}{C_1 n^{\alpha}}$ converges in $\norm{\cdot}_{\ast}$ to some element in $\SC_{\ast}$.
    We deduce that the series also converges in $\norm{\cdot}$ and that
    \begin{equation*}
        \norm{\sum_{n=1}^{\infty} \frac{c_0 y_n}{C_0 C_1 n^{\alpha}}} 
        \le \norm{\sum_{n=1}^{\infty} \frac{c_0 y_n}{C_1 n^{\alpha}}}_{\ast} 
        \le 1.
    \end{equation*}
    This shows that $\SC$ satisfies \ConditionZetaTextAlphaC{(\alpha,\frac{C_0C_1}{c_0})}.
\end{proof}

\begin{lemma}
    \label{quasi-convexity_preserved}
    Let $(\YY, \norm{\cdot}_{\YY})$ be a quasi-normed space and let $\SC \subset \YY$
    satisfy \ConditionZetaTextAlphaC{(\alpha,C_0)} for some $\alpha,C_0>0$.
    \begin{enumerate}[label=(\roman*)]
      \item \label{quasi-convexity_preserved_item_lin_ops}
            Let $(\XX, \norm{\cdot}_{\XX})$ be a quasi-normed space
            and let $\Phi \colon \YY \to \XX$ be a bounded linear map.
            Then $\Phi(\SC)$ satisfies \ConditionZetaTextAlphaC{(\alpha,C_0)}.

        \item \label{quasi-convexity_preserved_item_translations}
              Assume that $\frac{1}{C_0} \sum_{n=1}^{\infty} \frac{1}{n^{\alpha}} =1$.
              Then $y+ c\SC$ satisfies \ConditionZetaTextAlphaC{(\alpha,C_0)}
              for all $c>0$ and all $y\in \YY$.
    \end{enumerate}
\end{lemma}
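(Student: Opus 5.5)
Both parts are verified directly against \Cref{def:condition_zeta}; the only real work is passing limits through continuous linear maps and handling the translation term.

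For \ref{quasi-convexity_preserved_item_lin_ops}, let $(x_n)_{n\in\NN} \subset \Phi(\SC)$ and choose $y_n \in \SC$ with $x_n = \Phi(y_n)$. By \ConditionZetaTextAlphaC{(\alpha,C_0)} for $\SC$, the partial sums $S_N := \sum_{n=1}^N \frac{y_n}{C_0 n^\alpha}$ converge in $\YY$ to some $y \in \SC$. A bounded linear map satisfies $\norm{\Phi v}_{\XX} \le \norm{\Phi}\,\norm{v}_{\YY}$, so $\norm{\Phi(S_N) - \Phi(y)}_{\XX} \le \norm{\Phi} \, \norm{S_N - y}_{\YY} \to 0$. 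Linearity gives $\Phi(S_N) = \sum_{n=1}^N \frac{x_n}{C_0 n^\alpha}$. Hence the series $\sum_{n} \frac{x_n}{C_0 n^\alpha}$ converges in $\XX$ to $\Phi(y) \in \Phi(\SC)$.

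Convergence here means $\norm{\cdot}_{\XX}$-distance to the limit tends to zero. Limits are unique because, by the quasi-triangle inequality, $\norm{a-b} \le k(\norm{a-S_N}+\norm{S_N-b})$. So no continuity of the quasi-norm is needed, and in particular no Aoki--Rolewicz argument.

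For \ref{quasi-convexity_preserved_item_translations}, let $(x_n) \subset y + c\SC$ and write $x_n = y + c\, z_n$ with $z_n \in \SC$. Then
\[
  \sum_{n=1}^N \frac{x_n}{C_0 n^\alpha}
  = \Bigl(\frac{1}{C_0}\sum_{n=1}^N n^{-\alpha}\Bigr) y + c \sum_{n=1}^N \frac{z_n}{C_0 n^\alpha}.
\]
The second partial sum converges to some $z \in \SC$ by \ConditionZetaTextAlphaC{(\alpha,C_0)}.

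The main obstacle is the scalar coefficient $w_N := \frac{1}{C_0}\sum_{n\le N} n^{-\alpha}$. The hypothesis $\frac{1}{C_0}\sum_{n=1}^\infty n^{-\alpha} = 1$ means $w_N$ is finite for all $N$ and $w_N \to 1$, so $\norm{w_N y - y} = |w_N - 1|\,\norm{y} \to 0$. Combining the two convergent pieces with the quasi-triangle inequality, namely $\norm{(a_N+b_N)-(a+b)} \le k(\norm{a_N-a}+\norm{b_N-b})$, together with homogeneity for the factor $c$, the series converges to $y + c z \in y + c\SC$. This verifies \ConditionZetaTextAlphaC{(\alpha,C_0)} for $y+c\SC$.
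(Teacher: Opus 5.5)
Your proof is correct and follows the paper's argument essentially step by step: in (i) you pull back to preimages in $\SC$ and push the limit through $\Phi$ using linearity and boundedness. In (ii) you split the partial sums into the coefficient $\frac{1}{C_0}\sum_{n\le N} n^{-\alpha}\to 1$ multiplying $y$ and the series $c\sum_n z_n/(C_0 n^\alpha)$, exactly as the paper does, and your remarks on uniqueness of limits and on combining two convergent sequences via the quasi-triangle inequality spell out what the paper leaves implicit.
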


\begin{proof}
    We show $\ref{quasi-convexity_preserved_item_lin_ops}$.
    Let $(x_i)_{i\in \NN} \subset \Phi(\SC)$.
    Then there exists $(y_i)_{i\in \NN} \subset \SC$ such that $x_i = \Phi(y_i)$ for all $i\in \NN$.
    Since $\SC$ satisfies \ConditionZetaTextAlphaC{(\alpha,C_0)} in $\YY$, we have
    \begin{equation*}
        \sum_{i=1}^n \frac{y_i}{C_0 i^{\alpha}} 
        \xrightarrow[(n\to\infty)]{\norm{\cdot}_{\YY}} \sum_{i=1}^{\infty} \frac{y_i}{C_0 i^{\alpha}} =: y \in \SC.
     \end{equation*}
     Using the linearity of $\Phi$ and its continuity, we infer that
     \begin{equation*}
        \sum_{i=1}^n \frac{x_i}{C_0 i^{\alpha}} 
        = \sum_{i=1}^n \frac{\Phi(y_i)}{C_0 i^{\alpha}}
        =\Phi\biggl( \sum_{i=1}^n \frac{y_i}{C_0 i^{\alpha}} \biggr)
        \xrightarrow[(n\to\infty)]{\norm{\cdot}_{\XX}} \Phi(y) =:x \in \Phi(\SC).
     \end{equation*}
     This shows that $\Phi(\SC)$ satisfies \ConditionZetaTextAlphaC{(\alpha,C_0)} in $\XX$.

     We show $\ref{quasi-convexity_preserved_item_translations}$.
     Let $y\in \YY$ and let $c>0$.
     Let $(y_i)_{i\in \NN} \subset y+c\SC$. 
     For $i\in \NN$ let $z_i := \frac{y_i-y}{c}$.
     Then $(z_i)_{i\in \NN} \subset \SC$ and we have
     \begin{equation*}
         \sum_{i=1}^n \frac{y_i}{C_0i^{\alpha}}
         = \sum_{i=1}^n \frac{y}{C_0i^{\alpha}}
         +c\sum_{i=1}^n \frac{z_i}{C_0i^{\alpha}}
     \end{equation*}
     for all $n\in \NN$.
     Since $\sum_{i=1}^{\infty} \frac{1}{C_0i^{\alpha}}=1$, the first sum on the right-hand side converges to $y$.
     Since $\SC$ satisfies \ConditionZetaTextAlphaC{(\alpha,C_0)}, the second sum on the right-hand side converges to some element in $\SC$.
     Hence the sum on the left-hand side converges to some element in $y+ c\SC$.
     This shows that $y + c \SC$ satisfies \ConditionZetaTextAlphaC{(\alpha,C_0)}.
\end{proof}

\begin{corollary}\label{continuously_embedded_balls_are_zeta}
    Let $(\YY, \norm{\cdot}_{\YY})$ and $(\XX,\norm{\cdot}_{\XX})$ be quasi-Banach spaces with $\YY \embeds \XX$.
    Then any closed ball $\ball\bigl(0,r\mid \YY\bigr)$ of radius $r > 0$ centered at $0$
    considered as a subset of $\XX$ satisfies \ConditionZetaText.
\end{corollary}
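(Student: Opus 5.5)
The plan is to combine \cref{quasi_Banach_unit_balls_are_quasi_convex} with \cref{quasi-convexity_preserved}. First, I reduce to the unit ball. Since $\YY$ is a quasi-Banach space with some modulus of concavity $K \geq 1$, I fix any $\alpha > 1 + \log_2(K)$. Then \cref{quasi_Banach_unit_balls_are_quasi_convex} gives a constant $C > 0$ such that $\ball(0,1 \mid \YY)$ satisfies \ConditionZetaTextAlphaC{(\alpha,C)} as a subset of $\YY$.

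Second, I transfer the condition to $\XX$. The natural choice is the bounded linear map $\Phi := r \cdot \iota : \YY \to \XX$, where $\iota$ is the continuous embedding. It is linear, and it is bounded because $\| r y \|_{\XX} \leq r \cdot C_{\iota} \cdot \| y \|_{\YY}$, where $C_\iota$ is the embedding constant. By homogeneity of $\norm{\cdot}_{\YY}$ we have $\Phi\bigl(\ball(0,1\mid\YY)\bigr) = r \cdot \ball(0,1\mid \YY) = \ball(0,r\mid\YY)$, now viewed as a subset of $\XX$. Part \ref{quasi-convexity_preserved_item_lin_ops} of \cref{quasi-convexity_preserved} then shows that this set satisfies \ConditionZetaTextAlphaC{(\alpha,C)} in $\XX$. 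Note that part \ref{quasi-convexity_preserved_item_translations} is not needed, because the balls are centered at $0$ and scaling is absorbed into $\Phi$.

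I do not expect a real obstacle. The only care needed is to confirm that the proof of \cref{quasi-convexity_preserved}~\ref{quasi-convexity_preserved_item_lin_ops} uses only the continuity of $\Phi$ between the quasi-norm topologies. This holds for bounded linear maps between quasi-normed spaces, since $\|\Phi(y_n) - \Phi(y)\|_{\XX} \leq \|\Phi\| \, \|y_n - y\|_{\YY}$. As a result, convergence of the partial sums in $\YY$ carries over to convergence in $\XX$, with the limit lying in $\Phi(\SC)$. Completeness of $\XX$ is not even required; it only enters through the completeness of $\YY$, which \cref{quasi_Banach_unit_balls_are_quasi_convex} needs.
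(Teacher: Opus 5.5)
Your proof is correct and is essentially the paper's argument. The paper also starts from \cref{quasi_Banach_unit_balls_are_quasi_convex} and applies part~\ref{quasi-convexity_preserved_item_lin_ops} of \cref{quasi-convexity_preserved}, but it does so twice (first to the dilation $y \mapsto r y$ on $\YY$, then to the embedding $\iota$), whereas you apply it once to the composite $r \cdot \iota$; this difference is immaterial.
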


\begin{proof}
  Let $r>0$.
  By \cref{quasi_Banach_unit_balls_are_quasi_convex},
  the unit ball $\ball(0,1\mid \YY)$ satisfies {\ConditionZetaText} in $\YY$.
  Since dilations are bounded linear maps, part \ref{quasi-convexity_preserved_item_lin_ops}
  of \cref{quasi-convexity_preserved} implies that the ball $\ball(0,r\mid \YY)$
  satisfies {\ConditionZetaText} in $\YY$.
  Since the embedding $\iota \colon \YY \embeds \XX$ is a bounded linear map,
  part \ref{quasi-convexity_preserved_item_lin_ops} of \cref{quasi-convexity_preserved}
  implies that $\iota\bigl( \ball(0,r\mid \YY)\bigr) \subset \XX$ satisfies {\ConditionZetaText}.
\end{proof}

\subsection{Application to Besov- and Triebel-Lizorkin spaces}
\label{sec_unit_Balls_B_F_are_zeta}

\subsubsection{
  Proof of \texorpdfstring{
             \Cref{existence_crit_measure_isotropic}
           }{
              Proposition~\ref{existence_crit_measure_isotropic}
           }
}
\label{sec_proof_of_existence_crit_measure_isotropic}

We recall the setting from \cref{existence_crit_measure_isotropic} and introduce some notation.
Let $\emptyset \ne \Omega \subset \RR^d$ be an open and bounded subset.
Let $A_1,A_2\in \{B,F\}$.
Let $s_1,s_2\in \RR$ and let $0<p_1,q_1,p_2,q_2 \le \infty$ (with $p_i<\infty$ if $A_i=F$).
Assume that 
\begin{equation*}
   s_1 > s_2 
   \quad \text{and} \quad
    s_1 - \frac{d}{p_1} > s_2 -\frac{d}{p_2}.
\end{equation*}
Let 
\begin{equation*}
   \YY := (A_1)^{s_1}_{p_1,q_1}(\Omega),
   \quad 
   \text{and}
   \quad 
   \XX := (A_2)^{s_2}_{p_2,q_2}(\Omega).
\end{equation*}
Furthermore, let
\begin{equation*}
\SC := \bigl\{ y \in \YY : \norm{y}_{\YY}\le 1\bigr\}.
\end{equation*}

The following lemma is well-known.
For a proof and relevant references, we refer to \cref{sec_proof_of_minkowski_dim_isotropic}.
\begin{lemma}
    \label{minkowski_dim_isotropic}
The spaces $\bigl(\XX,\norm{\cdot}_{\XX}\bigr)$ and $\bigl(\YY,\norm{\cdot}_{\YY}\bigr)$ from above are quasi-Banach spaces.
    Furthermore, we have a compact embedding $\YY \embeds \XX$ and the subset $\SC\subset\XX$ from above is a non-empty totally bounded subset with 
    \begin{equation*}
        \md(\SC) = \frac{d}{s_1-s_2}.
    \end{equation*}
\end{lemma}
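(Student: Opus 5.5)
Since this lemma collects well-known facts, the plan is to assemble them from the literature rather than prove anything from scratch, and to translate entropy-number asymptotics into the power-exponential Minkowski dimension.

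\emph{Step 1: quasi-Banach property.} For $\Omega$ bounded and open, the spaces $(A_i)^{s_i}_{p_i,q_i}(\Omega)$ are defined by restriction from $\RR^d$ with the quotient quasi-norm. The spaces on $\RR^d$ are quasi-Banach, and the restriction kernel $\{f : f|_\Omega = 0\}$ is closed. Hence the quotient is quasi-Banach, as in \cite[Section 1.11]{triebelTheoryFunctionSpaces2006}.

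\emph{Step 2: compact embedding and entropy numbers.} I will cite the entropy-number results of Edmunds--Triebel, extended to arbitrary bounded open $\Omega$ by Triebel's later monographs. Under $s_1>s_2$ and $s_1-d/p_1>s_2-d/p_2$, these give that $\iota:\YY\embeds\XX$ is compact with dyadic entropy numbers
\[
  e_k(\iota)\asymp k^{-(s_1-s_2)/d}.
\]
If the literature only covers nice domains (e.g.\ cubes or Lipschitz domains), I would sandwich $\Omega$ between two cubes. The extension-by-zero and restriction operators then transfer the upper and lower bounds, possibly after a localization argument with a fixed cutoff function. Compactness means $\SC=\iota(\ball(0,1\mid\YY))$ is totally bounded, and $\SC\neq\emptyset$ trivially.

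\emph{Step 3: entropy numbers to $\md$.} For this I invoke the appendix facts announced in \cref{sec:proofs_of_technical_results_on_function_spaces}. If $e_k(\iota)\asymp k^{-1/\sigma}$, then $\log_2\cn(\SC,\eps)\asymp\eps^{-\sigma}$, and consequently $\lmd(\SC)=\umd(\SC)=\sigma$.

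The sketch of that translation runs as follows. The entropy number $e_k$ is the infimum of radii $\eps$ such that $\SC$ is covered by $2^{k-1}$ balls with centers in $\XX$. This controls $\extcn$, and \cref{lem:InternalVSExternalCoveringNumbers} passes to $\cn$ at the cost of a constant factor in $\eps$, namely the triangle constant. Taking $\log_2\log_2$ and dividing by $\log_2(1/\eps)$ kills all multiplicative constants. This yields $\md(\SC)=d/(s_1-s_2)$.

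\emph{Main obstacle.} The main difficulty is the literature bookkeeping in Step 2. The two-sided entropy estimate must be stated for the full parameter range (including $p,q<1$, mixed $B$/$F$ types, and negative $s$) and for arbitrary bounded open $\Omega$. The lower bound in particular needs care. I would obtain it by embedding a fixed ball $Q\subset\Omega$ and using that restriction to $Q$ and extension from $Q$ are bounded in both scales.
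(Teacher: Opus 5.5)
Your outline matches the paper's proof. The quasi-Banach property comes from the quotient definition of the spaces on $\Omega$. Compactness and total boundedness come from two-sided estimates $\en_k(\iota)\asymp k^{-(s_1-s_2)/d}$. The passage to $\md(\SC)$ is exactly \cref{entropy_numbers_to_minkowski_dim_upper,entropy_numbers_to_minkowski_dim_lower}, with \cref{lem:InternalVSExternalCoveringNumbers} absorbing the triangle constant.

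What you leave open is the step you call the main obstacle, and you should supply it rather than search the literature for it. The $B$--$B$ result \cite[Theorem~1.97]{triebelTheoryFunctionSpaces2006} already covers arbitrary bounded domains, all $s_i\in\RR$ and all $0<p_i,q_i\le\infty$. So only the cases involving an $F$-space remain. These follow from two ingredients:
\begin{itemize}
\item the elementary embeddings $B^s_{p,\min\{p,q\}}(\Omega)\embeds F^s_{p,q}(\Omega)\embeds B^s_{p,\max\{p,q\}}(\Omega)$, which are known on $\RR^d$ and pass to $\Omega$ because the quasi-norms on $\Omega$ are quotient quasi-norms;
\item the ideal property $\en_k(RTS)\le\norm{R}\,\en_k(T)\,\norm{S}$ from \cref{entropy_numbers_basic_properties}.
\end{itemize}
For the upper bound, factor $\iota$ as $(A_1)^{s_1}_{p_1,q_1}(\Omega)\embeds B^{s_1}_{p_1,\tilde q_1}(\Omega)\embeds B^{s_2}_{p_2,\tilde q_2}(\Omega)\embeds (A_2)^{s_2}_{p_2,q_2}(\Omega)$. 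For the lower bound, factor a $B$--$B$ embedding through $\iota$, using the opposite choices of $\tilde q_1,\tilde q_2$. The $B$--$B$ asymptotics do not depend on the fine indices, so both bounds transfer. Compactness of $\iota$ also follows from the factorization.

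Consequently your domain sandwich is unnecessary. If you keep it, drop the reference to extension by zero, which is not bounded on these spaces for general $\Omega$ and $s$.
\begin{itemize}
\item The upper bound needs no extension at all. For a ball $Q\supset\Omega$, the unit ball of $(A_1)^{s_1}_{p_1,q_1}(\Omega)$ consists of restrictions to $\Omega$ of elements of the radius-$2$ ball of $(A_1)^{s_1}_{p_1,q_1}(Q)$; this uses the quotient definition. Restriction $(A_2)^{s_2}_{p_2,q_2}(Q)\to(A_2)^{s_2}_{p_2,q_2}(\Omega)$ has norm at most $1$. Hence the entropy numbers on $\Omega$ are at most twice those on $Q$.
\item The lower bound does need a bounded linear extension operator from a small ball $Q'\subset\Omega$, valid for the given parameters (e.g.\ Rychkov's universal extension operator), as you indicate in your last paragraph.
\end{itemize}
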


\begin{proof}[Proof of \cref{existence_crit_measure_isotropic}]
That $\md(\SC) = \frac{d}{s_1-s_2}$ is already shown in \cref{minkowski_dim_isotropic}.
In order to show that a critical measure exists,
we will invoke \cref{equivalence_lower_minkowski_existence_of_measure}. 
In the following, we check its prerequisites.
By \cref{minkowski_dim_isotropic}, the embedding $\YY \embeds \XX$ is compact.
In particular, it is continuous.
By \cref{continuously_embedded_balls_are_zeta}, $\SC$ satisfies {\ConditionZetaText} as a subset of $\XX$.
Hence, \cref{equivalence_lower_minkowski_existence_of_measure} is applicable.
\end{proof}

\subsubsection{
  Proof of \texorpdfstring{
             \Cref{existence_crit_measure_dms}
           }{
              Proposition~\ref{existence_crit_measure_dms}
           }
}
\label{sec_proof_of_existence_crit_measure_dms}

We recall the setting from \cref{existence_crit_measure_dms} and introduce some notation.
 Let $d\in \NN$ and let $\emptyset \ne \Omega \subset \RR^d$ be an arbitrary bounded domain%
 \footnote{In the sense of \cite[Section 3.1]{vybiralFunctionSpacesDominating2006}.}.
 Let $A_1,A_2 \in \{B,F\}$, let $s_1,s_2\in \RR$ and let $0<p_1,p_2,q_1,q_2\le \infty$ (with $p_i<\infty$ if $A_i=F$).
 Assume that
    \begin{equation*}
        s_1 > s_2 
        \quad \text{and} \quad 
        s_1 - \frac{1}{p_1} > s_{2} - \frac{1}{p_{2}}.
    \end{equation*}
Let 
\begin{equation*}
   \YY := S^{s_1}_{p_1,q_1}A_1(\Omega),
   \quad 
\SC := \bigl\{ y \in \YY : \norm{y}_{\YY}\le 1\bigr\},
   \quad 
   \text{and}
   \quad 
   \XX := S^{s_2}_{p_2,q_2}A_2(\Omega).
\end{equation*}

The following lemma is well-known.
For a proof and relevant references, we refer to \cref{sec_proof_of_minkowski_dim_dms}.

\begin{lemma}\label{minkowski_dim_dms}
  The spaces $\bigl(\XX,\norm{\cdot}_{\XX}\bigr)$ and $\bigl(\YY,\norm{\cdot}_{\YY}\bigr)$
  from above are quasi-Banach spaces.
  Furthermore, we have a compact embedding $\YY \embeds \XX$ and the subset $\SC\subset \XX$
  from above is a non-empty totally bounded subset with 
  \begin{equation*}
    \md(\SC) = \frac{1}{s_1-s_2}.
  \end{equation*}
\end{lemma}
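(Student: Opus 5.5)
The plan is to mirror the proof of \cref{minkowski_dim_isotropic}: reduce everything to known embedding and entropy-number results for dominating mixed smoothness spaces on domains, and translate those into the power-exponential Minkowski dimension.

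\emph{Step 1 (quasi-Banach spaces).} I will first note that $S^{s}_{p,q}A(\Omega)$ is defined as the restriction space of $S^{s}_{p,q}A(\RR^d)$, equipped with the infimum (quotient) quasi-norm. The spaces on $\RR^d$ are quasi-Banach spaces, as recorded in \cite[Section 3.1]{vybiralFunctionSpacesDominating2006}. The kernel of the restriction map, $\{f : f|_\Omega = 0\}$, is closed, since restriction to $\Omega$ is continuous into distributions. Hence the quotient is again a quasi-Banach space with the same modulus of concavity.

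\emph{Step 2 (compact embedding and entropy numbers).} Next I will invoke the entropy number asymptotics of Vybíral \cite{vybiralFunctionSpacesDominating2006}. Under $s_1 - s_2 > (1/p_1 - 1/p_2)_+$, the embedding $\mathrm{id}\colon \YY \to \XX$ is compact, and its entropy numbers satisfy
\[
  e_k(\mathrm{id}) \asymp k^{-(s_1-s_2)} \, (\log k)^{\beta}
\]
for some exponent $\beta\in\RR$ depending on $d,q_i,A_i$. I will check that the hypothesis $s_1 > s_2$ together with $s_1 - 1/p_1 > s_2 - 1/p_2$ is exactly this condition.

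The main obstacle is that the literature states these bounds for particular parameter combinations: $B$ to $B$ on the cube, and $F$-cases either via sandwiching $S^{s}_{p,\min(p,q)}B \embeds S^{s}_{p,q}F \embeds S^{s}_{p,\max(p,q)}B$ or for $p\le2$ only. To reach arbitrary bounded domains, I will sandwich $\Omega$ between two cubes $Q_1\subset \Omega\subset Q_2$ using extension and restriction operators. For the $F$-scale I will use the elementary embeddings into $B$-spaces with slightly perturbed smoothness. That perturbation only costs $\eps$ in the polynomial exponent and arbitrary log factors, both invisible on the power-exponential scale.

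\emph{Step 3 (converting to the Minkowski dimension).} Finally, I will apply the appendix lemmas relating entropy numbers to the power-exponential Minkowski dimension. If $e_k \asymp k^{-\gamma}$ up to log factors, with $\gamma = s_1-s_2$, then $\log_2\cn(\SC,\eps)\asymp \eps^{-1/\gamma}$ up to logarithmic factors. Here one uses that $e_k(\mathrm{id})\le\eps$ gives $\log_2\cn(\SC,\eps)\lesssim k$, and conversely. Therefore
\[
  \lim_{\eps\downarrow0}\frac{\log_2\log_2\cn(\SC,\eps)}{\log_2(1/\eps)}=\frac1{s_1-s_2}.
\]
Total boundedness follows from compactness, and $\SC\neq\emptyset$ is trivial.
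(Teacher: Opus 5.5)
Your proposal is correct and follows essentially the paper's route: the $B$-to-$B$ entropy asymptotics from \cite[Theorem~4.11]{vybiralFunctionSpacesDominating2006}, transfer to the $F$-cases via $S^{s}_{p,\min\{p,q\}}B \embeds S^{s}_{p,q}F \embeds S^{s}_{p,\max\{p,q\}}B$ together with \cref{entropy_numbers_basic_properties}, and then \cref{entropy_numbers_to_minkowski_dim_upper} and \cref{entropy_numbers_to_minkowski_dim_lower}, with an arbitrarily small loss in the exponent absorbing the logarithmic factors. Your quotient argument for the quasi-Banach property fills in a point the paper leaves implicit. The cube sandwich and the perturbed smoothness are harmless but unnecessary detours: the paper applies Vybíral's theorem directly on the bounded domain, and for restriction spaces entropy numbers are monotone in the domain anyway, with no extension operator required. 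Also avoid claiming matching logarithmic exponents in a two-sided $\asymp$ bound; this is why the paper only uses $k^{-(s_1-s_2)} \lesssim \en_k(\iota) \lesssim k^{-(s_1-s_2-\delta)}$.
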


\begin{proof}[Proof of \cref{existence_crit_measure_dms}]
The identity $\md(\SC)= \frac{1}{s_1-s_2}$ follows from \cref{minkowski_dim_dms}.
In order to show the existence of a critical measure,
we will invoke \cref{equivalence_lower_minkowski_existence_of_measure}. 
In the following, we check its prerequisites.
By \cref{minkowski_dim_dms}, the embedding $\YY \embeds \XX$ is compact.
In particular, it is continuous.
By \cref{continuously_embedded_balls_are_zeta},
$\SC$ satisfies {\ConditionZetaText} as a subset of $\XX$.
Hence, \cref{equivalence_lower_minkowski_existence_of_measure} is applicable.
\end{proof}

%% file: parts/technical_proofs_introduction.tex
\section{Postponed proofs from the introduction related to non-linear approximation}
\label{sec:TechnicalProofsIntroduction}

\subsection{
  Proof of \texorpdfstring{
             \Cref{thm:RateDistortionTheoryPhaseTransition}
           }{
              Theorem~\ref{thm:RateDistortionTheoryPhaseTransition}
           }
}
\label{sec:proof_of_thm:RateDistortionTheoryPhaseTransition}

\begin{proof}[Proof of \cref{thm:RateDistortionTheoryPhaseTransition}]
  Let $k \geq 1$ be a "triangle constant" for $d$.

  \medskip{}

  \emph{(i)}
  The claim is vacuously satisfied if $\CR^{\ast}=0$.
  Hence, we can assume that $\CR^{\ast}>0$.
  For $n \in \N$, define
  \[
    \delta_n
    := \inf \bigl\{ \delta(E_n, D_n) \,\,:\,\, (E_n, D_n) \text{ is an $n$-bit codec for } \SC \bigr\}
    .
  \]
  By assumption of the theorem, $\SC$ is bounded, which easily implies that $\delta_n < \infty$.
  By definition of $\delta_n$, we can choose for each $n \in \N$ an $n$-bit codec
  $(E_n^\ast, D_n^\ast)$ for $\SC$ satisfying $\delta(E_n^\ast, D_n^\ast) \leq 2^{-n} + \delta_n$.

  Now, let $0 \leq \CR < \CR^\ast$ be arbitrary and choose $C_{\CR} > 0$ with $2^{-n} \leq C_{\CR} \cdot n^{-\CR}$
  for all $n \in \N$.
  Note by \cref{lem:CompressionRateVSMinkowskiDimension} that indeed
  $\CR^\ast 
  = \bigl(\umd(\SC)\bigr)^{-1} 
  = \CR^\ast (\SC, \XX)$.
  By definition of $\CR^\ast(\SC, \XX)$, there
  thus exists $\tau \in (\CR, \CR^\ast]$ and a codec sequence $\mathscr{S} = \bigl((E_n, D_n)\bigr)_{n\in \NN}$
  for $\SC$ that achieves rate $\tau$, meaning there is a constant $C > 0$ such that
  $\delta(E_n, D_n) \leq C \cdot n^{- \tau} \leq C \cdot n^{-\CR}$ for all $n \in \N$.
  By choice of $(E_n^\ast, D_n^\ast)$ and $\delta_n$, this implies
  \[
    \delta(E_n^\ast, D_n^\ast)
    \leq 2^{-n} + \delta_n
    \leq 2^{-n} + \delta(E_n, D_n)
    \leq 2^{-n} + C \cdot n^{-\CR}
    \leq (C + C_{\CR}) \cdot n^{-\CR}
  \]
  for all $n \in \N$.
  This proves Part~(i).

  \medskip{}

  \emph{(ii)}
  The claim is vacuously true if $\CR^{\sharp}=\infty$. Hence, assume that $\CR^{\sharp}<\infty$.
  Let $\CR \in (\CR^\sharp, \infty) \subset (0,\infty)$.
  Since $\CR^\sharp = (\lmd(\SC))^{-1}$, this implies $\frac{1}{\CR} < \md(\SC)$.
  Hence, by definition of a critical measure, $\mu$ satisfies the small-ball condition of order
  $\sigma := \frac{1}{\CR}$.
  Hence, there exist $c_0 = c_0(\CR) > 0$ and $r_0 = r_0 (\CR) > 0$ such that
  \begin{equation}
    \mu^\ast \bigl(\ball(x, r)\bigr)
    \leq \exp\bigl(-c_0 \cdot r^{-1/\CR} \bigr)
    \qquad \forall \, x \in \XX \text{ and } 0 < r < r_0
    .
    \label{eq:RateDistortionPhaseTransitionProofSmallBallProperty}
  \end{equation}

  Set $\eps_0 := r_0$ and $c := c_0 \log_2(e)> 0$.
  Let $n \in \N$, let $(E_n, D_n)$ be an $n$-bit codec for $\SC$,
  and let $\eps \in (0, \eps_0)$.
  Let
  \[
    M
    := \bigl\{
         x \in \SC
         \,\,:\,\,
         d\bigl(x, D_n(E_n(x))\bigr) \leq \eps
       \bigr\}
  \]
  and write
  \[
    \mathrm{Image}(D_n) = \{ x_1, \dots, x_{2^n} \}
  \]
  with (not necessarily pairwise distinct) $x_i \in \XX$.
  Then 
  \[
    M \subset \bigcup_{i=1}^{2^n} \ball(x_i,\eps).
  \]
  By the subadditivity of $\mu^\ast$ and \cref{eq:RateDistortionPhaseTransitionProofSmallBallProperty},
  this implies
  \[
    \mu^\ast (M)
    \leq \sum_{i=1}^{2^n} \mu^\ast \bigl(\ball(x_i, \eps)\bigr)
    \leq 2^n \cdot \exp\bigl(-c_0 \cdot \eps^{-1/\CR}\bigr)
    =    2^{n - c_0\log_2(e) \cdot \eps^{-1/\CR}}
    .
  \]
  Since $c = c_0\log_2(e)> 0$, this proves the claim of Part~(ii).

  \medskip{}

  \emph{(iii)}
  The claim is vacuously true if $\CR^{\sharp}=\infty$. Hence, assume that $\CR^{\sharp}<\infty$.
  Let $\CR \in (\CR^\sharp, \infty)$, pick $\sigma \in (\CR^\sharp, \CR) \subset (\CR^\sharp, \infty)$,
  and choose $c = c(\sigma) > 0$ and $\eps_0 = \eps_0 (\sigma) > 0$ as provided by Part~(ii),
  applied to $\sigma$ instead of $\CR$.
  Let $\mathscr{S} = ( (E_n, D_n) )_{n \in \N}$ be a codec sequence for $\SC$.
  For each $m \in \N$, let
  \[
    \CalR_m^\CR
    := \bigl\{
         x \in \SC
         \,\,:\,\,
         \forall \, n \in \N : d\bigl(x, D_n(E_n(x))\bigr) \leq m \cdot n^{-\CR}
       \bigr\}
    .
  \]
  Since $\CalR^\CR (\mathscr{S}, \SC, \XX) = \bigcup_{m \in \N} \CalR_m^\CR$, it is enough to show that
  $\mu^\ast (\CalR_m^\CR) = 0$ for every $m \in \N$.
  Thus, fix $m \in \N$.

  There exists $n_0 = n_0 (\CR,m)$ with $m \cdot n^{-\CR} < \eps_0$ for all $n \geq n_0$.
  Hence, since
  \[
    \CalR_m^\CR
    \subset 
              \bigl\{ x \in \SC \,\,:\,\, d\bigl(x, D_n(E_n(x))\bigr) \leq m \cdot n^{-\CR} \bigr\}
    ,
  \]
  we get by Part~(ii) (for $\sigma$ instead of $\CR$) for every $n \geq n_0$ that
  \begin{align*}
    \mu^\ast (\CalR_m^\CR)
    & \leq \mu^\ast \Bigl(\bigl\{ x \in \SC \,\,:\,\, d\bigl(x, D_n(E_n(x))\bigr) \leq m \cdot n^{-\CR} \bigr\}\Bigr) \\
    & \leq 2^{n - c \cdot (m \cdot n^{-\CR})^{-1/\sigma}}
      =    2^{n - c \cdot m^{-1/\sigma} n^{\CR/\sigma}}
      \xrightarrow[n\to\infty]{} 0
    .
  \end{align*}
  Here, the last step used that $\CR/\sigma > 1$.

  For the proof of the final part, note that $\CalR^\CR (\mathscr{S}, \SC, \XX) \subset \CalR^s (\mathscr{S}, \SC, \XX)$ if $\CR > s$.
  This implies that
  \[
    \bigcup_{\CR \in (\CR^\sharp, \infty)} \CalR^\CR (\mathscr{S}, \SC, \XX)
    = \bigcup_{n \in \N} \CalR^{\CR^\sharp + 1/n} (\mathscr{S}, \SC, \XX)
  \]
  is a $\mu^\ast$-null set, as a countable union of $\mu^\ast$-null sets.
\end{proof}

\subsection{
  Proof of \texorpdfstring{
             \Cref{thm:PhaseTransitionForFamiliesWithControlledComplexity}
           }{
              Theorem~\ref{thm:PhaseTransitionForFamiliesWithControlledComplexity}
           }
}
\label{sec:PhaseTransitionForControlledComplexityFamiliesProof}

\begin{proof}[Proof of \Cref{thm:PhaseTransitionForFamiliesWithControlledComplexity}]
  \textbf{Proof of Part~(i):}
  Consider the codec sequence ${\mathscr{S}^\ast \!=\! ( (E_n^\ast, D_n^\ast) )_{n \in \N}}$
  from Part~(i) of \Cref{thm:RateDistortionTheoryPhaseTransition}.
  Set $\CalA_n^\ast := \mathrm{Image}(D_n^\ast) \subset \XX$.
  Then the bound in Part~(i) of \Cref{thm:RateDistortionTheoryPhaseTransition} directly implies
  for arbitrary $0 \leq \AR < \AR^\ast$ that
  \[
    \sup_{x \in \SC} \,\, \dist(x, \CalA_n^\ast)
    = \sup_{x \in \SC} \,\, \inf_{y \in \mathrm{Image}(D_n^\ast)} d(x, y)
    \leq \sup_{x \in \SC} d\bigl(x, D_n^\ast(E_n^\ast(x))\bigr)
    = \delta(E_n^\ast, D_n^\ast)
    \leq C(\AR) \cdot n^{-\AR}
    .
  \]
  Finally, we note for any $x\in \XX$ and $R>0$ that
  \[
    \cn(\ball(x,R) \cap \CalA_n^\ast, n^{-\alpha})
    \leq \# (\ball(x,R) \cap \CalA_n^\ast)
    \leq \num (\CalA_n^\ast)
    \leq 2^n
    =    \exp(\fcc(n)),
  \]
  where $\fcc(n) := \ln(2) \cdot n$.
  Hence Condition~\eqref{eq:ControlledComplexity}
  is satisfied.
  It is trivial to see that Condition~\eqref{eq:ZetaGrowthCondition} holds as well.
  Thus, $(\CalA_n^\ast)_{n \in \N}$ is a family of controlled complexity.

  \bigskip{}

  \textbf{Proof of Part~(ii):}
  Suppose that \eqref{eq:FamilyApproximationRate} holds for some $\AR \geq 0$.
  If $\AR = 0$, we trivially have $\AR \leq \AR^\ast$; we can thus assume $\AR > 0$.
  Let $k \geq 1$ be a triangle constant for $d$.
  Fix $\delta > 0$.
  Since $\SC \subset \XX$ is bounded, there exist $R \geq 4 k \, C(\AR) > 0$
  and $x_0 \in \XX$ with $\SC \subset \ball(x_0, \frac{R}{2 k})$.
  Let $\fcc : \N \to [0,\infty)$ and $n_0 \in \N$
  satisfy \eqref{eq:ZetaGrowthCondition} and \eqref{eq:ControlledComplexity}
  for $\alpha = \AR$ and $x = x_0$.
  By \eqref{eq:ZetaGrowthCondition}, we have $\fcc(n) \leq C_\delta \cdot n^{1 + \delta}$
  for all $n \in \N$ and some constant $C_\delta > 0$.

  Let $\eps \in (0, n_0^{-t})$ and choose $n \in \N_{\geq 2}$ with
  \[
    k \cdot (1 + 2 \, C(\AR)) \cdot n^{-\AR}
    \leq \frac{\eps}{2k}
    \leq k \cdot (1 + 2 \, C(\AR)) \cdot (n-1)^{-\AR}
    .
  \]
  Note that this implies
  \(
    n^{-t}
    \leq \eps
    < n_0^{-t}
  \)
  and thus $n \geq n_0$.

  Let $\CalN_n$ be a minimal $n^{-\AR}$-net for $\ball(x_0, R) \cap \CalA_n$.
  By \eqref{eq:ControlledComplexity}, we then have
  \[
    \# \CalN_n
    \leq \exp(\fcc(n))
    \leq \exp(C_\delta \cdot n^{1+\delta})
    .
  \]

  Now, given any $x \in \SC$, by \eqref{eq:FamilyApproximationRate} there exists
  $\tilde{x} \in \CalA_n$ with
  $d(x, \tilde{x}) \leq 2 \, C(\AR) \cdot n^{-\AR} \leq 2 \, C(\AR) \leq \frac{R}{2k}$.
  We then have $d(\tilde{x}, x_0) \leq k \cdot (d(\tilde{x}, x) + d(x, x_0)) \leq k \cdot (\frac{R}{2k} + \frac{R}{2k}) = R$
  and hence $\tilde{x} \in \ball(x_0, R) \cap \CalA_n$.
  There thus exists $\hat{x} \in \CalN_n$ with $d(\tilde{x}, \hat{x}) \leq n^{-\AR}$.
  This finally implies
  \[
    d(x, \hat{x})
    \leq k \cdot \bigl(d(x, \tilde{x}) + d(\tilde{x}, \hat{x})\bigr)
    \leq k \cdot \bigl(2 \, C(\AR) \cdot n^{-\AR} + n^{-\AR}\bigr)
    =    (1 + 2 \, C(\AR)) \cdot k \cdot n^{-\AR}
    \leq \frac{\eps}{2 k}
    .
  \]
  This shows that $\CalN_n$ is an (external) $\frac{\eps}{2k}$-net for $\SC$.
  In combination with \Cref{lem:InternalVSExternalCoveringNumbers}, this implies
  \[
    \cn(\SC, \eps)
    \leq \extcn\left(\SC, \XX, \frac{\eps}{2k}\right)
    \leq \# \CalN_n
    \leq \exp(C_\delta \cdot n^{1+\delta})
    .
  \]
  Finally, note that
  \[
    n^\AR
    \leq (1 + (n-1))^\AR
    \leq (2 \cdot (n-1))^\AR
    \leq 2^\AR \cdot (n-1)^\AR
    \leq 2^{\AR+1} k^2 (1 + 2 \, C(\AR)) \cdot \eps^{-1}
  \]
  and hence $n \leq C^\ast \cdot \eps^{-1/\AR}$ for a suitable constant $C^\ast = C^\ast (\AR,k) > 0$.

  We have thus shown for all $\eps \in (0, n_0^{-t})$ that
  \[
    \cn(\SC, \eps)
    \leq \exp\bigl( (C^\ast)^{1+\delta} C_\delta \cdot \eps^{-(1+\delta)/\AR}\bigr)
  \]
  and thus
  \[
    \frac{\log_2 \log_2 \cn(\SC, \eps)}{\log_2(1/\eps)}
    \leq \frac{\log_2 \bigl( \log_2(e) (C^\ast)^{1+\delta} C_\delta\bigr) + \frac{1+\delta}{\AR} \log_2(1/\eps)}{\log_2(1/\eps)}
    ,
  \]
  which easily implies $\umd(\SC) \leq \frac{1+\delta}{\AR}$.

  Since $\delta > 0$ was arbitrary, this shows $\umd(\SC) \leq \AR^{-1}$
  and hence as claimed that
  \[
    \AR^\ast = \bigl(\umd(\SC)\bigr)^{-1} \geq \AR
    .
  \]

  \bigskip{}

  \textbf{Proof of Part~(iii):}
  Let $k \geq 1$ be a triangle constant for the quasi-metric $d$ on $\XX$.
  The claim is vacuously satisfied if $\AR^\sharp = \infty$; hence, we can assume in the
  following that $\AR^\sharp < \infty$.

  Fix $x_0 \in \SC$.
  Since $\SC$ is bounded, there exists $R > 0$ such that $\SC \subset \ball(x_0, \frac{R}{2 k})$.
  Let $\AR>\AR^{\sharp}$ and choose $\sigma \in (\AR^\sharp, \AR)$.
  Since $0 \leq \AR^{\sharp} < \sigma < \AR$, we can write $\frac{\AR}{\sigma} = 1 + \delta$ with
  $\delta > 0$.

  The idea of the proof is to construct a codec sequence
  $\mathscr{S} = \bigl( (E_m, D_m) \bigr)_{m \in \N}$ for $\SC$ such that
  \begin{equation}
    \CalR^\AR \bigl( (\CalA_n)_{n \in \N}, \SC, \XX\bigr)
    \subset \CalR^\sigma (\mathscr{S}, \SC, \XX)
    .
    \label{eq:PhaseTransitionControlledComplexityMainClaim}
  \end{equation}
  Once this is shown, the claim follows from Part~(iii) of \cref{thm:RateDistortionTheoryPhaseTransition}.

  For the construction of $\mathscr{S}$, let $n_0 \in \N$
  and $\fcc = \fcc_{\alpha,x_0,R} : \N \to [0,\infty)$ be as in
  \cref{def:ControlledComplexity}, for $\alpha = \AR$ and $x = x_0$.
  By assumption \eqref{eq:ZetaGrowthCondition} on $\fcc$,
  we have $\fcc (n) \leq C_\delta \cdot n^{1+\delta}$ for all $n \in \N$,
  for a suitable constant $C_\delta \in (0,\infty)$.
  Let $m_0 \in \N$ such that $\frac{\ln(2)}{C_\delta} m \geq n_0^{1 + \delta}$ for $m \geq m_0$.

  For $m < m_0$, let $E_m : \SC \to \{ 0,1 \}^m$ be arbitrary, and define $D_m : \{ 0,1 \}^m \to \SC$
  via $D_m (b) = x_0$ for all $b \in \{ 0,1 \}^m$.
  For $m \in \N_{\geq m_0}$,
  \begin{equation}
    \text{let $n = n(m) \in \N$ be maximal with } n^{1+\delta} \leq \frac{\ln(2)}{C_\delta} \cdot m
    .
    \label{eq:ControlledComplexityPhaseTransitionChoiceOfN}
  \end{equation}
  This is possible, since the set of allowed $n$ is clearly bounded from above,
  and is non-empty because of $\frac{\ln(2)}{C_\delta} \cdot m \geq n_0^{1 + \delta}$,
  which also implies that $n(m) \geq n_0$ for $m \geq m_0$.
  Then, by Condition~\eqref{eq:ControlledComplexity} of \Cref{def:ControlledComplexity},
  given any $m \geq m_0$ we have
  \[
    \cn\bigl( \ball(x_0, R) \cap \CalA_n, n^{-\AR}\bigr)
    \leq \exp(\fcc(n))
    \leq \exp(C_\delta \cdot n^{1+\delta})
    \leq \exp(\ln(2) \cdot m)
    =    2^m
    .
  \]
  Hence, there exists an $n^{-\AR}$-net $\CalN_m \subset \XX$ for $\ball(x_0, R) \cap \CalA_n$
  with $\# \CalN_m \leq 2^m$.
  We now set
  \[
    \CalN_m^\ast
    := \begin{cases}
         \CalN_m \subset \ball(x_0, R),   & \text{if } \CalN_m \neq \emptyset, \\
         \{ x_0 \} \subset \ball(x_0, R), & \text{otherwise} ,
       \end{cases}
  \]
  noting that $\CalN_m^\ast \neq \emptyset$ and $\# \CalN_m^\ast \leq 2^m$.

  We can thus choose a surjection $D_m : \{ 0,1 \}^m \to \CalN_m^\ast$
  and for each $x \in \SC$ we can choose $E_m (x) \in \{ 0,1 \}^m$ with
  \begin{equation}
    d\bigl(x, D_m(E_m(x))\bigr)
    = \min_{b \in \{ 0,1 \}^m} d(x, D_m(b))
    = \min_{\hat{x} \in \CalN_m^\ast} d(x, \hat{x})
    .
    \label{eq:PhaseTransitionControlledComplexityCodecRelatesToNet}
  \end{equation}

  Now, given $x \in \CalR^\AR ( (\CalA_n)_{n \in \N}, \SC, \XX)$, the quantity
  \[
    \kappa (x)
    := \sup_{n \in \N} \bigl(n^\AR \cdot \dist(x, \CalA_n)\bigr)
    \in [0,\infty)
  \]
  is a finite constant.
  Note that by maximality of $n = n(m)$ (see \Cref{eq:ControlledComplexityPhaseTransitionChoiceOfN}),
  we have
  \[
    \frac{\ln(2)}{C_\delta} \cdot m
    < (n+1)^{1+\delta}
    \leq 2^{1+\delta} \cdot n^{1 + \delta}
  \]
  and thus, for $c_\delta := \ln(2) / (2^{1+\delta} \, C_\delta) > 0$,
  \begin{equation}
    n = n(m) \geq (c_\delta \cdot m)^{1 / (1+\delta)}
    \qquad \forall \, m \geq m_0
    .
    \label{eq:ControlledComplexityPhaseTransitionNLowerBound}
  \end{equation}
  In particular, $n(m) \to \infty$ as $m \to \infty$.
  We can thus choose $m_x \geq m_0$ with
  \begin{equation}
    n = n(m) \geq \Bigl(\frac{2 k}{R} \cdot (1 + \kappa (x))\Bigr)^{1/\AR}
    \qquad \forall \, m \geq m_x
    .
    \label{eq:ControlledComplexityPhaseTransitionMxChoice}
  \end{equation}

  \medskip{}

  Now, choose $\tilde{x} \in \CalA_n$ with
  \[
    d(x, \tilde{x})
    \leq \dist (x, \CalA_n) + n^{-\AR}
    \leq n^{-\AR} \kappa (x) + n^{-\AR}
    = \frac{1 + \kappa (x)}{n^\AR}
    \overset{\eqref{eq:ControlledComplexityPhaseTransitionMxChoice}}{\leq} \frac{R}{2 k}
    .
  \]
  This implies because of $x \in \SC \subset \ball(x_0, \frac{R}{2 k})$ that
  \[
    d(x_0, \tilde{x})
    \leq k \cdot \bigl(d(x_0, x) + d(x, \tilde{x})\bigr)
    \leq k \Bigl(\frac{R}{2 k} + \frac{R}{2 k}\Bigr)
    =    R
    ,
  \]
  and thus $\tilde{x} \in \CalA_n \cap \ball(x_0, R) \neq \emptyset$.
  Since $\CalN_m$ is an $n^{-t}$-net for $\ball(x_0, R) \cap \CalA_n$,
  this implies $\CalN_m \neq \emptyset$ and thus $\CalN_m^\ast = \CalN_m$.
  Since $\CalN_m$ is an $n^{-\AR}$-net for $\CalA_n \cap \ball(x_0, R)$,
  there thus exists $\hat{x} \in \CalN_m = \CalN_m^\ast$ with $d(\tilde{x}, \hat{x}) \leq n^{-\AR}$.
  This implies
  \begin{align*}
    d\bigl(x, D_m(E_m(x))\bigr)
    & \overset{\eqref{eq:PhaseTransitionControlledComplexityCodecRelatesToNet}}{=}
      \min_{y \in \CalN_m^\ast} d(x,y)
      \leq d(x, \hat{x})
      \leq k \cdot \bigl(d(x, \tilde{x}) + d(\tilde{x}, \hat{x})\bigr) \\
    & \leq k \cdot \left(\frac{1 + \kappa(x)}{n^\AR} + n^{-\AR}\right)
      =    \underbrace{k \cdot (2 + \kappa (x))}_{=: \tilde{\kappa}(x)} \cdot n^{-\AR} \\
    & \overset{\eqref{eq:ControlledComplexityPhaseTransitionNLowerBound}}{\leq}
        \tilde{\kappa} (x) \cdot (c_\delta \cdot m)^{-\AR / (1+\delta)}
      = \tilde{\kappa} (x) \cdot c_\delta^\sigma \cdot m^{-\sigma}
    .
  \end{align*}
  This holds for all $m \geq m_x$.
  For $m_0 \leq m < m_x$, we have $\mathrm{Image}(D_m) \subseteq \CalN_m^\ast \subset \ball(x_0, R)$
  and for $m < m_0$ we have $\mathrm{Image}(D_m) = \{ x_0 \} \subset \ball (x_0, R)$.
  Because of $x \in \SC \subset \ball(x_0, \frac{R}{2k}) \subset \ball (x_0, R)$,
  this implies for $m < m_x$ that
  \begin{align*}
    d\bigl(x, D_m(E_m(x))\bigr)
    & \leq k \cdot \Bigl(d(x,x_0) + d\bigl(x_0, D_m(E_m(x))\bigr)\Bigr) \\
    & \leq 2 k R
      =    2 k R \cdot m^\sigma \cdot m^{-\sigma}
      \leq 2 k R \cdot m_x^\sigma \cdot m^{-\sigma}
    .
  \end{align*}
  Overall, this shows that $x \in \CalR^\sigma (\mathscr{S}, \SC, \XX)$.

  We have thus shown \cref{eq:PhaseTransitionControlledComplexityMainClaim}.
  As discussed earlier, this completes the proof.
\end{proof}

\subsection{
  Proof of \texorpdfstring{
             \Cref{prop:NNApproximationOfControlledComplexity}
           }{
              Proposition~\ref{prop:NNApproximationOfControlledComplexity}
           }
}
\label{sec:NNControlledComplexityProofs}

For the proof of \Cref{prop:NNApproximationOfControlledComplexity}, we will need the
following technical lemma regarding the covering numbers of sets of neural networks.
It in turn relies on \cite[Lemma~6.1]{GrohsVoigtlaenderTheoryToPracticeGapDeepLearning},
but generalized to a different "ambient space" than $C([0,1]^d)$.

\begin{lemma}\label{lem:NNCoveringNumberBounds}
  For $d, n, L_0 \in \N$ and $C_0 \geq 1$, define
  \begin{align*}
    &\CalNN_n (C_0, L_0;\RR^d) \\
    &:= \biggl\{
           R_\varrho (\CalW)
           \,\,:\,\,
           \begin{array}{c}
                \text{$\CalW$ neural network weights with}\\ 
           \| \CalW \|_{\ell^0} \leq n, \,\,
           \| \CalW \|_{\ell^\infty} \leq C_0, \,\,
           L(\CalW) \leq L_0, \,\,
           d_{\mathrm{in}}(\CalW) = d, \,\,
           d_{\mathrm{out}}(\CalW) = 1
           \end{array}
         \biggr\}
    .
  \end{align*}
  Then for any measurable, bounded set $\Omega \subset \R^d$ and any $p \!\in\! (0,\infty]$,
  there exist constants $R \!=\! R(\Omega) \!\geq\! 1$ and $C = C(\Omega, p, d) > 0$
  such that for every $\eps \in (0,1)$, it holds that
  \[
      \cn \bigl(\CalNN_n (C_0, L_0;\RR^d), \norm{\cdot}_{L^p(\Omega)}, \eps\bigr)
    \leq \left( C \cdot L_0^4 \cdot \left( 8 d^2 R \cdot C_0 \cdot n \right)^{1+L_0} \cdot \eps^{-1} \right)^{2n}
    .
  \]
\end{lemma}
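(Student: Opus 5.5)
The plan is to reduce to the sup-norm covering bound of \cite[Lemma~6.1]{GrohsVoigtlaenderTheoryToPracticeGapDeepLearning}. That result gives, for networks restricted to a cube $[-R,R]^d$, a bound of the form
\[
  \bigl(C' \cdot L_0^4 \cdot (8 d^2 R\, C_0\, n)^{1+L_0}\, \eps^{-1}\bigr)^{2n}
\]
for the covering numbers in $C([-R,R]^d)$. The key point is that realization maps are Lipschitz in the weights: two networks with the same architecture and sparsity pattern whose weights differ by at most $\eta$ in $\ell^\infty$ have realizations differing uniformly on $[-R,R]^d$ by at most roughly $\eta\, L_0\, (dRC_0 n)^{L_0}$. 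One then counts sparsity patterns and grid points in $[-C_0,C_0]^n$. I would cite this, or reprove it along those lines if the cited version only treats $[0,1]^d$, handling $[-R,R]^d$ by rescaling the first-layer weights. Rescaling multiplies $C_0$ by at most $R$, which is exactly why a factor $R$ appears.

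\emph{Step 1.} Since $\Omega$ is bounded, choose $R = R(\Omega) \geq 1$ with $\Omega \subset [-R,R]^d$. Then for any bounded measurable $f,g$,
\[
  \|f-g\|_{L^p(\Omega)} \le \lambda(\Omega)^{1/p}\, \|f-g\|_{L^\infty([-R,R]^d)} .
\]
If $\lambda(\Omega) = 0$, everything is trivial. This holds for all $p \in (0,\infty]$; quasi-norms cause no trouble here because the inequality is a direct integral estimate with no triangle inequality needed.

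\emph{Step 2.} Take an $\eps/\lambda(\Omega)^{1/p}$-net in sup-norm on $[-R,R]^d$ consisting of network realizations, which the cited lemma provides. By Step 1, this is an $\eps$-net in $L^p(\Omega)$. If the cited net is external or has radius issues, I would pass to an internal net via \cref{lem:InternalVSExternalCoveringNumbers}, changing $\eps$ by the factor $2k$ with $k$ the triangle constant of $L^p$ (which depends only on $p$). Equivalence classes mod null sets are harmless.

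\emph{Step 3.} Absorb the factors $\lambda(\Omega)^{1/p}$ and $2k$ into the constant. This gives $C = C(\Omega,p,d)$, with $\eps^{-1}$ replaced by $C\eps^{-1}$, inside the $2n$-th power.

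The main obstacle is Step 2's matching to the exact form of the cited bound. That includes the domain (a cube versus $[0,1]^d$) and the requirement $\eps\in(0,1)$. If rescaling does not fit cleanly, I would instead redo the weight-perturbation estimate directly on $[-R,R]^d$. This uses the induction over layers: the output of layer $\ell$ is bounded by $(dRC_0 n)^{\ell}$ up to constants, and the perturbation grows like $\eta\,\ell\,(C_0 n)^{\ell}$. The result is then quantized on a grid with $\lesssim (\text{stuff})\eps^{-1}$ points per weight, times $\binom{\text{positions}}{n}$ patterns, which is bounded by $(L_0 n^2\cdots)^n$. Collecting these gives the stated power $2n$.
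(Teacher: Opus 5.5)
Your overall route is the paper's: enclose $\Omega$ in $Q=[-R,R]^d$, bound the $L^p(\Omega)$-distance by a constant times the sup-distance on $Q$, and transport to $[0,1]^d$ by modifying the first layer. You then apply \cite[Lemma~6.1]{GrohsVoigtlaenderTheoryToPracticeGapDeepLearning} there, pass back to an internal net via \cref{lem:InternalVSExternalCoveringNumbers}, and absorb $(2R)^{d/p}$ (or your sharper $\lambda(\Omega)^{1/p}$) and $2k$ into $C$. However, the rescaling step, which is the only place in the proof with real bookkeeping, is wrong as you describe it. As the paper uses it, the cited lemma is a bound on $[0,1]^d$ of the form $\bigl(c\,\eps^{-1}L^4(\max\{d,m\}\,C)^{1+L}\bigr)^{m}$ for $m$ nonzero weights and weight bound $C$. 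It contains neither $R$ nor $d^2$ nor the exponent $2n$, so your first paragraph presupposes the conclusion.

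To map $[0,1]^d$ onto $Q$ you need $T_Q x = 2Rx-(R,\dots,R)^T$, which has a translation part; a pure dilation only reaches $[0,R]^d$, which need not contain $\Omega$. Composing with $T_Q$ replaces $(A_1,b_1)$ by $\bigl(2RA_1,\; b_1-RA_1(1,\dots,1)^T\bigr)$. The new bias has entries up to $(1+dR)C_0$, and it can acquire up to $\|A_1\|_{\ell^0}$ new nonzero entries. Hence $f\circ T_Q\in\CalNN_{2n}(4dRC_0,L_0;\R^d)$, not a class with $n$ weights and bound $RC_0$. This doubling of $n$ is what produces the exponent $2n$, and $\max\{d,2n\}\cdot 4dRC_0\le 8d^2RC_0\,n$ produces the base; neither comes from pattern counting or from a pure factor $R$, as you suggest. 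With this corrected, your argument goes through and yields exactly the stated bound. Note also that the resulting net elements $f_i\circ T_Q^{-1}$ need not lie in $\CalNN_n(C_0,L_0;\R^d)$. So the external-to-internal step with the factor $2k$ is genuinely required, not merely a precaution.
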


\begin{proof}
\textbf{Step~1 (Rescaling):}
Choose $R \geq 1$ with $\Omega \subseteq [-R,R]^d =: Q$ and define
\[
  T_Q \, x := 2R \cdot x + (-R,\dots,-R)^T
  \qquad \text{for } x \in \R^d
  ,
\]
noting that $T_Q : \R^d \to \R^d$ is affine-linear, and $T_Q([0,1]^d) = Q$.
In this step, we show that
\begin{equation}
  \bigl\{ f \circ T_Q \,\,:\,\, f \in \CalNN_n(C_0, L_0;\RR^d) \bigr\}
  \subseteq \CalNN_{2n}(4 d R \cdot C_0, L_0;\RR^d)
  .
  \label{eq:NeuralNetworkRescaling}
\end{equation}
To see this, let $\CalW = ( (A_1,b_1),\dots,(A_L,b_L))$ be a set of neural network weights
with $\|\CalW\|_{\ell^0} \leq n$ and $\|\mathcal{W}\|_{\ell^\infty} \leq C_0$,
as well as $L \leq L_0$ and $d_{\mathrm{in}}(\mathcal{W}) = d$,
as well as $d_{\text{out}}(\mathcal{W}) = 1$. 
Then, define
\[
  \CalW_Q
  := \Bigl(
       \left( 2R \, A_1, \,\, A_1 (-R,\dots,-R)^T + b_1 \right),
       (A_2, b_2),
       \dots,
       (A_L, b_L)
     \Bigr)
  .
\]
Note for $T_1 \, x := A_1 \, x + b_1$ that
$(T_1 \circ T_Q) x = 2R \, A_1 \, x + A_1 (-R,\dots,-R)^T + b_1$,
and thus
\[
  R_\varrho (\CalW_Q) = (R_\varrho \CalW) \circ T_Q
  .
\]
Moreover, it is easy to see that $\|2R A_1\|_{\ell^0} = \|A_1\|_{\ell^0}$ and
\[
  \left\| A_1 (-R,\dots,-R)^T \right\|_{\ell^0}
  = \left\| (-R) \cdot \sum_{j=1}^d (A_1)_{-, j} \right\|_{\ell^0}
  \leq \sum_{j=1}^d \|(A_1)_{-, j}\|_{\ell^0}
  \leq \|A_1\|_{\ell^0}
  ,
\]
which shows $\|\mathcal{W}_Q\|_{\ell^0} \leq 2 \|\mathcal{W}\|_{\ell^0} \leq 2n$.
Finally, note
\[
  \|2R A_1\|_{\ell^\infty}
  = 2R \|A_1\|_{\ell^\infty}
  \leq 2R \cdot C_0
  \leq 2dR \cdot C_0
\]
and
\[
  \left\| A_1 (-R,\dots,-R)^T + b_1 \right\|_{\ell^\infty}
  \leq \|b_1\|_{\ell^\infty} + R \sum_{j=1}^d \|(A_1)_{-, j}\|_{\ell^\infty}
   \leq C_0 + dR \, C_0
   \leq 2dR \cdot C_0,
\]
so that $\|\mathcal{W}_Q\|_{\ell^\infty} \leq 4 d R \cdot C_0$.
Overall, this proves \Cref{eq:NeuralNetworkRescaling}.

\bigskip{}

\textbf{Step~2 (Reducing to covering number bounds in $\XX = C([0,1]^d)$):}
In this step, we show that
\[
  \extcn\bigl( \CalNN_n(C_0, L_0;\RR^d), \,\, L^p(\Omega), \,\, (2R)^{d/p} \cdot \eps \bigr)
  \leq \cn\bigl( \CalNN_{2n}(4 d R \cdot C_0, L_0;\RR^d), \,\, C([0,1]^d), \,\, \eps \bigr)
  .
\]

To this end, first note for any continuous $f : \R^d \to \R$ that
\begin{equation}
  \|f\|_{L^p(\Omega)}
  \leq \|f\|_{L^p([-R,R]^d)}
  \leq (2R)^{d/p} \cdot \|f\|_{L^\infty(Q)}.
  \label{eq:NeuralNetworkCoveringNumbersNormChange}
\end{equation}
Now, to show the claim regarding the covering numbers,
let $\{f_1, \dots, f_N\} \subseteq \CalNN_{2n}(4 d R \cdot C_0, L_0;\RR^d)$
be an $\eps$-net for $\CalNN_{2n}(4 d R \cdot C_0, L_0;\RR^d)$,
with respect to $\| \cdot \|_{L^\infty([0,1]^d)}$.
Set $g_i := f_i \circ T_Q^{-1} \in C(\R^d; \R)$ for $i \in \{ 1,\dots,N \}$.
Then given $f = R_\varrho (\CalW) \in \CalNN_n(C_0, L_0;\RR^d)$,
step 1 shows that
\[
  f \circ T_Q
  = R_\varrho(\CalW_Q)
  \in \CalNN_{2n}(4 d R \cdot C_0, L_0;\RR^d)
  ,
\]
so that there exists $i \in \{1, \dots, N\}$ with $\|f \circ T_Q - f_i\|_{L^\infty([0,1]^d)} \leq \eps$.
Then
\begin{align*}
  \|f - g_i\|_{L^p(\Omega)}
  \overset{\eqref{eq:NeuralNetworkCoveringNumbersNormChange}}&{\leq} (2R)^{d/p} \cdot \|f - g_i\|_{L^\infty(Q)} \\
  \overset{Q = T_Q([0,1]^d)}&{=} (2R)^{d/p} \cdot \|(f - g_i) \circ T_Q\|_{L^\infty([0,1]^d)} \\
                            &=   (2R)^{d/p} \cdot \|f \circ T_Q - f_i\|_{L^\infty([0,1]^d)}
  \leq (2R)^{d/p} \cdot \eps.
\end{align*}
This easily implies the claim of Step~2.

\bigskip{}

\textbf{Step~3 (Completing the proof):}
Using \Cref{lem:InternalVSExternalCoveringNumbers} and Step~2,
we get (with $k \geq 1$ denoting a triangle constant for $L^p(\Omega)$):
\begin{align*}
  \cn\bigl(\CalNN_n(C_0, L_0; \RR^d), \,\, L^p(\Omega), \,\, \eps\bigr)
  &\leq \extcn \left( \CalNN_n(C_0, L_0; \RR^d), \,\, L^p(\Omega), \,\, (2R)^{d/p} \frac{\eps}{(2R)^{d/p} \cdot 2k} \right) \\
  &\leq \cn \left( \CalNN_{2n}(4 d R \cdot C_0, L_0; \RR^d), \,\, C([0,1]^d), \,\, \frac{\eps}{(2R)^{d/p} \cdot 2k} \right)
  .
\end{align*}
Using \cite[Lemma~6.1]{GrohsVoigtlaenderTheoryToPracticeGapDeepLearning},
we thus obtain for $\eps \in (0,1)$ that
\begin{align*}
  \cn\bigl(\CalNN_n(C_0, L_0; \RR^d), \,\, L^p(\Omega), \,\, \eps\bigr)
  &\leq \left(
          \frac{88 \cdot k \cdot (2R)^{d/p}}{\eps}
          \cdot L_0^4
          \cdot \left( \max\{d, 2 n\} \cdot 4 d R \cdot C_0 \right)^{1+L_0}
        \right)^{2n} \\
  &\leq \left( C \cdot L_0^4 \cdot \left( 8 d^2 R \cdot C_0 \cdot n \right)^{1+L_0} \cdot \eps^{-1} \right)^{2n},
\end{align*}
with $C = C(p, d, \Omega)$.
Here, we used that $\max\{d,2n\} \le 2dn$.
\end{proof}

With this preparation, we can prove \Cref{prop:NNApproximationOfControlledComplexity}.

\begin{proof}[Proof of \Cref{prop:NNApproximationOfControlledComplexity}]
  In the notation of \Cref{lem:NNCoveringNumberBounds}, we have
  \[
    \CalA_n
    := \CalNN_n (g, \R^d)
    = \CalNN_n \bigl(g \, n^g, \lfloor (\ln(en))^g \rfloor; \RR^d\bigr)
    .
  \]
  Let $k\ge 1$ be a triangle constant for $L^p(\Omega)$. 
  Then, using \Cref{lem:InternalVSExternalCoveringNumbers,lem:NNCoveringNumberBounds}, we see
  \begin{align*}
    \cn \bigl(\ball(x,R) \cap \CalA_n, L^p(\Omega), n^{-\alpha}\bigr)
    &\leq \extcn \bigl(\ball(x,R) \cap \CalA_n, L^p(\Omega), n^{-\alpha} / (2 k)\bigr) \\
    &\leq \extcn \bigl(\CalA_n, L^p(\Omega), n^{-\alpha} / (2 k)\bigr)
     \leq \cn \bigl(\CalA_n, L^p(\Omega), n^{-\alpha} / (2 k)\bigr) \\
    &\leq \bigl(C \cdot (\ln(en))^{4 g} \cdot (8 d^2 R g \, n^{g+1})^{1 + (\ln(en))^g} \cdot 2k \, n^\alpha\bigr)^{2n}
  \end{align*}
  for certain constants $C = C(\Omega,d,p) > 0$ and $R=R(\Omega) \geq 1$.
  Hence,
  \begin{align*}
    &\log_2 \Bigl(\cn \bigl(\ball(x,R) \cap \CalA_n, L^p(\Omega), n^{-\alpha}\bigr)\Bigr) \\
    &\leq 2 n \cdot \Bigl(
                      \log_2 (C)
                      + 4 g \log_2 (\ln(en))\\
    &\hspace{6em}+ \bigl(1 + (\ln(en))^g\bigr) \cdot \log_2 (8 d^2 g R \, n^{g+1})
                      + \log_2 (2k)
                      + \alpha \, \log_2 (n)
                    \Bigr) \\
    &\leq 2 n \cdot \Bigl(
                     \widetilde{C}
                     + \widetilde{C} \ln(en)
                     + \bigl(\widetilde{C} \, \ln(en)\bigr)^g \cdot \bigl(\ln(8 d^2 g R) + (1+g) \, \ln(n)\bigr)
                     + \widetilde{C} \ln(en)
                    \Bigr) \\
    &\leq \widetilde{\widetilde{C}} \cdot n \cdot (\ln(en))^{g+1}
      =:  \fcc (n) ,
  \end{align*}
  for certain constants $\widetilde{C}, \widetilde{\widetilde{C}}$ that only depend on $p,d,g,R,C$.
  Hence, Condition~\eqref{eq:ControlledComplexity} is satisfied, and it is easy to see
  that Condition~\eqref{eq:ZetaGrowthCondition} holds as well.
\end{proof}

\subsection{Proof of \texorpdfstring{\Cref{prop:NNsOptimalForCr}}{Proposition~\ref{prop:NNsOptimalForCr}}}
\label{sec:NNsOptimalForCrProof}

\begin{proof}[Proof of \Cref{prop:NNsOptimalForCr}]
  We have $\| \cdot  \|_{L^1([0,1]^d)} \leq \| \cdot \|_{L^p([0,1]^d)}$ thanks
  to (e.g.) Hölder's inequality.
  Therefore, \cite[Theorem~3]{ClementsEntropiesOfSeveralSetsOfFunctions} shows
  that there exists a constant $c=c(d,r)>0$ such that
  \[
    \log_2 \bigl(\cn\bigl(\SC, L^p([0,1]^d),\eps\bigr)\bigr)
    \geq \log_2 \bigl(\cn\bigl(\SC, L^1([0,1]^d), \eps\bigr)\bigr)
    \geq c \cdot (1/\eps)^{d / r}
    \qquad \text{for } \eps \in (0,1)
    ,
  \]
  and this easily implies $\lmd(\SC) \geq \frac{d}{r}$ and thus
  $\AR^{\sharp} := (\lmd(\SC))^{-1} \leq \frac{r}{d}$.

  Conversely, \cite[Theorem~5]{SchimdtHieberNonparametricRegressionUsingReLUNNs}
  combined with \cite[Lemma~F.1]{PetersenVoigtlaenderOptimalApproximationUsingNNs}
  shows that there exist constants $C_0,\dots,C_3 \geq 1$ only depending on $d,m,\alpha$
  such that for every $\tau \in \NN$, every $N \in \NN_{\ge C_0}$ and every $f \in \SC$,
  there exists a neural network $\Phi_{f,\tau,N}$ of depth at most $C_1 \cdot \tau$,
  with at most $C_2 \cdot \tau N$ non-zero weights and all weights bounded in absolute value by $1$
  such that
  \[
    \| f - \Phi_{f,\tau,N} \|_{L^p ([0,1]^d)}
    \leq \| f - \Phi_{f,\tau,N} \|_{L^\infty ([0,1]^d)}
    \leq C_3 \cdot (2^{-\tau} N + N^{-r/d})
    .
  \]

  Now, let $n_0 = n_0 (C_0,\dots,C_3, d, m, \alpha) \in \N_{\ge2}$ sufficiently large
  (chosen below) and let $n \geq n_0$.
  Pick
  \[
    N := \Bigl\lfloor \frac{n}{C_2 \cdot (\log_2 (n))^2} \Bigr\rfloor
    \qquad \text{and} \qquad
    \tau := \bigl\lfloor (\log_2 (n))^2 \bigr\rfloor
    ,
  \]
  where $n_0$ is chosen so large that $N \geq C_0 \geq 1$ and $\tau \geq 1$.
  Then the network $\Phi_{f,n} := \Phi_{f,\tau,N}$ from above satisfies the following:
  \begin{itemize}
    \item All its weights are bounded in absolute value by $1 \leq 3 \cdot n^3 = g \cdot n^g$ for $g = 3$;
    \item The network has at most $C_2 \cdot \tau N \leq n$ non-zero weights;
    \item The network has at most $C_1 \, \tau \le C_1 \cdot (\log_2 (n))^2 \leq (\ln(en))^3 = (\ln(en))^g$ layers.
          Here, the second step holds for $n_0$ sufficiently large.
  \end{itemize}
  Overall, this shows that $\Phi_{f,n} \in \CalA_n = \CalNN_n (3, \R^d)$.
  Finally, by choice of $\tau, N$ we have on the one hand
  $2N \geq N + 1 \geq \frac{n}{C_2 \cdot (\log_2(n))^2}$
  and on the other hand $\tau \geq (\log_2(n))^2 - 1$.
  Since furthermore $N \leq n$, we thus see
  \begin{align*}
    \| f - \Phi_{f,n} \|_{L^p ([0,1]^d)}
    &\leq C_3 \cdot \bigl(2^{-\tau} N + N^{-r/d}\bigr) \\
    &\leq C_3 \cdot \Bigl(
                      n \cdot 2^{1 - (\log_2 (n))^2} 
                      + 2^{r/d}C_2^{r/d} \cdot \bigl(n / (\log_2(n))^2\bigr)^{-r/d}
                    \Bigr) \\
    &\leq C_4 \cdot \bigl(n / (\log_2(n))^2\bigr)^{-r/d}
    .
  \end{align*}
  Here, the last step holds for a sufficiently large choice of $n_0$ and $C_4$,
  since $2^{-(\log_2(n))^2}$ decays faster than any power $n^{-\nu}$ with $\nu > 0$.

  We have thus shown that the family $(\CalA_n)_{n \in \N} = \bigl(\CalNN_n (3,\R^d)\bigr)_{n \in \N}$
  achieves every approximation rate $n^{-\AR}$ with $\AR < r/d$ on $\SC$.
  Since this family is of controlled complexity by \Cref{prop:NNApproximationOfControlledComplexity},
  it follows from Part~(ii) of \Cref{thm:PhaseTransitionForFamiliesWithControlledComplexity} that
  $\AR^\ast := \bigl(\umd (\SC)\bigr)^{-1}$ satisfies $\AR^\ast \geq \frac{r}{d}$.

  Overall, we thus see:
  \begin{itemize}
    \item We have $\frac{d}{r} \leq \lmd(\SC) \leq \umd(\SC) \leq \frac{d}{r}$ and thus
          $\md(\SC) = \frac{d}{r} \in (0,\infty)$.
          In particular, $\SC \subset L^p ([0,1]^d)$ is totally bounded.

    \item The set $\SC$ is the unit ball of a Banach space,
          and thus satisfies {\ConditionZetaText} by \cref{p_norm_unit_balls_are_quasi_convex},
          so that \Cref{thm:MainResultCriticalMeasureExistence} shows the existence
          of a Borel probability measure $\mu$ on $\XX = L^p ([0,1]^d)$
          that is critical for $\SC$.

    \item The family $(\CalA_n)_{n \in \N} = (\CalNN_n (3, \R^d))_{n \in \N}$ is of controlled
          complexity and achieves every approximation rate $0 \leq \AR < \AR^\ast = \frac{r}{d}$.
  \end{itemize}
  Thus, all remaining claims follow from \Cref{thm:PhaseTransitionForFamiliesWithControlledComplexity}.
\end{proof}

\subsection{
  Proof of \texorpdfstring{
             \Cref{prop:PolynomiallyBoundedNTermApproximationControlledComplexity}
           }{
              Proposition~\ref{prop:PolynomiallyBoundedNTermApproximationControlledComplexity}
           }
}
\label{sec:NonlinearNTermControlledComplexityProofs}

For the proof, we will need the following lemma from
\cite{VybiralFiniteDimensionalEmbeddings}.

\begin{lemma}[{Lemma~7 in \cite{VybiralFiniteDimensionalEmbeddings}}]\label{lem:VybiralEntropyNumberEstimate}
  Let $p \in (0, 1]$ and let $\XX$ be a real, $n$-dimensional $p$-normed space.
  Then
  \[
    e_k(\mathrm{id}: \XX \to \XX) \leq 4^{\frac{1}{p}} \cdot 2^{-\frac{k-1}{n}}
    \qquad \forall \, k \in \N
    ,
  \]
  where $\en_k$ denotes the $k$-th (dyadic) entropy number, see \eqref{eq:def_entropy_numbers}.
\end{lemma}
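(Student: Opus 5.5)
We intend to prove this by a standard volumetric packing argument, adapted to the $p$-triangle inequality. Recall that $\en_k(\mathrm{id}:\XX\to\XX)$ is the infimum of all $\eps>0$ such that the unit ball $\ball := \ball(0,1\mid\norm{\cdot})$ can be covered by at most $2^{k-1}$ balls of radius $\eps$. So it suffices to show the following: for $\eps := 4^{1/p}\,2^{-(k-1)/n}$, the unit ball $\ball$ admits an $\eps$-net of cardinality at most $2^{k-1}$.

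\emph{Trivial case and measure setup.} If $\eps\ge 1$, the single ball $\ball(0,\eps)$ already covers $\ball$, and we are done. This covers in particular $k=1$. Assume therefore $\eps<1$. Fix a basis of $\XX$ and identify $\XX$ with $\R^n$. Let $\lambda$ denote Lebesgue measure. Since all quasi-norms on a finite-dimensional space are equivalent to the Euclidean norm, $\ball$ is compact with nonempty interior, so $0<\lambda(\ball)<\infty$. Moreover, $\norm{\cdot}$ is continuous (it is a $p$-norm), so all balls are closed and hence Borel. Homogeneity of the norm gives $\lambda(t\ball)=t^n\lambda(\ball)$ for every $t>0$.

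\emph{Disjoint small balls.} Let $P\subset\ball$ be a maximal $\eps$-separated subset. By \cref{maximal_separated_subsets_are_nets}, $P$ is an $\eps$-net, and compactness forces $P$ to be finite. Put $r:=2^{-1/p}\eps<1$ and consider the open balls $U_x:=\{y:\norm{y-x}<r\}$ for $x\in P$. These are pairwise disjoint. Indeed, a common point $z\in U_x\cap U_{x'}$ would give, by the $p$-triangle inequality,
\[
\norm{x-x'}^p\le\norm{x-z}^p+\norm{z-x'}^p<2r^p=\eps^p,
\]
which contradicts the $\eps$-separation of $P$. Each $U_x$ has measure $r^n\lambda(\ball)$, up to the null difference between open and closed balls; this difference is irrelevant, since we may shrink $r$ slightly and let it tend back. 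Furthermore, for $y\in U_x$ we have $\norm{y}^p\le 1+r^p\le 2$, so $U_x\subset 2^{1/p}\ball$.

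\emph{Counting.} Comparing measures gives $\#P\cdot r^n\lambda(\ball)\le 2^{n/p}\lambda(\ball)$, hence
\[
\#P\le\bigl(2^{1/p}/r\bigr)^n=\bigl(4^{1/p}/\eps\bigr)^n=2^{k-1}.
\]
Thus $P$ is an $\eps$-net with at most $2^{k-1}$ points, which proves the claim. The only delicate points are the open/closed-ball measurability, handled above by continuity of the $p$-norm, and the case $\eps\ge1$, where the enlargement $(1+r^p)^{1/p}\le 2^{1/p}$ would fail but the one-ball cover settles the claim directly.
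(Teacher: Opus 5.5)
Your proof is correct and complete. The paper gives no argument of its own for this lemma; it imports the statement from Vyb\'{\i}ral's Lemma~7, so there is no in-paper proof to compare with. What you wrote is the standard volume-comparison proof, the $p$-normed analogue of the classical bound $e_k(\mathrm{id})\le 4\cdot 2^{-(k-1)/n}$ for $n$-dimensional Banach spaces. It makes the lemma self-contained and shows where the constant $4^{1/p}$ comes from:
\begin{itemize}
  \item one factor $2^{1/p}$ comes from the radius $r=2^{-1/p}\eps$ needed for disjointness under the $p$-triangle inequality;
  \item the other comes from the crude enlargement $(1+r^p)^{1/p}\le 2^{1/p}$, which uses $r<1$.
\end{itemize}

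Two small points could be stated more cleanly.
\begin{itemize}
  \item The open/closed-ball issue is settled directly by $t\ball\subset\{y:\norm{y}<1\}\subset\ball$ for every $t<1$, which forces the open and closed unit balls to have equal Lebesgue measure.
  \item Finiteness of $P$ does not need compactness: the same counting applied to any finite subset of $P$ bounds its cardinality by $2^{k-1}$.
\end{itemize}
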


Using the previous lemma, we can now prove the following auxiliary result,
which is the main ingredient for proving \Cref{prop:PolynomiallyBoundedNTermApproximationControlledComplexity}.

\begin{lemma}\label{lem:CoveringNumberEstimatesForBallsInFiniteDimensionalSpaces}
  Let $(V, \|\cdot\|)$ be a quasi-normed space.
  Then there exists a constant $C > 0$ such that for every $n \in \N$,
  $R > 0$ and $\eps \in (0, R]$, every $v \in V$ and every subspace $W \subseteq V$
  with $\dim_{\R}(W) = n$, we have
  \[
    \cn \bigl(\ball(v, R) \cap W, \eps\bigr)
    \leq \left( C \cdot \frac{R}{\eps} \right)^n.
  \]
  Here, $\dim_{\R}(W)$ denotes the dimension of $W$ interpreted as a real vector space.
\end{lemma}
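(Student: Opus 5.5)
We reduce the claim to \Cref{lem:VybiralEntropyNumberEstimate} after replacing $\|\cdot\|$ by a $p$-norm. By \Cref{thm:Aoki_Rolewicz}, pick $p \in (0,1]$ and a $p$-norm $\|\cdot\|_\ast$ with $C_1^{-1}\|\cdot\| \le \|\cdot\|_\ast \le C_1 \|\cdot\|$. The constant $C$ we produce will depend only on $p$, $C_1$ and the triangle constant $k$ of $\|\cdot\|$, not on $n$, $R$, $\eps$, $v$ or $W$.

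\emph{Step 1 (recentering).} If $\ball(v,R)\cap W=\emptyset$ there is nothing to prove. Otherwise fix $w_0 \in \ball(v,R)\cap W$. Then $\ball(v,R)\cap W \subset \ball(w_0, 2kR)\cap W = w_0 + \ball_W(0,2kR)$, where $\ball_W$ denotes the ball in $W$ with respect to $\|\cdot\|$. By \Cref{lem:InternalVSExternalCoveringNumbers} it suffices to bound the external covering number of this set at scale $\eps/(2k)$, with centers allowed anywhere in $W \subset V$. Translation invariance and homogeneity then reduce the problem to covering $\ball_W(0,1)$ by $\|\cdot\|$-balls of radius $\eps/(4k^2R)$.

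\emph{Step 2 (entropy numbers).} Pass to $\|\cdot\|_\ast$: we have $\ball_W(0,1)\subset \ball^\ast_W(0,C_1)$, and an $\|\cdot\|_\ast$-ball of radius $\delta/C_1$ lies in the $\|\cdot\|$-ball of radius $\delta$. Consider $(W,\|\cdot\|_\ast)$ as a real $n$-dimensional $p$-normed space. \Cref{lem:VybiralEntropyNumberEstimate} says its unit ball is covered by $2^{j-1}$ balls of radius $4^{1/p}2^{-(j-1)/n}$ for every $j\in\N$; this is how we read the definition \eqref{eq:def_entropy_numbers}, where entropy numbers may be taken slightly larger than the infimum. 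After scaling, $\ball^\ast_W(0,C_1)$ is covered by $2^{j-1}$ $\|\cdot\|_\ast$-balls of radius $C_1 4^{1/p}2^{-(j-1)/n}$.

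\emph{Step 3 (choice of $j$).} Choose $j$ minimal such that $C_1 4^{1/p} 2^{-(j-1)/n} \le \eps/(4k^2 R C_1)$. This gives
\[
2^{j-1} \le 2\,\bigl(4^{1/p}\cdot 4k^2C_1^2\, R/\eps\bigr)^n .
\]
Since $\eps\le R$, the ratio $R/\eps$ is at least $1$, so the factor $2$ can be absorbed into the $n$-th power, which yields $(C R/\eps)^n$.

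\emph{Main obstacle.} The main difficulty is bookkeeping rather than any genuine obstruction. The one point that needs care is the complex case: if $V$ is complex, $W$ must be regarded as a real space of dimension $\dim_\R W = n$, and \Cref{lem:VybiralEntropyNumberEstimate} is applied to that real space. The statement already phrases the dimension as $\dim_\R$, so this is consistent.
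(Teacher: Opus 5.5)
Your proposal is correct and is essentially the paper's proof. Both use Aoki--Rolewicz, recentering at some $w_0 \in \ball(v,R)\cap W$ via the quasi-triangle inequality, rescaling, the entropy estimate of \Cref{lem:VybiralEntropyNumberEstimate} applied to $(W,\|\cdot\|_\ast)$ as a real $p$-normed space, and \Cref{lem:InternalVSExternalCoveringNumbers}, with the same choice of $j$ (the paper's $\ell$) and the same use of $\eps \le R$.

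The one loose point is that $e_j \le 4^{1/p}2^{-(j-1)/n}$ only guarantees coverings by balls of radius strictly larger than $e_j$. The paper sidesteps this by using radius $8^{1/p}2^{-(\ell-1)/n} \ge 2\cdot 4^{1/p}2^{-(\ell-1)/n}$; doing the same in your Step~3 changes only the constant $C$.
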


\begin{proof}
Let $k \geq 1$ be a triangle constant for $\| \cdot \|$.
By the Aoki-Rolewicz theorem (see \Cref{thm:Aoki_Rolewicz}), there exist $p \in (0, 1]$,
$C_1 \geq 1$, and a $p$-norm $\| \cdot \|^*$ on $V$ with
\[
  C_1^{-1} \cdot \| \cdot \|^*
  \leq \| \cdot \|
  \leq C_1 \cdot \| \cdot \|^*
  .
\]
We will write $\ball^*(x, r) := \{y \in V \,\,:\,\, \| x - y \|^* \leq r\}$ for $x \in V$, $r > 0$.

By rescaling, it is easy to see that
\begin{align*}
  \cn\bigl(\ball(v, R) \cap W, \eps\bigr)
  &= \cn\left(R \cdot \left(\ball\left(\frac{v}{R}, 1\right) \cap \frac{1}{R} W\right), R \frac{\eps}{R}\right) \\
  &= \cn\left(\ball\left(\frac{v}{R}, 1\right) \cap W, \frac{\eps}{R}\right).
\end{align*}
This shows that we can without loss of generality assume that $R=1$ and thus $\eps \in (0, 1]$.

Now, set
\[
  \ell
  := \left\lceil
       1 + n \cdot \log_2 \left( \frac{(2k)^2 C_1^2 \cdot 8^{1/p}}{\eps} \right)
     \right\rceil
  \in \N_{\geq 2}.
\]
By \Cref{lem:VybiralEntropyNumberEstimate}, we then obtain existence of
$\widetilde{w}_1, \dots, \widetilde{w}_{2^{\ell - 1}} \in W$ satisfying
\begin{align*}
  \ball\left(0, \frac{1}{C_1}\right) \cap W
  \subseteq \ball^*(0, 1) \cap W
  &\subseteq \bigcup_{j=1}^{2^{\ell - 1}} \ball^*\left(\widetilde{w}_j , 8^{1/p} \cdot 2^{-\frac{\ell - 1}{n}}\right) \cap W \\
   &\subseteq \bigcup_{j=1}^{2^{\ell - 1}} \ball^*\left(\widetilde{w}_j, \frac{\eps}{(2k)^2 C_1^2}\right) \\
  &\subseteq \bigcup_{j=1}^{2^{\ell - 1}} \ball\left(\widetilde{w}_j, \frac{\eps}{(2k)^2 C_1}\right)
  .
\end{align*}
By rescaling, this implies
\begin{equation}
  \ball(0, 2k) \cap W
  \subseteq \bigcup_{j=1}^{2^{\ell - 1}}
              \ball\left(w_j, \frac{\eps}{2k}\right)
  \qquad \text{for certain } w_1,\dots,w_{2^{\ell-1}} \in W
  .
  \label{eq:FiniteDimensionalSpaceCoveringEstimateProofStep1}
\end{equation}

Now, if $\ball(v, 1) \cap W = \emptyset$, the claim of the lemma is trivially satisfied.
We can thus assume that there exists $w_0 \in \ball(v, 1) \cap W$.
Given $w \in \ball(v, 1) \cap W$, we then have
\[
  d(w, w_0)
  \leq k \cdot \bigl(d(w, v) + d(v, w_0)\bigr)
  \leq 2k,
\]
and hence
\[
  \ball(v, 1) \cap W \subseteq \ball(w_0, 2k) \cap W.
\]
Now, \Cref{eq:FiniteDimensionalSpaceCoveringEstimateProofStep1} yields $w_1, \dots, w_{2^{\ell-1}} \in W$ with
\begin{equation*}
  \begin{aligned}
  \ball(v, 1) \cap W \subseteq \ball(w_0, 2k) \cap W &\subseteq w_0 + \bigl(\ball(0, 2k) \cap W\bigr) \\
  &\subseteq \bigcup_{j=1}^{2^{\ell-1}} \ball\left(w_j + w_0, \frac{\eps}{2k}\right).
  \end{aligned}
\end{equation*}
Thus, $\extcn\bigl( B(v,1)\cap W, \frac{\eps}{2k}\bigr) \le 2^{\ell-1}$.
By \cref{lem:InternalVSExternalCoveringNumbers}, this implies 
\[
  \begin{aligned}
      \cn( B(v,1)\cap W, \eps) 
      &\le \extcn\Bigl( B(v,1)\cap W, \frac{\eps}{2k}\Bigr)
    \leq 2^{\ell-1} \leq 2^{1 + n \log_2 \left( (2k)^2 C_1^2 8^{1/p} \big/ \eps \right)} \\
    &= 2 \cdot \left( \frac{(2k)^2 C_1^2 8^{1/p}}{\eps} \right)^n 
    \stackrel{(\ast)}{\leq} \left( \frac{8^{1 + \frac{1}{p}} k^2 C_1^2 \cdot R}{\eps} \right)^n 
    =: \left( C \cdot \frac{R}{\eps} \right)^n.
  \end{aligned}
\]
Here, we used at $(\ast)$ that $n \geq 1$ and $R = 1$.
This completes the proof of the lemma.
\end{proof}

\begin{proof}[Proof of \Cref{prop:PolynomiallyBoundedNTermApproximationControlledComplexity}]
Since $P$ is a polynomial, there exist $m, C_1 \in \N$ with $P(n) \leq C_1 \cdot n^m$ for all $n \in \N$.
Let $x \in \XX$, and $R , \alpha > 0$.

For each subset $I_0 \subset \N$ with $\# I_0 \leq n$, the space
\[
  U_{I_0} := \linspan \bigl\{g_k \,\,:\,\, k \in I_0\bigr\}
\]
is a subspace of $\XX$ of dimension at most $n$.
This means that independent of the considered ``base field'' $\mathbb{K} \in \{ \R, \CC \}$,
the real dimension of $U_{I_0}$ satisfies $\dim_{\R}(U_{I_0}) \leq 2 n$.
In case of $\dim(U_{I_0})\ge 1$, \Cref{lem:CoveringNumberEstimatesForBallsInFiniteDimensionalSpaces}
therefore shows for $\eps \in (0,R]$ that
\[
  \cn \bigl(\ball(x,R) \cap U_{I_0}, \eps\bigr)
  \leq \left( C_2 \cdot \frac{R}{\eps} \right)^{\dim_{\R}(U_{I_0})}
  \le \left( C_2 \cdot \frac{R}{\eps} \right)^{2 n}
\]
with a constant $C_2 = C_2(\|\cdot\|^{-1})$.
In case of $\dim(U_{I_0}) = 0$, we have $U_{I_0} = \{0\}$
and thus ${\cn\bigl(B(x,R)\cap U_{I_0}, \eps\bigr) \le 1 \le (C_2 \cdot R/\eps)^{2n}}$.
It is easy to see
\[
  \cn \left(\,\, \bigcup_{j=1}^M M_j, \eps \right)
  \leq \sum_{j=1}^M \cn(M_j, \eps) \text{ for arbitrary subsets } M_j \subset \XX.
\]
Finally, since each set $\emptyset \ne I_0 \subset \{1, \dots, C_1 \cdot n^m\}$ with $\# I_0 \leq n$ can be
specified by listing the $n$ elements it contains (with repetitions in case of $\num I_0 <n$), we have
\begin{align*}
  M
  &:= \# \bigl\{ I_0 : I_0 \subset \{k \in \N : k \leq P(n)\}, \# I_0 \leq n \bigr\} \\
  &\leq \# \bigl\{ I_0 : I_0 \subset \{1, \dots, C_1 \cdot n^m\}, \# I_0 \leq n \bigr\} \\
  &\leq \bigl(C_1 \cdot n^m\bigr)^n+1 
  \le (C_3 \cdot n^m)^n,
\end{align*}
where $C_3 := C_1 +1$.

Thus, we overall get for $n \geq n_0 := \lceil C_2 C_3 \max\{1,R\} \rceil$
and $C := 1 + 2\alpha + m$ that
\begin{align*}
  &\cn\left( \CalA_n \cap \ball(x,R), n^{-\alpha} \right)\\
  &\le \cn
     \Biggl(\,\,
       \bigcup_{\substack{I_0 \subset \{k \in \N : k \leq P(n)\} , \\ \# I_0 \leq n}}
         \ball(x,R) \cap \linspan \bigl\{g_k \,\,:\,\, k \in I_0\bigr\}, \quad
         \min\{R,n^{-\alpha}\}
     \Biggr) \\
  &\leq \sum_{\substack{I_0 \subset \{k \in \N : k \leq P(n)\} , \\ \# I_0 \leq n}}
  \cn\bigl(\ball(x,R) \cap U_{I_0}, \quad \min\{R, n^{-\alpha}\}\bigr) \\
  &\leq \bigl(C_3 \cdot n^m\bigr)^n \cdot \bigl(C_2 R \cdot \max\{R^{-1},n^{\alpha}\}\bigr)^{2 n} \\
  &\leq \bigl(C_2 C_3 R \max\{1,R^{-1}\}\bigr)^n \cdot n^{(2 \alpha + m)n}
   \overset{(\ast)}{\leq} n^n \cdot n^{(2 \alpha + m)n} \\
  &= n^{(1 + 2\alpha + m)n}
   = n^{Cn}
   \leq \exp(\fcc(n))
\end{align*}
for $\fcc(n) := C \cdot n \cdot \ln(e n)$.
Here, the step marked with $(\ast)$ used that $n \geq n_0 \geq C_2 C_3 R$.
We have thus verified Condition \eqref{eq:ControlledComplexity}.
It is easy to see that $\frac{\fcc(n)}{n^{1+\delta}} \xrightarrow[n\to\infty]{} 0$ for every $\delta>0$,
so that Condition \eqref{eq:ZetaGrowthCondition} is satisfied as well.
\end{proof}

\subsection{
  Proof of \texorpdfstring{
             \Cref{prop:ShearletsOptimalForC2Boundary}
           }{
              Proposition~\ref{prop:ShearletsOptimalForC2Boundary}
           }
}
\label{sec:ShearletProof}

\begin{proof}[Proof of \Cref{prop:ShearletsOptimalForC2Boundary}]
  The idea of the proof is to identify a set $\SC_{00}$ satisfying {\ConditionZetaText}
  and a map $\Psi\colon \SC_{00} \to \SC$ satisfying a kind of ``lower H{\"o}lder bound''
  which will allow us to transfer a critical measure for $\SC_{00}$ to a critical measure for $\SC$,
  using \Cref{prop:transfer_principle}.

  \medskip{}

  \textbf{Step 1} \emph{(Constructing a first "transfer map" $\Phi$):}
  In this step, we construct a set $\SC_0$ and a "transfer map" $\Phi : \SC_0 \to \SC$
  which will later be used to construct the final "transfer map" $\Psi$.

  For any non-trivial interval $I \subset \R$, let $C^{1,1}(I)$
  denote the set of bounded functions $f \in C^1 (I)$
  for which the derivative $f'$ is Lipschitz continuous.
  For $f \in C^{1,1}(I)$, define
  \[
    \| f \|_{C^{1,1}} 
    := \norm{f}_{C^{1,1}(I)}
    := \max\biggl\{ \| f \|_{L^\infty(I)} , \sup_{x,y \in I, x \neq y} \frac{|f'(x) - f'(y)|}{|x-y|}\biggr\}
    .
  \]
  Note that if $f \in C^2 (I)$, then $f$ belongs to $C^{1,1}(I)$ if and only if $f''$ is bounded
  (which is automatic, if $I$ is a compact interval),
  and then $\| f \|_{C^{1,1}(I)} = \max \{ \| f \|_{L^\infty(I)} , \| f'' \|_{L^\infty(I)} \}$;
  this is an easy consequence of the mean-value theorem.

  Now, with the set $C_{2\pi}^2 (\R)$ as in \Cref{def:StarShapedSets}, set
  \[
    \SC_0
    \coloneqq \left\{
                f \in C^2_{2\pi}(\R)
                \,\,\colon\,\,
                \|f\|_{C^{1,1}(\R)} \leq \frac{1}{8}
              \right\}
    ,
  \]
  let $x^{(0)} := (\frac{1}{2}, \frac{1}{2})^T \in \R^2$, and consider the map
  \[
    \Phi \colon \quad
    \SC_0 \to \SC , \quad
    f \mapsto \indicator_{x^{(0)} + S_{f + \frac{3}{8}}}
    ,
  \]
  where $S_{f + \frac{3}{8}}$ is defined as in \Cref{eq:StarShapedSet}.

  We first show that $\Phi$ is indeed well-defined, i.e., that $\Phi(f) \in \SC$ for $f \in \SC_0$.
  To see this, let $f \in \SC_0$; then $-\frac{1}{8} \leq f \leq \frac{1}{8}$,
  so that $\varrho_0 \leq \frac{1}{4} \leq f + \frac{3}{8} \leq \frac{1}{2} \leq \varrho_1$.
  Moreover, $f + \frac{3}{8} \in C_{2\pi}^2 (\R)$ with
  $\| (f + \frac{3}{8})'' \|_{L^\infty} \leq \| f \|_{C^{1,1}} \leq \frac{1}{8} \leq \nu$.
  Finally, for $x = (x_1, x_2)^T \in S_{f + \frac{3}{8}}$, we have $x_1 = r \cos(\varphi)$
  for some $\varphi \in [0, 2 \pi]$ and $0 \leq r \leq f(\varphi) + \frac{3}{8} \leq \frac{1}{2}$.
  Therefore, $|x_1| \leq \frac{1}{2}$, and one can similarly show that $|x_2| \leq \frac{1}{2}$.
  Hence, $x + x^{(0)} \in [0, 1]^2$, and this shows $x^{(0)} + S_{f + \frac{3}{8}} \subset [0, 1]^2$.
  Overall, this proves that $x^{(0)} + S_{f + \frac{3}{8}} \in \mathrm{STAR}^2 (\nu; \varrho_0, \varrho_1)$
  and hence $\Phi(f) \in \SC$.

  Next, we show that $\Phi$ satisfies
  $\norm{\Phi(f)-\Phi(g)}_{L^2(\RR^2)} \asymp \norm{f-g}_{L^1([0,2\pi])}^{\frac{1}{2}}$
  for $f,g\in \SC_0$.
  As the first step, we note for $f,g \in \SC_0$ that
  \begin{equation}
    \begin{aligned}
      & \bigl\| \Phi (f) - \Phi (g) \bigr\|_{L^2(\R^2)}^2 \\
      &= \Bigl\|
           \indicator_{x^{(0)} + S_{f + \frac{3}{8}}}
           - \indicator_{x^{(0)} + S_{g + \frac{3}{8}}}
         \Bigr\|_{L^2}^2 
       = \Bigl\|\indicator_{S_{f + \frac{3}{8}}} - \indicator_{S_{g + \frac{3}{8}}} \Bigr\|_{L^2}^2 \\
      &= \int_{\R^2}
           \biggl|
             \indicator_{S_{f + \frac{3}{8}}}(x) - \indicator_{S_{g + \frac{3}{8}}}(x)
           \biggr|^2
         \, dx \\
      &\overset{(\ast)}{=}
         \int_0^\infty
           r
           \int_0^{2\pi}
             \biggl|
               \indicator_{S_{f + \frac{3}{8}}}\left( r \begin{pmatrix} \cos\varphi \\ \sin\varphi \end{pmatrix} \right)
               - \indicator_{S_{g + \frac{3}{8}}}\left( r \begin{pmatrix} \cos\varphi \\ \sin\varphi \end{pmatrix} \right)
             \biggr|^2
           \, d\varphi
         \, dr \\
      &\overset{(\dagger)}{=}
         \int_0^{2\pi}
           \int_0^\infty
             r \cdot \Bigl|
                        \indicator_{0 \leq r \leq f(\varphi) + \frac{3}{8}}
                        - \indicator_{0 \leq r \leq g(\varphi) + \frac{3}{8}}
                     \Bigr|^2
           \, dr
         \, d\varphi \\
      &\overset{(\ddagger)}{=}
         \int_0^{2\pi}
           \int_0^\infty
             r \cdot \indicator_{(\min\{f(\varphi), g(\varphi)\} + \frac{3}{8} ,\,\, \max\{f(\varphi), g(\varphi)\} + \frac{3}{8}]}(r)
           \, dr
         \, d\varphi
      .
    \end{aligned}
    \label{eq:ShearletStuffPhiIsometryStep1}
  \end{equation}
  Here, the step marked with $(\ast)$ introduced polar coordinates and
  the step marked with $(\dagger)$ used Tonelli's theorem and the definition of the sets $S_{\varrho}$
  from \Cref{eq:StarShapedSet}.
  Finally, the step marked with $(\ddagger)$ used the identity
  \[
    \bigl| \indicator_{[0, a]}(r) - \indicator_{[0, b]}(r) \bigr|^2
    = \indicator_{(\min\{a,b\}, \max\{a,b\}]} (r)
    \qquad \text{ for } r \in \R \text{ and } a,b \geq 0
    .
  \]
  To prove this identity, assume without loss of generality that $a \leq b$
  and distinguish the four cases \emph{(i)} $r < 0$, \emph{(ii)} $0 \leq r \leq a$,
  \emph{(iii)} $a < r \leq b$, and \emph{(iv)} $r > b$.

  Now, we first derive a lower bound for $\bigl\| \Phi (f) - \Phi (g) \bigr\|_{L^2(\R^2)}^2$.
  To this end, first note for $a,b,c \geq 0$ that
  \begin{align*}
    &\int_{\R} \indicator_{(\min \{ a,b \} + c, \max \{ a,b \} + c]} (r) \, d r \\
    &= (\max \{ a,b \} + c) - (\min \{ a,b \} + c)
     = \max \{ a,b \} - \min \{ a,b \}
     = |a - b|
    .
  \end{align*}
  Note furthermore that if
  $\indicator_{(\min\{f(\varphi), g(\varphi)\} + \frac{3}{8} ,\,\, \max\{f(\varphi), g(\varphi)\} + \frac{3}{8}]}(r) \neq 0$,
  then $r \geq \frac{3}{8} \geq \frac{1}{4}$.
  Hence, we get in view of \Cref{eq:ShearletStuffPhiIsometryStep1} that
  \begin{equation}
    \begin{aligned}
      \bigl\| \Phi (f) - \Phi (g) \bigr\|_{L^2(\R^2)}^2
      &\geq \frac{1}{4}
            \int_0^{2\pi}
              \int_{\R}
                \indicator_{(\min\{f(\varphi), g(\varphi)\} + \frac{3}{8} ,\,\, \max\{f(\varphi), g(\varphi)\} + \frac{3}{8}]}(r)
              \, d r
            \, d \varphi \\
      &=   \frac{1}{4}
           \int_{0}^{2 \pi} |f(\varphi) - g(\varphi)| \, d \varphi
       =   \frac{1}{4} \,
           \| f - g \|_{L^1 ([0, 2\pi])}
       .
    \end{aligned}
    \label{eq:ShearletStuffPhiIsometryLowerBound}
  \end{equation}

  The upper bound is shown similarly:
  Note that if
  ${\indicator_{\frac{3}{8} + (\min\{f(\varphi), g(\varphi)\} ,\,\, \max\{f(\varphi), g(\varphi)\}]}(r) \neq 0}$,
  then $r \leq \frac{1}{2}$.
  Therefore, \Cref{eq:ShearletStuffPhiIsometryStep1} implies
  \[
    \begin{aligned}
      \bigl\| \Phi (f) - \Phi (g) \bigr\|_{L^2(\R^2)}^2
      &\leq \frac{1}{2}
            \int_0^{2\pi}
              \int_{\R}
                \indicator_{(\min\{f(\varphi), g(\varphi)\} + \frac{3}{8} , \max\{f(\varphi), g(\varphi)\} + \frac{3}{8}]}(r)
              \, d r
            \, d \varphi \\
      &=   \frac{1}{2} \,
           \| f - g \|_{L^1 ([0, 2\pi])}
       .
    \end{aligned}
  \]
  In particular, this implies that $\Phi$ is continuous
  (with respect to the $L^1$ topology on $\SC_0$ and the $L^2$ topology on $\SC$)
  and hence Borel measurable.

  \medskip{}

  \textbf{Step 2} \emph{(Constructing a suitable measure to transfer):}
  Write $C_b^2 (\R)$ for the set of all functions in $C^2 (\R)$ with $f,f',f''$ bounded,
  and equip this space with the norm $\| \cdot \|_{C^{1,1}(\RR)}$.
  Moreover, for a function $f : \R \to \R$, let $\supp (f)$
  denote the closure of the set $\{ x \in \R \,\,:\,\, f(x) \neq 0 \}$.
  It is well-known\footnote{This follows for instance by combining the very general result
  \cite[Chapter~17, Theorem~7.11]{SmithPrimerOfModernAnalysis} with a multiplication by a cutoff function
  and by noting that the $\| \cdot \|_{C^{1,1}}$ norm is equivalent to the norm
  ${\| f \|_{L^\infty} + \| f' \|_{L^\infty} + \| f'' \|_{L^\infty}}$
  by \cite[Lemma~F.1]{PetersenVoigtlaenderOptimalApproximationUsingNNs}.
  However, an easy direct proof is also possible since we are in the setting of
  an interval in dimension $d = 1$.}
  that there exists a bounded linear extension operator
  \[
    E : \quad L^1 ([\pi - 1, \pi + 1]) \to L^1 (\R)
  \]
  such that the restriction
  \[
    E: \quad C^{1,1}([\pi-1, \pi+1]) \to C^{1,1}(\R)
  \]
  is well-defined and bounded, with $(Ef)|_{[\pi-1, \pi+1]} = f$
  and $\supp (Ef) \subseteq [1, 2\pi - 1]$
  and with $E(C^2 ([\pi-1, \pi+1])) \subseteq C^2_b (\R)$.
  We let $E_{2 \pi} f$ be the $2\pi$-periodic extension of $(Ef)|_{[0, 2\pi)}$,
  noting that if $f \in C^2 ([\pi-1, \pi+1])$, then $E_{2 \pi}f \in C_{2\pi}^2(\R)$
  due to the condition $\supp (Ef) \subseteq [1, 2\pi-1]$.

  It is easy to see that there exists $C \geq 1$ with
  \[
    \| E f \|_{C^{1,1} (\R)} \leq C \cdot \| f \|_{C^{1,1}([\pi-1,\pi+1])}
    \quad \text{and} \quad
    \| E_{2\pi} f \|_{C^{1,1}(\R)} \leq C \cdot \| f \|_{C^{1,1}([\pi-1,\pi+1])}
  \]
  for all $f \in C^{1,1} ([\pi-1, \pi+1])$ and such that
  \[
    \| E_{2 \pi} f \|_{L^1 ([0, 2 \pi])}
    \leq C \cdot \| f \|_{L^1 ([\pi - 1, \pi + 1])}
    \qquad \forall \, f \in L^1 ([\pi - 1, \pi + 1])
    .
  \]

  Now, define
  \[
    \SC_{00}
    := \left\{
         f \in C^2 ([\pi-1, \pi+1])
         \,\,:\,\,
         \|f\|_{C^{1,1}} \leq \frac{1}{8 C}
       \right\}
    .
  \]
  We note that
  \[
    \overline{\SC_{00}}^{\|\cdot\|_{L^1}}
    \supseteq \SC_{00}^{1,1}
    := \left\{
         f \in C^{1,1}([\pi-1, \pi+1])
         \,\,:\,\,
         \|f\|_{C^{1,1}} \leq \frac{1}{16 \, C^2}
       \right\}.
  \]
  Indeed, for $f \in \SC_{00}^{1,1}$ we have $Ef \in C^{1,1}(\R)$ with
  $\|E f\|_{C^{1,1}} \leq 1/(16 \, C)$.
  Now, pick a function $\varphi \in C_c^\infty((-1,1))$ with $\varphi \geq 0$ and $\int_{-\infty}^\infty \varphi (x) dx = 1$
  and set $\varphi_\eps := \eps^{-1} \varphi(x/\eps)$ and $f_\eps := (Ef) * \varphi_\eps$
  (where the symbol "$\ast$" denotes convolution).
  Then, denoting the smallest Lipschitz constant of a function $g$ by
  $\Lip (g) = \sup_{x \neq y} \frac{|g(x) - g(y)|}{|x-y|}$,
  a combination of \cite[Section 4.13 and Theorem~4.15]{AltLinearFunctionalAnalysis}
  and \cite[Proposition~9.3]{FollandRealAnalysis} shows
  \(
    \|f_\eps\|_{C^{1,1}}
    = \max \{ \|f_\eps\|_{L^\infty}, \Lip (f'_\eps) \}
    \leq \|Ef\|_{C^{1,1}}
    \leq \frac{1}{16 \, C},
  \)
  as well as $f_{\eps} \in C^{\infty}(\RR)$,
  so that $f_\eps \in \SC_{00}$, and $f_\eps \xrightarrow{L^1 ([\pi-1,\pi+1])} (Ef)|_{[\pi-1, \pi+1]} = f$.

  Thus, noting that \emph{(i)} each $\eps$-covering (consisting of finitely many, closed $L^1$-balls)
  for $\SC_{00}$ is an $\eps$-covering for $\overline{\SC_{00}}^{\norm{\cdot}_{L^1}}$
  and hence for $\SC_{00}^{1,1}$, we see in view of
  \emph{(ii)} \Cref{lem:InternalVSExternalCoveringNumbers} and
  \emph{(iii)} \cite[Theorem~3]{ClementsEntropiesOfSeveralSetsOfFunctions} that
  \[
    \log_2 \Bigl( \cn(\SC_{00}, \|\cdot\|_{L^1}, \eps) \Bigr)
    \overset{(i)}{\geq} \log_2 \Bigl( \extcn (\SC_{00}^{1,1}, \|\cdot\|_{L^1}, \eps) \Bigr)
    \overset{(ii)}{\geq} \log_2 \Bigl( \cn\left(\SC_{00}^{1,1}, \|\cdot\|_{L^1}, 2\eps\right) \Bigr)
    \overset{(iii)}{\gtrsim} \eps^{- 1/2}
  \]
  for all sufficiently small $\eps > 0$.

  By definition, this shows $\lmd(\SC_{00}) \geq \frac{1}{2} > 0$.
  Moreover, on $C^2([\pi-1,\pi+1])$, the norm $\norm{\cdot}_{C^{1,1}}$ is equivalent to $\norm{\cdot}_{C^2}$.
  Hence $\SC_{00}$ is a ball around the origin in a Banach space and hence satisfies {\ConditionZetaText}
  by \cref{continuously_embedded_balls_are_zeta}.
  Thus, \Cref{thm:MainResultCriticalMeasureExistence} and \Cref{restriction_inherits_criticality}
  yield a Borel probability measure $\mu_0$ on $\SC_{00}$
  (with $\SC_{00}$ considered as a subset of $L^1 ([\pi-1, \pi+1])$)
  such that for each $0\le s < \frac{1}{2}$,
  there exist $\eps_0(s) > 0$ and $c(s) > 0$ with
  \[
    \mu_0 \bigl(\SC_{00} \cap \ball (f, \eps\mid \| \cdot \|_{L^1})\bigr)
    \leq \exp\bigl(-c(s) \cdot \eps^{-s}\bigr)
    \quad \forall \ \eps \in (0, \eps_0(s)) \text{ and } f \in \SC_{00} .
  \]

  \medskip

  \textbf{Step~3} \emph{(Transferring the measure to $\SC$):}
  We now define $\mu$ to be the push-forward of $\mu_0$ under the (Borel measurable) map 
  \[
    \Psi := \Phi \circ E_{2 \pi} : \quad \SC_{00} \subset L^1([\pi-1,\pi+1]) \to \SC \subset L^2 (\R^2).
  \]
  Note that indeed $E_{2\pi}(\SC_{00}) \subset \SC_{0}$ and $\Phi : \SC_0 \to \SC$,
  so that $\Psi$ is well-defined and $\mu$ is a Borel probability measure on $\SC$.
  Here, we use that $E_{2\pi} : L^1 ([\pi-1,\pi+1]) \to L^1([0, 2 \pi])$
  is continuous with $E_{2 \pi}(\SC_{00}) \subset \SC_{0}$ and that
  $\Phi : \SC_0 \to \SC$ is continuous, with respect to the $L^1([0, 2 \pi])$-topology
  on $\SC_0$ and the $L^2 (\R^2)$-topology on $\SC$.
  Hence, $\Psi : \SC_{00} \to \SC$ is continuous and hence Borel measurable,
  with respect to the $L^1 ([\pi-1, \pi+1])$-topology on $\SC_{00}$ and the $L^2(\R^2)$-topology
  on $\SC$.

  Moreover, the map $\Psi$ satisfies a sub-H{\"o}lder condition as in \cref{prop:transfer_principle}.
  Indeed, for ${g,h \in \SC_{00}}$, we have $E_{2 \pi}(g)|_{[\pi-1, \pi+1]} = g$
  (and similarly for $h$).
  It follows that
  \begin{align*}
    \| g - h \|_{L^1([\pi-1, \pi+1])}
    &\leq \| E_{2 \pi}(g) - E_{2 \pi}(h) \|_{L^1([0, 2 \pi])} \\
    \overset{\text{Eq. \eqref{eq:ShearletStuffPhiIsometryLowerBound}}}&{\leq}
          4 \,\, \| \Phi(E_{2 \pi}(g)) - \Phi(E_{2\pi}(h)) \|_{L^2(\R^2)}^2.
  \end{align*}
  Now \cref{prop:transfer_principle} (with $\beta=\frac{1}{2}$ and applied once for every $s<\frac{1}{2}$)
  implies that $\mu$ satisfies the small-ball condition with parameter $\sigma$ for arbitrary $\sigma<1$.
  By \Cref{cor:SmallBallCondition_Dominated_by_LowerMinkowskiDim},
  this implies $\lmd(\SC) \geq 1$.

  \medskip

  \textbf{Step 4} \emph{(Completing the proof):}
  Finally, \cite[Theorem~E.10]{VoigtlaenderPeinAnalysisVSSynthesisSparsityAlphaShearlets}
  implies that there exists a polynomial $P$ 
  and a suitably enumerated cone-adapted shearlet system $\CalG = (g_n)_{n \in \N}$ 
  such that $(\CalA_n)_{n \in \N} = \bigl(\Sigma_n(\CalG, P)\bigr)_{n \in \N}$ satisfies
  for arbitrary $\delta \in (0,1)$ that
  \[
    \sup_{f \in \SC} \dist_{L^2}(f, \CalA_n) \leq C_\delta \cdot n^{-(1 - \delta)}
    \qquad \forall \, n \in \N
    .
  \]
  By \Cref{thm:PhaseTransitionForFamiliesWithControlledComplexity}
  and \Cref{prop:PolynomiallyBoundedNTermApproximationControlledComplexity},
  this implies $\umd(\SC) \leq 1$.

  Overall, we see that $\md(\SC) = 1$, that the measure $\mu$ is critical for $\SC$
  (equipped with the $L^2 (\R^2)$ metric),
  and that the family $(\CalA_n)_{n \in \N} = \bigl(\Sigma_n(\CalG, P)\bigr)_{n \in \N}$ 
  is optimal for $\SC$ in the class of families of controlled complexity.
  We note that the constructed measure $\mu$ is a critical Borel probability measure
  on $\SC \subset L^2(\R^2)$; this can be changed to a Borel probability measure
  on $L^2 (\R^2)$ that is critical for $\SC$ using \Cref{cor:IntrinsicToExtrinsic}.
\end{proof}

%% file: parts/technical_proofs_quasi_metric.tex
\section{Proofs of technical results in the setting of quasi-metric spaces}
\label{sec:QuasiMetricTechnicalProofs}

\subsection{Proof of \texorpdfstring{\cref{lem:FrostmanLemmaEasy}}{Lemma~\ref{lem:FrostmanLemmaEasy}}}
\label{sec:proof_of_lem:FrostmanLemmaEasy}

\begin{proof}[Proof of \cref{lem:FrostmanLemmaEasy}]
  This result already appears in \cite[Proposition~3.4]{kloecknerGeneralizationHausdorffDimension2012},
  but only for the setting of genuine metric spaces, and without a proof.
  For the sake of completeness, we provide the proof here.

  If $s = 0$, the claim of the lemma is trivial.
  Thus, let $s > 0$.
  Note that $\SC \neq \emptyset$, since $\mu^{\ast}(\SC) > 0$.
  Let $c,r_0$ as in \cref{eq:SmallBallConditionPrecise}.
  In the following, we use the interpretation $\exp\bigl(-c \cdot (1/r)^s\bigr) = 0$ for $r = 0$.
  By letting $r \downarrow 0$ in \cref{eq:SmallBallConditionPrecise}, we then see that
  \cref{eq:SmallBallConditionPrecise} remains valid for $r = 0$ as well.
  Here, we use that $\om{\mu}\bigl(B(x,0)\bigr) \le \om{\mu}\bigl(B(x,r)\bigr)$ for all $r>0$.

  Assume towards a contradiction that $\hausd(\SC) < s$.
  By definition (see \cref{eq:PEHausdorffDimensionDefinition}), this means $\CalH_{\exp}^{\sigma} (\SC) = 0$
  for some $\sigma \in (\hausd(\SC),s)$.
  Since $\sigma < s$, there exists $r^\ast = r^\ast (s, \sigma, c) > 0$ such that
  \begin{equation}
    c \cdot (1/r)^s \geq (1/r)^\sigma
    \qquad \forall \, 0 < r \le r^\ast
    .
    \label{eq:FrostmannLemmaEasyPowerEstimate}
  \end{equation}

  Now, by definition of $\CalH_{\exp}^{\sigma}$, see \cref{sub:power-exp_Hausdorff_dimension},
  there exists a covering $\SC \subset \bigcup_{i \in I} M_i$,
  with a countable (or finite) index set $I$, with sets $M_i \subset \SC$
  (without loss of generality, we may assume that $M_i\subset \SC$ instead of $M_i \subset\XX$,
  by switching from $M_i$ to $M_i \cap \SC$, if necessary)
  satisfying $\diam(M_i) \leq \min \{ \frac{r_0}{2}, r^\ast \}$,
  and such that $\sum_{i \in I} f_\sigma (\diam(M_i)) < \om{\mu}(\SC)$,
  with $f_\sigma$ as in \cref{eq:PEHausdorffHelperFunction}.
  By discarding empty sets, we can assume that $M_i \neq \emptyset$ for all $i \in I$.

  Now, for each $i \in I$, set $r_i := \diam (M_i)$ and choose $x_i \in M_i$.
  By definition of the diameter and since we consider "closed" balls,
  we then have $M_i \subset \ball (x_i, r_i)$.
  Overall, we thus get
  \begin{align*}
    \om{\mu}(\SC)
      &  = \mu^\ast \left(\bigcup_{i \in I} M_i\right) \\
      & \leq \sum_{i \in I} \mu^\ast (M_i)
        \leq \sum_{i \in I} \mu^\ast (\ball (x_i, r_i)) \\
      & \overset{\eqref{eq:SmallBallConditionPrecise}}{\leq}
          \sum_{i \in I} \exp\bigl(- c \cdot (1/r_i)^s\bigr)
        \overset{\eqref{eq:FrostmannLemmaEasyPowerEstimate}}{\leq}
          \sum_{i \in I} \exp\bigl(- (1/r_i)^\sigma\bigr) \\
      & = \sum_{i \in I} f_\sigma (\diam(M_i))
      < \om{\mu}(\SC)
      ,
  \end{align*}
  which is the desired contradiction.
\end{proof}

\subsection{Proof of \texorpdfstring{\cref{lem:CompressionRateVSMinkowskiDimension}}{Lemma~\ref{lem:CompressionRateVSMinkowskiDimension}}}
\label{sec:proof_of_lem:CompressionRateVSMinkowskiDimension}

\begin{proof}[Proof of \cref{lem:CompressionRateVSMinkowskiDimension}]
  Note that the claim is trivial in case of $\# \SC < 2$,
  since in this case $\# \SC = 1$ and this easily implies $\CR^\ast (\SC, \XX) = \infty$
  and $\umd(\SC) = 0$.
  Hence, in the remainder of the proof, we can assume that $\# \SC \geq 2$.
  For the entire proof, let $k \geq 1$ be a triangle constant for $d$.
  We separately prove "$\leq$" and "$\geq$".

  \medskip{}

  "$\leq$":
  This is trivial if $\CR^\ast (\SC, \XX) = 0$.
  Thus, assume $\CR^\ast (\SC, \XX) > 0$, and let $0 < \CR < \CR^\ast (\SC, \XX)$ be arbitrary.
  It easily follows from the definition of $\CR^\ast(\SC,\XX)$ that there exist $C \geq 1$
  and a codec sequence $( (E_n, D_n))_{n \in \N}$ for $\SC$ with
  $\delta(E_n, D_n) \leq C \cdot n^{-\CR}$ for all $n \in \N$.
  Notice that for every $n\in \NN$, the range of $D_n$ consists of at most $2^n$ points in $\XX$.
  By definition of $\delta$, it thus follows that $\SC$ can be covered by $2^n$ closed balls with radius $C\cdot n^{-\CR}$.
  Hence 
  \begin{equation}
      \extcn(\SC, \XX, C\cdot n^{-\CR}) \le 2^n \quad \forall n \in \NN.
      \label{eqn_102_1}
  \end{equation}

  We will now show that \eqref{eqn_102_1} is sufficient to give suitable upper bounds
  for $\cn(\SC, \eps)$ for all sufficiently small $\eps>0$.
  Let $\eps \in (0,1]$ be arbitrary.
  Choose $n \in \N_{\geq 2}$ with
  \begin{equation}
    2Ck \cdot n^{-\CR} < \eps \leq 2 C k \cdot (n-1)^{-\CR}
    .
      \label{eqn_102_2}
  \end{equation}
  Note that this implies because of $n \geq 2$ that
  \[
      n^{\CR}
    = \bigl( (n-1) + 1\bigr)^{\CR}
    \leq \bigl(2 (n-1)\bigr)^{\CR}
    \leq \frac{2^{\CR+1} C k}{\eps}
  \]
  and thus $n \leq (2^{\CR+1} C k)^{1/\CR} \eps^{-1/\CR}$.
  
  Using the monotonicity of $\cn$ together with \eqref{eqn_102_2} at (a),
  \cref{lem:InternalVSExternalCoveringNumbers} at (b), and \eqref{eqn_102_1} at (c), we infer that
  \begin{equation*}
     \cn(\SC,\eps)
     \overset{(a)}{\le}
     \cn(\SC, 2Ck \cdot n^{-\CR})
     \overset{(b)}{\le}
 \extcn(\SC,\XX, C\cdot n^{-\CR})
 \overset{(c)}{\le} 2^n.
  \end{equation*}
  We have thus shown
  \[
    \log_2\bigl(\cn (\SC, \eps)\bigr)
    \leq n
    \leq (2^{\CR+1} C k)^{1/\CR} \eps^{-1/\CR}
  \]
  and thus
  \[
    \log_2 \bigl(\log_2 (\cn (\SC, \eps))\bigr)
    \leq \log_2 \bigl( (2^{\CR+1} C k)^{1/\CR} \eps^{-1/\CR}\bigr)
    =    \frac{1}{\CR} \log_2 (2^{\CR+1} C k) + \frac{1}{\CR} \log_2 (1/\eps)
  \]
  for all $\eps \in (0,1]$.

  Thus, we finally see
  \[
    \umd (\SC)
    = \limsup_{\eps \downarrow 0}
        \frac{\log_2 \log_2 \cn(\SC, \eps)}{\log_2 (1/\eps)}
    \leq \limsup_{\eps \downarrow 0}
         \left(
           \frac{\frac{1}{\CR} \log_2 (2^{\CR+1} C k)}{\log_2 (1/\eps)}
           + \frac{1}{\CR}
         \right)
    =    \frac{1}{\CR}
    <    \infty
    .
  \]
  Hence, $\CR \leq \bigl(\umd (\SC)\bigr)^{-1}$.
  Since $0 < \CR < \CR^\ast (\SC, \XX)$ was arbitrary, this proves "$\leq$".

  \medskip{}

  "$\geq$":
  In case of $\umd(\SC) = \infty$, the claim is trivial.
  Hence, assume $\umd(\SC) < \infty$, and let $0 < \CR < \bigl(\umd(\SC)\bigr)^{-1}$
  be arbitrary, so that $\umd(\SC) < \frac{1}{\CR}$.
  By definition of $\umd(\SC)$, see \cref{sub:power-exp_Minkowski_dimension},
  this implies existence of $\eps_0 \in (0,1)$ with
  \[
    \frac{\log_2 \log_2 \cn(\SC, \eps)}{\log_2 (1/\eps)} \leq \frac{1}{\CR}
    \qquad \forall \, 0 < \eps \leq \eps_0
    .
  \]
  Hence,
  \begin{equation}
    \cn(\SC, \eps) \leq 2^{(1/\eps)^{1/\CR}}
    \qquad \forall \, 0 < \eps \leq \eps_0
    .
    \label{eq:CompressionRateVSMinkowskiCoveringBound}
  \end{equation}

  Let $n_0 := \lceil (1/\eps_0)^{1/\CR} \rceil - 1 \in \N$.
  For $n \in \N$ with $n > n_0$, we have $n \geq (1/\eps_0)^{1/\CR}$, meaning that
  $\eps_n := n^{-\CR}$ satisfies $\eps_n \leq \eps_0$.
  Hence, $\cn(\SC,\eps_n) \leq 2^{(1/\eps_n)^{1/\CR}} = 2^n$ by
  \cref{eq:CompressionRateVSMinkowskiCoveringBound}.
  We can thus choose points $x_1,\dots,x_{2^n} \in \SC$ (not necessarily pairwise distinct) 
  such that $\{ x_1,\dots,x_{2^n} \}$ is an $\eps_n$-net of $\SC$.
  Choose a surjective map $D_n : \{ 0,1 \}^n \to \{ x_1,\dots,x_{2^n} \}$.
  For every $x \in \SC$, there then exists $E_n (x) \in \{ 0,1 \}^n$ with
  $d\bigl(x, D_n(E_n(x))\bigr) \leq \eps_n = n^{-\CR}$.
  This completes the construction of the codec $(E_n, D_n)$ for $n > n_0$.

  Finally, for $n \in \N$ with $n \leq n_0$, let $D_n : \{ 0,1 \}^n \to \SC \subset \XX$
  and $E_n : \SC \to \{ 0,1 \}^n$ be arbitrary.
  Then, we have for $C := \max \bigl\{ 1, \diam(\SC) \cdot n_0^\CR \bigr\}$ that
  \[
    \delta(E_n, D_n)
    \leq \begin{cases}
           n^{-\CR} \leq C \cdot n^{-\CR},                            & \text{if } n > n_0, \\
           \diam(\SC) \leq C \cdot n_0^{-\CR} \leq C \cdot n^{-\CR} , & \text{if } n \leq n_0 .
         \end{cases}
  \]
  Here, we note because of $\cn(\SC, \eps) < \infty$ for $0 < \eps \leq \eps_0$ that
  $\SC$ is totally bounded and hence bounded, so that $\diam(\SC) < \infty$.

  Overall, by definition of $\CR^\ast (\SC, \XX)$, we see $\CR^\ast (\SC,\XX) \geq \CR$.
  Since $0 < \CR < \bigl(\umd(\SC)\bigr)^{-1}$ was arbitrary, this proves "$\geq$".
\end{proof}

\subsection{Proof of \texorpdfstring{\cref{eq:PEHausdorffDimensionDefinition}}{Equation~(\ref{eq:PEHausdorffDimensionDefinition})}}
\label{sec:proof_of_eq:PEHausdorffDimensionDefinition}

In order to show that \cref{eq:PEHausdorffDimensionDefinition} holds, we first need the following lemma.
Its proof follows \cite[Theorem~4.7]{mattilaGeometrySetsMeasures1995}.
See also \cite[Lemma~2.2]{kloecknerGeneralizationHausdorffDimension2012}.
Since we use a different scale than in \cite[Theorem~4.7]{mattilaGeometrySetsMeasures1995},
since \cite[Lemma~2.2]{kloecknerGeneralizationHausdorffDimension2012} is stated without a proof,
and since we deal with quasi-metric spaces, we provide a proof.

\begin{lemma}\label{lem:ComparisionHausdorffMeasures_s_t}
    Let $(\XX,d)$ be a quasi-metric space, let $M\subset \XX$, and let $0<s<t<\infty$.
    \begin{enumerate}[label=(\roman*)]
        \item If $\hdm^s(M) <\infty$, then $\hdm^t(M)=0$.
        \item If $\hdm^t(M) >0$, then $\hdm^s(M)=\infty$. 
    \end{enumerate}
\end{lemma}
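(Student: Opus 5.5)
I will prove (i) directly from the definitions, following the classical scheme of \cite[Theorem~4.7]{mattilaGeometrySetsMeasures1995}. Then (ii) is simply the contrapositive of (i): if $\hdm^s(M) < \infty$, then (i) gives $\hdm^t(M) = 0$. So the real work is (i).

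For (i), the key elementary estimate compares the two gauge functions at small scales. For $0 < r \le \delta \le 1$ we have
\[
  \frac{f_t(r)}{f_s(r)}
  = \exp\bigl( (1/r)^s - (1/r)^t \bigr)
  = \exp\bigl( -(1/r)^s \bigl( (1/r)^{t-s} - 1 \bigr) \bigr).
\]
Since $t > s$, the exponent tends to $-\infty$ as $r \downarrow 0$. Hence
\[
  \kappa(\delta) := \sup_{0 < r \le \delta} \frac{f_t(r)}{f_s(r)} \xrightarrow[\delta \downarrow 0]{} 0 .
\]
To justify this, I note that $(1/r)^s \bigl((1/r)^{t-s} - 1\bigr) \ge \delta^{-s}\bigl(\delta^{s-t} - 1\bigr)$ for $r \le \delta < 1$, because both factors are decreasing in $r$ and positive there. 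This gives $\kappa(\delta) \le \exp\bigl(-\delta^{-s}(\delta^{s-t} - 1)\bigr) \to 0$. At $r = 0$ both gauge functions vanish, so sets of diameter $0$ contribute nothing to either sum.

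Now fix $\delta \in (0,1)$ and an admissible cover $(M_i)_{i \in \N}$ of $M$ with $\diam(M_i) \le \delta$. Termwise we have $f_t(\diam M_i) \le \kappa(\delta) \, f_s(\diam M_i)$. Summing and taking the infimum over such covers gives
\[
  \hdm^{t,\delta}(M) \le \kappa(\delta) \cdot \hdm^{s,\delta}(M) \le \kappa(\delta) \cdot \hdm^s(M).
\]
The last inequality uses that $\hdm^{s,\delta}$ is nondecreasing as $\delta \downarrow 0$ and that $\hdm^s$ is its supremum. Letting $\delta \downarrow 0$ and using $\hdm^s(M) < \infty$, the right-hand side tends to $0$, so $\hdm^t(M) = \sup_\delta \hdm^{t,\delta}(M) = 0$.

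No property of the (quasi-)metric beyond the definition of the diameter enters, so the quasi-metric setting causes no difficulty. The only point requiring care is the monotonicity bound defining $\kappa(\delta)$ uniformly in $r \le \delta$. Everything else is a routine comparison of infima.
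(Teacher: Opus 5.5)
Your proof is correct and follows the same route as the paper. Both arguments bound $f_t(r)/f_s(r)$ uniformly on $(0,\delta]$ by $\exp(\delta^{-s}-\delta^{-t})$, using monotonicity of $r^{-s}-r^{-t}$ (the paper via a derivative, you via a product of positive decreasing factors), then compare covers of diameter at most $\delta$ and obtain (ii) as the contrapositive. The only cosmetic difference is that you pass to the infimum over covers directly to get $\hdm^{t,\delta}(M)\le\kappa(\delta)\,\hdm^{s,\delta}(M)$, whereas the paper fixes a near-optimal cover with slack $+1$ and takes successive limits in $\delta$ and $\delta_0$.
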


\begin{proof}
    Let $\delta_0 \in (0,1)$ be arbitrary.
    Let $\delta \in (0,\delta_0)$. 
    Since $\hdm^s(M)<\infty$, it follows from the definition of $\hdm^s$,
    see \cref{sub:power-exp_Hausdorff_dimension}, that there exists a sequence $(M_i)_{i\in \NN}$
    of subsets $M_i\subset \XX$ such that $\diam(M_i)\le \delta$ for all $i\in \NN$,
    such that $M \subset \cup_{i\in\NN} M_i$, and such that
    \begin{equation}
        \sum_{i\in \NN} \exp\Bigl( -\Bigl(\frac{1}{d_i}\Bigr)^s\Bigr) 
        \le \hdm^s(M) + 1,
        \label{eqn_101_1}
    \end{equation}
    where we set $d_i := \diam(M_i)$ for $i\in \NN$ and where we identify $\exp(-(1/r)^s)=0$ if $r=0$.
    Moreover, since $\frac{\dif}{\dif x} (x^{-s} - x^{-t}) = x^{-1} ( tx^{-t} - sx^{-s}) \geq 0$
    for $x\le 1$, we have
    \begin{equation}
        \exp\bigl( - d_i^{-t} + d_i^{-s} \bigr)
        \le  \exp\bigl( - \delta^{-t} + \delta^{-s} \bigr).
        \label{eqn_101_2}
   \end{equation}

   Using \eqref{eqn_101_1} and \eqref{eqn_101_2}, we get
    \begin{align*}
        \hdm^{t,\delta_0}(M)
        &\le \sum_{i\in \NN} \exp\Bigl(-\Bigl(\frac{1}{d_i}\Bigr)^t\Bigr)
        =\sum_{i\in \NN} \exp\Bigl(-\Bigl(\frac{1}{d_i}\Bigr)^s\Bigr) 
        \cdot \exp\bigl( - d_i^{-t} + d_i^{-s} \bigr)\\
        &\le \bigl(\hdm^{s}(M) +1 \bigr) \cdot \exp\bigl( -\delta^{-t} + \delta^{-s} \bigr).
    \end{align*}
    Passing to the limit $\delta\downarrow0$, we obtain that $\hdm^{t,\delta_0}(M) =0$.
    Passing to the limit $\delta_0\downarrow 0 $, we obtain $\hdm^{t}(M)=0$.
    This shows (i).
    The second claim (ii) is just the contraposition of the first claim and is thus also proved.
\end{proof}

With \cref{lem:ComparisionHausdorffMeasures_s_t} in place, we may proceed to prove \cref{eq:PEHausdorffDimensionDefinition}.

\begin{lemma}
  Let $(\XX,d)$ be a quasi-metric space and let $M\subset \XX$. 
  Then the four expressions from \cref{eq:PEHausdorffDimensionDefinition} coincide.
\end{lemma}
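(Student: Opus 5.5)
We introduce the four sets $A_\infty := \{ s > 0 : \hdm^s(M) = \infty \}$, $A_{>0} := \{ s > 0 : \hdm^s(M) > 0 \}$, $A_0 := \{ s > 0 : \hdm^s(M) = 0 \}$ and $A_{<\infty} := \{ s > 0 : \hdm^s(M) < \infty \}$. Denote the four expressions by $a_1 := \sup A_\infty$, $a_2 := \sup A_{>0}$, $a_3 := \inf A_0$ and $a_4 := \inf A_{<\infty}$. The convention is $\sup\emptyset = 0$ and $\inf\emptyset = \infty$. The plan is to prove the chain $a_1 \le a_2 \le a_3 \le a_4 \le a_1$. The only tool needed is \cref{lem:ComparisionHausdorffMeasures_s_t}, together with the complementarity relations $A_{>0} = (0,\infty)\setminus A_0$ and $A_{<\infty} = (0,\infty)\setminus A_\infty$.

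First, $a_1 \le a_2$ holds because $A_\infty \subset A_{>0}$. Similarly, $a_3 \le a_4$ holds because $A_0 \subset A_{<\infty}$, and infima over larger sets are smaller.

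Next, for $a_2 \le a_3$, take $s \in A_{>0}$ and $t \in A_0$. We claim $s \le t$. If instead $t < s$, then \cref{lem:ComparisionHausdorffMeasures_s_t}(ii), applied with the pair $t < s$, turns $\hdm^s(M) > 0$ into $\hdm^t(M) = \infty$. This contradicts $\hdm^t(M) = 0$. Hence every element of $A_{>0}$ is at most every element of $A_0$, which gives $\sup A_{>0} \le \inf A_0$. The empty-set conventions are consistent with this: if $A_{>0} = \emptyset$ the left side is $0$, and if $A_0 = \emptyset$ the right side is $\infty$.

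Finally, for $a_4 \le a_1$, we use the structure of the sets. By part (i) of the lemma, $A_{<\infty}$ is upward closed. Indeed, if $s \in A_{<\infty}$ and $t > s$, then $\hdm^t(M) = 0 < \infty$. Its complement $A_\infty$ is therefore downward closed. Since the two sets partition $(0,\infty)$, the complement of an upward-closed set is an initial segment and the set itself is a final segment, so $\sup A_\infty = \inf A_{<\infty}$. The degenerate cases are handled by the conventions. If $A_\infty = \emptyset$, then $A_{<\infty} = (0,\infty)$ and its infimum is $0 = \sup\emptyset$. If $A_{<\infty} = \emptyset$, then $A_\infty = (0,\infty)$ and its supremum is $\infty = \inf\emptyset$.

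There is no real obstacle here. The only care needed is the bookkeeping of the boundary cases and the empty-set conventions, and checking that the strict inequality $s < t$ required in \cref{lem:ComparisionHausdorffMeasures_s_t} is available each time it is invoked. Equality $s = t$ never needs the lemma.
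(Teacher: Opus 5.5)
Your step ``$a_3 \le a_4$ because $A_0 \subset A_{<\infty}$'' has the inequality backwards. Since $A_{<\infty}$ is the larger set, the inclusion gives $\inf A_{<\infty} \le \inf A_0$, i.e.\ $a_4 \le a_3$. With this corrected, what you have actually established is $a_1 \le a_2 \le a_3$ and $a_4 = a_1 \le a_3$. Nothing bounds $a_3$ from above, so the cycle does not close.

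This is not a cosmetic slip, because the missing inequality is the one with real content. It says that exponents with $0 < \hdm^s(M) < \infty$ cannot occupy a range to the left of $\inf A_0$, and no inclusion argument can detect this. For instance, if one had $0<\hdm^s(M)<\infty$ for all $s \in [1,2]$ and $\hdm^s(M)=0$ for $s>2$, then $\inf A_{<\infty} = 1 < 2 = \inf A_0$. It is exactly the step you treated as free bookkeeping.

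The repair uses \cref{lem:ComparisionHausdorffMeasures_s_t}(i) once more. If $t \in A_{<\infty}$, then $\hdm^u(M) = 0$ for every $u > t$. Hence $(t,\infty) \subset A_0$ and $\inf A_0 \le t$, and taking the infimum over $t$ gives $a_3 \le a_4$. This is trivial if $A_{<\infty} = \emptyset$.

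Alternatively, order the cycle as the paper does:
\begin{itemize}
  \item $a_1 \le a_2$ and $a_4 \le a_3$ follow from the inclusions;
  \item $a_2 \le a_4$ holds because $s \in A_{>0}$, $t \in A_{<\infty}$ and $t < s$ would contradict part (i);
  \item $a_3 \le a_1$ holds because for $0 < s < t < \inf A_0$ one has $\hdm^t(M) > 0$, hence $\hdm^s(M) = \infty$ by part (ii).
\end{itemize}

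Your other steps are correct: $a_1 \le a_2$, $a_2 \le a_3$, and $\sup A_\infty = \inf A_{<\infty}$ via upward closedness of $A_{<\infty}$.
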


\begin{proof}
    Let $S_{<\infty} := \bigl\{s >0 : \hdm^s(M)<\infty\bigr\}$
    and define $S_{=0}$, $S_{=\infty}$, and $S_{>0}$ analogously.

    We first deal with the edge cases. 
    If $S_{<\infty}= \emptyset$, then $S_{=0}=\emptyset$, $S_{=\infty}=S_{>0}=(0,\infty)$.
    In that case, all four quantities in \cref{eq:PEHausdorffDimensionDefinition} are equal to $+\infty$.
    If $S_{>0}=\emptyset$, then $S_{=\infty}=\emptyset$, and $S_{=0}=S_{<\infty}=(0,\infty)$.
    In that case, all four quantities in \cref{eq:PEHausdorffDimensionDefinition} are equal to $0$.

    Assume in the following that there exist $s_0,t_0 \!\in\! (0,\infty)$
    such that $\hdm^{s_0}(M) \!>\! 0$ and $\hdm^{t_0}(M) \!<\! \infty$.
    It follows from \cref{lem:ComparisionHausdorffMeasures_s_t}
    that $\hdm^{s_0/2}(M)=\infty$ and that $\hdm^{2t_0}(M)=0$.
    In particular, all four sets defined above are non-empty.

    From the subset relations, we immediately get 
    \begin{equation*}
        \inf S_{<\infty} \le \inf S_{=0}
        \quad \text{and}\quad
        \sup S_{=\infty} \le \sup S_{>0}.
    \end{equation*}

    We claim that $\inf S_{=0} \le \sup S_{=\infty}$.
    Indeed, if $\inf S_{=0} =0$, then the inequality is trivial. 
    If $\inf S_{=0}>0$, let $0<s<\inf S_{=0}$ be arbitrary and take $s<t<\inf S_{=0}$.
    Then $\hdm^{t}(M)>0$ and thus, by \cref{lem:ComparisionHausdorffMeasures_s_t}, that $\hdm^{s}(M)=\infty$. 
    Hence $s\le \sup S_{=\infty}$.
    Since this holds for all $0 < s< \inf S_{=0}$, we deduce the claim also in that case.

    We claim that $\sup S_{>0} \le \inf S_{<\infty}$.
    Indeed, if $s \in S_{>0}$ and $t \in S_{<\infty}$, then $\hdm^{s}(M)>0$ and $\hdm^{t}(M)<\infty$.
    It follows that $s\le t$, as otherwise $s>t$ would contradict \cref{lem:ComparisionHausdorffMeasures_s_t}.

    Combining all four inequalities, we deduce that all quantities in \cref{eq:PEHausdorffDimensionDefinition} must be equal.
\end{proof}

\subsection{Proof of \texorpdfstring{\cref{eq:PEHausdorffDominatedByPELowerMinkowski}}{Equation~(\ref{eq:PEHausdorffDominatedByPELowerMinkowski})}}
\label{sec:proof_of_eq:PEHausdorffDominatedByPELowerMinkowski}

\begin{lemma}\label{lem:PEHausdorffDominatedByPEMinkowski}
  Let $\SC$ be a non-empty quasi-metric space.
  Then we have
  \[
    \hausd (\SC) \leq \lmd (\SC)
    .
  \]
\end{lemma}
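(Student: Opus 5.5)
The plan is to show that $\CalH_{\exp}^{t}(\SC) = 0$ for every $t > \lmd(\SC)$. By \cref{eq:PEHausdorffDimensionDefinition} (the $\inf\{s : \CalH_{\exp}^s = 0\}$ form), this gives $\hausd(\SC) \le t$, and letting $t \downarrow \lmd(\SC)$ yields the claim.

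First the trivial cases. If $\lmd(\SC) = \infty$ (in particular if $\SC$ is not totally bounded), there is nothing to prove. If $\#\SC = 1$, then $\SC$ is covered by a single set of diameter $0$, so $\CalH_{\exp}^s(\SC) = 0$ for all $s$ and $\hausd(\SC) = 0$. So assume $\SC$ is totally bounded, $\#\SC \ge 2$ and $s_\ast := \lmd(\SC) < \infty$. Let $k \ge 1$ be a triangle constant, fix $t > s_\ast$, and pick $\sigma \in (s_\ast, t)$.

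Since the $\liminf$ in \cref{eq:LowerMinkowskiDimension} is below $\sigma$, there is a sequence $\eps_j \downarrow 0$ with $\log_2 \cn(\SC, \eps_j) \le \eps_j^{-\sigma}$, i.e. $\cn(\SC,\eps_j) \le 2^{\eps_j^{-\sigma}}$. This uses only the definition of $\liminf$, so it holds along a subsequence and not for all small $\eps$, which is exactly why we get the lower and not the upper dimension. For each $j$, choose an $\eps_j$-net $\{x_1,\dots,x_{N_j}\}$ with $N_j = \cn(\SC,\eps_j)$ and set $M_i := \ball(x_i,\eps_j) \cap \SC$. In a quasi-metric space the diameter of a ball is controlled by $\diam(M_i) \le 2k\eps_j$, because $d(y,z) \le k(d(y,x_i)+d(x_i,z))$. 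Since $f_t$ is non-decreasing, for any $\delta > 0$ and all $j$ with $2k\eps_j \le \delta$ we get
\[
  \CalH_{\exp}^{t,\delta}(\SC)
  \le N_j \cdot f_t(2k\eps_j)
  \le \exp\bigl(\ln(2)\, \eps_j^{-\sigma} - (2k)^{-t} \eps_j^{-t}\bigr).
\]

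The only real point is the final comparison. Since $t > \sigma$, the term $(2k)^{-t}\eps_j^{-t}$ dominates $\ln(2)\,\eps_j^{-\sigma}$ as $j \to \infty$, so the right-hand side tends to $0$. This gives $\CalH_{\exp}^{t,\delta}(\SC) = 0$ for every $\delta>0$, hence $\CalH_{\exp}^{t}(\SC) = 0$. Two details need care along the way: the constant $2k$ from the quasi-triangle inequality must be absorbed (it is, because the polynomial rates $\eps^{-t}$ and $\eps^{-\sigma}$ differ), and countable covers are permitted while a finite cover is a special case. No further obstacle is expected.
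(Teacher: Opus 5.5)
Your proof is correct and follows essentially the same route as the paper. In both, you take a sequence $\eps_j \downarrow 0$ realizing the $\liminf$, cover $\SC$ by $\cn(\SC,\eps_j) \le \exp(c\,\eps_j^{-\sigma})$ balls of diameter at most $2k\eps_j$, and use $t > \sigma$ to get $\exp\bigl(c\,\eps_j^{-\sigma} - (2k\eps_j)^{-t}\bigr) \to 0$, hence $\CalH_{\exp}^{t}(\SC) = 0$; the only differences are cosmetic, such as fixing $\delta$ first rather than passing to the limit of $\CalH_{\exp}^{t,2k\eps_j}(\SC)$ directly.
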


\begin{proof}
  The proof largely follows the one of
  \cite[Proposition~3.2]{kloecknerGeneralizationHausdorffDimension2012},
  which is the same result, but only for the case of genuine metric spaces.

  If $\lmd(\SC) = \infty$, we are done.
  In the following, we thus assume that $\lmd(\SC) < \infty$,
  which in particular implies that $\SC$ is totally bounded.
  In case of $\# \SC < 2$, it is straightforward to see that $\hausd(\SC) = 0 \leq \lmd(\SC)$.
  Hence, we can assume that $\# \SC \geq 2$.

  Let $t > s > \lmd(\SC)$.
  Since $s > \lmd(\SC)$, there exist $c > 0$ and a sequence $(\eps_n)_{n\in\NN} \subset (0,\infty)$
  such that $\lim_{n\to\infty} \eps_n = 0$ and such that
  \begin{equation*}
      \cn(\SC,\eps_n) 
      < \exp\Bigl( c \cdot \Bigl(\frac{1}{\eps_n}\Bigr)^s\Bigr) 
      < \infty
      \quad \text{ for all $n\in \NN$.}
  \end{equation*}
  For $n\in \NN$ let $N(n):= \cn(\SC,\eps_n)$ and let $(B_i^{(n)})_{i=1}^{N(n)}$ be a covering
  of $\SC$ by balls of radius $\eps_n$.
  Using the quasi-triangle inequality with constant $k$,
  it follows that $d_i^{(n)}:=\diam(B_i^{(n)}) \le 2 k \eps_n$ for every $i=1,\dots,N(n)$.
  We infer that
  \begin{equation*}
      \hdm^{t,2k\eps_n}(\SC)
      \le \sum_{i=1}^{N(n)} \exp\Bigl(- \Bigl(\frac{1}{d_i^{(n)}}\Bigr)^t\Bigr)
      \le \exp\Bigl( c\cdot \Bigl(\frac{1}{\eps_n}\Bigr)^s\Bigr)  
      \cdot \exp\Bigl(- \Bigl(\frac{1}{2k\eps_n}\Bigr)^t\Bigr).
  \end{equation*}
  It follows that
  \begin{equation*}
      \hdm^{t}(\SC) 
      = \lim_{n\to \infty}\hdm^{t,2k\eps_n}(\SC)
      \le \liminf_{n\to \infty}\exp\Bigl( c\cdot \Bigl(\frac{1}{\eps_n}\Bigr)^s\Bigr)  
      \cdot \exp\Bigl(- \Bigl(\frac{1}{2k\eps_n}\Bigr)^t\Bigr) =0.
  \end{equation*}
  Hence $\hausd(\SC) \le t$.
  Since this holds for all $t>\lmd(\SC)$, we deduce the claim.
\end{proof}

\subsection{Proof of \texorpdfstring{\cref{measure_SC_n_is_well_def}}{Lemma~\ref{measure_SC_n_is_well_def}}}
\label{sec:proof_of_measure_SC_n_is_well_def}

\begin{proof}[Proof of \cref{measure_SC_n_is_well_def}]
    \emph{Step 1:}
    We first show that $\mu$ is a well-defined probability measure.
    Since every $\SC_n$ is finite and non-empty, the uniform distribution on $\SC_n$ exists
    and is a Borel measure on $\XX$.
    By \cite[Proposition 10.6.1]{cohnMeasureTheory2013}, the sequence of independent Borel-measurable
    random vectors $Y_n \colon \Omega \to \XX$, $n\in \NN$, exists
    (i.e., there exists a probability space $(\Omega,\CalA)$ on which a sequence $(Y_n)_{n\in \NN}$
    of random variables is defined, such that the $Y_n$ are independent and each $Y_n$
    is uniformly distributed on $\SC_n$).

    Now define $S_n := \sum_{i=1}^n \frac{Y_i}{C_0\cdot i^{\alpha}}$.
    We show that $S_n\colon \Omega \to \XX$ is Borel-measurable on $\XX$.
    Let \mbox{$A\in \Borel(\XX)$} be arbitrary and let
    \begin{equation*}
      \widetilde{A}:=
      \biggl\{
        (y_1,\dots,y_n) \in \SC_1\times \dots \times \SC_n
        \,\,:\,\,
        \sum_{i=1}^n \frac{y_i}{C_0\cdot i^{\alpha}} \in A
      \biggr\}.
    \end{equation*}
    Then $\widetilde{A}$ is finite, because $\SC_1 \times \dots \times \SC_n$ is.
    Moreover, we have
    \begin{align}
        S_n^{-1}(A) 
        &= \bigcup_{(y_1,\dots,y_n) \in \widetilde{A}} 
        \bigl\{ \omega \in \Omega : \text{for all $i=1,\dots,n$: $Y_i(\omega)=y_i$}\bigr\}
        \nonumber\\
        &= \bigcup_{(y_1,\dots,y_n) \in \widetilde{A}}  \,\,
             \bigcap_{i=1,\dots,n} Y_i^{-1}\bigl(\{y_i\}\bigr).
        \label{eqn_100_1}
    \end{align}
    Since every $Y_i$ is Borel-measurable, it follows
    that each $Y_i^{-1}(\{y_i\})$ is a measurable subset of $\Omega$.
    By \eqref{eqn_100_1}, the set $S_n^{-1}(A)$ is thus a finite union
    of finite intersections of measurable sets.
    Hence $S_n^{-1}(A)$ is measurable, meaning that $S_n$ is a Borel-measurable function.

    Now define
    \begin{equation*}
        Y \colon \quad \Omega \to \XX,\quad
        Y(\omega) := \sum_{n=1}^{\infty} \frac{Y_n(\omega)}{C_0\cdot n^{\alpha}}.
    \end{equation*}
    Since $\SC$ satisfies \ConditionZetaTextAlphaC{(\alpha,C_0)},
    the series converges for every $\omega\in \Omega$ to some element in $\SC$.
    Furthermore, since $\XX$ is metrizable by \cref{thm:Aoki_Rolewicz},
    and since $Y$ is the pointwise limit of the sequence of Borel-measurable functions
    $(S_n)_{n\in \NN}$, it follows from \cite[Proposition 8.1.10]{cohnMeasureTheory2013}
    that $Y$ is also Borel measurable.
    Hence $\mu$ given in \cref{definition_measure_SC_n} is well-defined.

    \medskip{}
    \emph{Step 2:}
    We show that $\om{\mu}(\XX\setminus \SC)=0$.

    To begin, note that each $\SC_n$ is finite, and hence a separable, completely metrizable space,
    i.e., a Polish space; see \cite[Section~8.1]{cohnMeasureTheory2013}.
    Thus, by \cite[Proposition~8.1.4]{cohnMeasureTheory2013}, the space $\CalZ := \prod_{n=1}^{\infty} \SC_n$
    (with the product topology) is a Polish space as well.
    Moreover, \cite[Proposition~8.1.7]{cohnMeasureTheory2013} shows that the product
    $\sigma$-algebra on $\CalZ$ agrees with the Borel $\sigma$-algebra on $\CalZ$.

    \smallskip{}

    Next, let $\widehat{\XX}$ be the completion of $\XX$ (as a metric space, where we choose any metric
    inducing the topology of $\XX$; such a metric exists by the Aoki-Rolewicz theorem
    (\Cref{thm:Aoki_Rolewicz})).
    Note that each projection
    \[
      \pi_n : \quad
      \CalZ \to \SC_n \subset \SC \subset \XX \subset \widehat{\XX}, \quad
      z = (z_\ell)_{\ell \in \N} \mapsto z_n
    \]
    is continuous, meaning that each of ``partial sum'' maps
    \[
      S_m : \quad
      \CalZ \to \XX \subset \widehat{\XX}, \quad
      S_m \bigl( (z_\ell)_{\ell \in \N}\bigr) = \sum_{n=1}^{m} \frac{z_n}{C_0 \, n^\alpha}
    \]
    is continuous as well (since it only depends on finitely many coordinates).
    Now, define
    \[
      \CalP
      := \bigcup_{m \in \N} S_m (\CalZ)
      \subset \XX
      \subset \widehat{\XX}
      ,
    \]
    and let $\overline{\CalP}$ be the closure of $\CalP$ in $\widehat{\XX}$.
    Note that since each of the sets $\SC_n$ is finite, so is each set $S_m (\CalZ)$,
    which implies that $\CalP$ is countable, so that $\overline{\CalP}$ is separable,
    and hence a Polish space.

    Now, since $\SC$ satisfies \ConditionZetaTextAlphaC{(\alpha,C_0)}, the map
    \[
      S : \quad
      \CalZ \to \SC \subset \XX \subset \widehat{\XX} , \quad
      z = (z_\ell)_{\ell \in \N}
      \mapsto \sum_{n=1}^{\infty} \frac{z_n}{C_0 \, n^\alpha}
            = \lim_{m \to \infty} S_m (z)
    \]
    is well-defined.
    Because of $S_m (z) \in S_m (\CalZ) \subset \CalP \subset \overline{\CalP}$,
    we get that
    \[
      S(z) = \lim_{m \to \infty} S_m (z) \in \overline{\CalP}
      \qquad \text{for all } z \in \CalZ
      .
    \]
    Since $\widehat{\XX}$ and hence also $\overline{\CalP}$ is metrizable,
    \cite[Proposition~8.1.10]{cohnMeasureTheory2013} implies that
    $S : \CalZ \to \overline{\CalP}$ is (Borel) measurable,
    as the pointwise limit of a sequence of measurable functions.
    Since $\CalZ$ and $\overline{\CalP}$ are Polish spaces,
    it now follows by \cite[Theorem~13.2.1(b')]{DudleyRealAnalysisProbability} that
    $S(\CalZ) \subset \overline{\CalP}$ is an analytic set.

    Next, we define a Borel measure $\nu$ on $\overline{\CalP}$ via $\nu := \gamma \circ S^{-1}$
    (the push-forward of $\gamma$ via $S$, interpreted as a map $S : \CalZ \to \overline{\CalP}$),
    where $\gamma := \prod_{n=1}^{\infty} \gamma_n$ is the product measure of the normalized
    counting measures $\gamma_n$ on $\SC_n$.
    We note that this product measure exists by \cite[Section~8.2]{DudleyRealAnalysisProbability}
    (or alternatively, \cite[Theorem~3.5.1]{BogachevMeasureTheory}).

    \smallskip{}

    Since $S(\CalZ) \subset \overline{\CalP}$ is an analytic set,
    \cite[Theorem~8.4.1]{cohnMeasureTheory2013}
    implies that $S(\CalZ)$ is $\nu$-measurable, meaning that there exist Borel sets
    $\widehat{A}, \widehat{B} \subset \overline{\CalP}$ with
    $\widehat{A} \subset S(\CalZ) \subset \widehat{B}$ and
    $\nu (\widehat{B} \setminus \widehat{A}\,) = 0$.
    Since $S(\CalZ) \subset \widehat{B}$, we have $S^{-1}(\widehat{B}) = \CalZ$ and thus
    \[
      \nu(\widehat{B}\,)
      = \gamma(S^{-1}(\widehat{B}\,))
      = \gamma(\CalZ)
      = 1,
    \]
    and therefore also $\nu(\widehat{A}\,) = 1$, where $\widehat{A} \subset S(\CalZ) \subset \SC$.

    To complete the proof of Step~2, note that
    $\widehat{A} \subset \overline{\CalP} \subset \widehat{\XX}$
    is a Borel set, so that $A := \widehat{A} \cap \XX$ is a Borel subset of $\XX$,
    by \cite[Lemma~6.2.4]{BogachevMeasureTheory}.
    Note that in fact $\widehat{A} \subset S(\CalZ) \subset \SC \subset \XX$,
    so that $\widehat{A} = \widehat{A} \cap \XX = A$ is a Borel subset of $\XX$.
    Finally, the sequence of random variables $(Y_n)_{n \in \N}$ from \Cref{definition_measure_SC_n}
    follows the distribution $\gamma$.
    This implies
    \begin{align*}
      \mu(A)
      &= \PP \biggl(\,\sum_{n=1}^{\infty} \frac{Y_n}{C_0 \, n^\alpha} \in A\biggr)
       = \PP \Bigl(S \bigl( (Y_n)_{n \in \N} \bigr) \in A \Bigr)
       = \PP \bigl( (Y_n)_{n \in \N} \in S^{-1}(A)\bigr) \\
      &= \PP \bigl( (Y_n)_{n \in \N} \in S^{-1}(\widehat{A} \,)\bigr)
       = \gamma \bigl( S^{-1}(\widehat{A} \,) \bigr)
       = \nu (\widehat{A} \,)
       = 1
      .
    \end{align*}

    Finally, since $A = \widehat{A} \subset S(\CalZ) \subset \SC$,
    we see that $\XX \setminus \SC \subset \XX \setminus A$ and hence
    \[
      \mu^\ast (\XX \setminus \SC)
      \leq \mu^\ast (\XX \setminus A)
      = \mu (\XX \setminus A)
      = \mu(\XX) - \mu(A)
      = 0
      .
    \]

    \medskip{}
    \emph{Step 3:}
    By Step~2, we have $\mu^\ast (\XX \setminus \SC) = 0$ and hence
    \[
      1
      = \mu (\XX)
      = \mu^\ast(\XX)
      \leq \mu^\ast (\SC) + \mu^\ast (\XX \setminus \SC)
      = \mu^\ast (\SC)
      \leq \mu^\ast (\XX)
      =    \mu(\XX)
      =    1,
    \]
    which easily implies $\om{\mu}(\SC) = 1$.
\end{proof}

\subsection{Proof of \texorpdfstring{\cref{restriction_inherits_growht_order}}{Lemma~\ref{restriction_inherits_growht_order}}
}
\label{sec_proof_of_restriction_inherits_growht_order}

In order to prove \cref{restriction_inherits_growht_order},
we need the following two elementary results regarding restrictions
of measures and of sigma-algebras.

The following \cref{restrict_measure_to_subset_of_full_outer_measure} states that
for subsets of $\SC$, the outer measure induced by the restriction of $\mu$ to $\SC$
agrees with the outer measure induced by $\mu$.
The proof is given in \cref{sub:proof_of_restrict_measure_to_subset_of_full_outer_measure} below.

\begin{lemma}\label{restrict_measure_to_subset_of_full_outer_measure}
    Let $(\XX,\mathcal{F})$ be a measurable space and let $\mu\colon \mathcal{F}\to [0,1]$
    be a probability measure on $\XX$.
    Let $\SC \subset \XX$ be such that $\om{\mu}(\SC)=1$.
    Then the measure $\nu$ defined via
    \begin{equation}
        \label{eqn_definition_restricted_measure}
       \nu(A \cap \SC) := \mu(A)
       \quad \text{ for $A\in \mathcal{F}$}
    \end{equation}
    is a well-defined probability measure on $\mathcal{F}\sigmacap\SC$.
    Furthermore,
    \begin{equation}
        \label{eqn_outer_measure_and_restriction}
        \om{\nu}(B) = \om{\mu}(B)
        \quad \text{ for all $B\subset \SC$}.
    \end{equation}
\end{lemma}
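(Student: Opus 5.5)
The plan is to verify directly that \eqref{eqn_definition_restricted_measure} is unambiguous, that it yields a countably additive probability measure on the trace $\sigma$-algebra, and then to compare the two outer measures.

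\emph{Step 1 (well-definedness).} Let $A, A' \in \mathcal{F}$ with $A \cap \SC = A' \cap \SC$. We must show $\mu(A) = \mu(A')$. The symmetric difference $D := A \triangle A' \in \mathcal{F}$ satisfies $D \cap \SC = \emptyset$, so $\SC \subset \XX \setminus D$. By the definition \eqref{eq:OuterMeasure} of $\om{\mu}$,
\[
1 = \om{\mu}(\SC) \leq \mu(\XX \setminus D).
\]
Hence $\mu(D) = 0$, and $|\mu(A) - \mu(A')| \le \mu(D) = 0$. This is the only place where $\om{\mu}(\SC)=1$ is really used, and I expect it to be the crux. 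The rest is bookkeeping.

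\emph{Step 2 (measure).} Since $\mathcal{F}\sigmacap\SC$ is a $\sigma$-algebra on $\SC$, it remains to check the measure axioms. First, $\nu(\SC) = \mu(\XX) = 1$. For countable additivity, let $(B_n)_n$ be pairwise disjoint in $\mathcal{F}\sigmacap\SC$ and write $B_n = A_n \cap \SC$ with $A_n \in \mathcal{F}$. Since the $A_n$ need not be disjoint, I would replace them by the disjointified sets
\[
A_n' := A_n \setminus \bigcup_{m<n} A_m \in \mathcal{F}.
\]
Because the $B_n$ are disjoint, $A_n' \cap \SC = B_n \setminus \bigcup_{m<n} B_m = B_n$. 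Therefore
\[
\nu\Bigl(\bigcup_n B_n\Bigr) = \mu\Bigl(\bigcup_n A_n'\Bigr) = \sum_n \mu(A_n') = \sum_n \nu(B_n),
\]
where Step 1 guarantees that the choice of representative does not matter.

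\emph{Step 3 (outer measures).} Fix $B \subset \SC$.

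For "$\le$": if $A \in \mathcal{F}$ and $A \supset B$, then $A \cap \SC \supset B$ as well, so $\om{\nu}(B) \le \nu(A\cap\SC) = \mu(A)$. Taking the infimum over all such $A$ gives $\om{\nu}(B) \le \om{\mu}(B)$.

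For "$\ge$": every set $E \in \mathcal{F}\sigmacap\SC$ with $E \supset B$ has the form $E = A\cap \SC$ for some $A \in \mathcal{F}$. Then $A \supset B$ and $\nu(E) = \mu(A) \ge \om{\mu}(B)$. Taking the infimum over such $E$ gives $\om{\nu}(B) \ge \om{\mu}(B)$, which proves \eqref{eqn_outer_measure_and_restriction}.
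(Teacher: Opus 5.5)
Your proof is correct and follows essentially the same route as the paper. Well-definedness comes from the fact that a measurable set disjoint from $\SC$ must be $\mu$-null because $\om{\mu}(\SC)=1$; you use $A \triangle A'$, whereas the paper treats $A\setminus A'$ and argues by symmetry. The outer-measure identity comes from comparing the infima over covering sets in $\mathcal{F}$ and in $\mathcal{F}\sigmacap\SC$. Your explicit check of countable additivity via disjointification is a welcome addition, since the paper only verifies well-definedness and leaves the measure axioms implicit.
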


The following \cref{Borel_intrinsic_is_Borel_trace} shows that the intrinsic Borel sigma-algebra
on $\SC$ is equal to the trace of the Borel sigma-algebra on $\XX$.
Here, for a quasi-metric space $(\XX,\varrho)$, we write $\Borel(\XX,\varrho)$
for the Borel $\sigma$-algebra generated by the topology induced by $\varrho$.
The proof is given in \cref{sub:proof_of_Borel_intrinsic_is_Borel_trace} below.

\begin{lemma}\label{Borel_intrinsic_is_Borel_trace}
   Let $(\XX,\metr)$ be a quasi-metric space and let $\emptyset \neq \SC \subset \XX$.
   Then 
   \begin{equation*}
       \Borel\bigl( \SC,\metr\vert_{\SC\times \SC}\bigr) 
       = \Borel\bigl(\XX,\metr\bigr) \sigmacap \SC.
   \end{equation*}
\end{lemma}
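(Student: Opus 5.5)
The plan is to compare the two topologies on $\SC$ directly: the topology $\tau_{\SC}$ induced by the restricted quasi-metric $\metr\vert_{\SC\times\SC}$, and the subspace topology $\tau_{\XX}\sigmacap\SC$ inherited from $(\XX,\metr)$. The first goal is to show that these coincide. For a set $V\subset\SC$, being open in either topology is characterized by the same ball condition: for every $x\in V$ there is $r>0$ with $\{y\in\SC:\metr(x,y)<r\}\subset V$.

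For the inclusion $\tau_{\XX}\sigmacap\SC\subset\tau_{\SC}$, take $U$ open in $\XX$ and $x\in U\cap\SC$. Openness of $U$ gives an $r>0$ with the $\metr$-ball of radius $r$ around $x$ contained in $U$. Intersecting that ball with $\SC$ shows that $U\cap\SC$ satisfies the intrinsic ball condition at $x$.

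The converse inclusion is the delicate step, because in a quasi-metric space the "open" balls need not be open sets. So one cannot simply take $U$ to be a union of balls and expect $U$ to be open in $\XX$. I would first try to define, for $V\in\tau_{\SC}$, the set $U:=\bigcup_{x\in V}\operatorname{int}_{\XX}\bigl(\{y:\metr(x,y)<r_x\}\bigr)$ and check two things: that $U$ is open (this is immediate), and that $U\cap\SC=V$. The difficulty is that the interior might fail to contain $x$ itself.

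The cleanest fix is to pass, via \cref{quasi_metric_Heinonen_result} (or Aoki--Rolewicz in the normed case), to an equivalent quasi-metric $\metr_\ast$ such that $\metr_\ast^p$ is a genuine metric. Equivalent quasi-metrics induce the same topology, both on $\XX$ and on $\SC$, since restricting preserves bi-Lipschitz equivalence. For the genuine metric $\metr_\ast^p$ the claim is the standard fact that the subspace topology equals the topology of the restricted metric: open balls are open, and $U:=\bigcup_{x\in V}B_{\metr_\ast^p}(x,r_x)$ is open in $\XX$ with $U\cap\SC=V$.

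Finally, the equality of $\sigma$-algebras follows from the general fact that, for any topology $\tau$ on $\XX$, $\sigma(\tau)\sigmacap\SC=\sigma(\tau\sigmacap\SC)$. The inclusion $\supseteq$ holds because the trace $\sigma(\tau)\sigmacap\SC$ is a $\sigma$-algebra containing $\tau\sigmacap\SC$. For $\subseteq$, I would use the good-sets argument: the family $\{A\subset\XX: A\cap\SC\in\sigma(\tau\sigmacap\SC)\}$ is a $\sigma$-algebra containing $\tau$, so it contains $\sigma(\tau)$.

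I expect the main obstacle to be the measurability/topology subtlety in the converse inclusion, and it is handled by the metrization step above.
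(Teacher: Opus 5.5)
Your proposal is correct and follows essentially the same route as the paper. You metrize via \cref{quasi_metric_Heinonen_result}, use that bi-Lipschitz equivalent quasi-metrics and powers of a quasi-metric induce the same topology, and reduce to the metric fact that the trace topology equals the topology of the restricted metric; you then pass to $\sigma$-algebras via $\sigma(\tau)\sigmacap\SC=\sigma(\tau\sigmacap\SC)$, which the paper cites from Bogachev and you prove directly with the good-sets argument.
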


Using the previous two auxiliary results, the proof of \cref{restriction_inherits_growht_order} is now easy.

\begin{proof}[Proof of \cref{restriction_inherits_growht_order}]
    First of all, \cref{restrict_measure_to_subset_of_full_outer_measure} shows that
    the measure $\nu$ is a well-defined probability measure on $\mathcal{F}\sigmacap \SC$.

    \medskip{}

    \emph{Part (i):}
    By \cref{Borel_intrinsic_is_Borel_trace}, we have
    $\Borel(\XX,\metr)\sigmacap \SC = \Borel(\SC,\metr\vert_{\SC\times \SC}) = \Borel(\SC)$.
    Part (i) follows from this by noticing that $\mathcal{F} = \Borel(\XX, \metr) = \Borel(\XX)$
    in the definition of $\nu$.

    \medskip{}

    \emph{Part (ii):}
    Assuming \eqref{eqn_restrictions_growth_condition_mu},
    we show \eqref{eqn_restrictions_growth_condition_nu}.
    Using \eqref{eqn_outer_measure_and_restriction} at (a)
    and the monotonicity of the outer measure at (b), we get
    \begin{equation*}
        \om{\nu}\bigl(\ball\bigl(x,r\mid \SC\bigr)\bigr)
        = \om{\nu}\bigl(\ball\bigl(x,r\mid \XX\bigr)\cap \SC\bigr)
        \overset{(a)}{=} \om{\mu}\bigl(\ball\bigl(x,r\mid \XX\bigr)\cap \SC\bigr)
        \overset{(b)}{\le} \om{\mu}\bigl(\ball\bigl(x,r\mid \XX\bigr)\bigr).
    \end{equation*}
    Now the growth condition \eqref{eqn_restrictions_growth_condition_mu} for $\mu$
    implies the growth condition \eqref{eqn_restrictions_growth_condition_nu} for $\nu$.
\end{proof}

\subsubsection{Proof of \texorpdfstring{\cref{restrict_measure_to_subset_of_full_outer_measure}}{Lemma~\ref{restrict_measure_to_subset_of_full_outer_measure}}}
\label{sub:proof_of_restrict_measure_to_subset_of_full_outer_measure}

\begin{proof}[Proof of \cref{restrict_measure_to_subset_of_full_outer_measure}]
    In order to show the well-definedness, let $A,A' \in \mathcal{F}$ with $A\cap \SC = A'\cap \SC$.
    We have to show that $\mu(A) = \mu(A')$.
    We have
    \[
      (A\setminus A') \cap \SC
      = A \cap (A')^{c} \cap \SC
      = (A\cap \SC) \cap (A')^{c}
      = (A'\cap \SC) \cap (A')^{c}
      \subset A' \cap (A')^{c}
      = \emptyset.
    \]
    Hence $(A\setminus A') \subset \SC^{c}$.
    This implies that $\SC \subset (A\setminus A')^{c}$. 
    Thus,
    \[
      \mu\bigl((A\setminus A')^{c}\bigr)
      =\om{\mu}\bigl((A\setminus A')^{c}\bigr)
      \geq \om{\mu}(\SC)
      =1.
    \]
    Therefore, since $\mu$ is a measure, we have $\mu(A\setminus A')=0$.
    We deduce that
    \begin{equation*}
       \mu(A) 
       = \mu(A\cap A') + \mu(A\setminus A')
       = \mu(A\cap A').
    \end{equation*}
    Since the argument is symmetric in $A,A'$,
    swapping the roles of $A$ and $A'$, we infer that we also have $\mu(A')=\mu(A\cap A')$.
    Therefore, we have $\mu(A) = \mu(A')$.
    Hence $\nu$ is well-defined.

    We now prove \eqref{eqn_outer_measure_and_restriction}.
    Let $B\subset \SC$.
    Then we have for every $A\in \mathcal{F}$ that $B \subset A$ if and only if $B\subset A\cap \SC$.
    Using the definition of the outer measures $\om{\mu}$ at (a),
    the identity \eqref{eqn_definition_restricted_measure} at (b),
    the just proven equivalence at (c),
    the definition of $\mathcal{F} \sigmacap \SC$ at (d)
    and the definition of the outer measure $\om{\nu}$ at (e),
    we infer that
    \begin{align*}
        \om{\mu}(B) 
        &\overset{(a)}{=} \inf \bigl\{ \mu(A) : A\in \mathcal{F},\, A\supseteq B\bigr\}\\
        &\overset{(b)}{=} \inf \bigl\{ \nu(A \cap \SC) : A\in \mathcal{F},\, A\supseteq B \bigr\}\\
        &\overset{(c)}{=} \inf \bigl\{ \nu(A \cap \SC) : A\in \mathcal{F},\, A \cap \SC\supseteq B \bigr\}\\
        &\overset{(d)}{=} \inf
                          \bigl\{
                            \nu\bigl(\widetilde{A}\bigr)
                            \,\,:\,\,
                            \widetilde{A} \in \mathcal{F} \sigmacap \SC,\, \widetilde{A} \supseteq B
                          \bigr\}\\
        &\overset{(e)}{=} \om{\nu}(B).
        \qedhere
    \end{align*}
\end{proof}

\subsubsection{Proof of \texorpdfstring{\cref{Borel_intrinsic_is_Borel_trace}}{Lemma~\ref{Borel_intrinsic_is_Borel_trace}}}
\label{sub:proof_of_Borel_intrinsic_is_Borel_trace}

In order to prove \cref{Borel_intrinsic_is_Borel_trace},
we first gather some results on traces of quasi-metrics, topologies, and $\sigma$-algebras.
Let us introduce some notation.
For a topology $\tau$ on a space $\XX$, we will write $\Borel(\XX)$, $\Borel(\tau)$,
or $\Borel(\XX,\tau)$ to denote the Borel $\sigma$-algebra on $\XX$.
For a quasi-metric space $(\XX,\varrho)$, we will write $\tau(\XX,\varrho)$
for the topology generated by $\varrho$.

The following \cref{technical_lemma_traces_Borel_sigma_algebra} establishes
that the trace Borel $\sigma$-algebra is the $\sigma$-algebra induced by the trace topology.
See \cite[Lemma 6.2.4]{BogachevMeasureTheory} for a proof.

\begin{lemma}\label{technical_lemma_traces_Borel_sigma_algebra}
    Let $(\XX,\tau)$ be a topological space and let $\SC$ be a subset equipped with the trace topology $\tau\sigmacap \SC$.
    Then
    \begin{equation*}
       \Borel(\XX,\tau) \sigmacap \SC 
       =  \Borel(\SC, \tau\sigmacap \SC).
    \end{equation*}
\end{lemma}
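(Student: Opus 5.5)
The statement to prove is \cref{technical_lemma_traces_Borel_sigma_algebra}: for a topological space $(\XX,\tau)$ and $\SC\subset\XX$ with the trace topology, $\Borel(\XX,\tau)\sigmacap\SC = \Borel(\SC,\tau\sigmacap\SC)$. I would prove the two inclusions separately. The key object is the family
\[
  \mathcal{G} := \{ A \subset \XX \,\,:\,\, A\cap\SC \in \Borel(\SC,\tau\sigmacap\SC)\}.
\]

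\emph{Inclusion ``$\subset$''.} I would first check that $\mathcal{G}$ is a $\sigma$-algebra on $\XX$. We have $\XX\cap\SC=\SC$, so $\XX\in\mathcal{G}$. Closure under complements holds because $(\XX\setminus A)\cap\SC = \SC\setminus(A\cap\SC)$. Closure under countable unions holds because $(\bigcup_n A_n)\cap\SC=\bigcup_n (A_n\cap\SC)$. Every $U\in\tau$ lies in $\mathcal{G}$, since $U\cap\SC\in\tau\sigmacap\SC$ is open in the trace topology. Hence $\Borel(\XX,\tau)\subset\mathcal{G}$, which is exactly $\Borel(\XX,\tau)\sigmacap\SC\subset\Borel(\SC,\tau\sigmacap\SC)$.

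\emph{Inclusion ``$\supset$''.} I would first check that $\Borel(\XX,\tau)\sigmacap\SC$ is itself a $\sigma$-algebra on $\SC$. We have $\SC=\XX\cap\SC$. For complements, $\SC\setminus(A\cap\SC)=(\XX\setminus A)\cap\SC$. For countable unions, a union of traces is the trace of the union. This family also contains every trace-open set $U\cap\SC$ with $U\in\tau$. Since $\Borel(\SC,\tau\sigmacap\SC)$ is the smallest $\sigma$-algebra containing $\tau\sigmacap\SC$, the inclusion follows.

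The argument is a standard good-sets argument and has no real obstacle. The only delicate point is the well-definedness in the second step: I would not try to choose a canonical representative $A$ for each element of the trace family, and would instead argue directly with the set identities above.
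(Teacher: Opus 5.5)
Your proof is correct: both good-sets arguments (that $\{A\subset\XX : A\cap\SC\in\Borel(\SC,\tau\sigmacap\SC)\}$ is a $\sigma$-algebra containing $\tau$, and that the trace family $\Borel(\XX,\tau)\sigmacap\SC$ is a $\sigma$-algebra on $\SC$ containing $\tau\sigmacap\SC$) work exactly as you describe. The paper gives no proof of its own and simply cites Bogachev's Lemma~6.2.4, which is this same standard argument; the ``delicate point'' you flag is not an issue, since your set identities hold for any choice of representatives.
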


That the trace of the topology induced by a metric agrees with the topology induced
by the trace of the metric is stated in the following \cref{lem:trace_of_topology_induced_by_metric}.
See \cite[Theorem 5.1 in chapter IX]{DugundjiTopology}.

\begin{lemma}\label{lem:trace_of_topology_induced_by_metric}
   Let $(\XX,d)$ be a metric space and let $\emptyset\ne\SC \subset \XX$.
   Then
   \[
       \tau\big(\SC, d\big\vert_{\SC\times \SC}\big)
       = \tau(\XX,d) \sigmacap\SC.
   \]
\end{lemma}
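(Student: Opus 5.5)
We must prove \cref{lem:trace_of_topology_induced_by_metric}: for a metric space $(\XX,d)$ and $\emptyset\ne\SC\subset\XX$, the topology on $\SC$ induced by $d\vert_{\SC\times\SC}$ coincides with the trace topology $\tau(\XX,d)\sigmacap\SC$. The plan is to prove the two inclusions directly from the definition of open sets via balls. Throughout, write $\ball_{\SC}(x,r)$ for the open ball of radius $r$ around $x$ taken with respect to $d\vert_{\SC\times\SC}$, and observe that $\ball_{\SC}(x,r)=\ball_{\XX}(x,r)\cap\SC$ for every $x\in\SC$. Here the open balls $\ball_{\XX}(x,r)$ are genuinely open in $\tau(\XX,d)$, because $d$ is an honest metric and the triangle inequality applies.

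\emph{Inclusion $\supseteq$.} Let $U\in\tau(\XX,d)$ and $x\in U\cap\SC$. Since $U$ is open, there is $r>0$ with $\ball_{\XX}(x,r)\subset U$. Intersecting with $\SC$ gives $\ball_{\SC}(x,r)\subset U\cap\SC$. Hence $U\cap\SC$ is open for $d\vert_{\SC\times\SC}$.

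\emph{Inclusion $\subseteq$.} Let $V\subset\SC$ be open for $d\vert_{\SC\times\SC}$. For each $x\in V$, choose $r_x>0$ with $\ball_{\SC}(x,r_x)\subset V$, and set
\[
  U:=\bigcup_{x\in V}\ball_{\XX}(x,r_x).
\]
This set is open in $\XX$ as a union of open balls; this is the only step that uses the triangle inequality, to show that open balls are open. Moreover,
\[
  U\cap\SC=\bigcup_{x\in V}\ball_{\SC}(x,r_x).
\]
This union contains $V$, since each $x\in V$ lies in its own ball, and it is contained in $V$ by the choice of the radii $r_x$. Hence $V=U\cap\SC\in\tau(\XX,d)\sigmacap\SC$.

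No step here is a real obstacle. The only point needing care is that the openness of balls requires a genuine metric, which is why the lemma is stated for metric spaces rather than quasi-metric spaces. In the subsequent proof of \cref{Borel_intrinsic_is_Borel_trace}, one would pass to an equivalent quasi-metric that is a power of a metric, as in \cref{quasi_metric_Heinonen_result}, and then apply this lemma together with \cref{technical_lemma_traces_Borel_sigma_algebra}.
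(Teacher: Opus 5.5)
Your proof is correct: both inclusions are argued properly, and you rightly point out that the genuine triangle inequality is used only once, to show that the open balls $\{y : d(x,y) < r\}$ are open, which is needed in the $\subseteq$ direction. The paper does not prove this lemma itself but cites Dugundji's \emph{Topology} (Theorem~IX.5.1), and your two-inclusion ball argument is the standard proof of that fact. One caveat: in the paper $\ball(x,r)$ denotes the \emph{closed} ball, so you should use a different symbol than $\ball_{\SC}$, $\ball_{\XX}$ for your open balls.
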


In order to lift the previous result to quasi-metric spaces,
we will use that there always exists a metric that induces the same topology.
The following \cref{quasi_metric_Heinonen_result} is a weaker version
of \cite[Proposition 14.5]{heinonenLecturesAnalysisMetric2001}.

\begin{lemma} \label{quasi_metric_Heinonen_result}
 Let $(\XX,\varrho)$ be a quasi-metric space.
 Then there exists $\eps>0$ and a metric $d$ on $\XX$
 such that $d$ is bi-Lipschitz equivalent to $\varrho^{\eps}$.
\end{lemma}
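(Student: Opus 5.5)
We use the standard metrization argument for quasi-metrics, the Frink/Aoki--Rolewicz chain construction, in the form given in \cite[Proposition 14.5]{heinonenLecturesAnalysisMetric2001}. Let $k\ge1$ be a triangle constant for $\varrho$. Choose $\eps>0$ so small that $(2k)^{\eps}\le 2$, i.e.\ $\eps\le 1/\log_2(2k)$, and set $\sigma:=\varrho^{\eps}$. Since $(a+b)^\eps \le 2^\eps\max\{a,b\}^\eps$, the quasi-triangle inequality gives
\[
  \sigma(x,z)\le (2k)^{\eps}\max\{\sigma(x,y),\sigma(y,z)\}\le 2\max\{\sigma(x,y),\sigma(y,z)\}.
\]
So $\sigma$ is a symmetric, definite function satisfying a ``quasi-ultrametric'' inequality with constant $2$.

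Next define the chain metric
\[
  d(x,y):=\inf\Bigl\{\sum_{i=0}^{m-1}\sigma(x_i,x_{i+1}) \,:\, m\in\N,\ x_0=x,\ x_m=y\Bigr\}.
\]
Symmetry and the triangle inequality hold by construction (concatenate chains), and $d\le\sigma$ is trivial. What remains is the reverse bound $\sigma(x,y)\le C\, d(x,y)$. Once that holds, definiteness follows as well, so $d$ is a metric that is bi-Lipschitz equivalent to $\varrho^\eps$.

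The main obstacle is the lower bound. Following Frink, I would prove by induction on $m$ that
\[
  \sigma(x_0,x_m)\le 2\sum_{i=0}^{m-1}\sigma(x_i,x_{i+1})
\]
for every chain. (If the constant $2$ turns out to be too tight in this form, take $C=4$ by shrinking $\eps$ so that $(2k)^\eps\le\sqrt2$.)

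For the inductive step, write $S=\sum_i\sigma(x_i,x_{i+1})$. If $S=0$ we are done. Otherwise let $j$ be the largest index such that $\sum_{i<j}\sigma(x_i,x_{i+1})\le S/2$. Then the tail sum satisfies $\sum_{i>j}\sigma(x_i,x_{i+1})\le S/2$ as well. The two subchains $x_0,\dots,x_j$ and $x_{j+1},\dots,x_m$ are strictly shorter, so the induction hypothesis gives $\sigma(x_0,x_j)\le S$ and $\sigma(x_{j+1},x_m)\le S$. Trivially $\sigma(x_j,x_{j+1})\le S$.

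Applying the quasi-ultrametric inequality twice yields
\[
  \sigma(x_0,x_m)\le 2\max\{\sigma(x_0,x_j),\,2\max\{\sigma(x_j,x_{j+1}),\sigma(x_{j+1},x_m)\}\}\le 4S,
\]
which gives the bound with constant $4$. The bookkeeping needs care to keep the constant stable under induction: naively the constant doubles at each step. The standard fix is the sharper split-point argument with constant $(2k)^{\eps}\le\sqrt2$, whose two applications give a factor of exactly $2$ and close the induction. I would therefore fix $\eps$ with $(2k)^{\eps}\le\sqrt 2$ from the start and carry out this argument.
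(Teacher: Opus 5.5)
Your final argument is correct. With $(2k)^{\eps}\le\sqrt2$, two applications of the quasi-ultrametric inequality give $\sigma(x,w)\le 2\max\{\sigma(x,y),\sigma(y,z),\sigma(z,w)\}$. At the split point $j$ this yields $\sigma(x_0,x_m)\le 2\max\{S,S,S\}=2S$, so the halving induction closes with constant $2$ and $\tfrac12\varrho^{\eps}\le d\le\varrho^{\eps}$.

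The paper gives no proof of this lemma; it only cites Heinonen's Proposition~14.5, which is classically proved by the same chain (Frink/Aoki--Rolewicz) construction. Your variant accepts a smaller exponent, $\eps\le 1/(2\log_2(2k))$. In return you get a completely elementary induction and bi-Lipschitz constant $2$. The smaller exponent costs nothing, since only the existence of some $\eps>0$ is needed.

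In a write-up, drop the detour with $(2k)^{\eps}\le 2$ entirely, for two reasons.

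\emph{The constant-$2$ claim is false.} The constant-$2$ quasi-ultrametric inequality alone does not imply $\sigma(x_0,x_m)\le 2\sum_i\sigma(x_i,x_{i+1})$. Take four points with $\sigma(x_0,x_1)=\sigma(x_2,x_3)=\eta$, $\sigma(x_1,x_2)=1$, $\sigma(x_0,x_2)=\sigma(x_1,x_3)=2$ and $\sigma(x_0,x_3)=4$. These satisfy the inequality, yet $4>2(1+2\eta)$ for $\eta<\tfrac12$.

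\emph{The constant-$4$ conclusion does not follow.} Your computation there does not give constant $4$ either: an induction hypothesis with constant $C$ only returns $2C$, so the naive induction never closes. At that level one needs a finer, Frink-type weighted chain estimate.
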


The following \cref{lem:quasi_metric_bi_Lipschiz_equiv_same_topo}
and \cref{lem:quasi_metric_to_some_power_same_topo} state that for a quasi-metric $\varrho$,
the metric $d$ found in \cref{quasi_metric_Heinonen_result} induces the same topology.

\begin{lemma}
    \label{lem:quasi_metric_bi_Lipschiz_equiv_same_topo}
    Let $\XX$ be a set and let $\varrho,\tilde{\varrho}$ be two quasi-metrics on $\XX$.
    Assume that $\varrho$ and $\tilde{\varrho}$ are bi-Lipschitz equivalent.
    Then
    \begin{equation*}
        \tau(\XX,\varrho) = \tau(\XX,\tilde{\varrho})
    \end{equation*}
\end{lemma}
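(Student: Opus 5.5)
The argument is a direct comparison of open sets. Write $k\ge 1$ for a common triangle constant and $C \ge 1$ for the bi-Lipschitz constant, so that $C^{-1}\varrho \le \tilde{\varrho} \le C\varrho$. By the definition recalled in \Cref{sub:QuasiNormedSpacesOuterMeasures}, a set $U \subset \XX$ is open for $\varrho$ if and only if for every $x\in U$ there is $r>0$ with $\ball(x,r\mid\varrho)\subset U$. Only this characterization is used; continuity of the quasi-metrics is not needed, which matters since $\varrho$ need not be continuous in its own topology.

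The first step is to compare the balls of the two quasi-metrics. From $\tilde\varrho \le C\varrho$ we get
\[
  \ball\bigl(x,r/C \mid \varrho\bigr) \subset \ball\bigl(x,r\mid\tilde{\varrho}\bigr)
  \quad\text{for all } x\in\XX,\ r>0 .
\]
Symmetrically, $\ball(x,r/C\mid\tilde\varrho)\subset\ball(x,r\mid\varrho)$.

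Now let $U\in\tau(\XX,\tilde\varrho)$ and $x\in U$. Choose $r>0$ with $\ball(x,r\mid\tilde\varrho)\subset U$. Then $\ball(x,r/C\mid\varrho)\subset U$ by the inclusion above. Hence $U\in\tau(\XX,\varrho)$, so $\tau(\XX,\tilde\varrho)\subset\tau(\XX,\varrho)$. Exchanging the roles of $\varrho$ and $\tilde\varrho$ gives the reverse inclusion, and therefore the two topologies coincide.

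There is no real obstacle. The only point to check is that the openness criterion via closed balls (stated just after the definition of the topology) may be used interchangeably with the one via strict-inequality balls. This holds since $\{y : \varrho(x,y)<r\}\supset\ball(x,r/2\mid\varrho)$ and $\ball(x,r\mid\varrho)\subset\{y:\varrho(x,y)<2r\}$. With this noted, the whole argument is just the two ball inclusions above.
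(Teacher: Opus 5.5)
Your proof is correct and is essentially the paper's argument: the paper also compares balls directly, deducing from $\varrho \le C\tilde{\varrho}$ that $\{y : \tilde{\varrho}(x,y) < r/C\} \subset \{y : \varrho(x,y) < r\}$, and gets the reverse inclusion by symmetry. The only difference is that the paper works with strict-inequality balls, which is the defining criterion for openness. Your extra check that closed and strict-inequality balls may be used interchangeably is correct, but it is not needed there.
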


\begin{proof}
    We prove ``$\subseteq$'' and ``$\supseteq$'' separately.
    To prove ``$\subseteq$'', let $V \in \tau(\XX, \varrho)$ and let $x\in V$.
    Then, there exists $r>0$ such that $\{y\in \XX: \varrho(y,x)<r\} \subset V$.
    By assumption, there exists $C>0$ such that $\varrho(x,y) \leq C \cdot \tilde{ \varrho}(x,y)$
    for all $x,y\in X$.
    Hence
    \(
      \{y\in \XX : \tilde{\varrho}(x,y) < \frac{r}{C}\}
      \subset \{y\in \XX: \varrho(x,y)<r\} \subset V
      .
    \)
    Hence $V \in \tau(\XX,\tilde{\varrho})$.
    Thus $\tau(\XX,\varrho) \subset \tau(\XX,\tilde{\varrho})$.
    The converse set inclusion ``$\supseteq$'' follows by symmetry from ``$\subseteq$''
    by swapping the roles of $\varrho$ and $\tilde{\varrho}$.
\end{proof}

\begin{lemma}\label{lem:quasi_metric_to_some_power_same_topo}
   Let $(\XX,\varrho)$ be a quasi-metric space and let $\eps>0$.
   Then $\varrho^{\eps}$ is a quasi-metric on $\XX$ and $\tau(\XX,\varrho) = \tau(\XX,\varrho^{\eps})$.
\end{lemma}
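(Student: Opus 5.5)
The plan is to verify the two claims directly from the definitions. Throughout, let $k \geq 1$ be a triangle constant for $\varrho$.

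\emph{Step 1: $\varrho^{\eps}$ is a quasi-metric.} Symmetry and definiteness are inherited from $\varrho$, so only the quasi-triangle inequality needs work. For $x,y,z \in \XX$ we have
\[
  \varrho(x,z)^{\eps} \leq k^{\eps} \bigl(\varrho(x,y) + \varrho(y,z)\bigr)^{\eps}.
\]
We then use the elementary bound $(a+b)^{\eps} \leq \max\{1, 2^{\eps-1}\} \cdot (a^{\eps} + b^{\eps})$ for $a,b \geq 0$. For $\eps \leq 1$ this is subadditivity of the concave map $t \mapsto t^{\eps}$. For $\eps > 1$ it follows from convexity, via $\bigl(\tfrac{a+b}{2}\bigr)^{\eps} \leq \tfrac{a^{\eps} + b^{\eps}}{2}$. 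Hence $\varrho^{\eps}$ satisfies the quasi-triangle inequality with constant $k^{\eps} \max\{1, 2^{\eps-1}\}$.

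\emph{Step 2: the topologies coincide.} The key observation is that for every $x \in \XX$ and $r > 0$,
\[
  \{ y \in \XX : \varrho(x,y) < r \} = \{ y \in \XX : \varrho^{\eps}(x,y) < r^{\eps} \},
\]
because $t \mapsto t^{\eps}$ is strictly increasing on $[0,\infty)$. Now let $U \in \tau(\XX,\varrho)$ and $x \in U$. Pick $r > 0$ with $\{ y : \varrho(x,y) < r \} \subset U$. The open $\varrho^{\eps}$-ball of radius $r^{\eps}$ around $x$ is the same set, so it also lies in $U$, and therefore $U \in \tau(\XX, \varrho^{\eps})$. For the converse, take a radius $r'$ for $\varrho^{\eps}$ and use $r := (r')^{1/\eps}$ for $\varrho$.

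There is no genuine obstacle here. The only point requiring care is the exponent-dependent constant in Step 1 when $\eps > 1$.
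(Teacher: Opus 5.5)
Your proof is correct and matches the paper's argument: symmetry and definiteness are inherited, the quasi-triangle inequality comes from an elementary power estimate, and $\varrho$ and $\varrho^{\eps}$ have exactly the same open balls, since $\{y : \varrho(x,y) < r\} = \{y : \varrho^{\eps}(x,y) < r^{\eps}\}$. The only difference is cosmetic: the paper uses the cruder bound $(a+b)^{\eps} \leq (2\max\{a,b\})^{\eps} \leq 2^{\eps}(a^{\eps}+b^{\eps})$, which holds for all $\eps>0$ at once and avoids your case split, whereas your concavity/convexity argument gives the sharper constant $k^{\eps}\max\{1,2^{\eps-1}\}$.
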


\begin{proof}
    It is clear that $\varrho^{\eps}$ is symmetric and definite.
    To show the quasi-triangle inequality for $\varrho^{\eps}$,
    let $x,y,z \in \XX$, and let $C$ be a triangle constant for $\varrho$.
    Then, using the elementary estimate
    $(a + b)^\eps \leq (2 \max \{ a, b \}) \leq 2^\eps (a^\eps + b^\eps)$
    for $a,b \geq 0$, we see
    \[
        \varrho^{\eps}(x,y) 
        \le \big( C \varrho(x,z) + C \varrho(z,y)\big)^{\eps}
        \le C^{\eps} \cdot 2^{\eps} \cdot \big( \varrho^{\eps}(x,z) + \varrho^{\eps}(z,y)\big).
    \]
    Hence $\varrho^{\eps}$ is a quasi-metric.

    We now show that $\varrho$ and $\varrho^{\eps}$ induce the same topology.
    For all $x\in \XX$ and all $r>0$, we have
    \begin{equation*}
        \big\{ y \in \XX : \varrho(x,y) <r \big\}
        = \big\{ y \in \XX : \varrho(x,y)^{\eps} < r^{\eps} \big\}.
    \end{equation*}
    Hence $\varrho$ and $\varrho^{\eps}$ generate the same family of ``open'' balls,
    and this implies $\tau(\XX,\varrho) =\tau(\XX,\varrho^{\eps})$.
\end{proof}

With \cref{quasi_metric_Heinonen_result}, \cref{lem:quasi_metric_bi_Lipschiz_equiv_same_topo},
and \cref{lem:quasi_metric_to_some_power_same_topo} in place,
we can now lift the result from \cref{lem:trace_of_topology_induced_by_metric}
to the setting of quasi-metric spaces.

\begin{lemma}\label{lemma_trace_topology_induced_by_trace_quasi_metric}
  Let $(\XX,\varrho)$ be a quasi-metric space and let $\emptyset \ne\SC\subset \XX$ be a subset.
  Then
  \begin{equation*}
      \tau\big(\SC, \varrho\big\vert_{\SC\times \SC}\big)
      = \tau(\XX,\varrho) \sigmacap\SC. 
  \end{equation*}
\end{lemma}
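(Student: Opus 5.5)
The plan is to reduce to the metric case via \cref{quasi_metric_Heinonen_result} and then chain the three auxiliary lemmas just stated.

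First, \cref{quasi_metric_Heinonen_result} provides $\eps>0$ and a genuine metric $d$ on $\XX$ that is bi-Lipschitz equivalent to $\varrho^{\eps}$. By \cref{lem:quasi_metric_to_some_power_same_topo}, $\varrho^{\eps}$ is a quasi-metric and $\tau(\XX,\varrho)=\tau(\XX,\varrho^{\eps})$. By \cref{lem:quasi_metric_bi_Lipschiz_equiv_same_topo}, $\tau(\XX,\varrho^{\eps})=\tau(\XX,d)$. Combining these gives $\tau(\XX,\varrho)=\tau(\XX,d)$.

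Next we pass to $\SC$. The restriction $d|_{\SC\times\SC}$ is a metric on $\SC$, and it remains bi-Lipschitz equivalent to $(\varrho|_{\SC\times\SC})^{\eps}=\varrho^{\eps}|_{\SC\times\SC}$ with the same constant, since those inequalities hold pointwise. Also, $\varrho|_{\SC\times\SC}$ is a quasi-metric on $\SC$. Applying the same two lemmas on the set $\SC$ then yields
\[
  \tau(\SC,\varrho|_{\SC\times\SC})=\tau\bigl(\SC,(\varrho|_{\SC\times\SC})^{\eps}\bigr)=\tau(\SC,d|_{\SC\times\SC}).
\]

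Finally, \cref{lem:trace_of_topology_induced_by_metric} gives $\tau(\SC,d|_{\SC\times\SC})=\tau(\XX,d)\sigmacap\SC$, which by the first step equals $\tau(\XX,\varrho)\sigmacap\SC$. Chaining these equalities proves the claim.

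None of the steps is a real obstacle. The only point needing care is checking that restricting to $\SC$ preserves the hypotheses of the lemmas, namely the bi-Lipschitz equivalence and the quasi-metric property, and both follow immediately because they are pointwise conditions.
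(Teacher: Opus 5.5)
Your proposal is correct and is essentially the paper's own proof. You take the metric $d$ from \cref{quasi_metric_Heinonen_result}, observe that the bi-Lipschitz equivalence and the identity $(\varrho|_{\SC\times\SC})^{\eps}=\varrho^{\eps}|_{\SC\times\SC}$ carry over to $\SC$, and then chain \cref{lem:quasi_metric_to_some_power_same_topo}, \cref{lem:quasi_metric_bi_Lipschiz_equiv_same_topo}, and \cref{lem:trace_of_topology_induced_by_metric} on both $\SC$ and $\XX$, exactly as the paper does.
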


\begin{proof}
    By \cref{quasi_metric_Heinonen_result}, there exists a metric $d$ and $\eps>0$
    such that $\varrho^{\eps}$ and $d$ are bi-Lipschitz equivalent.
    By checking the definition, it follows that also
    $\varrho^{\eps}\big\vert_{\SC\times \SC}$ and $d\big\vert_{\SC\times \SC}$
    are bi-Lipschitz equivalent.
    Furthermore, we have
    \(
      \varrho^{\eps}\big\vert_{\SC\times \SC}
      = \big( \varrho\big\vert_{\SC\times \SC}\big)^{\eps}
      .
    \)
    Using \cref{lem:quasi_metric_to_some_power_same_topo} at (a),
    \cref{lem:quasi_metric_bi_Lipschiz_equiv_same_topo} at (b),
    and that $d$ is a metric together with \cref{lem:trace_of_topology_induced_by_metric} at (c),
    we obtain
    \begin{equation*}
        \tau\big(\SC,\varrho\big\vert_{\SC\times \SC}\big)
        \overset{(a)}{=}\tau\big(\SC,\varrho^{\eps}\big\vert_{\SC \times \SC}\big)
        \overset{(b)}{=}\tau\big(\SC,d\big\vert_{\SC\times \SC}\big)
        \overset{(c)}{=}\tau\big(\XX,d\big)\sigmacap\SC 
        \overset{(b)}{=}\tau\big(\XX,\varrho^{\eps}\big)\sigmacap\SC 
        \overset{(a)}{=}\tau(\XX,\varrho)\sigmacap \SC.
        \qedhere
    \end{equation*}
\end{proof}

With \cref{technical_lemma_traces_Borel_sigma_algebra}
and \cref{lemma_trace_topology_induced_by_trace_quasi_metric} established,
the proof of \cref{Borel_intrinsic_is_Borel_trace} is now straightforward.

\begin{proof}[Proof of \cref{Borel_intrinsic_is_Borel_trace}]
    Using \cref{lemma_trace_topology_induced_by_trace_quasi_metric} at (a)
    and \cref{technical_lemma_traces_Borel_sigma_algebra} at (b), we deduce that
    \begin{equation*}
     \Borel\bigl( \tau(\SC,\metr\vert_{\SC\times \SC})\bigr) 
     \overset{(a)}{=} \Borel\bigl(\tau(\XX,\varrho) \sigmacap \SC\bigr)
     \overset{(b)}{=} \Borel\bigl(\tau(\XX,\metr)\bigr) \sigmacap \SC.
     \qedhere
    \end{equation*}
\end{proof}

%% file: parts/technical_proofs_function_spaces.tex
\section{Proofs of technical results on function spaces}
\label{sec:proofs_of_technical_results_on_function_spaces}

We believe that the results in this section are folklore, but we provide proofs since we could not locate a convenient reference.

\subsection{Entropy numbers and Minkowski dimension}

Following \cite[Definition 1 in Section 1.3.1]{edmundsFunctionSpacesEntropy1996}, for two quasi-Banach spaces $U$ and $V$, a linear map $T\colon U\to V$, and a number $k\in \NN$, the $k$-th \emph{(dyadic) entropy number of $T$} is defined as
\begin{equation}
    \en_k(T) 
    := \en_k(T\colon U\to V)
    := \inf \biggl\{ \eps>0 : \begin{array}{c}
            \text{there exist $(v_i)_{i=1}^{2^{k-1}} \subset V$}\\
            \text{such that $T(\ball_U) \subset \bigcup_{i=1}^{2^{k-1}} (v_i + \eps \ball_V)$} 
   \end{array}
       \biggr\}
       \label{eq:def_entropy_numbers}
\end{equation}
with $\ball_U$ denoting the closed unit ball in $U$ (and similar for $B_{V}$).
The following \cref{entropy_numbers_basic_properties} is contained in \cite[Lemma 1 in Section 1.3.1]{edmundsFunctionSpacesEntropy1996}.

\begin{lemma}
    \label{entropy_numbers_basic_properties}
 Let $U,V,W$ be quasi-Banach spaces and let $S\colon U\to V$ and $T\colon V\to W$ be bounded linear maps.
 \begin{enumerate}[label=(\roman*)]
     \item We have $\en_1(T) \le \norm{T}$.
    \item For all $k,\ell\in \NN$ we have $\en_{k+\ell-1}(TS) \le \en_{k}(T) \en_{\ell}(S)$.
 \end{enumerate}
\end{lemma}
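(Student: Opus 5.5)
Both parts reduce to directly composing coverings. The only tool needed is the elementary fact that for quasi-normed spaces $\ball_V$ is balanced, so that $\ball(v,\eps \mid V) = v + \eps\ball_V$, and that linear maps commute with translations and dilations.

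For (i) we use a one-element covering. Take $v_1 := 0$ and $\eps := \norm{T}$, or $\eps:=\norm{T}+\delta$ for arbitrary $\delta>0$ if $\norm{T}=0$, to avoid $\eps=0$. For $u\in\ball_U$ we have $\norm{Tu}_V\le\norm{T}\norm{u}_U\le\norm{T}$, so $T(\ball_U)\subset 0+\norm{T}\ball_V$. Since $2^{1-1}=1$, this $\eps$ is admissible in the infimum defining $\en_1(T)$. Hence $\en_1(T)\le\norm{T}$, after letting $\delta\downarrow0$ in the degenerate case.

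For (ii), fix $\eps_1>\en_k(T)$ and $\eps_2>\en_\ell(S)$. Choose $v_1,\dots,v_{2^{\ell-1}}\in V$ with $S(\ball_U)\subset\bigcup_i (v_i+\eps_2\ball_V)$, and $w_1,\dots,w_{2^{k-1}}\in W$ with $T(\ball_V)\subset\bigcup_j(w_j+\eps_1\ball_W)$; both exist by the definition of the infimum. Then
\[
TS(\ball_U)\subset\bigcup_i \bigl(Tv_i+\eps_2 T(\ball_V)\bigr)\subset\bigcup_{i,j}\bigl(Tv_i+\eps_2 w_j+\eps_1\eps_2\ball_W\bigr),
\]
by linearity of $T$ and the homogeneity $\eps_2(w+\eps_1\ball_W)=\eps_2 w+\eps_1\eps_2\ball_W$. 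The number of centers is $2^{\ell-1}\cdot2^{k-1}=2^{(k+\ell-1)-1}$, which matches the index $k+\ell-1$. Therefore $\en_{k+\ell-1}(TS)\le\eps_1\eps_2$, and letting $\eps_1\downarrow\en_k(T)$ and $\eps_2\downarrow\en_\ell(S)$ gives the claim.

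There is essentially no obstacle here. The only points needing care are that the infimum in \eqref{eq:def_entropy_numbers} might not be attained, which is why we work with $\eps_1,\eps_2$ strictly larger than the entropy numbers, and the degenerate case $\norm{T}=0$ in (i). Completeness of the spaces is not used.
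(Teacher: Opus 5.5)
Your proof is correct. Part (i) follows from the one-ball covering, and part (ii) from composing a $2^{\ell-1}$-element covering of $S(\ball_U)$ with a $2^{k-1}$-element covering of $T(\ball_V)$. The two facts you use tacitly are immediate: the admissible radii form an upward-closed set because $\eps'\ball_V\subset\eps\ball_V$ for $\eps'<\eps$, and $\en_k(T)\le\en_1(T)\le\norm{T}<\infty$.

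The paper does not prove this lemma; it cites Lemma~1 in Section~1.3.1 of \cite{edmundsFunctionSpacesEntropy1996}, and your composition-of-coverings argument is the standard proof of that result.
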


\begin{lemma}
    \label{entropy_numbers_to_minkowski_dim_upper}
    Let $U,V$ be quasi-Banach spaces and let $T\colon U\to V$ be a compact linear map and let $t\in(0,\infty)$.
    Assume that 
    \begin{equation*}
       \limsup_{k\to \infty} \bigl( k^{t} \cdot \en_k(T)\bigr) <\infty.
    \end{equation*}
    Then $\bigl(T(\ball_U),\, \norm{\cdot \mid V}\bigr)$ is totally bounded and we have $\umd\bigl(T(\ball_U)\bigr) \le \frac{1}{t}$.
\end{lemma}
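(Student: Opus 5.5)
The plan is to turn the decay of the entropy numbers directly into an upper bound on the covering numbers of $T(\ball_U)$. The hypothesis gives a constant $A>0$ and some $k_0\in\NN$ with $\en_k(T)\le A\,k^{-t}$ for all $k\ge k_0$. By the definition \eqref{eq:def_entropy_numbers}, for every $k\ge k_0$ and every $\eta>\en_k(T)$, the set $T(\ball_U)$ can be covered by $2^{k-1}$ sets of the form $v_i+\eta\ball_V$, that is, by balls $\ball(v_i,\eta\mid V)$ with centers $v_i\in V$. Taking $\eta:=2A k^{-t}$ therefore gives
\[
\extcn\bigl(T(\ball_U),V,2Ak^{-t}\bigr)\le 2^{k-1}
\qquad\text{for all } k\ge k_0 .
\]
Since the radii $2Ak^{-t}$ tend to $0$, this already shows that $T(\ball_U)$ is totally bounded. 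To see this we pass to internal covering numbers using \cref{lem:InternalVSExternalCoveringNumbers}, which gives $\cn(M,2K\eps)\le\extcn(M,V,\eps)$ with $K$ a triangle constant of $\norm{\cdot\mid V}$. Every radius is then finite, and $T(\ball_U)\neq\emptyset$ because it contains $0$.

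For the dimension bound, I would fix a small $\eps>0$ and choose the least integer $k\ge k_0$ with $4KA\,k^{-t}\le\eps$. This choice forces $k\le (4KA/\eps)^{1/t}+1$ once $\eps$ is small, because minimality gives $4KA(k-1)^{-t}>\eps$ whenever $k-1\ge k_0$. Monotonicity of $\cn$ in the radius together with the display above then yields
\[
\cn\bigl(T(\ball_U),\eps\bigr)\le\cn\bigl(T(\ball_U),4KAk^{-t}\bigr)\le\extcn\bigl(T(\ball_U),V,2Ak^{-t}\bigr)\le 2^{k}.
\]
From this,
\[
\log_2\log_2\cn\bigl(T(\ball_U),\eps\bigr)\le\log_2 k\le \tfrac1t\log_2(1/\eps)+O(1).
\]
Dividing by $\log_2(1/\eps)$ and taking $\limsup_{\eps\downarrow0}$ gives $\umd\le 1/t$.

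The main obstacle is a definitional one rather than an analytic one: the case $\#T(\ball_U)<2$, i.e.\ $T=0$, where $\log_2\log_2\cn$ is not defined. By the paper's footnote convention $\umd=0$ in that case, so the claim holds trivially and I would dispose of it first. Apart from that, the only care needed is the external-to-internal passage via \cref{lem:InternalVSExternalCoveringNumbers}, and the fact that the additive constants in the bound on $\log_2 k$ vanish after dividing by $\log_2(1/\eps)$. Compactness of $T$ is not actually needed beyond guaranteeing that the entropy numbers are finite.
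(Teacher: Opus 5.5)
Your proposal is correct and is essentially the paper's proof. It turns $\en_k(T)\le A\,k^{-t}$ into an external covering of $T(\ball_U)$ by $2^{k-1}$ balls, passes to internal covering numbers via \cref{lem:InternalVSExternalCoveringNumbers} at the cost of the factor $2K$, and takes $k\asymp\eps^{-1/t}$. The differences are cosmetic: you get total boundedness from the entropy decay rather than from compactness of $T$, and you explicitly handle the degenerate case $T=0$, which the paper leaves implicit. One small correction to your closing remark: entropy numbers are finite for any bounded $T$, and the decay hypothesis by itself already forces $T$ to be compact, so compactness is not needed for finiteness of the entropy numbers.
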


\begin{proof}
    Let $\SC:= T(\ball_U)$
    and let $K$ be the modulus of concavity of $V$.
    Since $T$ is compact, $\SC$ is totally bounded.

    We start the proof of the remaining claim with the following observation, which relates entropy numbers to covering numbers.
    Let $k\in \NN$ and $\eps>0$ such that 
    \begin{equation*}
        \frac{\eps}{2K} > \en_k(T).
    \end{equation*}
    Then, by definition of entropy numbers, there exist $v_1,\dots, v_{2^{k-1}} \in V$ such that
    \[
        \SC \subset \bigcup_{i=1}^{2^{k-1}} \ball(v_i,\eps/2K).
    \]
    By \cref{lem:InternalVSExternalCoveringNumbers}, we get
    \begin{equation*}
       \cn(\SC,\eps) 
       \le \extcn\Bigl( \SC, V, \frac{\eps}{2K}\Bigr) \le 2^{k-1}.
    \end{equation*}
    
    Using the just derived relation, we now prove the claim.
    By assumption, there exist $k_0\in \NN$ and $C>0$ such that $\en_k(T) < C\cdot k^{-t}$ for all $k\ge k_0$.
    For $\eps>0$ let
    \begin{equation}
        \label{eqn_11_1}
        k:= k(\eps) := \left\lceil \Bigl( \frac{2CK}{\eps}\Bigr)^{\frac{1}{t}}\right\rceil.
    \end{equation}
    Then, for every sufficiently small $\eps > 0$ (depending on $C,K,t$), we have $k(\eps)\ge k_0$.
    Furthermore, we have
    \begin{equation*}
        \frac{\eps}{2K} \ge C\cdot k^{-t} > \en_k(T).
    \end{equation*}
    Therefore, by the preliminary argument, we have $\cn(\SC,\eps)\le 2^{k-1}$.
    Using \eqref{eqn_11_1}, we get 
    \begin{equation*}
        2^{k-1} 
        =\exp(\ln(2)(k-1))
        \le \exp(\tilde{c} \cdot \eps^{-1/t})
    \end{equation*}
    for some $\tilde{c}>0$ (depending on $C,K,t$).
    We deduce that
    \begin{equation*}
        \cn(\SC,\eps) \le \exp(\tilde{c}\cdot \eps^{-1/t}) 
        \quad \text{for all sufficiently small $\eps > 0$}.
    \end{equation*}
    This easily implies the claim.
\end{proof}

\begin{lemma}
    \label{entropy_numbers_to_minkowski_dim_lower}
    Let $U,V$ be quasi-Banach spaces and let $T\colon U\to V$ be a compact linear map and let $t\in(0,\infty)$.
    Assume that 
    \begin{equation*}
       \liminf_{k\to \infty} \bigl( k^{t} \cdot \en_k(T)\bigr) >0.
    \end{equation*}
    Then $\bigl(T(\ball_U), \norm{\cdot \mid V}\bigr)$ is totally bounded and we have $\lmd\bigl(T(\ball_U)\bigr) \ge \frac{1}{t}$.
\end{lemma}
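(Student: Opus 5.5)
The plan mirrors the proof of \cref{entropy_numbers_to_minkowski_dim_upper}, but in the opposite direction: we turn a lower bound on the entropy numbers into a lower bound on the covering numbers of $\SC := T(\ball_U)$. Total boundedness is immediate from the compactness of $T$, exactly as before. By assumption there exist $c>0$ and $k_0\in\NN$ such that
\[
  \en_k(T) > c\cdot k^{-t} \qquad \text{for all } k\ge k_0 .
\]

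The key observation links internal covering numbers to entropy numbers. Suppose $\cn(\SC,\eps)\le 2^{k-1}$. Then there is an $\eps$-net in $\SC\subset V$ with at most $2^{k-1}$ points, so $\SC$ is covered by at most $2^{k-1}$ sets of the form $v_i+\eps\ball_V$ (padding with repeated centers if needed). By the definition \eqref{eq:def_entropy_numbers}, this gives $\en_k(T)\le\eps$. Contrapositively, $\en_k(T)>\eps$ forces $\cn(\SC,\eps)>2^{k-1}$. Here we do not even need \cref{lem:InternalVSExternalCoveringNumbers}, since internal nets are in particular external coverings.

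Next we choose $k$ as a function of $\eps$. For small $\eps>0$ set
\[
  k := k(\eps) := \Bigl\lfloor (c/\eps)^{1/t} \Bigr\rfloor ,
\]
so that $k^{-t}\ge \eps/c$, that is, $c\,k^{-t}\ge\eps$. For $\eps$ sufficiently small we have $k(\eps)\ge k_0$, and hence $\en_k(T)>c\,k^{-t}\ge\eps$. By the observation above, this yields
\[
  \cn(\SC,\eps) > 2^{k-1}.
\]
Since $k-1\ge (c/\eps)^{1/t}-2\ge \tfrac12 (c/\eps)^{1/t}$ for small $\eps$, we get
\[
  \log_2\cn(\SC,\eps)\ge \tfrac12 c^{1/t}\eps^{-1/t}.
\]

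To finish, take $\log_2$ once more and divide by $\log_2(1/\eps)$. This gives
\[
  \frac{\log_2\log_2\cn(\SC,\eps)}{\log_2(1/\eps)} \ge \frac{1}{t} + \frac{\log_2\bigl(c^{1/t}/2\bigr)}{\log_2(1/\eps)} ,
\]
and the $\liminf$ as $\eps\downarrow0$ is at least $1/t$. One small point needs care: the footnote convention requires $\num\SC\ge2$, but this is automatic here, since $\cn(\SC,\eps)\to\infty$.

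There is no real obstacle in this argument. The only delicate point is the direction of the inequality in the entropy/covering link, together with the floor rounding of $k$; both are routine.
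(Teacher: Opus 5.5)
Your proof is correct and follows the paper's argument essentially step for step. It uses the same contrapositive link (if $\en_k(T) > \eps$ then $\cn(\SC,\eps) > 2^{k-1}$), the same choice $k(\eps) = \lfloor (c/\eps)^{1/t} \rfloor$, and the same final $\log_2\log_2$ comparison; you only write out the last estimate and the $\num\SC \ge 2$ point a little more explicitly than the paper does.
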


\begin{proof}
    Let $\SC:= T(\ball_U)$.
    Since $T$ is compact, $\SC$ is totally bounded.

    We start the proof of the remaining claim with the following observation. 
    By definition of entropy numbers, if $\eps < \en_k(T)$,
    then $\SC$ cannot be covered by $2^{k-1}$ balls of radius $\eps$ (centered anywhere in $V$),
    so that $\cn(\SC,\eps) > 2^{k-1}$.
    
    Using this relation, we now prove the claim.
    By assumption, there exists $k_0\in \NN$ and $c>0$ such that $\en_k(T) > c\cdot k^{-t}$ for all $k\ge k_0$.
    For $\eps>0$ let
    \begin{equation}
        \label{eqn_11_2}
        k:= k(\eps) := \left\lfloor \Bigl( \frac{c}{\eps}\Bigr)^{\frac{1}{t}}\right\rfloor.
    \end{equation}
    Then, for all sufficiently small $\eps > 0$ (depending on $c,t$), we have $k(\eps)\ge k_0$.
    Furthermore, we have
    \begin{equation*}
        \en_k(T) > c \cdot k^{-t} \ge c \cdot \Bigl( (c/\eps)^{1/t} \Bigr)^{-t} = \eps.
    \end{equation*}
    Therefore, by the preliminary argument, we have $\cn(\SC,\eps) > 2^{k-1}$.
    Using \eqref{eqn_11_2}, we get 
    \begin{equation*}
        2^{k-1} 
        =\exp(\ln(2)(k-1))
        \ge \exp(\tilde{c} \cdot \eps^{-1/t})
    \end{equation*}
    for some $\tilde{c}>0$ and all sufficiently small $\eps > 0$ (both depending on $c,t,k_0$).
    We deduce that
    \begin{equation*}
        \cn(\SC,\eps) \ge \exp(\tilde{c}\cdot \eps^{-1/t}) 
        \quad \text{for all sufficiently small $\eps > 0$}.
    \end{equation*}
    This easily implies the claim.
\end{proof}

\subsection{Embeddings for spaces on domains}

Let $\big(V(\RR^d), \norm{\cdot \mid V(\RR^d)}\big)$ be a quasi-normed space
where $V(\RR^d)$ is a subset of $\td(\RR^d)$.
Let ${\emptyset \ne \Omega \subset \RR^d}$ be an open subset.
We define the space
\begin{equation*}
        V(\Omega) := \big\{ f\in \dis(\Omega) :
        \text{ there exists $g \in V(\RR^d)$ with $g\big\vert_{\Omega}=f$} \big\},
\end{equation*}
and the quasi-norm
\begin{equation*}
    \norm{f \mid V(\Omega)}
    := \inf\big\{ \norm{g \mid V(\RR^d)} : \text{$g\in V(\RR^d)$ with $g\big\vert_{\Omega}=f$}\big\}.
\end{equation*}

\begin{lemma}\label{transfer_embeddings_RR^d_to_Omega_1}
    If the embedding $V_1(\RR^d) \embeds V_2(\RR^d)$ is continuous,
    then the embedding \\$V_1(\Omega)\embeds V_2(\Omega)$ is also continuous.
\end{lemma}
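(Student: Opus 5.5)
The plan is to unwind the definitions of $V(\Omega)$ and its quasi-norm and push the whole-space inequality through the infimum. By assumption, $V_1(\RR^d) \subset V_2(\RR^d)$ and there is a constant $C>0$ with $\norm{g \mid V_2(\RR^d)} \le C \cdot \norm{g \mid V_1(\RR^d)}$ for all $g \in V_1(\RR^d)$. The goal is to show that $V_1(\Omega) \subset V_2(\Omega)$ and that $\norm{f \mid V_2(\Omega)} \le C \cdot \norm{f \mid V_1(\Omega)}$ for all $f \in V_1(\Omega)$, with the same constant $C$.

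\emph{Inclusion.} Let $f \in V_1(\Omega)$. By definition there is some $g \in V_1(\RR^d)$ with $g|_{\Omega} = f$ in $\dis(\Omega)$. Since $V_1(\RR^d) \subset V_2(\RR^d)$, the same $g$ lies in $V_2(\RR^d)$. Restriction to $\Omega$ is the usual restriction of tempered distributions to test functions supported in $\Omega$, and it does not depend on which space $g$ is regarded as an element of. Hence $f \in V_2(\Omega)$.

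\emph{Norm bound.} Fix $f \in V_1(\Omega)$ and let $g \in V_1(\RR^d)$ be any extension with $g|_\Omega = f$. Then $g$ is also an admissible competitor in the infimum defining $\norm{f \mid V_2(\Omega)}$, so
\[
  \norm{f \mid V_2(\Omega)} \le \norm{g \mid V_2(\RR^d)} \le C \cdot \norm{g \mid V_1(\RR^d)} .
\]
Taking the infimum over all such $g$ gives $\norm{f \mid V_2(\Omega)} \le C \cdot \norm{f \mid V_1(\Omega)}$, which is exactly the continuity of $V_1(\Omega) \embeds V_2(\Omega)$.

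There is no real obstacle. The only point needing care is that "the embedding is continuous" means both the set inclusion and the norm inequality, and that restriction to $\Omega$ is compatible with the inclusion $V_1(\RR^d) \subset V_2(\RR^d) \subset \td(\RR^d)$. The latter holds because both spaces sit inside $\td(\RR^d)$ with the same underlying objects.
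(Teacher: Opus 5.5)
Your proof is correct and is essentially the paper's argument. The paper picks an extension $\bar f$ whose $V_1(\RR^d)$-quasi-norm is within $\eps$ of $\norm{f \mid V_1(\Omega)}$ and then lets $\eps \to 0$. You bound over every extension $g$ and take the infimum directly, which is the same step written without the $\eps$.
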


\begin{proof}
    Let $f \in V_1(\Omega)$.
    Let $\eps>0$ be arbitrary.
    By definition of $\norm{\cdot \mid V_1(\Omega)}$,
    there exists $\bar{f} \in V_1(\RR^d)$ with $\bar{f}\big\vert_{\Omega}=f$
    and $\norm{\bar{f}\mid V_1(\RR^d)} \le \norm{ f \mid V_1(\Omega)}+\eps$.
    We have $\bar{f} \in V_1(\RR^d) \subset V_2(\RR^d)$.
    Hence, by definition of $V_2(\Omega)$, we have $f = \bar{f}\big\vert_{\Omega} \in V_2(\Omega)$.
    Using the definition of $\norm{\cdot \mid V_2(\Omega)}$ at (a),
    and the embedding assumption at (b), we infer that
    \begin{equation*}
        \norm{f\mid V_2(\Omega)} 
        \overset{(a)}{\le} \norm{ \bar{f} \mid V_2(\RR^d)}
        \overset{(b)}{\lesssim} \norm{\bar{f} \mid V_1(\RR^d)}
        \le \norm{f \mid V_1(\Omega)} + \eps
    \end{equation*}
    with constants independent of $f$. 
    Taking $\eps\to 0$, we deduce the claim.
\end{proof}

\subsection{Proof of \texorpdfstring{\cref{minkowski_dim_isotropic}}{Lemma~\ref{minkowski_dim_isotropic}}}
\label{sec_proof_of_minkowski_dim_isotropic}

\begin{lemma}
    \label{embeddings_isB_isF_Omega}
 Let $\emptyset \ne \Omega\subset \RR^d$ be an open subset.
 Let $s\in \RR$, let $0<p<\infty$ and let $0<q\le \infty$.
 Then we have the continuous embeddings
 \begin{equation*}
     B^{s}_{p, \min\{p,q\}}(\Omega)
         \embeds F^s_{p,q}(\Omega)
         \embeds B^s_{p,\max\{p,q\}}(\Omega).
 \end{equation*}
\end{lemma}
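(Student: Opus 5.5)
The plan is to deduce the statement directly from the corresponding embeddings on $\RR^d$ via \cref{transfer_embeddings_RR^d_to_Omega_1}. Recall that $B^s_{p,q}(\Omega)$ and $F^s_{p,q}(\Omega)$ are defined by restriction, as in \cite[Section 1.11]{triebelTheoryFunctionSpaces2006}. This matches the abstract construction $V(\Omega)$ preceding \cref{transfer_embeddings_RR^d_to_Omega_1}: there is the same space of restrictions $g|_\Omega$ of elements $g\in V(\RR^d)\subset\td(\RR^d)$, with the infimum quasi-norm. Hence it suffices to verify the two continuous embeddings
\begin{equation*}
  B^{s}_{p,\min\{p,q\}}(\RR^d) \embeds F^s_{p,q}(\RR^d) \embeds B^s_{p,\max\{p,q\}}(\RR^d),
\end{equation*}
and then apply \cref{transfer_embeddings_RR^d_to_Omega_1} twice, with $(V_1,V_2) = (B^s_{p,\min\{p,q\}}, F^s_{p,q})$ and with $(V_1,V_2) = (F^s_{p,q}, B^s_{p,\max\{p,q\}})$.

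For the embeddings on $\RR^d$, I would cite the standard result (e.g.\ Triebel, Theory of Function Spaces I, Proposition 2.3.2/2), which is the one in the reference \cite{triebelTheoryFunctionSpaces2006} already used in the paper. If a self-contained argument is preferred, fix a smooth dyadic resolution of unity $(\varphi_j)_j$ and write $f_j := \mathcal{F}^{-1}(\varphi_j \widehat{f})$. Put $r := \min\{p,q\}$ and $R := \max\{p,q\}$.

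For the left embedding, note that $\ell^r \embeds \ell^q$ gives pointwise
\begin{equation*}
  \bigl\|(2^{js}f_j(x))_j\bigr\|_{\ell^q} \le \bigl\|(2^{js}f_j(x))_j\bigr\|_{\ell^r}.
\end{equation*}
Since $r\le p$, the $p/r$-triangle inequality in $L^{p/r}$ then yields
\begin{equation*}
  \Bigl\|\bigl(\textstyle\sum_j |2^{js}f_j|^r\bigr)^{1/r}\Bigr\|_{L^p} \le \Bigl(\textstyle\sum_j \|2^{js}f_j\|_{L^p}^r\Bigr)^{1/r}.
\end{equation*}

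For the right embedding, the pointwise bound
\begin{equation*}
  \bigl\|(2^{js}f_j(x))_j\bigr\|_{\ell^R} \le \bigl\|(2^{js}f_j(x))_j\bigr\|_{\ell^q}
\end{equation*}
reduces the claim to $\bigl(\sum_j \|g_j\|_{L^p}^R\bigr)^{1/R} \le \bigl\|(\sum_j |g_j|^R)^{1/R}\bigr\|_{L^p}$ with $R\ge p$. This is the reverse Minkowski inequality in $L^{p/R}$ with $p/R\le 1$, equivalently Minkowski's integral inequality for the exponent $R/p\ge1$.

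The only point I would double-check, and which I expect to be the main (if mild) obstacle, is that the restriction spaces in the paper's cited definition are exactly of the form $V(\Omega)$ with the infimum quasi-norm, so that \cref{transfer_embeddings_RR^d_to_Omega_1} applies verbatim. This is true by \cite[Section 1.11]{triebelTheoryFunctionSpaces2006}. I would also check that the $\RR^d$ embeddings are taken with the same (equivalent) quasi-norms; the implicit constants there depend only on $s,p,q,d$.
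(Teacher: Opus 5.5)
Your proposal is correct and matches the paper's proof: the paper cites the $\RR^d$ embeddings from \cite[Section 2.3.2, Proposition 2]{triebelTheoryFunctionSpaces1983} and then applies \cref{transfer_embeddings_RR^d_to_Omega_1}. One citation slip: that proposition is in the 1983 volume, not in \cite{triebelTheoryFunctionSpaces2006}, which the paper uses only for the definition of the spaces on $\Omega$. Your optional self-contained verification of the $\RR^d$ embeddings, via $\ell^r\embeds\ell^q$ together with the Minkowski and reverse Minkowski inequalities, is also correct, though the paper does not need it.
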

\begin{proof}
In the case $\Omega =\RR^d$, this was proved in \cite[Section 2.3.2 Proposition 2]{triebelTheoryFunctionSpaces1983}.
    The claim then follows by applying \cref{transfer_embeddings_RR^d_to_Omega_1}.
\end{proof}

\begin{lemma}\label{entropy_numbers_isotropic}
Let $\emptyset \ne \Omega \subset \RR^d$ be an open and bounded subset.
Let $A_1,A_2\in \{B,F\}$.
Let $s_1,s_2\in \RR$ and let $0<p_1,q_1,p_2,q_2 \le \infty$ (with $p_i<\infty$ if $A_i=F$).
Assume that 
\begin{equation*}
   s_1 > s_2 
   \quad \text{and} \quad
    s_1 - \frac{d}{p_1} > s_2 -\frac{d}{p_2}.
\end{equation*}
Then the embedding $\iota \colon (A_1)^{s_1}_{p_1,q_1}(\Omega) \embeds (A_2)^{s_2}_{p_2,q_2}(\Omega)$
is compact and there exist constants $C_1, C_2>0$ such that the dyadic entropy numbers of $\iota$ satisfy
    \begin{equation*}
        C_1 \cdot k^{-\frac{s_1-s_2}{d}} 
        \le \en_k(\iota) 
        \le C_2 \cdot k^{-\frac{s_1-s_2}{d}} 
        \quad 
        \text{ for all $k\in \NN$}.
    \end{equation*}
\end{lemma}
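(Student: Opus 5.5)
The plan is to reduce the statement to the known entropy-number estimates for embeddings on all of $\RR^d$, or on a cube/ball, and transfer them to $\Omega$ by sandwiching $\Omega$ between two nice domains. Since $\Omega$ is bounded and open, I would pick a small open ball $Q_0 \subset \Omega$ and a large open ball $Q_1 \supset \overline{\Omega}$. For bounded Lipschitz (or $C^\infty$) domains such as balls, the two-sided estimate $\en_k(\iota) \asymp k^{-(s_1-s_2)/d}$ for $\iota\colon (A_1)^{s_1}_{p_1,q_1}(Q_j) \embeds (A_2)^{s_2}_{p_2,q_2}(Q_j)$ is classical. For the $B$-spaces it is in \cite{edmundsFunctionSpacesEntropy1996}, Section 3.3, under exactly the conditions $s_1>s_2$ and $s_1-d/p_1>s_2-d/p_2$. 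The $F$-cases are handled by \cref{embeddings_isB_isF_Omega}: $F^{s}_{p,q}$ is sandwiched between $B^s_{p,\min\{p,q\}}$ and $B^s_{p,\max\{p,q\}}$, and these have the same $s,p$. Composing via \cref{entropy_numbers_basic_properties}(ii), with bounded embeddings contributing only constants through $\en_1 \le \norm{\cdot}$, therefore gives the same asymptotics for all combinations $A_1,A_2 \in \{B,F\}$.

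\emph{Upper bound on $\Omega$.} I would use a factorization through $Q_1$. Given $f$ in the unit ball of $(A_1)^{s_1}_{p_1,q_1}(\Omega)$, it has an extension $g\in (A_1)^{s_1}_{p_1,q_1}(\RR^d)$ of norm at most $2$, whose restriction to $Q_1$ has $Q_1$-norm at most $2$. The restriction map $R\colon V(Q_1)\to V(\Omega)$ has norm at most $1$ by the definition of the quotient quasi-norms, since any extension of $g|_{Q_1}$ is an extension of $f$. Writing $\iota_\Omega(f)=R\,\iota_{Q_1}(g|_{Q_1})$ shows that $\iota_\Omega(\ball)\subset R\bigl(\iota_{Q_1}(2\ball_{Q_1})\bigr)$. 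Hence $\en_k(\iota_\Omega)\le 2\,\en_k(\iota_{Q_1})\lesssim k^{-(s_1-s_2)/d}$. Note that this uses only restriction, not a bounded extension operator for $\Omega$, which is important because $\Omega$ is arbitrary. Compactness then follows from $\en_k\to0$.

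\emph{Lower bound on $\Omega$.} Here I would use a factorization through $Q_0$. This needs an extension operator $E\colon V(Q_0)\to V(\Omega)$ for the source space and the restriction $R_0\colon (A_2)^{s_2}_{p_2,q_2}(\Omega)\to (A_2)^{s_2}_{p_2,q_2}(Q_0)$, which again has norm at most $1$. Then $\iota_{Q_0}=R_0\,\iota_\Omega\, E$. For $E$, I would not use a universal extension operator, since these fail for $p<1$ with negative $s$ in some ranges. Instead I would restrict to the subspace of distributions supported in a fixed compact $K\subset Q_0$, for instance a concentric smaller ball. Multiplying by a cutoff $\chi\in C_c^\infty(Q_0)$ with $\chi\equiv1$ near $K$, the map $f\mapsto \chi \tilde f$, extended by zero, is bounded from $V(Q_0)$ to $V(\RR^d)$ by the pointwise multiplier theorems of \cite{triebelTheoryFunctionSpaces2006}. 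The cleaner route is to reprove the lower entropy bound directly for the embedding restricted to functions supported in $K$. The standard lower-bound proof goes through wavelet or atomic sequence spaces and discretization $\ell^{p_1}_{2^{jd}}\to\ell^{p_2}_{2^{jd}}$, and it uses only functions supported in an arbitrary small cube. So the lower bound on such a subspace is available, with cube size affecting only constants. Via $\en_k(\iota_{K})\le \norm{R_0}\,\en_k(\iota_\Omega)\,\norm{E}$, this gives $\en_k(\iota_\Omega)\gtrsim k^{-(s_1-s_2)/d}$. For small $k$ I would adjust the constants using monotonicity and positivity of $\en_k$.

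I expect the lower bound to be the main obstacle: locating or justifying a reference that gives the lower estimate for functions supported in a small cube across the full parameter range, including $p<1$ and $F$-spaces. My first attempt would be to cite the explicit construction in Edmunds--Triebel's lower-bound proof, which builds bumps $\psi(2^j x - m)$ inside a cube, and to note that it localizes. I would then transfer to $F$ via \cref{embeddings_isB_isF_Omega}.
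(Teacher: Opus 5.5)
Your outline is correct but takes a different route from the paper. The paper never passes through smooth domains. It cites \cite[Theorem~1.97]{triebelTheoryFunctionSpaces2006}, which already gives the two-sided estimate for the $B\to B$ embedding on an \emph{arbitrary} bounded open $\Omega$. It then transfers this to all combinations of $B$ and $F$ by exactly the sandwich argument you describe (\cref{embeddings_isB_isF_Omega} together with \cref{entropy_numbers_basic_properties}).

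You instead re-derive the independence of the domain from the ball case. Your upper bound $\en_k(\iota_\Omega)\le 2\,\en_k(\iota_{Q_1})$, obtained by restricting from a ball $Q_1\supset\overline{\Omega}$, is complete and shows clearly why no regularity of $\Omega$ is needed.

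For the lower bound, your detour through distributions supported in $K$ is unnecessary and rests on a false premise. Bounded linear extension operators $\mathrm{Ext}\colon (A_1)^{s_1}_{p_1,q_1}(Q_0)\to(A_1)^{s_1}_{p_1,q_1}(\RR^d)$ for a ball exist for every fixed $s_1\in\RR$ and $0<p_1,q_1\le\infty$. This follows from the classical constructions for smooth domains, and Rychkov's universal operator even handles all parameters at once on Lipschitz domains. Setting $Ef:=(\mathrm{Ext}f)|_{\Omega}$, you get $\iota_{Q_0}=R_0\,\iota_\Omega\,E$. Hence $\en_k(\iota_{Q_0})\le\|R_0\|\,\|E\|\,\en_k(\iota_\Omega)$, and the ball lower bound transfers immediately, with no localized entropy estimate needed.

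Your cutoff route can also be made to work, since the standard lower-bound argument only uses bumps supported in a fixed ball. As written, however, that step is only a sketch. Overall, your approach gives a self-contained explanation of why the estimate does not depend on the shape of $\Omega$. The cost is extra work that the paper avoids by citing the arbitrary-domain version directly.
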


\begin{proof}
    In the case $A_1=A_2=B$, this is \cite[Theorem 1.97]{triebelTheoryFunctionSpaces2006}.
    The general case follows from this case using \cref{entropy_numbers_basic_properties}
    and \cref{embeddings_isB_isF_Omega}, as we will now show. 
    
    We first show the upper bound.
    By \cref{embeddings_isB_isF_Omega} there exist $\tilde{q}_1, \tilde{q}_2 \in (0,\infty]$
    such that we have the embeddings
    \begin{equation*}
        \iota_1 \colon (A_1)^{s_1}_{p_1,q_1}(\Omega) 
        \embeds B^{s_1}_{p_1,\tilde{q}_1}(\Omega) 
        \quad \text{and} \quad
        \iota_2 \colon B^{s_2}_{p_2,\tilde{q}_2}(\Omega) 
        \embeds (A_2)^{s_2}_{p_2,q_2}(\Omega).
    \end{equation*}
    Consider the sequence of embeddings
    \begin{equation*}
        \iota_A \colon (A_1)^{s_1}_{p_1,q_1}(\Omega) 
        \overset{\iota_1}{\embeds} B^{s_1}_{p_1,\tilde{q}_1}(\Omega) 
        \overset{\iota_B}{\embeds} B^{s_2}_{p_2,\tilde{q}_2}(\Omega) 
        \overset{\iota_2}{\embeds} (A_2)^{s_2}_{p_2,q_2}(\Omega).
    \end{equation*}
    Then, using \cref{entropy_numbers_basic_properties}, we infer that
    \begin{equation*}
       \en_k(\iota_A) 
       = \en_k(\iota_2 \iota_B \iota_1)
       \le \en_k(\iota_2 \iota_B) \en_1(\iota_1)
       \le \en_1(\iota_2) \en_k(\iota_B) \en_1(\iota_1)
       \le \norm{\iota_2} \norm{\iota_1} \en_k(\iota_B).
    \end{equation*}
    The upper bound for $\en_k(\iota_B)$ then implies the upper bound for $\en_k(\iota_A)$.

    We show the lower bound. 
    By \cref{embeddings_isB_isF_Omega} there exist $\tilde{q}_1, \tilde{q}_2 \in (0,\infty]$
    such that we have the embeddings
    \begin{equation*}
        \iota_1 \colon B^{s_1}_{p_1,\tilde{q}_1}(\Omega) 
        \embeds (A_1)^{s_1}_{p_1,q_1}(\Omega) 
        \quad \text{and} \quad
        \iota_2 \colon 
        (A_2)^{s_2}_{p_2,q_2}(\Omega) 
        \embeds B^{s_2}_{p_2,\tilde{q}_2}(\Omega) 
    \end{equation*}
    Consider the sequence of embeddings
    \begin{equation*}
        \iota_B \colon B^{s_1}_{p_1,\tilde{q}_1}(\Omega) 
        \overset{\iota_1}{\embeds} (A_1)^{s_1}_{p_1,q_1}(\Omega)
        \overset{\iota_A}{\embeds} (A_2)^{s_2}_{p_2,q_2}(\Omega) 
        \overset{\iota_2}{\embeds} B^{s_2}_{p_2,\tilde{q}_2}(\Omega).
    \end{equation*}
    Then, using \cref{entropy_numbers_basic_properties}, we infer that
    \begin{equation*}
       \en_k(\iota_B) 
       = \en_k(\iota_2 \iota_A \iota_1)
       \leq \en_k(\iota_2 \iota_A) \en_1(\iota_1)
       \leq \en_1(\iota_2) \en_k(\iota_A) \en_1(\iota_1)
       \leq \norm{\iota_2} \norm{\iota_1} \en_k(\iota_A).
    \end{equation*}
    The lower bound for $\en_k(\iota_B)$ then implies the lower bound for $\en_k(\iota_A)$
    (using also that $\iota_1, \iota_2 \ne 0$).
\end{proof}

\begin{proof}[Proof of \cref{minkowski_dim_isotropic}]
    The spaces are quasi-Banach spaces, see \cite[Remark 1.96]{triebelTheoryFunctionSpaces2006}.
    The remaining claim follows from \cref{entropy_numbers_isotropic}, \cref{entropy_numbers_to_minkowski_dim_lower}, and \cref{entropy_numbers_to_minkowski_dim_upper}.
\end{proof}

\subsection{Proof of \texorpdfstring{\cref{minkowski_dim_dms}}{Lemma~\ref{minkowski_dim_dms}}}
\label{sec_proof_of_minkowski_dim_dms}

\begin{lemma}
    \label{embeddings_dmsB_dmsF_Omega}
 Let $\emptyset \ne \Omega \subset \RR^d$ be an open subset.
 Let $s\in \RR$, let $0<p<\infty$ and let $0<q\le \infty$.
 Then we have the continuous embeddings
 \begin{equation*}
     S^{s}_{p,\min\{p,q\}}B(\Omega)
     \embeds S^{s}_{p,q}F(\Omega)
     \embeds S^{s}_{p,\max\{p,q\}}B(\Omega).
 \end{equation*}
\end{lemma}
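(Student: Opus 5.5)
The plan is to copy the proof of \cref{embeddings_isB_isF_Omega} almost verbatim: first establish the embeddings on $\RR^d$, and then transfer them to $\Omega$ with \cref{transfer_embeddings_RR^d_to_Omega_1}.

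\emph{Step 1: the case $\RR^d$.} For the spaces of dominating mixed smoothness on $\RR^d$ we need
\[
  S^{s}_{p,\min\{p,q\}}B(\RR^d)
  \embeds S^{s}_{p,q}F(\RR^d)
  \embeds S^{s}_{p,\max\{p,q\}}B(\RR^d).
\]
I will first look for a citation, for example in \cite{vybiralFunctionSpacesDominating2006}, where elementary embeddings of this type are recorded.

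If no citation turns up, I will prove the embeddings directly from the Fourier-analytic definitions. Let $(\varphi_{\bar k})_{\bar k\in\NN_0^d}$ be the tensor-product decomposition of unity. Then the quasi-norms are
\[
  \norm{f\mid S^s_{p,q}B}=\Bigl(\sum_{\bar k} 2^{|\bar k|_1 sq}\norm{\mathcal{F}^{-1}(\varphi_{\bar k}\mathcal{F}f)}_{L^p}^q\Bigr)^{1/q},
\]
\[
  \norm{f\mid S^s_{p,q}F}=\Bigl\|\Bigl(\sum_{\bar k} 2^{|\bar k|_1 sq}|\mathcal{F}^{-1}(\varphi_{\bar k}\mathcal{F}f)|^q\Bigr)^{1/q}\Bigr\|_{L^p}.
\]
Both embeddings then follow from $\ell^q$-versus-$L^p$ comparisons of a sequence of functions $(g_{\bar k})$, for arbitrary index sets:
\[
  \bigl\|\,\|g_{\bar k}\|_{\ell^q}\bigr\|_{L^p} \le \bigl\|\,\|g_{\bar k}\|_{L^p}\bigr\|_{\ell^{\min\{p,q\}}},
  \qquad
  \bigl\|\,\|g_{\bar k}\|_{L^p}\bigr\|_{\ell^{\max\{p,q\}}} \le \bigl\|\,\|g_{\bar k}\|_{\ell^q}\bigr\|_{L^p}.
\]
These are proved exactly as in \cite[Section 2.3.2 Proposition 2]{triebelTheoryFunctionSpaces1983}.

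For the first inequality, the case $q\le p$ follows from the monotonicity $\ell^{q}\embeds\ell^{p}$ together with the Minkowski inequality in $L^{p/q}$. The case $p<q$ follows from $\ell^p\embeds\ell^q$ and Fubini.

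For the second inequality, the case $p\le q$ uses the reverse Minkowski inequality for $L^{p/q}$ with $p/q\le1$ (equivalently, Minkowski in $\ell^{q/p}$). The case $q<p$ again uses Fubini together with $\ell^q\embeds\ell^p$.

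The index set $\NN_0^d$ plays no role in these inequalities, so the isotropic argument transfers without change.

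\emph{Step 2: transfer to $\Omega$.} The spaces $S^s_{p,q}A(\Omega)$ are defined by restriction, in the sense of \cite[Section 3.1]{vybiralFunctionSpacesDominating2006}. This matches the abstract definition of $V(\Omega)$ used before \cref{transfer_embeddings_RR^d_to_Omega_1}. Applying that lemma twice gives the claimed embeddings on $\Omega$.

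The main obstacle is Step 1, and only in the sense of locating a reference for the $\RR^d$ statement. The mathematics there is routine, so I expect it to be a matter of bookkeeping rather than a genuine difficulty.
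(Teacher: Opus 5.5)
Your proposal is correct and follows the paper's proof. The paper obtains the $\RR^d$ case by citing \cite[Proposition~1.15]{triebelFunctionSpacesDominating2019} and then transfers it to $\Omega$ via \cref{transfer_embeddings_RR^d_to_Omega_1}, exactly as in your Step~2. Your fallback direct proof of the $\RR^d$ case via the two $\ell^q$-versus-$L^p$ inequalities is also valid (including $q=\infty$), though in the case $q\le p$ of the first inequality the monotonicity $\ell^q\hookrightarrow\ell^p$ is superfluous, since Minkowski's inequality in $L^{p/q}$ alone suffices.
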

\begin{proof}
    In the case $\Omega = \RR^d$, this is shown in \cite[Proposition 1.15]{triebelFunctionSpacesDominating2019}. 
    The claim now follows from \cref{transfer_embeddings_RR^d_to_Omega_1}.
\end{proof}

\begin{lemma}\label{entropy_numbers_dms}
    Let $\emptyset \ne \Omega \subset \RR^d$ be an arbitrary bounded domain.
    Let $A_1,A_2 \in \{B,F\}$.
    Let $s_1,s_2 \in \RR$ and let $0<p_1,q_1,p_2,q_2\le\infty$ (with $p_i <\infty$ if $A_i = F$).
    Assume that
    \begin{equation*}
       s_1 > s_2
       \quad \text{and}\quad
       s_1 - \frac{1}{p_1} > s_2 - \frac{1}{p_2}.
    \end{equation*}
    Then the embedding $\iota\colon S^{s_1}_{p_1,q_1}A_1(\Omega) \embeds S^{s_2}_{p_2,q_2}A_2(\Omega)$
    is compact and there exist constants $C_1,C_2>0$ and $u_1,u_2\ge0$ such that
    \begin{equation*}
        C_1 \cdot k^{-(s_1-s_2)}\cdot (\ln (e k))^{u_1}
        \le \en_k(\iota)
    \le C_2 \cdot k^{-(s_1-s_2)} \cdot (\ln (e k))^{u_2}
    \quad \text{for all $k\in \NN$.}
    \end{equation*}
\end{lemma}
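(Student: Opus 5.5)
The plan mirrors the proof of \cref{entropy_numbers_isotropic}: first settle the case $A_1 = A_2 = B$ by citing the literature, then transfer to the $F$-cases via \cref{embeddings_dmsB_dmsF_Omega} and \cref{entropy_numbers_basic_properties}.

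\emph{Step 1 (the case $A_1=A_2=B$).} For $\Omega$ an arbitrary bounded domain, the entropy numbers of $S^{s_1}_{p_1,q_1}B(\Omega) \embeds S^{s_2}_{p_2,q_2}B(\Omega)$ are known from \cite{vybiralFunctionSpacesDominating2006}. There, under the stated conditions $s_1>s_2$ and $s_1 - \frac{1}{p_1} > s_2 - \frac{1}{p_2}$, one has compactness and two-sided bounds of the form
\[
  \en_k(\iota_B) \asymp k^{-(s_1-s_2)} (\ln (ek))^{u}
  \qquad \text{for some } u\ge 0,
\]
where $u$ depends on $d$, the $p_i$, the $q_i$ and $s_1-s_2$. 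This comes from a wavelet discretization reducing the problem to entropy numbers of embeddings between mixed sequence spaces. I would cite this directly. If the reference only provides the upper and lower bounds with possibly different logarithmic powers, that is harmless, because the statement allows $u_1 \neq u_2$. One still has to check that the result covers the full parameter range, including $p_i<1$ and $q_i=\infty$.

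\emph{Step 2 (transfer to $F$-spaces).} For the upper bound, use \cref{embeddings_dmsB_dmsF_Omega} to choose $\tilde q_1,\tilde q_2$ with
\[
  S^{s_1}_{p_1,q_1}A_1(\Omega) \embeds S^{s_1}_{p_1,\tilde q_1}B(\Omega)
  \quad\text{and}\quad
  S^{s_2}_{p_2,\tilde q_2}B(\Omega) \embeds S^{s_2}_{p_2,q_2}A_2(\Omega).
\]
When $A_i=B$, take $\tilde q_i = q_i$ and the identity map. Then factor $\iota = \iota_2 \iota_B \iota_1$ and apply \cref{entropy_numbers_basic_properties}:
\[
  \en_k(\iota) \le \norm{\iota_2}\,\norm{\iota_1}\,\en_k(\iota_B).
\]
For the lower bound, reverse the roles. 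Factor the $B$-embedding $\iota_B$ through $\iota$, using $S^{s_1}_{p_1,\tilde q_1}B \embeds S^{s_1}_{p_1,q_1}A_1$ and $S^{s_2}_{p_2,q_2}A_2 \embeds S^{s_2}_{p_2,\tilde q_2}B$. This gives $\en_k(\iota_B) \le C\,\en_k(\iota)$, and Step~1 then yields the lower bound. The factor $C$ is finite and nonzero because the embeddings are nonzero.

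\emph{Step 3 (compactness).} Once the upper bound is established, $\en_k(\iota)\to 0$, and this is equivalent to compactness of $\iota$. Alternatively, compactness follows by composing the compact map $\iota_B$ with bounded maps.

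The main obstacle is Step~1: matching the exact hypotheses of the cited dominating-mixed-smoothness entropy results, in particular the arbitrary bounded domain and the quasi-Banach range, to our setting. If the literature is stated only on cubes, I would first pass through extension and restriction operators. For this I would sandwich $\Omega$ between a cube and a smaller cube, using boundedness of $\Omega$, and track the constants.
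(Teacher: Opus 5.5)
Your proposal is correct and follows the paper's proof. The paper cites Vyb\'{\i}ral's Theorem~4.11 for $A_1=A_2=B$; that theorem already covers arbitrary bounded domains, so no cube or extension detour is needed. It then transfers to the $F$-cases by the same sandwich-and-factorization argument as in the isotropic lemma. The paper adds three details you leave out: it reduces $d=1$ directly to the isotropic lemma, it takes the upper bound from different parts of Vyb\'{\i}ral's theorem according to the sign of $\theta = s_1-s_2-\frac{1}{q_1}+\frac{1}{q_2}$ (so the two logarithmic powers do differ in general, as you anticipated), and it extends the $k\ge 2$ bounds to all $k\in\NN$ by using $\ln(ek)$.
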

\begin{proof}
    If $d=1$, we have $S^{s}_{p,q}A(\Omega)=A^{s}_{p,q}(\Omega)$ for $A\in \{B,F\}$
    and so the claim follows from \cref{entropy_numbers_isotropic}.
    Assume in the following that $d\ge 2$.
    In the case $A_1=A_2=B$, the claim is a consequence of
    \cite[Theorem 4.11]{vybiralFunctionSpacesDominating2006}.
    Indeed, the lower bound follows from
    \cite[Theorem 4.11 Part (ii)]{vybiralFunctionSpacesDominating2006}.
    The upper bound follows from \cite[Theorem 4.11 Part (iii)]{vybiralFunctionSpacesDominating2006},
    if ${\theta := s_1 - s_2 - \frac{1}{q_1} + \frac{1}{q_2} > 0}$, and if $\theta \leq 0$,
    it follows from \cite[Theorem 4.11 Part (iv)]{vybiralFunctionSpacesDominating2006}
    (for, e.g., $\eps:=1$).
    (We remark that strictly speaking, \cite[Theorem~4.11]{vybiralFunctionSpacesDominating2006} only
    provides bounds for the entropy numbers for the case $k \geq 2$,
    but it is easy to see that the bounds we state then hold for all $k \in \N$;
    this makes it necessary to use $\ln(e k)$ instead of $\ln(k)$.)

    Using the same arguments as in the proof of \cref{entropy_numbers_isotropic},
    the general case now follows from this case using \cref{entropy_numbers_basic_properties}
    and \cref{embeddings_dmsB_dmsF_Omega}. 
\end{proof}

\begin{proof}[Proof of \cref{minkowski_dim_dms}]
    Let $0<\delta<s_1-s_2$.
    It follows from \cref{entropy_numbers_dms} that
    \begin{equation*}
        k^{-(s_1-s_2)}
        \lesssim \en_k(\iota)
        \lesssim k^{-(s_1-s_2-\delta)}.
    \end{equation*}
    It follows from \cref{entropy_numbers_to_minkowski_dim_upper}
    that the generalized upper Minkowski dimension of the corresponding unit ball
    is upper bounded by $1/(s_1-s_2-\delta)$. 
    Taking $\delta\downarrow 0$, we infer that it is upper bounded by $1/(s_1-s_2)$.
    Similarly, using \cref{entropy_numbers_to_minkowski_dim_lower},
    we deduce that the generalized lower Minkowski dimension of the unit ball
    is lower bounded by $1/(s_1-s_2)$.
    Combining the upper and lower bound, we conclude that the Minkowski dimension of the unit ball
    is equal to $1/(s_1-s_2)$.
\end{proof}

%% file: parts/appendix.tex
\section{A compact convex set with distinct upper and lower power-exponential Minkowski dimensions}
\label{sec:example_lmd<umd}

In this section, we show that in infinite-dimensional spaces there exist convex and compact
subsets for which the upper and lower power-exponential Minkowski dimension differ.
More precisely, let ${0 < s < t < \infty}$.
Below, we provide an explicit construction of a set $\SC\subset \ell^{\infty}(\NN)$
satisfying ${\lmd(\SC) = s}$ and ${\umd(\SC) = t}$.

Let
\begin{equation*}
    \SC 
    := \bigl\{ x \in \ell^{\infty}(\NN) : \abs{x_i} \le a_i \text{ for all $i \in \NN$}\bigr\}
    = \prod_{i=1}^{\infty} [-a_i,a_i]
\end{equation*}
where the sequence $(a_i)_{i\in\NN}$ is defined as follows:
Let $a:= \frac{t}{s}$.
Then $a>1$.
Let
\begin{equation*}
    N_k := \lfloor 2^{t a^k} \rfloor
    \quad \text{for $k\in \NN$.}
\end{equation*}
We have
\begin{equation*}
    N_{k+1}-N_{k} = \lfloor 2^{t a^{k+1}} \rfloor - \lfloor 2^{t a^k}\rfloor
    \ge 2^{t a^{k+1}} - 1 - 2^{ta^k}
    = \bigl(2^{ta^k}\bigr)^a - 2^{ta^k} -1 \xrightarrow{k\to\infty} \infty.
\end{equation*}
Hence there exists $k_0\in \NN$ such that $N_{k+1}>N_k$ for all $k\ge k_0$.
We now define the sequence $(a_i)_{i\in\NN}$ as follows.
Let
\begin{equation*}
   a_i := \begin{cases}
       1,\quad &\text{for } i=1,\dots,N_{k_0},\\
       2^{-a^k},\quad &\text{for } k\ge k_0+1 \text{ and } N_{k-1}+1 \le i \le N_k.
   \end{cases}
\end{equation*}
Since $a>1$, the sequence $(a_i)_{i\in\NN}$ is non-increasing.
In addition, we have $a_i\to 0$ as $i\to \infty$.

Our main result in this section is the following \cref{umd_lmd_of_hilbert_cube_s_t}.
Its proof is deferred to the end of the section.
\begin{proposition}
    \label{umd_lmd_of_hilbert_cube_s_t}
    Let $\SC$ be as defined above.
    Then, considering $\SC$ as a subset of $\ell^{\infty}(\NN)$, we have
 \begin{equation*}
    \lmd(\SC) = s 
    \qquad \text{and} \qquad 
    \umd(\SC)=t.
 \end{equation*}
\end{proposition}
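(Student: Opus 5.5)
The plan is to compute the covering numbers of the box $\SC=\prod_i[-a_i,a_i]$ in $\ell^\infty$ directly, up to harmless constants. For a box in $\ell^\infty$ the covering numbers essentially factorize. For the upper bound, we cover each interval $[-a_i,a_i]$ with $a_i>\eps$ by at most $\lceil a_i/\eps\rceil$ subintervals of length $2\eps$, and take the center $0$ for the coordinates with $a_i\le\eps$. Taking products gives an $\eps$-net in $\SC$, hence
\[
\log_2\cn(\SC,\eps)\le \sum_{i:\,a_i>\eps}\log_2\bigl(2a_i/\eps\bigr).
\]
For the lower bound, the grid of points with coordinates in $\{-a_i,a_i\}$ for $a_i>\eps$ (and $0$ otherwise) is $2\eps$-separated up to a factor, whenever $2a_i>2\eps$. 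Comparing packing and covering numbers via a quasi-triangle/metric argument, we get $\log_2\cn(\SC,\eps/2)\ge \#\{i: a_i>\eps\}$. Together these reduce everything to the counting function $M(\eps):=\#\{i:a_i>\eps\}$, with
\[
M(2\eps)\lesssim \log_2\cn(\SC,\eps)\lesssim M(\eps)\cdot\log_2(2/\eps).
\]
The logarithmic factor is irrelevant on the $\log\log/\log$ scale.

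Next we compute $M$ along the explicit scales. For $\eps\in[2^{-a^{k}},2^{-a^{k-1}})$ with $k$ large, we have $a_i>\eps$ exactly for $i\le N_{k-1}$, so $M(\eps)=N_{k-1}\approx 2^{ta^{k-1}}$. Writing $\eps=2^{-u}$ with $u\in(a^{k-1},a^k]$, we get
\[
\frac{\log_2\log_2\cn(\SC,\eps)}{\log_2(1/\eps)}=\frac{t a^{k-1}+O(\log u)}{u}.
\]
On this interval the ratio ranges from about $t$ (at $u\downarrow a^{k-1}$) down to $ta^{k-1}/a^k=t/a=s$ (at $u=a^k$).

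It follows that $\limsup=t$, attained as $u\downarrow a^{k-1}$, and $\liminf=s$, attained at $u=a^k$. The error terms $O(\log u)/u$ and the shift from $\eps$ to $2\eps$ (which changes $u$ by $1$, negligible against $a^k$) vanish in the limit. We also need to check monotonicity in $u$ so that no values outside $[s,t]$ occur; this holds because the numerator is constant in $u$ on each interval.

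The main obstacle is the careful sandwiching at the interval endpoints. The jump of $M$ at $\eps=2^{-a^{k-1}}$ must be handled so that the $2\eps$ versus $\eps$ discrepancy does not shift the value by a whole block. Concretely, one has to verify that for $u$ slightly larger than $a^{k-1}+1$ the lower bound already gives $M(2\eps)=N_{k-1}$. This works since $2\eps=2^{-(u-1)}<2^{-a^{k-1}}$ as soon as $u>a^{k-1}+1$. It remains to check that the floors in $N_k$ and the initial block $i\le N_{k_0}$ do not affect the asymptotics.
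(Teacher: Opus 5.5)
Your argument is correct, but it is organized differently from the paper's.

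\textbf{The paper's route.} The paper's lower bound is volumetric, $\cn(\SC,\eps)\ge\prod_{i\le n(\eps)} a_i/\eps$. This bound degenerates just after each jump of $n(\eps)$, because the newly active coordinates contribute factors $a_i/\eps$ close to $1$. That is why the paper works with the shifted scales $\eps_k=(1-\delta)2^{-a^k}$ and $\eps_k'=(1+\delta)2^{-a^k}$. It proves two-sided bounds on the intervals $[\eps_{k+1}',\eps_k]$, where $n(\eps)=N_k$ is constant with a margin. It then bridges the transition intervals $[\eps_k,\eps_k']$ by monotonicity of $\cn(\SC,\cdot)$.

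\textbf{Your route.} Your grid, with $x_i\in\{\pm a_i\}$ if $a_i>\eps$ and $x_i=0$ otherwise, gives one bit per active coordinate no matter how close $a_i/\eps$ is to $1$. Hence, with $\eps=2^{-u}$, the sandwich $\log_2 N_{k-1}\le\log_2\log_2\cn(\SC,2^{-u})\le\log_2 N_{k-1}+\log_2(1+u)$ holds uniformly for $u\in(a^{k-1},a^k]$. The $\liminf$ and $\limsup$ can then be read off directly from $ta^{k-1}/u$.

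\textbf{Comparison.} Your version is shorter and needs no $\delta$-margins or separate transition step. The paper's volume bound captures the correct size $M(\eps)\log_2(1/\eps)$ of $\log_2\cn(\SC,\eps)$ away from the jumps, but that extra logarithmic factor is invisible on the $\log\log/\log$ scale.

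\textbf{Two small remarks.} First, the $\eps$ versus $2\eps$ bookkeeping in your last paragraph is unnecessary. Two distinct grid points differ by $2a_i>2\eps$ in some coordinate, and $\ell^\infty$ carries a genuine norm. So no closed $\eps$-ball contains two of them, and $\cn(\SC,\eps)\ge 2^{M(\eps)}$ holds directly, including at $u\downarrow a^{k-1}$. Second, ``the numerator is constant in $u$'' is not literally true. What is constant is its main term $\log_2 N_{k-1}$, and the sandwich above is exactly what makes that statement rigorous.
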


In our example, the quantity with the highest impact on the covering numbers
is the number of ``coordinates'' that need to be considered when trying to cover $\SC$
by balls of radius $\eps$. 
To fix notation, for every $\eps\in (0,1)$ let
\[
  n(\eps) := \max\{ n\in \NN : a_n > \eps\}.
\]
All remaining ``coordinates'', that is $i \in \NN$ with $i>n(\eps)$ can be dealt with
using a single ball of radius $\eps$ centered at the origin;
see \cref{bounds_for_covering_numbers_Hilbert_cube} for the details.

Before proving \cref{umd_lmd_of_hilbert_cube_s_t}, we first recall some results,
namely \cref{bounds_for_covering_numbers_Hilbert_cube} and \cref{Hilbert_cube_compact_convex}
on Hilbert cubes.
We believe these results to be folklore, but could not locate a reference. 
For the reader's convenience, we provide proofs.

\begin{lemma}
    \label{bounds_for_covering_numbers_Hilbert_cube}
    Let $\SC$ be as defined at the start of \cref{sec:example_lmd<umd} and
    consider $\SC$ as a subset of $\ell^{\infty}(\NN)$.
    Then for every $\eps \in (0,1)$ we have
    \begin{equation*}
        \prod_{i=1}^{n(\eps)} \frac{a_i}{\eps}
        \le \cn(\SC,\eps) 
        \le \prod_{i=1}^{n(\eps)} \Bigl(\frac{a_i}{\eps} +1\Bigr)  .
 \end{equation*}
\end{lemma}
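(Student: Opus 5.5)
The argument splits into a lower and an upper bound. In both, we use that $\SC=\prod_i[-a_i,a_i]$ with $(a_i)$ non-increasing, so $a_i>\eps$ exactly for $i\le n(\eps)$ and $a_i\le\eps$ for $i>n(\eps)$. Here $n(\eps)<\infty$ because $a_i\to 0$, and $n(\eps)\ge N_{k_0}\ge 1$ because $a_1=1>\eps$.

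\emph{Upper bound.} For each $i\le n(\eps)$ we cover $[-a_i,a_i]$ by closed intervals of radius $\eps$ with centres in $[-a_i,a_i]$. Taking centres $-a_i+\eps,\,-a_i+3\eps,\dots$, and clipping the last one to $a_i$, requires at most $\lceil a_i/\eps\rceil\le a_i/\eps+1$ centres. For $i>n(\eps)$ we use the single centre $0$, which lies within $\eps$ of every point of $[-a_i,a_i]$. The product grid then consists of points of $\SC$, and every $x\in\SC$ is within sup-distance $\eps$ of some grid point. This gives an $\eps$-net in $\SC$ with $\prod_{i\le n(\eps)}(a_i/\eps+1)$ elements.

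\emph{Lower bound.} We use a volume argument on the first $n:=n(\eps)$ coordinates. Let $P:\ell^\infty\to\RR^n$ be the projection onto these coordinates; it is $1$-Lipschitz from $\ell^\infty$ to $(\RR^n,\|\cdot\|_\infty)$. If $\{y_1,\dots,y_N\}$ is an $\eps$-net for $\SC$, then the cubes $P(y_j)+[-\eps,\eps]^n$ cover $P(\SC)=\prod_{i\le n}[-a_i,a_i]$. Comparing Lebesgue measures,
\[
\prod_{i\le n}2a_i\le N\,(2\eps)^n,
\]
so $N\ge\prod_{i\le n}a_i/\eps$.

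The main obstacle is getting the constants exactly as stated: the centres must lie in $\SC$, and the interval count must be $\lceil a_i/\eps\rceil$, bounded by $a_i/\eps+1$. We handle this by clipping the last centre as described above. The rest is routine.
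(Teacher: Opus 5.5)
Your proof is correct and follows the paper's argument: a product grid with $\lceil a_i/\eps\rceil$ centres in each coordinate $i\le n(\eps)$, padded with zeros in the remaining coordinates, gives the upper bound, and a Lebesgue-volume comparison after projecting onto the first $n(\eps)$ coordinates gives the lower bound. Your explicit check that the centres lie in $[-a_i,a_i]$, so that the net lies in $\SC$ as the internal covering number $\cn$ requires, makes precise a point the paper handles only implicitly by speaking of sub-intervals of $[-a_i,a_i]$.
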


\begin{proof}
    Fix $\eps\in (0,1)$.
    Because $(a_i)_{i\in\NN}$ is a null sequence, the set $\{ i\in \NN : a_i > \eps\}$ is finite.
    Furthermore, since $a_1=1>\eps$, the set is non-empty.
    Hence $n:=n(\eps)\in \NN$ is well-defined.
    
    We first show the upper bound.
    For each coordinate $i \in \{1, \dots, n\}$, the interval $[-a_i, a_i]$ has a length of $2a_i$.
    We can cover this interval using $k_i$ closed sub-intervals of radius $\eps$
    (which have length $2\eps$), where
    \[
      k_i
      = \left\lceil \frac{2a_i}{2\eps} \right\rceil
      = \left\lceil \frac{a_i}{\eps} \right\rceil \le \frac{a_i}{\eps} + 1.
    \]
    By taking the Cartesian product of these 1-dimensional covers,
    we obtain a covering of the finite-dimensional truncated box $B_n = \prod_{i=1}^n [-a_i, a_i]$
    by $\prod_{i=1}^n k_i$ hypercubes in $\mathbb{R}^n$, each having an $\ell^\infty$-radius of $\eps$. 
    
    For the remaining tail coordinates $i > n$, since $a_i \le \eps$, the projection of $\SC$
    onto these coordinates is entirely contained within a single $\ell^\infty$ ball of radius $\eps$
    centered at the origin.
    By padding the centers of our $\mathbb{R}^n$ hypercubes with zeros for all $i > n$,
    we can thus construct a valid $\eps$-cover for the entire set $\SC$ in $\ell^\infty(\NN)$.
    The total number of balls required is bounded by the number of centers:
    \[
        \cn(\SC, \eps) \le \prod_{i=1}^n k_i \le \prod_{i=1}^{n(\eps)} \left( \frac{a_i}{\eps} + 1 \right).
    \]
    
    We now show the lower bound using a volume argument.
    Let $N:= \cn(\SC,\eps)$, meaning that $\SC$ can be covered by $N$ $\ell^{\infty}$-balls of radius $\eps$.
    Let $\pi_n : \ell^\infty(\NN) \to \mathbb{R}^n$ denote the projection map onto the first $n$ coordinates.
    The projection of $\SC$ under $\pi_n$ is exactly the box $B_n = \prod_{i=1}^n [-a_i, a_i]$.
    Furthermore, the projection of any $\ell^\infty(\NN)$-ball of radius $\eps$ is an $\ell^\infty$-ball in $\mathbb{R}^n$ of radius $\eps$.
    
    Since the $N$ original balls cover $\SC$, their projections must completely cover $B_n$.
    In $\mathbb{R}^n$, the Lebesgue measure (volume) of each projected $\eps$-ball is $(2\eps)^n$.
    The volume of the target set $B_n$ is $\prod_{i=1}^n (2a_i) = 2^n \prod_{i=1}^n a_i$. 
    Hence,
    \[
        N \cdot (2\eps)^n \ge \operatorname{Vol}(B_n) = 2^n \prod_{i=1}^n a_i.
    \]
    Rearranging terms, we establish the lower bound for $N= \cn(\SC, \eps)$.
    This completes the proof.
\end{proof}

\begin{lemma}
    \label{Hilbert_cube_compact_convex}
    Let $\SC$ be as defined at the start of \cref{sec:example_lmd<umd}.
    Then $\SC$ is a convex and compact subset of $\ell^{\infty}(\NN)$.
\end{lemma}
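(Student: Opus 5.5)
Convexity is immediate from the product structure. Let $x,y \in \SC$ and $\lambda \in [0,1]$. For every $i$ we have $|\lambda x_i + (1-\lambda) y_i| \le \lambda a_i + (1-\lambda) a_i = a_i$, so $\lambda x + (1-\lambda) y \in \SC$. Moreover, $\SC \subset \ell^\infty(\NN)$ because $a_i \le 1$ for all $i$.

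For compactness, we use that $\ell^\infty(\NN)$ is a complete metric space. A subset of a complete metric space is compact if and only if it is closed and totally bounded, so it suffices to check these two properties.

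\emph{Closedness.} Let $x^{(m)} \in \SC$ with $x^{(m)} \to x$ in $\ell^\infty$. Convergence in $\ell^\infty$ implies coordinatewise convergence, so for each fixed $i$,
\[
  |x_i| = \lim_{m \to \infty} |x^{(m)}_i| \le a_i .
\]
Hence $x \in \SC$, and $\SC$ is closed.

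\emph{Total boundedness.} This is the only real point, and it follows from the upper bound in \cref{bounds_for_covering_numbers_Hilbert_cube}. The argument there only uses that $(a_i)_{i \in \NN}$ is a null sequence, so that $n(\eps) < \infty$ for each $\eps \in (0,1)$. For such $\eps$ it gives
\[
  \cn(\SC,\eps) \le \prod_{i=1}^{n(\eps)} \bigl(a_i/\eps + 1\bigr) < \infty .
\]
Recall the construction behind this bound: the first $n(\eps)$ coordinates are covered by a finite grid, and all coordinates with $a_i \le \eps$ lie within distance $\eps$ of $0$. Since $\cn(\SC,\eps)$ is finite for every $\eps \in (0,1)$, the set $\SC$ is totally bounded.

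Being closed and totally bounded in the complete space $\ell^\infty(\NN)$, the set $\SC$ is compact. There is no genuine obstacle here; the only care needed is to note that \cref{bounds_for_covering_numbers_Hilbert_cube} applies for every $\eps \in (0,1)$, which yields total boundedness directly.
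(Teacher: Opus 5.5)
Your proposal is correct and follows the paper's proof essentially verbatim. Convexity is immediate, closedness follows from coordinatewise convergence, total boundedness comes from the upper bound in \cref{bounds_for_covering_numbers_Hilbert_cube}, and compactness then follows from ``closed and totally bounded'' in the complete space $\ell^\infty(\NN)$; you state the completeness of $\ell^\infty(\NN)$ explicitly, which the paper leaves implicit.
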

\begin{proof}
    The convexity is straightforward.
    Since uniform convergence implies pointwise convergence
    and all intervals in the Cartesian product are closed, the set $\SC$ is closed.
    By \cref{bounds_for_covering_numbers_Hilbert_cube}, the set $\SC$ is also totally bounded.
    Hence $\SC$ is closed and totally bounded and thus compact.
\end{proof}

Now that we have established bounds on the covering numbers of $\SC$
(see \cref{bounds_for_covering_numbers_Hilbert_cube}),
we can proceed to prove \cref{umd_lmd_of_hilbert_cube_s_t}.

\begin{proof}[Proof of \cref{umd_lmd_of_hilbert_cube_s_t}]
 We will repeatedly make use of the following inequalities. 
 For every $k\ge k_0$ we have
 \begin{equation}
     \label{eqn_6_1}
     2^{ta^k}
     \ge N_k = \lfloor 2^{ta^k}\rfloor 
     \ge 2^{ta^k}-1
     = 2^{ta^k}\cdot \bigl(1- 2^{-ta^k}\bigr)
     \ge 2^{ta^k}\cdot c_1
 \end{equation}
 where $c_1 := 1 - 2^{-ta^{k_0}}>0$.

 \medskip{}

 \textbf{Step 1:}
 In the first part of the proof, we consider two particular sequences in order to provide
 an upper bound for $\lmd(\SC)$ and a lower bound for $\umd(\SC)$.
 Since $a>1$, we have 
 \begin{equation*}
     \frac{2^{-a^k}}{2^{-a^{(k+1)}}} 
     = 2^{a^k(a-1)} 
     \ge 2^{a^{k_0+1}(a-1)}
     >1
     \quad \text{for all $k\ge k_0+1$.}
 \end{equation*}
 It follows that there exists $\delta \in (0,1)$ such that
 \begin{equation*}
     \frac{2^{-a^k}}{2^{-a^{(k+1)}}} > \frac{1+\delta}{1-\delta} >1 \quad \text{for all $k\ge k_0+1$.}
 \end{equation*}
 We define 
 \begin{equation*}
 \eps_k := (1-\delta) \cdot 2^{-a^k}
 \quad \text{and} \quad
 \eps'_k := (1+\delta) \cdot 2^{-a^k}
 \quad \text{ for $k\ge k_0+1$.}
 \end{equation*}
 By the previous computation, we have $\eps_k > \eps'_{k+1}$ for all $k\ge k_0+1$.
 The idea behind the definition of $\eps_k$ and of $\eps_k'$ is that they are slightly smaller
 and slightly larger than the value $\eps= 2^{-a^k}$,
 at which the number $n(\eps)$ jumps from $N_{k-1}$ to $N_k$.

 \medskip{}

 \textbf{Step 1a:} We show that $\umd(\SC) \ge t$.
 Consider the sequence $(\eps_k)_{k\ge k_0+1}$ and let $k\ge k_0+1$ be arbitrary.
 For all $i \ge N_k+1$ we have $a_i\le 2^{-a^{k+1}} < \eps'_{k+1} \le \eps_k$.
 In addition, for all $i\le N_k$, we have $a_i \ge 2^{-a^k}>\eps_k$.
 Hence $n(\eps_k) = N_k$.
 By \cref{bounds_for_covering_numbers_Hilbert_cube}, we get
 \begin{equation*}
    \cn(\SC,\eps_k)
    \ge \prod_{i=1}^{n(\eps_k)} \frac{a_i}{\eps_k}
    \ge \prod_{i=1}^{N_k} \frac{2^{-a^k}}{(1-\delta)\cdot 2^{-a^k}}
    = \Bigl( \frac{1}{1-\delta}\Bigr)^{N_k}.
 \end{equation*}
 Hence
 \begin{equation}
     \label{eqn_6_2}
    \log_2\bigl(\cn(\SC,\eps_k)\bigr)
    \ge N_k \log_2\Bigl(\frac{1}{1-\delta}\Bigr).
 \end{equation}
 Using \eqref{eqn_6_2} and \eqref{eqn_6_1}, we obtain
 \begin{align}
    \log_2\log_2\bigl(\cn(\SC,\eps_k)\bigr)
    &\overset{\eqref{eqn_6_2}}{\ge} \log_2(N_k) + \log_2\log_2\Bigl(\frac{1}{1-\delta}\Bigr)\nonumber\\
    &\overset{\eqref{eqn_6_1}}{\ge} ta^k +\log_2(c_1) + \log_2\log_2\Bigl(\frac{1}{1-\delta}\Bigr).
    \label{eqn_6_3}
 \end{align}
 On the other hand, we have
 \begin{equation}
     \log_2\Bigl(\frac{1}{\eps_k}\Bigr) 
     = \log_2\Bigl( \frac{2^{a^k}}{1-\delta}\Bigr)
     = a^k + \log_2\Bigl(\frac{1}{1-\delta}\Bigr).
    \label{eqn_6_4}
 \end{equation}
 Inserting \eqref{eqn_6_4} into \eqref{eqn_6_3}, we obtain
 \begin{equation}
     \label{eqn_6_5}
    \log_2\log_2\bigl(\cn(\SC,\eps_k)\bigr)
    \ge t\log_2\Bigl(\frac{1}{\eps_k}\Bigr)
        -t \log_2\Bigl(\frac{1}{1-\delta}\Bigr)
        +\log_2(c_1) 
        + \log_2\log_2\Bigl(\frac{1}{1-\delta}\Bigr).
 \end{equation}
 Using that $\eps_k\to 0$ as $k\to \infty$ and that \eqref{eqn_6_5} holds for all $k\ge k_0+1$ at (a), we deduce that
 \begin{equation*}
     \umd(\SC)
     =\limsup_{\eps\to 0} \frac{\log_2\log_2(\cn(\SC,\eps))}{\log_2(1/\eps)}
     \ge \limsup_{k\to \infty}\frac{\log_2\log_2(\cn(\SC,\eps_k))}{\log_2(1/\eps_k)}
     \overset{(a)}{\ge} t.
 \end{equation*}
 This is the desired lower bound for $\umd(\SC)$.

 \medskip{}

 \textbf{Step 1b:}
 We show that $\lmd(\SC) \le s$.
 Consider the sequence $(\eps'_k)_{k\ge k_0+1}$ and let\\ $k\ge k_0+2$ be arbitrary.
 For all $i\le N_{k-1}$ we have $a_i \ge 2^{-a^{k-1}} > 2^{-a^{k-1}}(1-\delta) \ge 2^{-a^{k}}(1+\delta)= \eps'_k$.
 In addition, for all $i\ge N_{k-1}+1$, we have $a_i\le 2^{-a^k} < (1+\delta)2^{-a^{k}} =\eps'_k$.
 This shows that $n(\eps'_k) = N_{k-1}$.
 By \cref{bounds_for_covering_numbers_Hilbert_cube}
 and because of $a_i>\eps_k'$ for all $1\le i \le N_{k-1}$, we get
 \begin{equation*}
    \cn(\SC,\eps'_k)
    \le \prod_{i=1}^{N_{k-1}} \Bigl(\frac{a_i}{\eps_k'} +1\Bigr)
    \le \Bigl(\frac{2}{\eps_k'}\Bigr)^{N_{k-1}}.
 \end{equation*}
 Hence, using \eqref{eqn_6_1} at (a) and $a=t/s$ at (b), we infer that
 \begin{equation*}
 \log_2\bigl(\cn(\SC,\eps_k')\bigr)
 \le N_{k-1} \cdot \log_2\Bigl(\frac{2}{\eps_k'}\Bigr)
 \overset{(a)}{\le} 2^{ta^{k-1}} \cdot \log_2\Bigl(\frac{2}{\eps_k'}\Bigr)
 \overset{(b)}{=} 2^{sa^k}\cdot \log_2\Bigl(\frac{2}{\eps_k'}\Bigr).
 \end{equation*}
 Hence
 \begin{equation}
 \log_2\log_2\bigl(\cn(\SC,\eps_k')\bigr)
 \le sa^k + \log_2\log_2\Bigl(\frac{2}{\eps_k'}\Bigr).
     \label{eqn_6_6}
 \end{equation}
 By definition of $\eps'_k$ we have
 \begin{equation}
     \log_2\Bigl(\frac{1}{\eps_k'}\Bigr)
     = \log_2\Bigl(\frac{2^{a^k}}{1+\delta}\Bigr)
     = a^k + \log_2\Bigl(\frac{1}{1+\delta}\Bigr).
     \label{eqn_6_7}
 \end{equation}
 Inserting \eqref{eqn_6_7} into \eqref{eqn_6_6}, we obtain
 \begin{equation}
    \log_2\log_2\bigl(\cn(\SC,\eps_k')\bigr)
    \leq s \log_2\Bigl(\frac{1}{\eps_k'}\Bigr) 
         - s \log_2\Bigl(\frac{1}{1+\delta}\Bigr)
         + \log_2\log_2\Bigl(\frac{2}{\eps_k'}\Bigr).
    \label{eqn_6_8}
 \end{equation}
 Using that $\eps_k'\to 0$ as $k\to \infty$ and that \eqref{eqn_6_8} holds
 for all $k\ge k_0+2$ at (a), we deduce that
 \begin{equation*}
    \lmd(\SC)
    = \liminf_{\eps\to 0} \frac{\log_2\log_2\bigl(\cn(\SC,\eps)\bigr)}{\log_2(1/\eps)}
    \le \liminf_{k\to \infty}\frac{\log_2\log_2\bigl(\cn(\SC,\eps_k')\bigr)}{\log_2(1/\eps_k')}
    \le s.
 \end{equation*}
 This is the desired upper bound for $\lmd(\SC)$.

 \medskip{}

 \textbf{Step 2:} In the second step of the proof, we derive the lower bound for $\lmd(\SC)$
 and the upper bound for $\umd(\SC)$. 
 In order to give a lower bound for a liminf and an upper bound for a limsup,
 we have to consider all sufficiently small $\eps>0$ rather than just specific sequences
 as we did in Step~1 of the proof.
 We will distinguish two cases.

 \medskip{}

 \textbf{Step 2.a:}
 In the first case, we consider intervals for $\eps$ where $n(\eps)$ is constant.
 To be precise, we consider all $k\ge k_0+1$ and all $\eps$ satisfying
 \begin{equation}
     \eps_{k+1}'= (1+\delta)\cdot 2^{-a^{(k+1)}} \le \eps \le (1-\delta) \cdot 2^{-a^{k}} = \eps_k.
     \label{eqn_6_15}
 \end{equation}
 We have for all $i\le N_k$ that $a_i \ge 2^{-a^k} > (1-\delta)\cdot 2^{-a^k} \ge \eps$ and for all $i\ge N_{k}+1$ that
 $a_i \le 2^{-a^{k+1}} < (1+\delta) \cdot 2^{-a^{k+1}} \le \eps$.
 Hence $n(\eps) = N_k$.

 For the logarithms of the inverse of $\eps$, we have the bounds
 \begin{equation}
     \log_2\Bigl(\frac{1}{\eps}\Bigr)
     \le \log_2\Bigl( \frac{2^{a^{k+1}}}{1+\delta}\Bigr)
     =  a^{k+1} - \log_2(1+\delta)
     \le a^{k+1}
     \label{eqn_6_9}
 \end{equation}
 and
 \begin{equation}
    \log_2\Bigl(\frac{1}{\eps}\Bigr)
\ge \log_2\Bigl( \frac{2^{a^k}}{1-\delta}\Bigr)
= a^{k} - \log_2(1-\delta)
\ge a^{k}.
     \label{eqn_6_10}
 \end{equation}

 For the covering numbers, \cref{bounds_for_covering_numbers_Hilbert_cube} implies the lower bound
 \begin{equation*}
    \cn(\SC,\eps)
    \ge \prod_{i=1}^{N_k} \frac{a_i}{\eps}
    \ge \prod_{i=1}^{N_k} \frac{2^{-a^k}}{(1-\delta) \cdot 2^{-a^k}} 
    = \Bigl( \frac{1}{1-\delta}\Bigr)^{N_k}.
 \end{equation*}
 Hence, using \eqref{eqn_6_1} at (a), we get
 \begin{equation*}
    \log_2\bigl(\cn(\SC,\eps)\bigr)
    \ge N_k \cdot \log_2\Bigl(\frac{1}{1-\delta}\Bigr)
    \overset{(a)}{\ge} 2^{ta^k} \cdot c_1 \cdot \log_2\Bigl(\frac{1}{1-\delta}\Bigr).
 \end{equation*}
 Hence, using \eqref{eqn_6_9} at (a), and recalling that $a=t/s$, we get
 \begin{align}
    \log_2\log_2\bigl(\cn(\SC,\eps)\bigr)
    &\ge t a^k + \log_2(c_1) + \log_2\log_2\Bigl(\frac{1}{1-\delta}\Bigr)\nonumber\\
    &\overset{(a)}{\ge} t \cdot \frac{1}{a} \log_2\Bigl(\frac{1}{\eps}\Bigr)  
+ \log_2(c_1) + \log_2\log_2\Bigl(\frac{1}{1-\delta}\Bigr)\nonumber\\
    &= s \log_2(1/\eps) + c_2,
    \label{eqn_6_12}
 \end{align}
 where 
 \begin{equation*}
     c_2 := \log_2(c_1) + \log_2\log_2\Bigl(\frac{1}{1-\delta}\Bigr) \in \RR.
 \end{equation*}

 For the covering numbers, \cref{bounds_for_covering_numbers_Hilbert_cube} also implies the upper bound
 \begin{equation*}
    \cn(\SC,\eps)
\le \prod_{i=1}^{N_k} \Bigl(\frac{a_i}{\eps} +1\Bigr)
\le \Bigl(\frac{2}{\eps}\Bigr)^{N_k},
 \end{equation*}
 where we also used that $a_i\ge \eps$ for $1\le i \le N_k$.
 Hence, by \eqref{eqn_6_1}, we have
 \begin{equation*}
    \log_2\Bigl(\cn(\SC,\eps)\Bigr)
    \le N_k \cdot \log_2\Bigl(\frac{2}{\eps}\Bigr)
    \le 2^{ta^k}\cdot \log_2\Bigl(\frac{2}{\eps}\Bigr).
 \end{equation*}
 Hence, using also \eqref{eqn_6_10} at (a), we get
 \begin{align}
    \log_2\log_2\bigl(\cn(\SC,\eps)\bigr)
&\le ta^k + \log_2\log_2\Bigl(\frac{2}{\eps}\Bigr)\nonumber\\
&\overset{(a)}{\le} t \log_2(1/\eps) + \log_2\log_2\Bigl(\frac{2}{\eps}\Bigr).
\label{eqn_6_11}
 \end{align}

 \medskip{}

 \textbf{Step 2.b:}
 In the second case, we consider the values of $\eps$ around the jumps of $n(\eps)$.
 More precisely, we consider all $k\ge k_0+2$ and all $\eps$ satisfying
 \begin{equation}
   \eps_k' = (1+\delta)\cdot 2^{-a^{k}} \ge \eps \ge (1-\delta) \cdot 2^{-a^{k}} = \eps_{k}.
   \label{eqn_6_16}
 \end{equation}
 Notice that $\eps_k'$ is the lower endpoint of the $(k-1)$-th interval and $\eps_k$
 is the upper endpoint of the $k$-th interval considered in \eqref{eqn_6_15}.
 In particular, since here we assume $k\ge k_0+2$, the inequalities \eqref{eqn_6_12}
 and \eqref{eqn_6_11} apply to $\eps=\eps_k'$ and to $\eps=\eps_k$.
 The idea is now to approximate $\cn(\SC,\eps)$ and $\log_2(1/\eps)$
 by the corresponding values for $\eps_k$ and $\eps_k'$.

 For the logarithms of the inverses of $\eps$, we have for all $u>0$
 the following bounds for the approximation errors:
 \begin{equation}
     \abs{ \log_2 \Bigl(\frac{u}{\eps}\Bigr) - \log_2\Bigl(\frac{u}{\eps_k'}\Bigr)} 
     =\log_2\Bigl(\frac{\eps_k'}{\eps}\Bigr) 
     \le \log_2\Bigl( \frac{\eps_k'}{\eps_k}\Bigr)
     = \log_2\Bigl(\frac{1+\delta}{1-\delta}\Bigr)
     \label{eqn_6_13}
 \end{equation}
 and
 \begin{equation}
     \abs{ \log_2 \Bigl(\frac{u}{\eps}\Bigr) - \log_2\Bigl(\frac{u}{\eps_k}\Bigr)} 
     =\log_2\Bigl(\frac{\eps}{\eps_k}\Bigr) 
     \le \log_2\Bigl( \frac{\eps_k'}{\eps_k}\Bigr)
     = \log_2\Bigl(\frac{1+\delta}{1-\delta}\Bigr).
     \label{eqn_6_14}
 \end{equation}
 
 For the covering numbers, we will use the monotonicity of $\cn(\SC,\eps)$ in $\eps$.
 For the upper bound, we use the monotonicity of $\cn(\SC,\eps)$ at (a),
 then \eqref{eqn_6_11} for $\eps_k$ instead of $\eps$ at (b),
 and \eqref{eqn_6_14} at (c), resulting in
 \begin{align}
    \log_2\log_2\bigl(\cn(\SC,\eps)\bigr)
    &\overset{(a)}{\le} \log_2\log_2\bigl( \cn(\SC, \eps_k)\bigr)\nonumber\\
    &\overset{(b)}{\le}  t \log_2(1/\eps_k) +  \log_2\log_2\Bigl(\frac{2}{\eps_k}\Bigr)\nonumber\\
    &\overset{(c)}{\le} t \log_2(1/\eps)
                        + t\log_2\Bigl(\frac{1+\delta}{1-\delta}\Bigr)
                        + \log_2
                          \Bigl(
                            \log_2(2/\eps)
                            + \log_2\Bigl(\frac{1+\delta}{1-\delta}\Bigr)
                          \Bigr).
    \label{eqn_6_18}
 \end{align}
 For the lower bound, we use the monotonicity of $\cn(\SC,\eps)$ at (a),
 followed by \eqref{eqn_6_12} for $\eps_{k}'$ instead of $\eps$ at (b),
 and \eqref{eqn_6_13} at (c), resulting in
 \begin{align}
    \log_2\log_2\bigl(\cn(\SC,\eps)\bigr)
    &\overset{(a)}{\ge} \log_2\log_2\bigl(\cn(\SC,\eps_k')\bigr)\nonumber\\
    &\overset{(b)}{\ge} s \log_2(1/\eps_k') + c_2\nonumber\\
    &\overset{(c)}{\ge} s \log_2(1/\eps) - s \log_2\Bigl(\frac{1+\delta}{1-\delta}\Bigr) + c_2.
    \label{eqn_6_17}
 \end{align}

 \medskip{}

 \textbf{Step 2.c:} We combine Step~2.a and Step~2.b.
 Notice that the unions of all the intervals considered in \eqref{eqn_6_15}
 and in \eqref{eqn_6_16} cover $(0,\eps_{k_0+1}]$.
 Combining \eqref{eqn_6_12} and \eqref{eqn_6_17}, we infer that
 \begin{equation*}
    \log_2\log_2\bigl(\cn(\SC,\eps)\bigr)
    \ge s \log_2(1/\eps) - s \log_2\Bigl(\frac{1+\delta}{1-\delta}\Bigr) + c_2  
 \end{equation*}
 for all $\eps\in(0,\eps_{k_0+1}]$.
 It follows that $\lmd(\SC)\ge s$. This is the desired lower bound for $\lmd(\SC)$.

 Combining \eqref{eqn_6_11} and \eqref{eqn_6_18}, we obtain
 \begin{equation*}
    \log_2\log_2\bigl(\cn(\SC,\eps)\bigr)
    \leq t \log_2(1/\eps)
         + t\log_2\Bigl(\frac{1+\delta}{1-\delta}\Bigr)
         + \log_2\Bigl( \log_2(2/\eps) + \log_2\Bigl(\frac{1+\delta}{1-\delta}\Bigr)\Bigr)
 \end{equation*}
 for all $\eps\in(0,\eps_{k_0+1}]$.
 It follows that $\umd(\SC) \le t$.
 This is the desired upper bound for $\umd(\SC)$.

 Combining the upper and lower bounds that we derived in step 1 and step 2, we deduce the main claim.
\end{proof}
 
\section{Standard technical lemmas}
\label{sec:standard_technical_lemmas}

\begin{lemma}
    \label{lemma_technical_log_log_asymp}
    Let $\eps_f>0$, let $f\colon (0,\eps_f)\to \NN_{\ge2}$, and let $s>0$.
    If \begin{equation*}
       \liminf_{\eps\downarrow 0} \frac{\log_2\log_2 f(\eps)}{\log_2\big(\frac{1}{\eps}\big)} \ge s,
    \end{equation*}
    then for every $t\in (0,s)$ there exists $\eps_0\in (0,\eps_f)$
    such that $f(\eps) \ge 2^{\eps^{-t}}$ for all $\eps\in (0,\eps_0)$.
\end{lemma}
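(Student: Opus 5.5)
The plan is to unwind the definition of the limit inferior and then exponentiate twice.

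Fix $t \in (0,s)$. By the definition of $\liminf$ and the hypothesis
\[
  \liminf_{\eps\downarrow 0} \frac{\log_2\log_2 f(\eps)}{\log_2(1/\eps)} \ge s > t,
\]
there exists $\eps_1 \in (0,\eps_f)$ with
\[
  \inf_{0<\eps'<\eps_1} \frac{\log_2\log_2 f(\eps')}{\log_2(1/\eps')} > t .
\]
Hence for every $\eps \in (0,\eps_1)$ the quotient itself exceeds $t$. The quotient is well defined because $f(\eps) \ge 2$ gives $\log_2 f(\eps) \ge 1$ and hence $\log_2\log_2 f(\eps) \ge 0$.

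Next we restrict to $\eps < 1$, so that $\log_2(1/\eps) > 0$. Set $\eps_0 := \min\{\eps_1, 1/2\}$; this lies in $(0,\eps_f)$ as required. For $\eps \in (0,\eps_0)$, multiplying the inequality by the positive number $\log_2(1/\eps)$ gives
\[
  \log_2\log_2 f(\eps) > t \log_2(1/\eps) = \log_2\bigl(\eps^{-t}\bigr).
\]

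Finally, apply the strictly increasing map $x \mapsto 2^x$ twice. The first application gives $\log_2 f(\eps) > \eps^{-t}$, and the second gives $f(\eps) > 2^{\eps^{-t}}$. This is even slightly stronger than the claimed $f(\eps) \ge 2^{\eps^{-t}}$.

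There is no genuine obstacle. The only points that need care are that strictness $t < s$ makes the infimum over a small punctured interval exceed $t$, and that the sign of $\log_2(1/\eps)$ is positive, which is why $\eps_0 \le 1/2$ is imposed.
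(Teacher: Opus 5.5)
Your proposal is correct and follows the paper's proof: use the definition of the $\liminf$ to make the quotient exceed $t$ on a small interval $(0,\eps_0)$, rearrange, and apply $\nu \mapsto 2^{\nu}$ twice. Your extra restriction $\eps_0 \le 1/2$, which guarantees $\log_2(1/\eps) > 0$, makes explicit a detail that the paper's "rearranging terms" leaves implicit.
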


\begin{proof}
    Let $t \in (0,s)$.
    By definition of $\liminf$, there exists $\eps_0\in (0,\eps_f)$
    such that for all $\eps\in (0,\eps_0)$ we have
    \begin{equation*}
       \frac{\log_2\log_2 f(\eps)}{\log_2\big(\frac{1}{\eps}\big)} >t.
    \end{equation*}
    By rearranging terms and by applying the map $\nu \mapsto 2^{\nu}$ twice, we deduce the claim.
\end{proof}

\begin{lemma}\label{lemma_ball_inclusions}
 Let $(\XX,\metr)$ be a non-empty quasi-metric space
 with modulus of concavity $K\ge 1$, let $x\in \XX$, and let $r>0$.
 Then, for every $y \in \ball(x,r)$ we have
 \begin{equation*}
    \ball(x,r) \subset \ball(y,2Kr).
 \end{equation*}
\end{lemma}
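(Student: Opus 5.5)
We prove the inclusion directly from the quasi-triangle inequality. Write $K$ for the modulus of concavity, so that $\metr(a,c) \le K\bigl(\metr(a,b)+\metr(b,c)\bigr)$ for all $a,b,c\in\XX$.

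Fix $y \in \ball(x,r)$, so that $\metr(y,x)\le r$, and let $z \in \ball(x,r)$ be arbitrary, so that $\metr(z,x)\le r$. We apply the quasi-triangle inequality with $x$ as the intermediate point and use the symmetry of $\metr$:
\[
  \metr(z,y) \le K\bigl(\metr(z,x) + \metr(x,y)\bigr) \le K(r + r) = 2Kr .
\]
By the definition of the ``closed'' ball, this means $z \in \ball(y,2Kr)$. Since $z\in\ball(x,r)$ was arbitrary, we obtain $\ball(x,r)\subset\ball(y,2Kr)$.

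There is no real obstacle here. The only points to check are that the definition of quasi-metric includes symmetry, which lets us pass from $\metr(y,x)$ to $\metr(x,y)$, and that the balls are the ``closed'' balls defined via $\le$. Both hold by the definitions given in the introduction. No measurability or topological facts are needed.
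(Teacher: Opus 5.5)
Your proposal is correct and is essentially the paper's own proof: fix $z\in\ball(x,r)$ and apply the quasi-triangle inequality with intermediate point $x$ to get $\metr(z,y)\le K\bigl(\metr(z,x)+\metr(x,y)\bigr)\le 2Kr$. Your explicit remarks on symmetry and on the ``closed'' balls being defined via $\le$ are the only additions, and both are accurate.
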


\begin{proof}
    For $z\in \ball(x,r)$ we have $\metr(z,y) \le K \cdot \bigl( \metr(z,x) +\metr(x,y)\bigr)
       \le 2Kr$.
\end{proof}

\begin{lemma}\label{lem:InternalVSExternalCoveringNumbers}
  Let $(\XX, d)$ be a quasi-metric space with triangle constant $k \geq 1$.
  Then for any subset $\emptyset \neq \SC \subseteq \XX$ and every $\eps > 0$, we have
  \[
    \cn(\SC, \eps)
    \leq \extcn\left(\SC, \XX, \frac{\eps}{2k}\right).
  \]
\end{lemma}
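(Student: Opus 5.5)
We prove the statement directly from the definitions, by converting an external covering with radius $\eps/(2k)$ into an internal $\eps$-net of at most the same cardinality. If $\extcn(\SC,\XX,\eps/(2k)) = \infty$ there is nothing to show. Otherwise, set $N := \extcn(\SC,\XX,\eps/(2k)) \in \N$ and pick centers $z_1,\dots,z_N \in \XX$ with
\[
  \SC \subset \bigcup_{i=1}^N \ball\bigl(z_i, \tfrac{\eps}{2k}\bigr).
\]

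Let $I := \{ i \in \{1,\dots,N\} : \ball(z_i,\eps/(2k)) \cap \SC \neq \emptyset \}$. Since $\SC \neq \emptyset$, the set $I$ is nonempty. For each $i \in I$, choose a point $x_i \in \ball(z_i,\eps/(2k)) \cap \SC$, and set $\SC_\eps := \{ x_i : i \in I \} \subset \SC$. Then $\# \SC_\eps \leq \# I \leq N$.

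It remains to check that $\SC_\eps$ is an $\eps$-net for $\SC$. Let $x \in \SC$. There is some $i$ with $x \in \ball(z_i,\eps/(2k))$, and such an $i$ necessarily lies in $I$. The quasi-triangle inequality and symmetry of $d$ then give
\[
  d(x, x_i) \leq k \bigl( d(x,z_i) + d(z_i,x_i) \bigr) \leq k \cdot \frac{2\eps}{2k} = \eps .
\]
Hence $\cn(\SC,\eps) \leq \#\SC_\eps \leq N$, which is the claim.

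There is no genuine obstacle here. The only points needing care are the ones handled above: discarding balls that do not meet $\SC$, and treating the case of an infinite external covering number separately. One could alternatively quote \cref{lemma_ball_inclusions}, but the direct estimate is just as short.
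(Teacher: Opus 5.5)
Your proof is correct and is essentially the paper's argument: both pick a point of $\SC$ in each ball of the external covering and conclude via the quasi-triangle inequality. The only difference is cosmetic: you discard balls that miss $\SC$, whereas the paper assigns an arbitrary point of $\SC$ to those balls.
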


\begin{proof}
  Let $N := \extcn\left(\SC, \XX, \frac{\eps}{2k}\right)$.
  If $N= \infty$, the claim is trivial; hence, we can assume that $N<\infty$.
  By definition of $N$, there exist $x_1, \dots, x_N \in \XX$ with $\SC \subseteq \bigcup_{i=1}^N \ball\left(x_i, \frac{\eps}{2k}\right)$.
  Now, for $i \in \{1, \dots, N\}$, there are two cases:
  \begin{enumerate}[label=(\roman*)]
    \item if $\SC \cap \ball\left(x_i, \frac{\eps}{2k}\right) \neq \emptyset$,
          choose $y_i \in \SC \cap \ball\left(x_i, \frac{\eps}{2k}\right)$;

    \item if $\SC \cap \ball\left(x_i, \frac{\eps}{2k}\right) = \emptyset$,
          let $y_i \in \SC$ be arbitrary.
  \end{enumerate}
  We claim that $\{y_1, \dots, y_N\} \subseteq \SC$ is an $\eps$-net for $\SC$.
  Indeed, let $x \in \SC$ be arbitrary.
  Then there exists $i \in \{1, \dots, N\}$ such that
  $x \in \SC \cap \ball\left(x_i, \frac{\eps}{2k}\right) \neq \emptyset$,
  and hence $y_i \in \SC \cap \ball\left(x_i, \frac{\eps}{2k}\right)$.
  This implies
  \begin{align*}
    d(x, y_i) &\leq k \cdot \bigl(d(x, x_i) + d(x_i, y_i)\bigr) \\
              &\leq k \cdot \left(\frac{\eps}{2k} + \frac{\eps}{2k}\right)
               =    \eps.
  \end{align*}
  Hence, we have shown $\cn(\SC, \eps) \leq N = \extcn\left(\SC, \XX, \frac{\eps}{2k}\right)$.
\end{proof}

The following results are proven in the case of metric spaces in \cite[p. 278]{buragoCourseMetricGeometry2001}.
The same proofs apply (verbatim) in the case of quasi-metric spaces.

\begin{lemma}\label{maximal_separated_subsets_exist}
  Let $(\SC,\metr)$ be a non-empty quasi-metric space and let $\eps>0$.
  Then a maximal $\eps$-separated subset of $\SC$ exists.
\end{lemma}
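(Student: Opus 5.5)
The plan is a Zorn's lemma argument, following the metric-space proof in \cite[p.~278]{buragoCourseMetricGeometry2001}. Nothing in it uses the triangle inequality, so it transfers verbatim to quasi-metric spaces. Let $\mathcal{P}$ be the family of all $\eps$-separated subsets of $\SC$, partially ordered by inclusion. This family is non-empty: since $\SC \neq \emptyset$, we may pick $x_0 \in \SC$, and the singleton $\{x_0\}$ is vacuously $\eps$-separated. (The empty set would also serve.)

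Next I would check that every chain $(P_j)_{j \in J}$ in $\mathcal{P}$ has an upper bound, namely $P := \bigcup_{j} P_j$. To see that $P$ is $\eps$-separated, take distinct $x, x' \in P$, with $x \in P_{j_1}$ and $x' \in P_{j_2}$. Because the family is a chain, one of these sets contains the other, say $P_{j_1} \subset P_{j_2}$. Then both points lie in the $\eps$-separated set $P_{j_2}$, so $\metr(x,x') > \eps$. For an empty chain, $\emptyset$ or $\{x_0\}$ serves as the upper bound.

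Zorn's lemma then yields a maximal element $P^\ast$ of $(\mathcal{P}, \subset)$. I would finish by matching this to the paper's notion of maximality. If $x \in \SC \setminus P^\ast$ and $P^\ast \cup \{x\}$ were $\eps$-separated, it would be an element of $\mathcal{P}$ strictly containing $P^\ast$, contradicting maximality. So $P^\ast$ is a maximal $\eps$-separated subset in the sense defined in \cref{sub:Notation}.

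There is no genuine obstacle. The only points needing care are the degenerate empty-chain case and confirming that separation is a pairwise condition, so that it passes to unions of chains; neither step uses the quasi-triangle inequality. In the totally bounded case one could instead argue greedily, but Zorn's lemma covers everything uniformly.
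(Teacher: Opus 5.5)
Your Zorn's lemma argument is correct and is essentially the paper's proof: the paper simply defers to the metric-space argument in Burago--Burago--Ivanov (p.~278) and notes that it carries over verbatim to quasi-metric spaces. That transfer works for exactly the reason you point out: $\eps$-separation is a pairwise condition, so it passes to unions of chains, and the triangle inequality is never used.
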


\begin{lemma}
    \label{maximal_separated_subsets_are_nets}
 Let $(\SC,\metr)$ be a non-empty quasi-metric space and let $\eps>0$.
 Let $P\subset \SC$ be a maximal $\eps$-separated subset.
 Then $P$ is an $\eps$-net for $\SC$.
\end{lemma}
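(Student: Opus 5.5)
The statement is the standard fact that a maximal $\eps$-separated subset $P$ is an $\eps$-net. I would argue by contradiction, directly from the definitions of maximality and of an $\eps$-net given in \cref{sub:Notation}; no quasi-metric structure beyond symmetry of $\metr$ is needed.

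Fix $x \in \SC$. If $x \in P$, then $y := x$ works, since $\metr(x,x) = 0 \le \eps$.

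So assume $x \in \SC \setminus P$. By maximality of $P$, the set $P \cup \{x\}$ is not $\eps$-separated. Hence there exist distinct points $u, v \in P \cup \{x\}$ with $\metr(u,v) \le \eps$. Both cannot lie in $P$, since $P$ itself is $\eps$-separated. Therefore one of them, say $u$, equals $x$, and the other, $v$, lies in $P$. By symmetry of $\metr$ we get $\metr(x,v) \le \eps$ with $v \in P$.

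In both cases $x$ has a point of $P$ within distance $\eps$. Since $x \in \SC$ was arbitrary, $P$ is an $\eps$-net for $\SC$.

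The only step needing any care is excluding the case where the violating pair lies entirely in $P$, and that is immediate from the separation of $P$. The triangle inequality is never used, so the metric-space proof carries over verbatim to quasi-metric spaces, as the paper asserts.
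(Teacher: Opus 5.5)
Your proof is correct and is essentially the paper's own: the paper does not write out a proof but defers to the metric-space argument in Burago--Burago--Ivanov, p.~278, which is exactly this maximality argument. As you observe, only symmetry of $\metr$ is used, never the triangle inequality, which is why it carries over verbatim to quasi-metric spaces.
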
